\DeclareFontFamily{OT1}{rsfs}{}
\DeclareFontShape{OT1}{rsfs}{n}{it}{<-> rsfs10}{}
\DeclareMathAlphabet{\mathscr}{OT1}{rsfs}{n}{it}
\newtheorem{theorem}{Theorem}[section]
\newtheorem{corollary}[theorem]{Corollary}
\newtheorem{lemma}[theorem]{Lemma}
\newtheorem{proposition}[theorem]{Proposition}
\newtheorem{remark}[theorem]{Remark}
\newenvironment{proof}{\noindent {\bf Proof:}}{$\Box$ \vspace{2 ex}}
\numberwithin{equation}{section}
\DeclareMathOperator{\End}{End}
\DeclareMathOperator{\Hom}{Hom}
\DeclareMathOperator{\Gal}{Gal}
\DeclareMathOperator{\Aut}{Aut}
\DeclareMathOperator{\SO}{SO}
\DeclareMathOperator{\SL}{SL}
\DeclareMathOperator{\GL}{GL}
\DeclareMathOperator{\PGL}{PGL}
\DeclareMathOperator{\Trace}{Trace}
\DeclareMathOperator{\disc}{disc}
\DeclareMathOperator{\Res}{Res}
\DeclareMathOperator{\Spec}{Spec}
\DeclareMathOperator{\Stab}{Stab}
\DeclareMathOperator{\Pic}{Pic}
\def\Z{{\mathbb Z}}
\def\End{{\rm End}}
\def\SL{{\rm SL}}
\def\GL{{\rm GL}}
\def\PGL{{\rm PGL}}
\def\Stab{{\rm Stab}}
\def\bigstab{{\rm bigstab}}
\def\Sym{{\rm Sym}}
\def\Jac{{\rm Jac}}
\def\O{{\rm O}}
\def\SO{{\rm SO}}
\def\P{{\mathbb P}}
\def\Disc{{\rm Disc}}
\def\ss{{\rm ss}}
\def\disc{{\rm disc}}
\def\Aut{{\rm Aut}}
\def\irr{{\rm irr}}
\def\Inv{{\rm Inv}}
\def\red{{\rm red}}
\def\Det{{\rm Det}}
\def\Vol{{\rm Vol}}
\def\R{{\mathbb R}}
\def\F{{\mathbb F}}
\def\FF{{\mathcal F}}
\def\RR{{\mathcal R}}
\def\Q{{\mathbb Q}}
\def\H{{\mathcal H}}
\def\G{{\mathcal G}}
\def\C{{\mathcal C}}
\def\Z{{\mathbb Z}}
\def\P{{\mathbb P}}
\def\F{{\mathbb F}}
\def\Q{{\mathbb Q}}
\def\C{{\mathbb C}}
\def\H{{\mathcal H}}
\def\BB{{\mathcal E}}
\def\Sha{\mbox{\fontencoding{OT2}\selectfont\char88}}
\title{The average size of the $2$-Selmer group of Jacobians of \\ hyperelliptic curves having a rational Weierstrass point}
\author{Manjul Bhargava and Benedict H.\ Gross}
\begin{document}
\maketitle

\begin{abstract}
We prove that when all hyperelliptic curves of genus $n \geq 1$ having
a rational Weierstrass point are ordered by height, the average size of the $2$-Selmer group of their Jacobians is equal to $3$.  
It~follows that
(the limsup of)  the average rank of the Mordell-Weil group of their Jacobians is at~most~$3/2$.  

The method of Chabauty can then be used to obtain an effective bound on the number of rational points on most of these hyperelliptic curves; for example, we show that a majority of hyperelliptic curves of genus $n\geq 3$ with a rational Weierstrass point have fewer than 20 rational points.
\end{abstract}

\tableofcontents
\section{Introduction}

A hyperelliptic curve $C$ of genus $n \geq 1$ over $\Q$ with a marked rational Weierstrass point $O$ has an affine equation of the form
\begin{equation}\label{Ccdef}
y^2 = x^{2n+1} + c_2x^{2n-1} + c_3x^{2n-2} + \ldots + c_{2n+1} = f(x)
\end{equation}
with rational coefficients $c_m$. The point $O$ lies above $x = \infty$, the polynomial $f(x)$ is separable, and the ring of functions which are regular outside of $O$ is the Dedekind domain $\Q[x,y] = \Q[x,\sqrt{f(x)}]$. The change of variable $(x' = u^{2}x, y' = u^{2n+1}y)$ results in a change in the coefficients $c_m' = u^{2m} c_m$.  Hence we may assume the coefficients are all integers. These integers are unique if we assume further that, for every prime $p$, the integral coefficients $c_m$ are not all divisible by $p^{2m}$. In this case we say the coefficients are {\it indivisible}.

To make the next two definitions, we assume that $C$ is given by its unique equation with indivisible integral coefficients.
The discriminant  of $f(x)$ is a polynomial $D(c_2,c_3,\ldots, c_{2n+1})$ of weighted homogeneous degree $2n(2n+1)$ in the coefficients $c_m$, where $c_m$ has degree $m$. Since $f(x)$ is separable, this discriminant is nonzero. We define the {\it discriminant} $\Delta$ of the curve $C$ by the formula $\Delta(C) := 4^{2n}D(c_2,c_3, \ldots, c_{2n+1})$, and the (naive) {\it height} $H$ of the curve $C$ by
$$H(C) :=  \max \{|c_k|^{{2n(2n+1)}/{k}}\}_{k=2}^{2n+1}.$$
We include the expression $2n(2n+1)$ in the definition so that the weighted homogeneous degree of the height function $H$ is the same as that of the discriminant $\Delta$.
If $\Delta(C)$ is prime to $p$, the hyperelliptic curve $C$ has good reduction modulo $p$. The height $H(C)$ gives a concrete way to enumerate all of the hyperelliptic curves of a fixed genus with a rational Weierstrass point: for any real number $X > 0$ there are clearly only finitely many  curves with $H(C) < X$.  

The discriminant and height extend the classical notions in the case
of elliptic curves (which is the case $n = 1$).  Any elliptic curve $E$ over $\Q$ is given by a unique equation of the form $y^2=x^3+c_2x+c_3$, where $c_2,c_3 \in \Z$
and for all primes $p$:\, $p^6 \nmid c_3$ whenever $p^4 \mid c_2$. The discriminant
is then defined by the formula  $\Delta(E) := 2^4(-4c_2^3 - 27c_3^2)$ and the
naive height 
by 
$H(E):= \max \{|c_2|^3,|c_3|^2\}$.

Recall that the $2$-Selmer group $S_2(J)$ of the Jacobian $J=\Jac(C)$ of $C$ is a finite subgroup of the Galois cohomology group $H^1(\Q,J[2])$, which is defined by local conditions and fits into an exact sequence
$$
0\to J(\Q)/2J(\Q)\to S_2(J) \to \Sha_J[2]\to 0,$$
where $\Sha_J$ denotes the Tate-Shafarevich group of $J$ over $\Q$.

The purpose of this paper is to prove the following theorem. 

\begin{theorem}\label{main}
When all hyperelliptic curves of fixed genus $n \geq 1$ over $\Q$ having a rational
Weierstrass point are ordered by height,
the average size of the $2$-Selmer groups of their Jacobians is equal to~$3$.
\end{theorem}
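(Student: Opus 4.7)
The plan is to exploit the orbit-parametrization of $2$-Selmer elements set up earlier in the paper: the elements of $S_2(J)$, for $J=\Jac(C)$ of a curve $C$ as in~(\ref{Ccdef}), are in bijection with the \emph{locally soluble} rational orbits of a reductive group $G$ on a representation $V$. Concretely, one takes $W$ to be a $(2n+1)$-dimensional split orthogonal space, $V$ to be the space of self-adjoint operators on $W$, and $G=\SO(W)$ acting by conjugation, so that the invariants of $v\in V$ encode the coefficients $c_2,\ldots,c_{2n+1}$ of a curve as in~(\ref{Ccdef}) and the generic stabilizer is isomorphic to $J[2]$. Since every locally soluble rational orbit admits an integral representative, the theorem reduces to counting $G(\Z)$-orbits on $V(\Z)$ that are locally soluble and whose invariants define a curve of height $<X$, divided by the total number of such curves, and showing this ratio tends to $3$ as $X\to\infty$.

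I would carry out the count by the geometry-of-numbers method: the number of integer orbits is approximately the Euclidean volume of $\mathcal{F}\cdot R(X)\subset V(\R)$, where $\mathcal{F}$ is a fundamental domain for $G(\Z)$ acting on $G(\R)$ and $R(X)$ is a fundamental set for $G(\R)$ on the locus of invariants of height $<X$. Since $\mathcal{F}$ is noncompact, I would smooth by Bhargava's averaging trick, replacing $\mathcal{F}$ by an average of its translates against a compactly supported bump function on $G(\R)$, producing a region for which standard lattice-point estimates apply uniformly.

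The main obstacle will be controlling the cuspidal regions of $\mathcal{F}$. Integer points lying deep in the cusp tend to correspond to \emph{distinguished} orbits --- those coming from the identity element of $J(\Q)/2J(\Q)$ --- rather than genuine non-identity Selmer classes. A careful weight analysis of the action of a maximal torus of $G$ on $V$, identifying which coordinates force an orbit into the distinguished class, should show that the cusp contributes to leading order exactly one distinguished orbit per curve. The main body of $\mathcal{F}$ then contributes a volume that, after change-of-variables from invariants to the coefficients $c_m$, becomes a definite constant multiple of the number of curves of height $<X$.

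To pass from integer orbits to locally soluble ones, I would then apply a sieve writing the average Selmer count as a product of local densities over all places of $\Q$. This sieve requires a \emph{uniformity estimate} bounding the number of integer orbits with discriminant divisible by $p^2$ uniformly in $p$, so that tails from large primes can be absorbed. The local densities themselves can be computed from a Tamagawa-measure identity for $G$ combined with the self-duality of $J[2]$ under the Weil pairing; the product of these local densities with the archimedean volume should come out to exactly~$2$. Adding the~$1$ coming from the distinguished orbit yields the asserted average size~$3$.
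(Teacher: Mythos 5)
Your proposal follows essentially the same route as the paper: the orbit parametrization of $2$-Selmer elements by locally soluble $\SO(W)$-orbits on self-adjoint operators, integral representatives for such orbits, Bhargava-style averaging over translates of a fundamental domain with the cusp contributing the distinguished (identity) orbit and the main body a volume, a squarefree sieve with a uniform tail bound, and a Tamagawa-number computation producing $2+1=3$. The only point you elide is the technical device at $p=2$ (rescaling the invariants by powers of $2$ so that integral representatives are guaranteed), but this is a detail inside the same strategy.
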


More precisely, we show that
$$
\lim_{X\to\infty} \frac{\sum_{H(C)<X} \#S_2(\Jac(C))}{\sum_{H(C)<X} 1} = 3.
$$
In fact, we prove that the same result remains true even when we average over any subset of hyperelliptic curves $C$ defined by a finite set of congruence conditions on the coefficients $c_2, c_3, \ldots\,, c_{2n+1}$.


Since the 2-rank $r_2(S_2(J))$ of the 2-Selmer group of the Jacobian $J$ bounds the rank of the Mordell-Weil group $J(\Q)$, and since 
$
2r_2(S_2(J))\leq 2^{r_2(S_2(J))}=\#S_2(J), 
$
by taking averages 
we immediately obtain:
\begin{corollary}\label{cor}
  When all rational hyperelliptic curves of fixed genus $n \geq 1$ with a
  rational Weierstrass point are ordered by height, the average
  $2$-rank of the $2$-Selmer groups of their Jacobians is at most~$3/2$. Thus the
  average rank of the Mordell-Weil groups of their Jacobians is 
  at most $3/2$.
\end{corollary}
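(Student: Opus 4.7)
The deduction is immediate from Theorem \ref{main} and requires nothing beyond two elementary comparisons; the plan is simply to record them in order.

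First I would apply the pointwise inequality $2r \leq 2^r$, valid for every integer $r\geq 0$. Taking $r=r_2(S_2(J))$ gives
$$
2\,r_2(S_2(J)) \;\leq\; 2^{r_2(S_2(J))} \;=\; \#S_2(J)
$$
for each Jacobian $J=\Jac(C)$. Summing this inequality over all hyperelliptic curves $C$ of genus $n$ with a rational Weierstrass point and with $H(C)<X$, dividing by the number of such curves, and letting $X\to\infty$, the right-hand side tends to $3$ by Theorem \ref{main}. This yields the first assertion of the corollary, namely
$$
\limsup_{X\to\infty} \frac{\sum_{H(C)<X} r_2(S_2(\Jac(C)))}{\sum_{H(C)<X} 1} \;\leq\; \frac{3}{2}.
$$

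For the second assertion I would invoke the defining short exact sequence
$$
0 \to J(\Q)/2J(\Q) \to S_2(J) \to \Sha_J[2] \to 0
$$
recalled in the introduction. It gives $\dim_{\F_2} J(\Q)/2J(\Q) \leq r_2(S_2(J))$, and since the rank of the finitely generated abelian group $J(\Q)$ is bounded by $\dim_{\F_2} J(\Q)/2J(\Q)$, we obtain the pointwise bound $\mathrm{rank}\, J(\Q) \leq r_2(S_2(J))$. Combining this with the first step yields the stated bound of $3/2$ on the average Mordell--Weil rank.

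There is no genuine obstacle in the deduction itself: the entire substance of Corollary \ref{cor} is concentrated in Theorem \ref{main}, whose proof is the real work of the paper. The only subtle point worth flagging is that a limsup rather than a limit appears for the rank, because one is averaging a one-sided inequality and has no matching lower bound at this level of generality; nothing in the argument depends on the size of $\Sha_J[2]$, whose contribution is simply absorbed into the gap between $2r_2$ and $2^{r_2}$.
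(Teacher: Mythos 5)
Your proof is correct and follows essentially the same route as the paper: apply the pointwise bound $2r_2(S_2(J)) \leq 2^{r_2(S_2(J))} = \#S_2(J)$, average using Theorem~\ref{main}, and then use $\mathrm{rank}\,J(\Q) \leq r_2(S_2(J))$. The only difference is that you spell out the last inequality via the exact sequence and the 2-torsion of $J(\Q)$, whereas the paper simply asserts it; this is a harmless elaboration, not a different argument.
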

 
Another corollary of Theorem~\ref{main} is that the average $2$-rank of the $2$-torsion subgroup of the Tate-Shafarevich group $\Sha_J[2]$ is also at most~$3/2$. 
The analogous statements for elliptic curves were proven in \cite{BS}. 

We suspect that the average rank of the Mordell--Weil group is equal to $1/2$. However, the fact that the average rank is at most $3/2$ (independent of the genus) already gives some interesting arithmetic applications.  
Recall that the method of Chabauty~\cite{Cha}, as refined by Coleman~\cite{Col}, yields a finite and effective bound on the number of rational points on a curve over $\Q$ whenever its genus is greater than the rank of its Jacobian. Theorem~\ref{main} implies
\begin{corollary}\label{cdensity}
Let $\delta_n$ denote the lower density of hyperelliptic curves of genus $n$ with a rational Weierstrass point 
satisfying Chabauty's condition.  Then $\delta_n\to1$ as $n\to\infty$.
\end{corollary}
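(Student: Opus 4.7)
The plan is to deduce Corollary \ref{cdensity} from Theorem \ref{main} via a direct Markov-type inequality applied to the 2-Selmer group, exploiting the exponential gap between $\#S_2(J)$ and the rank bound it provides.

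First, I would recall that Chabauty's condition for a curve $C$ of genus $n$ is $\mathrm{rk}\, J(\Q) < n$, where $J = \Jac(C)$. The complementary set is the set of curves whose Mordell--Weil rank is at least $n$. Since $\mathrm{rk}\, J(\Q) \leq r_2(S_2(J))$, any curve with $\mathrm{rk}\, J(\Q) \geq n$ satisfies $\#S_2(J) \geq 2^n$. Thus it suffices to bound the upper density of the set
\[
B_n(X) := \{C : H(C) < X,\ \#S_2(\Jac(C)) \geq 2^n\}
\]
inside the set $P_n(X) := \{C : H(C) < X\}$ of all height-bounded curves of genus $n$ with a rational Weierstrass point.

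Second, I would apply Markov's inequality. Since $\#S_2(\Jac(C)) \geq 2^n$ on $B_n(X)$ and $\#S_2(\Jac(C)) \geq 1$ always, we have
\[
\sum_{C \in P_n(X)} \#S_2(\Jac(C)) \;\geq\; 2^n \cdot \#B_n(X),
\]
hence
\[
\frac{\#B_n(X)}{\#P_n(X)} \;\leq\; \frac{1}{2^n}\cdot\frac{\sum_{C \in P_n(X)} \#S_2(\Jac(C))}{\#P_n(X)}.
\]
Taking $\limsup$ as $X \to \infty$ and using Theorem \ref{main}, the right-hand side tends to $3/2^n$. Consequently the lower density of curves satisfying Chabauty's condition satisfies
\[
\delta_n \;\geq\; 1 - \frac{3}{2^n},
\]
which tends to $1$ as $n \to \infty$.

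There is essentially no obstacle: the main content is already contained in Theorem \ref{main}, and the exponential growth of the Selmer bound $2^{r_2(S_2(J))}$ relative to the constant average $3$ is precisely what forces the density of ``bad'' curves to decay geometrically in $n$. The only point of care is ensuring that the bound from Theorem \ref{main} is uniform enough in $X$ to yield a statement about the lower density for each fixed $n$, which is immediate since Theorem \ref{main} provides a limit (not merely a limsup) of the averages of $\#S_2(\Jac(C))$ over $H(C)<X$.
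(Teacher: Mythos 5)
Your proof is correct and uses essentially the same approach as the paper, namely a Markov-type inequality applied to $\#S_2(\Jac(C))$ using the lower bound $\#S_2 \ge 2^n$ on curves failing Chabauty's condition. The paper's version also uses the observation you stated but did not exploit, that $\#S_2 \ge 1$ always, which sharpens the inequality to $\delta_n \cdot 1 + (1-\delta_n)\cdot 2^n \le 3$ and hence $\delta_n \ge 1 - 2/(2^n-1)$ rather than your $\delta_n \ge 1 - 3/2^n$; the conclusion $\delta_n \to 1$ is unaffected.
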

Indeed, we have $\delta_n\geq 1-2/(2^{n}-1)$, so $\delta_n\to 1$ quite rapidly as $n$ gets large.  Thus for an asymptotic density of 1 of hyperelliptic curves with a rational Weierstrass point, one can effectively
bound the number of rational points.

As an explicit consequence, we may use Theorem~\ref{main}, together with the arguments in \cite{St}, to prove the following 
\begin{corollary}\label{ptsbound}
.

\begin{itemize}
\item[{\rm (a)}] For any $n \geq 2$, a positive proportion of hyperelliptic curves of genus $n$ with a rational Weierstrass point have at most $3$ rational points.
\item[{\rm (b)}] For any $n\geq 3$, a majority $($i.e., a proportion of $>50\%)$ of all hyperelliptic curves of genus $n$ with a rational Weierstrass point have fewer than $20$ rational points.
\end{itemize}
\end{corollary}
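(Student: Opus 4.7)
The plan is to combine Theorem~\ref{main} with Stoll's effective refinement of the Chabauty-Coleman method~\cite{St}. From Theorem~\ref{main} I would extract a refined Markov estimate as follows: if $p_k$ denotes the limiting proportion of curves with $\#S_2(\Jac(C)) = 2^k$, then $\sum_k p_k = 1$ and $\sum_k 2^k p_k = 3$, whence $\sum_{j\geq 1}(2^j-1)p_j = 2$ and so $\sum_{j\geq k} p_j \leq 2/(2^k-1)$ for every $k \geq 1$. Combined with the general inequality $2^r \leq \#S_2(\Jac(C))$ for the Mordell-Weil rank $r$, this shows that the Chabauty locus $\{r < n\}$ has density at least $1 - 2/(2^n-1)$, which is Corollary~\ref{cdensity}.

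For a curve $C$ with $r < n$ and a prime $p$ of good reduction satisfying mild hypotheses, Stoll's refined Chabauty-Coleman bound~\cite{St} takes the form $\#C(\Q) \leq \#C(\F_p) + 2r$. Since the conclusion of Theorem~\ref{main} holds uniformly over arbitrary finite sets of congruence conditions on $c_2,\ldots,c_{2n+1}$, I would impose congruences at small auxiliary primes $p$ forcing both good reduction at $p$ and a small upper bound on $\#C(\F_p)$, while preserving the refined Markov estimate inside the restricted family.

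For part~(a), take $n\geq 2$, $p=3$, and impose congruences on $f(x) \bmod 3$ ensuring good reduction at $3$ and $\#C(\F_3) = 1$ (equivalently, $f(x)$ is a nonzero non-square at every $x\in\F_3$, which is a positive-density condition by the Chinese Remainder Theorem). Inside this congruence family, the refined Markov estimate shows that at least one third of the curves have $\#S_2(\Jac(C)) \leq 2$ and therefore Mordell-Weil rank at most $1$; Stoll's bound then yields $\#C(\Q) \leq 1 + 2 = 3$, establishing the positive proportion claim. For part~(b) with $n\geq 3$, the refined Markov estimate gives that at least $5/7$ of all curves have $r\leq 2$, and imposing good reduction at a suitable small prime $p$ (chosen so that $\#C(\F_p) \leq 15$) preserves a majority; Stoll's bound then gives $\#C(\Q) \leq 15 + 4 = 19 < 20$.

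The main technical obstacle will be the precise form of Stoll's Chabauty-Coleman bound at small primes $p$: the effective statement $\#C(\Q) \leq \#C(\F_p) + 2r$ relies on a delicate $p$-adic analysis in~\cite{St}, and verifying that this analysis is compatible with the imposed congruence conditions is the most intricate point. The density machinery coming from Theorem~\ref{main} is by comparison robust: once the Chabauty-type bound is in hand, the Markov estimate flows through, and the sharp constants $3$ and $20$ in the statement follow from the numerics above.
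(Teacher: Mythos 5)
Your treatment of Corollary~\ref{cdensity} and of part~(b) is essentially the paper's: the same Markov-type estimate $\sum_{j\geq k}p_j\leq 2/(2^k-1)$, the same prime $p=7$, the same application of Stoll's Corollary~6.7 yielding $15+2\cdot 2 = 19$, and the same multiplication of the $6/7$ good-reduction density against the $5/7$ rank-$\leq2$ density to clear $50\%$.

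However, your proof of part~(a) has a concrete gap. Stoll's effective bound does \emph{not} take the clean form $\#C(\Q)\leq\#C(\F_p)+2r$ at small primes; the actual statement carries an extra $\lfloor 2r/(p-2)\rfloor$ term, so that for $p=3$ and $r=1$ it reads
$$\#C(\Q)\;\leq\;\#C(\F_3)+2\cdot 1+\Bigl\lfloor\tfrac{2}{3-2}\Bigr\rfloor\;=\;1+2+2\;=\;5,$$
not $3$. So citing Stoll's Corollary~6.7 directly at $p=3$ only shows a positive proportion of curves have at most $5$ rational points. The paper avoids this by arguing by hand inside the unique $3$-adic residue disc (forced by the congruence $\#C(\F_3)=1$): choosing an integral differential $\omega$ in the codimension-$\leq 1$ annihilator $\Omega_0$, noting its reduction has at most a double zero at the Weierstrass point so that in the expansion $\omega=(a_0+a_2z^2+a_4z^4+\cdots)dz$ either $a_0$ or $a_2$ is a unit, and then bounding by the Newton polygon the number of zeroes of the formal integral $F(z)=a_0z+(a_2/3)z^3+\cdots$ on $3\Z_3$. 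This bespoke argument is what produces the sharp bound of $3$; your direct invocation of Stoll's theorem does not. You correctly flagged the behavior of Stoll's bound at small $p$ as the crux, but the discrepancy is not a verification issue to be settled in your favor---the extra $\lfloor 2r/(p-2)\rfloor$ term genuinely enlarges the bound, and an argument finer than the black-boxed theorem is required.
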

In fact, we will give a lower bound on the proportion in (a) which is independent of the genus $n$.  
In~(b), when $n=2$, we may still deduce that more 
than a quarter of all such curves have fewer than $20$ rational points. 

The numbers in 
Corollary~\ref{ptsbound} can certainly be
 improved 
with a more careful analysis.
Bjorn Poonen and Michael Stoll have recently shown using Theorem~\ref{main} and Chabauty's method that (provided $n\geq 3$) a positive proportion of such curves 
have the point at infinity as their only rational point;
moreover, the density of such curves having only this one rational point approaches 1 as $n\to\infty$.  See~\cite{PS} for details and further such results.




Note that Corollary~\ref{ptsbound}(a) is 
also 
true when $n=1$:
it follows from \cite{BS2} that a positive proportion of elliptic curves have only one rational point (the origin of the group law). Meanwhile, Corollary~\ref{ptsbound}(b) is not expected to be true when $n=1$, since half of all elliptic curves are expected to have rank one and thus infinitely many rational points.
We suspect that $100\%$ of all hyperelliptic curves of genus $n \geq 2$ over $\Q$ with a rational Weierstrass point have only  one rational point. 

\section{The method of proof}

The proof of Theorem~\ref{main} follows the argument in \cite{BS} for elliptic curves.
There the main idea was to use the classical parametrization \cite[Lemma~2]{BSD} of 2-Selmer elements of elliptic curves by certain binary quartic forms to transform the problem into one of counting the integral orbits of the group $\PGL_2$ on the space $\Sym_4$ of binary quartics.
Over $\Q$, the group $\PGL_2$ is isomorphic to the special orthogonal group of the three-dimensional quadratic space $W$ of $2 \times 2$ matrices of trace zero, with quadratic form given by the determinant, and the representation $\Sym_4$ of $\PGL_2$ on the space of binary quartic forms is given by the action of $\SO(W)$ by conjugation on the space $V$ of self-adjoint operators $T: W \rightarrow W$ with trace zero~\cite{BG}. 

Let $W$ now be a bilinear space of rank $2n+1$ over $\Q$, where the Gram matrix
$A$ of the bilinear form $\langle w,u \rangle$ on $\Q^{2n+1}$ consists of 1's on the anti-diagonal and 0's elsewhere:
\begin{equation}\label{defa}
A = \left[\begin{array}{ccccc}
 & & & & \,1 \\
 & & &\,1\, & \\
 & &\iddots & & \\
  & \,1\, & & & \\
 1\, & & & & 
\end{array}\right].
\end{equation}
Let $\SO(W)$ be the special orthogonal group of $W$ over $\Q$, i.e., the subgroup of $\GL(W)$ defined by the algebraic equations $\langle gw,gu \rangle = \langle w,u \rangle$ and $\det(g) = +1$.  (One can define this group scheme over~$\Z$, using the unimodular lattice defined by $A$, but it is only smooth and reductive over $\Z[1/2]$). Let $V$ be the representation of
$\SO(W)$ given by conjugation on the self-adjoint operators $T:W \rightarrow W$ of trace zero. With respect to the above basis, an operator $T$ is self-adjoint if and only if its matrix $M$ is symmetric when reflected about the 
anti-diagonal. The matrix $B = AM$ is then symmetric in the usual sense. The symmetric matrix $B$ is the Gram matrix of the associated bilinear form $\langle w,u \rangle_T = \langle w,Tu \rangle$, and has anti-trace zero.

The coefficients of the monic characteristic polynomial of $T$, defined by
\begin{equation}\label{feq}
f(x) = \det(xI - T) = (-1)^n \det(xA - B) = x^{2n+1} + c_2x^{2n-1}+ \cdots + c_{2n+1},
\end{equation} 
give $2n$ independent invariant polynomials $c_2, c_3,\ldots c_{2n+1}$
on $V$, having degrees $2, 3, \ldots, 2n+1$ respectively, which generate the (free) ring of $\SO(W)$-invariants \cite{BG}.
When the discriminant $\disc(f)$ of $f$ is nonzero, we show how classes in the $2$-Selmer group of the Jacobian of the hyperelliptic curve $C$ with equation $y^2 = f(x)$ over $\Q$ correspond to certain orbits of $\SO(W)(\Q)$ on $V(\Q)$ with these polynomial invariants. 

More precisely, to each self-adjoint operator $T$ with invariants $c_2, c_3, \ldots, c_{2n+1}$ satisfying $\Delta(T):= 4^{2n}\cdot\disc(f)\neq 0$, we associate a non-degenerate pencil of quadrics in projective space $\P(W\oplus \Q) =\P^{2n+1}$. Two quadrics generating this pencil are $Q(w, z) = \langle w,w \rangle$ and $Q'(w,z) = \langle w,Tw \rangle + z^2$. The discriminant locus $\disc(xQ - x'Q')$ of this pencil is a homogeneous polynomial $g(x,x')$ of degree~$2n+2$ satisfying $g(1,0) = 0$ and $g(x,1) = f(x)$.  
The Fano variety $F_T$ of maximal linear isotropic subspaces of the base locus is smooth of dimension~$n$ over $\Q$ and forms a principal homogeneous space for the Jacobian $J$ of the curve $C$ (see~\cite{D}). 

Both the pencil and the Fano variety have an involution $\tau$ induced by the involution $\tau(w,z) = (w,-z)$ of $W\oplus \Q$. The involution $\tau$ of $F_T$ has $2^{2n}$ fixed points over an algebraic closure of $\Q$, which form a principal homogeneous space $P_T$ for the $2$-torsion subgroup $J[2]$ of the Jacobian $J$. Indeed, one can define the structure of a commutative algebraic group on the disconnected variety $G = J \cup F_T$ such that the involution $-1_G$ of $G$ induces the involution $\tau$ on the non-trivial component $F_T$ (see \cite{W}). We prove that the isomorphism class of the principal homogeneous space $P_T$ over $\Q$ determines the orbit of $T$. This gives an injective map from the set of rational orbits of $\SO(W)$ on $V$ with characteristic polynomial $f(x)$ to the set of elements in the Galois cohomology group $H^1(\Q,J[2])$, which classifies principal homogeneous spaces for this finite group scheme.

\begin{theorem}\label{orbit}
Let $C$ be the hyperelliptic curve of genus $n$ which is defined by the affine equation $y^2=f(x)$, where $f(x)$ is a monic, separable polynomial of degree $2n+1$ over $\Q$.  Then
the classes in the $2$-Selmer group of the Jacobian $J$ of $C$ over $\Q$ correspond bijectively to the orbits of $\SO(W)(\Q)$ on self-adjoint operators $T:W\to W$ with characteristic polynomial $f(x)$ such that the associated Fano variety $F_T$ 
has points over $\Q_v$ for all places $v$.
\end{theorem}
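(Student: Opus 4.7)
The plan is to realize the claimed correspondence as a twisting bijection in Galois cohomology, and then separately to match the local-solvability condition on the Fano variety with the Selmer condition on the associated cohomology class.

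First, I would formalize the construction $T\mapsto[P_T]$ discussed above: for any $T$ with characteristic polynomial $f(x)$ and $\Delta(T)\neq 0$, the $\tau$-fixed locus $P_T\subset F_T$ is a $J[2]$-torsor and yields a class $[P_T]\in H^1(\Q,J[2])$, which depends only on the $\SO(W)(\Q)$-orbit of $T$. To control the image, I would compute the stabilizer of any such $T$ in $\SO(W)_{\overline\Q}$ and identify it canonically with $J[2]$. The standard twisting principle then identifies the set of rational $\SO(W)(\Q)$-orbits on operators with characteristic polynomial $f$ with the kernel of the map $H^1(\Q,J[2])\to H^1(\Q,\SO(W))$. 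This simultaneously recovers the injectivity sketched in the preceding text and gives a cohomological description of the image.

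Second, I would match the local-solvability condition on $F_T$ with the Selmer condition. Under the natural connecting map $\delta:H^1(\Q,J[2])\to H^1(\Q,J)[2]$, the class $[P_T]$ is sent to $[F_T]$, so the condition $F_T(\Q_v)\neq\emptyset$ for every $v$ is equivalent to $\delta([P_T])$ being locally trivial everywhere, which by definition places $[P_T]$ in the 2-Selmer group $S_2(J)$. Thus the orbit map, restricted to operators whose Fano variety is everywhere locally solvable, indeed lands in $S_2(J)$.

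The remaining, and main, step is surjectivity onto $S_2(J)$. I would argue it place by place: at each completion $\Q_v$, the twisting principle shows that every class in $H^1(\Q_v,J[2])$ with locally solvable $F_T$ is realized by some $\SO(W)(\Q_v)$-orbit on $V(\Q_v)$, and then a global Selmer class is produced by patching these local orbits, the Selmer hypothesis being exactly what kills the obstruction to global descent in the relevant $\SO(W)$-cohomology. The main obstacle is this last patching step: turning local realizations of a Selmer class into an actual rational operator $T$. A workable alternative is an explicit construction via the étale algebra $L=\Q[x]/(f(x))$, where the 2-Selmer group embeds into a subgroup of $L^\times/L^{\times 2}$ cut out by norm and local conditions; from any such representative one writes down a symmetric bilinear form $B$ (and hence an operator $T=A^{-1}B$) whose invariants and Fano fixed locus realize the given class, and checking that this is inverse to $T\mapsto[P_T]$ completes the bijection.
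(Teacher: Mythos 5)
Your overall architecture matches the paper's: realize orbits cohomologically via a $J[2]$-torsor $P_T$, show the orbit set is $\ker(\eta:H^1(\Q,J[2])\to H^1(\Q,\SO(W)))$, show $\delta([P_T])=[F_T]$ so that everywhere-local solvability of $F_T$ is the Selmer condition, and finally show that the Selmer group lands in $\ker\eta$. Steps one and two are exactly Propositions~\ref{cohomology}, \ref{isom} and \ref{kernel} of the paper (after first proving the simply-transitive action and identifying $G_S\cong D\cong J[2]$, which you assert but do not verify).

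The gap is in the step you correctly flag as the main one. You propose to produce a global orbit from a Selmer class by ``patching'' local orbits, with ``the Selmer hypothesis being exactly what kills the obstruction to global descent in the relevant $\SO(W)$-cohomology.'' This phrasing conflates two different facts. The Selmer hypothesis, together with the local version of Proposition~\ref{kernel}, only gives you that the obstruction class $\eta(\alpha)\in H^1(\Q,\SO(W))$ is \emph{locally trivial at every place}. It does \emph{not} by itself kill $\eta(\alpha)$. What is needed is the nontrivial global input that the local--global principle holds for $H^1(\Q,\SO(W))$ when $W$ has odd rank $2n+1$: a nondegenerate quadratic space over $\Q$ of dimension $2n+1$ and discriminant $\equiv 1$ is split over $\Q$ if and only if it is split over every completion $\Q_v$. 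This is the Hasse--Minkowski theorem, and it is precisely the content of Proposition~\ref{global} in the paper. Without naming and invoking it, your argument does not close; ``patching local orbits'' is not a substitute, since there is no a priori reason a collection of compatible local orbit representatives should glue to a rational self-adjoint operator. Your alternative sketch via $L^\times/L^{\times 2}$ runs into the same issue: to produce an integral/rational operator from a representative $\alpha$, one must show that the twisted form $(\lambda,\nu)_\alpha=\Trace_{L/\Q}(\alpha\lambda\nu/f'(\beta))$ on $L$ is split over $\Q$, which again is a local-to-global statement about quadratic forms and reduces to Hasse--Minkowski.
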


We call such special orbits of $\SO(W)(\Q)$ on $V(\Q)$ 
{\it locally soluble}. Since they are in bijection with elements in the $2$-Selmer group of $\Jac(C)$, they are finite in number (whereas the number of rational orbits with characteristic polynomial $f(x)$ turns out to be infinite). Local solubility is a subtle concept. For example, for any $d$ in $\Q^*$ there is an obvious bijection $(T \rightarrow dT)$ between the rational orbits with characteristic polynomial $f(x)$ and the rational orbits whose characteristic polynomial $f^*(x)$ has coefficients $c_k^* = d^k c_k$. But this bijection does not necessarily preserve the locally soluble orbits, as the hyperelliptic curves $C$ and $C^*$ are isomorphic over $\Q$ only when $d$ is a square.

When $n = 1$, the Fano variety $F = F_T$ is the intersection of two quadrics in $\P^3$. This intersection defines a curve of genus $1$ whose Jacobian is the elliptic curve $J = C$. The quotient of $F$ by the involution $\tau$ is a curve $X$ of genus $0$, and the covering $F \rightarrow X$ is  ramified at the $4$ fixed points comprising $P_T$. Since the curve $X$ injects into the variety of isotropic lines in the split quadratic space $W$ of dimension $2n+1 = 3$, we conclude that $X$ is isomorphic to $\P^1$ over $\Q$. (For $n > 1$, the quotient $X$ of $F$ by $\tau$ embeds as singular subvariety of the Lagrangian Grassmannian of $W$, with $2^{2n}$ double points, which in turn embeds in projective space of dimension $2^n - 1$ via the spin representation. The composition  is the Kummer embedding of $X$.) In the case where $n = 1$, once we fix an isomorphism of $X$ with $\P^1$, the Fano variety $F$ has an equation of the form $z^2 = T(x,y)$, where $T(x,y)$ is a binary quartic form. The orbit is locally soluble if and only if the quartic form $T(x,y)$ represents a square in $\Q_v$ for all places $v$, and the homogeneous space $P_T$ is trivial if and only if $T(x,y)$ has a linear factor over $\Q$.  This is the point of view taken in \cite{BS}, where the latter orbits are called reducible.

In order to obtain Theorem \ref{main} from Theorem \ref{orbit},
we are reduced to counting the $\Q$-orbits in the representation $V$ that are
both locally soluble and have bounded integral invariants.
To count these locally soluble orbits, we prove that every such orbit of $V$ with integral polynomial invariants
has an integral representative having those same
invariants (at least away from the prime 2).
By a suitable adaptation of the counting techniques of \cite{dodpf}
and \cite{BS}, we first carry out a count of the {total
  number} of integral orbits in these representations having bounded
height satisfying certain {\it irreducibility} conditions over $\Q$ (to
be defined).  The primary obstacle in this counting, as in 
representations encountered previously, is that the
fundamental region in which one has to count points is not compact but
instead has a rather complex system of cusps going off to infinity.  A priori, it
could be difficult to obtain exact counts of points of bounded height
in the cusps of these fundamental regions.  We show however that, for all $n$, most 
of the integer points in the cusps correspond to points that are reducible;  meanwhile, most 
of the points in the main bodies of these fundamental regions are irreducible.
The orbits which contain a reducible point turn out to correspond to the identity
classes in the corresponding $2$-Selmer groups.

Since not all integral orbits correspond to Selmer elements, the proof of Theorem \ref{main} requires a {sieve} to those points that correspond to locally soluble
$\Q$-orbits on $V$, for which the squarefree sieve of \cite{geosieve} is applied.  By carrying out this sieve, we prove that
the average occurring in Theorem 1 arises naturally as the sum of two
contributions. One comes from the main body of the fundamental region, which corresponds to the average number of non-identity elements in the 2-Selmer group and which we show is given by the
Tamagawa number ($=2$) of the adjoint group $\SO(W)$ over $\Q$. The other comes from the cusp of the
fundamental region, which counts the average number ($=1$) of identity elements 
in the 2-Selmer group.  The sum $2 + 1 = 3$ then gives us the average size of the 2-Selmer group, as stated in Theorem~1.

To obtain Corollary~\ref{cdensity}, we note that by Theorem~\ref{main} the density $\delta_n$ must satisfy
\begin{equation}
\delta_n \cdot 1 + (1-\delta_n)\cdot 2^n \leq 3
\end{equation}
yielding $\delta_n\geq 1-2/(2^{n}-1)$.

To obtain explicit bounds on the number of rational points on these curves, we follow the arguments in \cite{St}.  Assume that $n \geq 2$.
Using congruence conditions at the prime $p = 3$, we may assume that all of the hyperelliptic curves $C$ of genus $n$ in our sample have good reduction modulo $3$, and that the only point on $C$ over $\Z/3\Z$ is the Weierstrass point at $\infty$. This simply means that the discriminant $\Delta$ of the integral polynomial $f(x)$ is prime to $3$ and that the three values $f(0)$, $f(1)$, and $f(-1)$ are all $\equiv -1$ modulo $3$. Hence any rational point $P$ on $C$ lies in the $3$-adic disc reducing to the Weierstrass point.

Since the average rank of the Jacobians in our sample is at most $3/2$, a positive proportion (indeed, at least a third) of these Jacobians have rank zero or one. Assume that this is the case for such a Jacobian $J$, let $T$ be the (finite) torsion subgroup of $J(\Q_3)$, and let $A$ be the $\Z_3$-submodule of $J(\Q_3)/T$ which is generated by $J(\Q)$. Then the $\Z_3$-rank of $A$ is either zero or one. Let $\Omega$ be the $\Q_3$-vector space of invariant differentials on $J$,
or equivalently the regular $1$-forms on the curve $C$ over $\Q_3$. The logarithm on the formal group gives a $\Z_3$-bilinear pairing $\Omega \times J(\Q_3)/T \rightarrow \Q_3$ with trivial left and right kernels. Let $\Omega_0$ be the subspace of $\Omega$ which annihilates the submodule $A$. The codimension of $\Omega_0$ is either zero or one, and any differential $\omega$ in the subspace $\Omega_0$ pairs trivially with the class of the divisor $(P) - (O)$, for any rational point $P$. Since $P$ and $O$ both lie in the disc reducing to the Weierstrass point, this pairing is computed by $3$-adic integration. Namely, the function $z = x^n/y$ is a uniformizing parameter on this disc which vanishes at $O$ and is taken to its negative by the hyperelliptic involution. If $z(P)$ is the parameter of the rational point $P$ and we expand
$\omega = (a_0 + a_2 z^2 + a_4 z^4+ \cdots)dz$, then the integral of $\omega$ from $O$ to $P$ is given by the value of the formal integral $F(z) = a_0z + (a_2/3) z^3 + (a_4/5) z^5 + \cdots$ at the point $z(P)$ in the maximal ideal of $\Z_3$. We can choose $\omega$ integral, and since we are in a subspace $\Omega_0$ of codimension at most one, the reduction of $\omega$ has at most a double zero at the Weierstrass point. Hence the coefficients $a_{2i}$ are all integral, and either $a_0$ or $a_2$ is a unit. It follows that the formal integral $F(z)$ has at most three zeroes in the disc $3\Z_3$, and hence that there are at most three rational points on the curve $C$, proving Corollary~\ref{ptsbound}(a).

To obtain Corollary~\ref{ptsbound}(b), we
note that a              
density of $1 - 1/7$ of hyperelliptic curves with a rational Weierstrass point have good reduction at 7.
The congruence version of Theorem~\ref{main} implies that among these curves,
the density of curves having rank    
$\leq 2$ is at least $1 - 2/(2^{3}-1).$
Since $\#C(\F_7) \leq 15$, Stoll's refinement~\cite[Cor.\ 6.7]{St} of Coleman's effective bound for Chabauty's method then results in
$\#C(\Q) \leq 15 + 2\cdot 2 = 19$ for such curves.  Hence the majority (indeed greater than $(1 - \frac17)(1 - \frac2{2^{2+1}-1}) = 30/49\approx61\%$) of our curves have fewer than 20 rational points, as desired.

\section{A representation of the orthogonal group} 

Let $W$ be a lattice of rank $2n+1$ having a non-degenerate integral bilinear form with signature $(n+1,n)$. By this we mean that $W$ is a free abelian group of rank $2n+1$ with a non-degenerate symmetric bilinear pairing  $\langle \;,\, \rangle: W \times W \rightarrow \Z$ having signature $(n+1,n)$ over $\R$. It follows that the determinant of $W$ is equal to $(-1)^n$. 

Such a lattice is unique up to isomorphism \cite[Ch.\ V]{S3}. It therefore has an ordered basis
$$\{e_1, e_2,\ldots,e_n, u, f_n,\ldots,f_2,f_1\}$$ with inner products given by
$$\begin{array}{c}
\langle e_i,e_j \rangle = \langle f_i,f_j \rangle =\langle e_i,u \rangle = \langle f_i,u \rangle= 0,\\[.075in]
 \langle e_i, f_j \rangle = \delta_{ij},\\[.085in]
\langle u,u \rangle = 1.
\end{array}
$$
The Gram matrix $A$ of the bilinear form with respect to this basis (which we will call the {\it standard basis}) is the anti-diagonal matrix (\ref{defa}). 

The lattice $W$ gives a split orthogonal space $W \otimes k$ over any field $k$ with char$(k) \neq 2$. This space has dimension $2n+1$ and determinant $\equiv (-1)^n$ in $k^*/k^{*2}$. Such an orthogonal space over $k$ is unique up to isomorphism, and has $\disc(W) = (-1)^n\det(W) \equiv 1$ (see~\cite{MH}). 

Let $T: W \rightarrow W$ be a $\Z$-linear transformation (i.e., an endomorphism of the abelian group $W$). We define the {\it adjoint transformation} $T^*$ uniquely by the formula
$$\langle Tv,w \rangle = \langle v,T^*w \rangle.$$
The matrix $M$ of $T^*$ with respect to our standard basis is obtained from the matrix of $T$ by reflection around the anti-diagonal. In particular, we have the identity $\det(T) = \det(T^*)$.
We say that $T$ is {\it self-adjoint} if $T = T^*$. Any self-adjoint transformation $T$ defines a new symmetric bilinear form on~$W$ by the formula
$$\langle v,w \rangle_T = \langle v,Tw \rangle.$$
This bilinear form has symmetric Gram matrix $B = AM$ in our standard basis.

We say that a $\Z$-linear transformation $g: W \rightarrow W$ is {\it orthogonal} if it preserves the bilinear structure on $W$, i.e., $\langle gv,gw \rangle = \langle v,w \rangle$ for all $w\in W$.
In that case $g$ is invertible, with $g^{-1} = g^*$, and $\det(g) = \pm1$. We define the group scheme $G = \SO(W)$ over $\Z$ by
$$ G := \SO(W) = \{g \in \GL(W): g^*g = 1, ~ \det(g) = 1\}.$$

Over $\Z[1/2]$ the group scheme $G$ is smooth and reductive. It defines the split reductive adjoint group with root system of type $B_n$ (see~\cite{SGA}). In particular, for every field $k$ with char$(k) \neq 2$, the group $G(k)$ gives the points of the split special orthogonal group $\SO_{2n+1}$ of the space $W \otimes k$.  

We note that the group $G$ is not smooth over $\Z_2$. Indeed, the involution $g\in G(\Z/2\Z)$ with matrix
\begin{equation}\label{defaa}
\left[\begin{array}{ccccc}
 \,1 & & & & \,1 \\
 & \,1 & & & \\
 & &\ddots & & \\
  & &  & \,1 & \\
 & & & &\,1
\end{array}\right]
\end{equation}
does not lift to an element of $G(\Z_2)$, or even to an element of $G(\Z/4\Z)$. The theory of Bruhat and Tits provides a smooth model $\G$ over $\Z_2$ with the same points in \'etale $\Z_2$-algebras (see~\cite{T}), namely, $\G$ is the normalizer of a parahoric subgroup $P$ of $G(\Q_2)$. When $n=1$, the subgroup~$P$ is an Iwahori subgroup, and when $n\geq2$, the subgroup $P$ is a maximal parahoric subgroup. The reductive quotient of $P$ is the split group $\SO_{2n}$. We will not need this smooth model in what follows.

If $g$ is orthogonal and $T$ is self-adjoint, then the transformation $gTg^{-1} = gTg^*$ is also self-adjoint. The self-adjoint operators form a free submodule $Y$ of $\End(W)$, of rank $(2n+1)(n+1)$, and the action of special orthogonal transformations by conjugation $T \to gTg^{-1}$  gives a  linear representation $G  \rightarrow \GL(Y)$.
Over $\Z[1/2]$ the representation $Y$ is orthogonal, via the non-degenerate symmetric pairing $\langle T_1,T_2 \rangle = \Trace(T_1T_2)$. If we identify the module of self-adjoint operators with the module of symmetric matrices, via the map $B = AM$ described above, then $Y$ is isomorphic to the symmetric square of the standard representation $W$ over $\Z[1/2]$.

The trace of an operator gives a nonzero, $G$-invariant linear form $Y \rightarrow \Z$. We define the representation $V$ of $G$ over $\Z$ as its kernel, i.e., 
$$V = \{T
\in \End(W) \,|\,\,T = T^*, \,\Trace(T) = 0\}.$$
The free $\Z$-module $V(\Z)$ has rank $2n^2 + 3n$, and can be identified with the submodule of symmetric matrices having anti-trace zero (where the anti-trace is the sum of the matrix entries on the anti-diagonal). We are going to study the orbits of $G(\Z)$ on $V(\Z)$.

We note that the representation $V$ of $G$ is algebraically irreducible in all characteristics which do not divide $2$ or $2n+1$. If the characteristic divides $2n+1$, then $V$  contains a copy of the trivial representation (as the scalar multiples of the identity operator $I$ are self-adjoint and have trace zero) and the quotient module is algebraically irreducible. In characteristic $2$, there are even more irreducible factors in the Jordan-H\"older series of $V$.

\section{Orbits with a fixed characteristic polynomial}

Let $k$ be a field, with char$(k) \neq 2$. In this section and the next, we consider the orbits of the group $G(k) = \SO(W)(k)$ on the representation $V(k) = V \otimes k$. A discussion of these orbits can also be found in our expository paper \cite{BG}.

Since the group $G(k)$ acts on the operators $T$ in $V(k)$ by conjugation $T \to gTg^{-1}$, the characteristic polynomial of $T$ is an invariant of its orbit. Since $T$ has trace zero, this monic characteristic polynomial has the form
\begin{equation}\label{fdef}
f(x) = \det(xI - T) =  x^{2n+1} + c_2x^{2n-1} + c_3x^{2n-2} + \cdots + c_{2n+1}
\end{equation}
with coefficients $c_m\in k$. The coefficients $c_m$ are polynomial invariants of the representation, with $\deg(c_m) = m$. These invariants are algebraically independent, and generate the full ring of polynomial invariants on $V$ (cf.~\cite{BG}).
An important polynomial invariant, of degree $2n(2n+1)$, is the multiple
$$\Delta = \Delta(c_2,c_3,\ldots,c_{2n+1}) = 4^{2n}\disc f(x)$$
of the discriminant of the characteristic polynomial of $T$, which gives the discriminant $\Delta$ of the associated hyperelliptic curve $y^2=f(x)$ (see~\cite{Lockhart}).
The discriminant $\Delta(T)$ is nonzero precisely when the polynomial $f(x)$ is separable over $k$, so has $2n+1$ distinct roots in a separable closure $k^s$.

Since the characteristic polynomial is an invariant of the $G(k)$-orbit, it makes sense to describe the set of orbits with a fixed characteristic polynomial $f(x)$.  We begin by showing that, for any separable $f(x)$, there is a naturally associated {orbit} of $G(k)$ on $V(k)$, which we call the {\it distinguished orbit}, that has associated characteristic polynomial $f(x)$.

\subsection{The distinguished orbit}\label{distsec}

We assume throughout that $f(x)$ is {\it separable} (i.e., $\disc(f)\neq 0)$ and is of the form (\ref{fdef}).

\begin{proposition}\label{simpletransitive}
The group $G(k)$ acts simply transitively on the pairs $(T,X)$, where $T$ is a self-adjoint operator in $V(k)$ with characteristic polynomial $f(x)$ and $X \subset W(k)$ is a maximal isotropic subspace of dimension $n$ with $T(X) \subset X^{\perp}$.
\end{proposition}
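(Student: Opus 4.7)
The plan is to produce a canonical \emph{model} pair $(T_0, X_0)$ attached to $f$, show every pair is $G(k)$-equivalent to it, and prove its stabilizer in $G(k)$ is trivial. For the model, let $L = k[x]/(f) = k[\beta]$, an \'etale $k$-algebra of dimension $2n+1$ since $f$ is separable. Define $\phi_0 \in L^\vee$ by $\phi_0(\beta^i) = \delta_{i, 2n}$ in the basis $1, \beta, \ldots, \beta^{2n}$ and set $B_0(v,w) = \phi_0(vw)$. A direct Gram-matrix calculation---the matrix is zero above the anti-diagonal, one on it, and filled in below via the reductions $\beta^{2n+j} \bmod f$---shows $B_0$ is non-degenerate with discriminant $(-1)^n$, hence $(L, B_0)$ is isometric to the split form on $W$; fix such an isometry, regard $L = W$, let $T_0$ be multiplication by $\beta$ (self-adjoint, trace zero, characteristic polynomial $f$), and set $X_0 := \langle 1, \beta, \ldots, \beta^{n-1}\rangle$, which is maximal isotropic with $T_0 X_0 \subset X_0^\perp = \langle 1, \beta, \ldots, \beta^n\rangle$.

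For transitivity, given $(T, X)$ I will produce a cyclic vector $w_0 \in X$ for $T$ such that $X = \langle w_0, T w_0, \ldots, T^{n-1} w_0\rangle$. Consider the descending filtration $X_{(i)} := \{x \in X : T^j x \in X \text{ for all } 0 \leq j \leq i\}$, and claim $\dim X_{(i)} = n - i$ for $0 \leq i \leq n-1$. Inductively, the map $X_{(i)} \to X^\perp/X$, $x \mapsto T^{i+1}x \bmod X$, has kernel $X_{(i+1)}$ and cannot be zero while $X_{(i)} \neq 0$: for if it were zero, $X_{(i)}$ would be a nonzero $T$-invariant isotropic subspace of $W$, which is excluded since after base change to $\overline{k}$ the eigenvectors of $T$ (having distinct eigenvalues, as $f$ is separable) are mutually orthogonal with $\langle v_i, v_i\rangle \neq 0$. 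So $X_{(n-1)}$ is a line; pick any generator $w_0$. The same no-isotropic-$T$-invariant-subspace principle shows $w_0, Tw_0, \ldots, T^{n-1}w_0$ are linearly independent, hence span $X$, and it also shows the $T$-submodule $k[T]\cdot w_0$ equals all of $W$: this submodule properly contains $X$ (because $T^n w_0 \notin X$), so if it were proper it would be a $T$-invariant $W_S = \bigoplus_{i \in S} \ker f_i(T)$ of dimension at least $2n$ in order to house the isotropic $X$, forcing $\dim W_S = 2n$; but then $X^\perp \cap W_S = X$, contradicting $T^n w_0 \in W_S \cap X^\perp$ together with $T^n w_0 \notin X$.

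Cyclicity gives an $L$-module isomorphism $\psi \colon L \xrightarrow{\sim} W$, $\beta^i \mapsto T^i w_0$, with $\psi(X_0) = X$; the pulled-back form is $B(v,w) = \phi(vw)$ where $\phi(\beta^k) = \langle w_0, T^k w_0\rangle$. The hypotheses $\langle X, X\rangle = \langle X, TX\rangle = 0$ force $\phi(\beta^k) = 0$ for $0 \leq k \leq 2n-1$, so $\phi = \lambda \phi_0$ with $\lambda := \phi(\beta^{2n}) \in k^*$. The scaled form $\lambda B_0$ is isometric to the split form on $W$, and for odd-dimensional quadratic spaces this forces $\lambda \in (k^*)^2$ (the discriminant scales by $\lambda^{2n+1}$, a square in $k^*$ iff $\lambda$ is); writing $\lambda = \mu^2$ and replacing $w_0$ by $\mu^{-1} w_0$ normalizes $\phi$ to $\phi_0$. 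The resulting $k$-linear isomorphism $(L, B_0, T_0, X_0) \xrightarrow{\sim} (W, \langle\cdot,\cdot\rangle, T, X)$ lies in $\mathrm{O}(W)(k)$; exactly one of its two sign choices (differing by $-1 \in L^*$, which has determinant $(-1)^{2n+1} = -1$) has determinant $+1$ and gives the desired $g \in G(k)$.

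For the trivial stabilizer, suppose $g \in G(k)$ fixes $(T_0, X_0)$. Commuting with $T_0$ puts $g$ in the centralizer $k[T_0] \cong L$, so $g$ is multiplication by some $\alpha \in L^*$; preservation of $B_0$ gives $\phi_0(\alpha^2 z) = \phi_0(z)$ for all $z$, and non-degeneracy of $B_0$ forces $\alpha^2 = 1$, i.e., $\alpha = \pm 1$. Then $\det g = \mathrm{N}_{L/k}(\alpha) = \alpha^{2n+1} = \alpha$, so $\det g = 1$ forces $\alpha = 1$ and $g = \mathrm{id}$. The principal obstacle of the plan is the filtration argument producing $w_0$ together with its cyclicity; the subsequent square-class normalization of $\lambda$ and the stabilizer computation are short.
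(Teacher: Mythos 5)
Your transitivity argument takes a genuinely different route from the paper's. The paper first proves simple transitivity over a separable closure $k^s$---using the cyclic decomposition theorem, the fact that every unit of $k^s[T]$ is a square (to replace $h$ with $hTh^{-1}=T'$ by an orthogonal map), and a count of the $2^{2n}$ admissible isotropic $X$'s acted on by the finite stabilizer---and then descends to $k$ by Galois descent. You instead work directly over $k$, manufacturing a $T$-cyclic vector $w_0 \in X$ via the filtration $X = X_{(0)} \supset X_{(1)} \supset \cdots$ together with the ``no nonzero $T$-invariant isotropic subspace'' principle, and replacing the square-root step by the square-class normalization $\lambda = \mu^2$. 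That part is correct and is arguably cleaner, since it avoids the passage to $k^s$ and back.

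The stabilizer computation, however, has a genuine gap: from $\alpha^2 = 1$ you conclude ``i.e., $\alpha = \pm 1$.'' This is false when $L = k[x]/(f)$ is not a field. If $f$ has $r+1$ irreducible factors over $k$, then $L^*[2]$ has order $2^{r+1}$, and there are elements of $L^*[2]_{N=1}$ other than $1$: for instance, if $L \cong k^3$ (the case $n=1$, $f$ split), the element $\alpha = (1,-1,-1)$ satisfies $\alpha^2 = 1$ and $N(\alpha)=1$, centralizes $T_0$, preserves $B_0$, and lies in $G(k)$, yet is nontrivial. You state that $g$ fixes $(T_0,X_0)$ but never use $g(X_0)=X_0$; that is exactly the hypothesis needed to close the gap. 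From $\alpha = g(1) \in X_0 = \langle 1,\beta,\ldots,\beta^{n-1}\rangle$ you get $\alpha = p(\beta)$ with $\deg p \le n-1$; then $p^2 - 1$ has degree $\le 2n-2 < 2n+1 = \deg f$, so $p^2 \equiv 1 \pmod{f}$ forces $p^2 = 1$ and hence $\alpha = \pm 1 \in k^*$. Only after this does your determinant step $\det g = N_{L/k}(\alpha) = \alpha^{2n+1} = \alpha$ become valid (for general $\alpha \in L^*$ with $\alpha^2=1$, $N(\alpha)$ is not $\alpha^{2n+1}$), and $\det g = 1$ then gives $\alpha = 1$. This is in effect the paper's observation that the subgroup of $L^*$ stabilizing $X_0$ under multiplication is exactly $k^*$.
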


The action of $g$ on pairs $(T,X)$ is by the formula $(gTg^{-1}, g(X))$. As a corollary, we can identify a {\it distinguished orbit} of $G(k)$ on the set of self-adjoint operators with characteristic polynomial $f(x)$, namely, the orbit consisting of those $T$ for which a maximal isotropic subspace $X$ exists over $k$ satisfying $T(X) \subset X^{\perp}$. We will frequently use $S$ to denote a self-adjoint operator in this distinguished~orbit. 

Next, let $L = k[x]/ (f(x))$. Since we have assumed that $f(x)$ is separable, $L$ is an \'etale $k$-algebra of rank $2n+1$.

\begin{proposition}\label{stabilizer}
The stabilizer $G_S$ of a vector $S$ in the distinguished orbit with characteristic polynomial $f(x)$ is $($uniquely$)$ isomorphic to the finite commutative \'etale group scheme $D$ of order $2^{2n}$ over $k$, which is the kernel of the norm map $\Res_{L/k}(\mu_2)\rightarrow \mu_2$.
\end{proposition}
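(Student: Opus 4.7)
The plan is to make $W$ a free rank-$1$ module over $L = k[x]/(f(x))$ via the action of $S$, rewrite the bilinear form on $W$ as a trace form over $L$, and then solve the stabilizer equations inside $L^{\times}$. Because $f$ is separable of degree $\dim_k W = 2n+1$, the minimal polynomial of $S$ equals $f$, so $k[S] \cong L$ and $W$ is free of rank $1$ over $L$; fix a generator $v_0$, giving a $k$-linear isomorphism $L \xrightarrow{\sim} W$, $\ell \mapsto \ell v_0$. Self-adjointness of $S$ translates to $\langle \ell v, w\rangle = \langle v, \ell w\rangle$ for all $\ell \in L$. Since $L/k$ is \'etale, the pairing $(\ell_1,\ell_2) \mapsto \Trace_{L/k}(\ell_1\ell_2)$ on $L$ is perfect, so the $k$-linear form $\ell \mapsto \langle v_0, \ell v_0\rangle$ equals $\Trace_{L/k}(\lambda \,\cdot\,)$ for a unique $\lambda \in L$, and non-degeneracy of $\langle\,,\rangle$ forces $\lambda \in L^{\times}$. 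Hence $\langle \ell_1 v_0, \ell_2 v_0\rangle = \Trace_{L/k}(\lambda\,\ell_1\ell_2)$.

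With this normalization, any $g \in \GL(W)$ commuting with $S$ lies in $(k[S])^{\times} = L^{\times}$ and acts on $W$ as multiplication by some $\mu \in L^{\times}$; the assignment $g \mapsto \mu$ is canonical, independent of the choice of $v_0$. The orthogonality condition $\langle \mu v, \mu w\rangle = \langle v, w\rangle$ becomes $\Trace_{L/k}\bigl(\lambda(\mu^2 - 1)\ell\bigr) = 0$ for every $\ell \in L$, which by non-degeneracy of the trace pairing together with $\lambda \in L^{\times}$ forces $\mu^2 = 1$. The determinant condition $\det_k(g) = 1$ is $N_{L/k}(\mu) = 1$, since multiplication by $\mu$ on $W \cong L$ has $k$-determinant $N_{L/k}(\mu)$. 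Therefore $G_S$ is identified on $k$-points with $\{\mu \in L : \mu^2 = 1,\ N_{L/k}(\mu) = 1\}$, precisely the kernel of the norm map $\Res_{L/k}(\mu_2) \to \mu_2$.

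To upgrade this to a scheme-theoretic statement, I would observe that the entire construction is natural in the base: for any $k$-algebra $R$, base-changing $S$, $W$, $L$, $\lambda$ to $R$ and repeating the argument gives the same identification of $G_S(R)$ with $D(R)$. This defines a canonical morphism of $k$-group schemes $G_S \to D$ that is bijective on $R$-points for every $R$, hence an isomorphism; this canonicity is the content of the word ``uniquely''. For \'etaleness and order: $\mu_2$ is \'etale over $k$ since $\mathrm{char}(k) \neq 2$, Weil restriction along the \'etale extension $L/k$ preserves \'etaleness, and the kernel of a morphism of \'etale group schemes is \'etale; base change to $\bar k$ gives $\Res_{L/k}(\mu_2)(\bar k) = (\Z/2\Z)^{2n+1}$ with $N_{L/k}$ the sum-mod-$2$ map, whose kernel has order $2^{2n}$. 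The main technical step is the transfer of the bilinear form in paragraph one: using separability and self-adjointness to write $\langle\,,\rangle$ as a single scaled trace form $\Trace_{L/k}(\lambda\,\cdot\,)$ and to force the scalar $\lambda$ to be a unit. Once this is in place, the stabilizer calculation inside $L^{\times}$ is essentially a formal consequence of the non-degeneracy of the trace pairing.
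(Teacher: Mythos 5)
Your proof is correct and follows essentially the same route as the paper: identify the centralizer of $S$ with $L^{\times}$ via $k[S]\cong L$, and then cut out the stabilizer in $\SO(W)$ by the two conditions $\mu^2=1$ and $N_{L/k}(\mu)=1$. The main (minor) differences are in presentation: the paper constructs an explicit isometry $\theta\colon L\to W$ using the fixed trace form $\Trace_{L/k}(\lambda\nu/f'(\beta))$ on $L$ and then transports the stabilizer computation through $\theta$, whereas you work intrinsically on $W$ as a free rank-one $L$-module and re-derive the form as $\Trace_{L/k}(\lambda\,\cdot\,)$ for a unit $\lambda$. Also, the paper obtains $\mu^2=1$ immediately from the observation that every element of $k[S]$ is already self-adjoint (so $g^*=g$ together with $g^*g=1$ gives $g^2=1$), which is a touch slicker than your trace-pairing non-degeneracy argument, though both are valid; and the paper's uniqueness clause is justified by the commutativity of $G_S$ rather than by independence of the $L$-generator $v_0$. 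None of these differences affect correctness, and your scheme-theoretic upgrade by functoriality is in the spirit of the paper's descent argument.
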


To prove these results, we need a method to construct orbits over $k$ with a fixed separable characteristic polynomial $f(x)$. 
Let $\beta$ be the image of $x$ in $L$, so a power basis of $L$ over $k$ is given by $\{1,\beta,\beta^2,\ldots,\beta^{2n}\}$. We define a symmetric bilinear form $(\;,\,)$ on $L$ by the formula
\begin{equation}\label{lndef}
(\lambda, \nu) :=\mbox{ the coefficient of $\beta^{2n}$ in the product $\lambda\nu$.}
\end{equation}
 Since $f(x)$ is separable, it is relatively prime to the derivative $f'(x)$ in $k[x]$ and the value $f'(\beta)$ is a unit in $L^*$. We then have another formula for this inner product which is due to Euler~\cite[Ch III, \S6]{S2}: 
$$ (\lambda,\nu) = \Trace_{L/k}(\lambda\nu/f'(\beta)).$$

The orthogonal space we have defined on $L$ is non-degenerate, of dimension $2n+1$ and discriminant $\equiv 1$ over $k$. Indeed, the entries of its Gram matrix with respect to the power basis are $0$ above the anti-diagonal, and $1$ on the anti-diagonal.
We next observe that the $k$-subspace $M$ of $L$ spanned by $\{1,\beta,\beta^2,\ldots,\beta^{n-1}\}$ has dimension $n$ and is isotropic for the bilinear form, so the orthogonal space $L$ is split \cite{Scharlau}. Hence we have an isometry $\theta: L \rightarrow W(k)$ defined over $k$. This isometry is well-defined up to composition with an orthogonal transformation $g$ of $W(k)$---any other isometry has the form $\theta' = g\theta$ where $g$ is an element of $\O(W) = \SO(W) \times \langle\pm I\rangle$.

The $k$-linear map $\beta: L \rightarrow L$ given by multiplication by $\beta$ is a self-adjoint transformation with characteristic polynomial equal to $f(x)$. Hence the operator $T = \theta\beta\theta^{-1}: W(k) \rightarrow W(k)$ is self-adjoint with the same characteristic polynomial. If we compose the isometry $\theta$ with the orthogonal transformation $g$, we obtain an operator $gTg^{-1}$ in the same $G(k) = \SO(W)(k)$ orbit. This follows from the fact that the dimension of $W$ is odd, so the orthogonal group and its special orthogonal subgroup have the same orbits on the representation $V$.

This orbit we have just constructed is distinguished, as $\beta(M)$ is contained in $M^{\perp} = \{1,\beta,\ldots,\beta^n\}$. Hence, $X = \theta(M)$ is a maximal isotropic subspace of $W$ over $k$ with $T(X) \subset X^{\perp}$, and $(T,X)$ is a distinguished pair in $V$, defined over $k$. 

To see that $G(k)$ acts simply-transitively on the pairs $(T,X)$, we first show that this is true when the field $k = k^s$ is {\it separably closed}. Since $f(x)$ is separable, it is also the minimal polynomial of $T$ on $W$. By the cyclic decomposition theorem, the centralizer of $T$ in $\End(W)$ is the algebra $k[T]$ generated by $T$, which is isomorphic to $L$. When $k = k^s$ is separably closed, this algebra is isomorphic to the product of $2n+1$ copies of the field $k$. If $T'$ has the same characteristic polynomial, then $T' = hTh^{-1}$, where $h\in\GL(W)(k)$ is well-defined up to right multiplication by $(k[T])^*$. Since both operators $T,T'$ are self-adjoint, the composition $h^*h$ centralizes $T$, so $\alpha = h^*h$ lies in $(k[T])^*$. In the separably closed case, every unit in the algebra $k[T]$ is a square, so we may modify $h$ by a square root of $\alpha$ to arrange that $h^*h = 1$. Modifying $h$ by $\pm I$ if necessary, we may also arrange that $\det(h) = 1$. It follows that $T'$ is in the same $G(k)$ orbit as $T$. 

The subgroup $G_T(k)$ stabilizing $T$ is the intersection of two subgroups of $\GL(W)(k)$. The first is the centralizer $(k[T])^*$, where all the operators are self-adjoint ($g^* = g$). The second is the subgroup $\SO(W)(k)$, where all the operators are special orthogonal ($g^* = g^{-1}, ~\det(g) = 1$) . Hence $G_T(k)$ is isomorphic to the group
$$D = \{g \in L^*: g^2 = 1, ~ N(g) =1\}.$$

Since $L$ is the product of $2n+1$ copies of $k$, the group $D$ is an elementary abelian $2$-group of order~$2^{2n}$. For future reference, we record what we have just proved.

\begin{proposition}\label{sepclosed}
When $k = k^s$ is separably closed, the group $G(k) = \SO(W)(k)$ acts transitively on the set of self-adjoint operators with a fixed separable characteristic polynomial $f(x)$. The stabilizer $G_T(k)$ of an operator $T$ in this orbit is an elementary abelian $2$-group of order $2^{2n}$.
\end{proposition}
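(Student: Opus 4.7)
The plan is to establish transitivity by producing a canonical ``model'' self-adjoint operator from the \'etale algebra $L = k[x]/(f(x))$, showing every self-adjoint $T$ with characteristic polynomial $f(x)$ is conjugate to it by an element of $G(k)$, and then to compute the stabilizer as an intersection of the centralizer $k[T]^*$ with the orthogonality relations inside $\GL(W)$.

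First I would set up the reference orbit, exactly as in the paragraphs preceding the statement: equip $L$ with the Euler bilinear form $(\lambda,\nu) = \Trace_{L/k}(\lambda\nu/f'(\beta))$, note that this form is split of discriminant $1$, and fix an isometry $\theta: L \to W(k)$. Then $T_0 := \theta \beta \theta^{-1}$ is self-adjoint with characteristic polynomial $f(x)$, giving a specific element in $V(k)$ to compare all others to.

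Next, for transitivity over a separably closed $k$, let $T$ be any self-adjoint operator with the same characteristic polynomial. Since $f(x)$ is separable, it equals the minimal polynomial of $T_0$ on $W$, so by the cyclic decomposition theorem $W$ is a cyclic $k[T_0]$-module; the same holds for $T$. It follows that there exists $h \in \GL(W)(k)$ with $T = h T_0 h^{-1}$, and $h$ is unique up to right multiplication by the centralizer $k[T_0]^* \cong L^*$. Taking adjoints of $T = hT_0h^{-1}$ and using $T^*=T$, $T_0^* = T_0$, one sees that $\alpha := h^* h$ commutes with $T_0$, hence lies in $k[T_0]^* \cong L^*$. Because $k = k^s$, the \'etale algebra $L$ is a product of $2n+1$ copies of $k$, so every unit in $L$ is a square; pick $\gamma \in L^*$ with $\gamma^2 = \alpha$, and observe that $\gamma$, being a polynomial in $T_0$, is self-adjoint. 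Replace $h$ by $h\gamma^{-1}$: this preserves the conjugation relation (since $\gamma$ commutes with $T_0$) while forcing $(h\gamma^{-1})^*(h\gamma^{-1}) = \gamma^{-1}\alpha\gamma^{-1} = 1$. Finally, since $\det(h\gamma^{-1}) = \pm 1$, multiplying by $-I$ if necessary puts the element into $G(k) = \SO(W)(k)$.

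For the stabilizer, the centralizer of $T$ in $\End(W)$ is the commutative algebra $k[T] \cong L$, so every element of $G_T(k) \subset \GL(W)(k)$ lies in $L^*$ and is therefore automatically self-adjoint. The orthogonality condition $g^*g = 1$ collapses to $g^2 = 1$, and $\det(g) = 1$ translates into $N_{L/k}(g) = 1$. Thus $G_T(k) \cong \{g \in L^*: g^2 = 1,\, N_{L/k}(g) = 1\}$; over $k = k^s$ this is the kernel of the product map $\{\pm 1\}^{2n+1}\to \{\pm 1\}$, an elementary abelian $2$-group of order $2^{2n}$.

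The only genuinely delicate step is the square-root modification, which is why the hypothesis that $k$ is separably closed is essential: one needs $L^* = (L^*)^2$ to replace $h$ by an orthogonal transformation. Over a general field this fails in general, and recovering $k$-rational orbits requires the finer analysis carried out in the subsequent subsections; here, however, the argument is immediate once one records that $\gamma \in k[T_0]$ is self-adjoint and commutes with $T_0$.
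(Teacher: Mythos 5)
Your argument is correct and is essentially the paper's own proof. The paper (in the passage leading up to the proposition) does exactly the same: uses separability to conclude that the centralizer of $T$ is $k[T]\cong L$ via the cyclic decomposition theorem, writes $T'=hTh^{-1}$ with $h$ determined up to right multiplication by $(k[T])^*$, observes $\alpha=h^*h\in (k[T])^*$ by self-adjointness, extracts a square root of $\alpha$ using separable closedness, and adjusts by $\pm I$ to land in $\SO(W)(k)$; the stabilizer computation as $\{g\in L^*: g^2=1,\,N(g)=1\}$ and the order count $2^{2n}$ are identical. Your version is slightly more explicit about the reference operator $T_0 = \theta\beta\theta^{-1}$ and about why the square root $\gamma\in k[T_0]$ is self-adjoint, but this is the same proof.
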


We remark that $G_T(k^s)$ embeds as a Jordan subgroup of $\SO(W)(k^s)$, stabilizing a decomposition of $W$ into a direct sum of orthogonal lines. This subgroup is unique up to conjugacy, and is not contained in any maximal torus. It is its own centralizer, and its normalizer is a finite group isomorphic to the Weyl group $W(D_{2n+1}) = 2^{2n}.S_{2n+1}$, where $S_{2n+1}$ is the symmetric group (see~\cite{KT}). 

We continue with the proof of Proposition~\ref{simpletransitive} in the case when $k$ is separably closed. The finite stabilizer $G_T(k)$ acts on the set of maximal isotropic subspaces $X$ in $V$ with $T(X)$ contained in $X^{\perp}$, and we must show that it acts simply transitively.  There are precisely $2^{2n}$ such subspaces $X$ defined over an algebraic closure of $k$: these are the points of the Fano variety of the complete intersection of the two quadrics 
$$Q(w) = \langle w,w\rangle, ~~~ Q_T(w) = \langle w,Tw\rangle$$
in $\P(W)$. For an excellent discussion of the Fano variety of the complete intersection of two quadrics over the complex numbers, see~\cite{D}. This material was also treated in the unpublished Cambridge Ph.D.\ thesis of M.\ Reid~\cite{R}. A full treatment of the arithmetic theory sketched below will appear in the Harvard Ph.D.\ thesis of X.\ Wang~\cite{W}.

The pencil $xQ - x'Q_T$ is non-degenerate. Using the matrices in our standard basis of $W$ we have the formula
$$\disc(xA - B) = (-1)^n\det(xA - AM) = (-1)^n\det(A)\det(xI - M) =  f(x)$$
and $f(x)$ is separable. To show that these subspaces $X$ of $W$ are all defined over the separably closed field $k$, and that $G_T(k)$ acts simply-transitively on them, we consider one such isotropic subspace in the model orthogonal space $L$, with form $(\lambda, \nu)$ given by (\ref{lndef}) and the self-adjoint operator given by multiplication by $\beta$. The subspace $M$ with basis $\{1,\beta,\ldots,\beta^{n-1}\}$ is maximal isotropic, and $\beta(M)$ is contained in $M^{\perp}$. The subgroup of $L^*$ which fixes $M$ under multiplication is clearly $k^*$. Hence the subgroup of $D$ which fixes  $M$ is trivial and the finite group $D$ must act simply-transitively on the $2^{2n}$ such subspaces (as it has the same order). This also shows that all of the $2^{2n}$ points of the Fano variety are defined over the separably closed field $k$. Translating this to $W$ via the isometry $\theta$ completes the proof of Proposition~\ref{simpletransitive} when the base field $k$ is separably closed.

For general fields $k$, we use Galois descent. Let $k^s$ denote a separable closure of $k$ and let $\Gal(k^s/k)$ denote the Galois group. Consider two distinguished pairs $(T,X)$ and $(T',X')$ which are defined over $k$. By the simple transitivity result we have just proved over $k^s$, there is a unique element $g$ in the group $\SO(W)(k^s)$ with $g(T,X) = (T',X')$. For any $\sigma$ in $\Gal(k^s/k)$, the element $g^{\sigma}$ has the same property. Hence $g = g^{\sigma}$ lies in $\SO(W)(k)$, which  completes the proof of Proposition~\ref{simpletransitive} for general $k$.

To determine the stabilizer of a vector in the distinguished orbit, as a finite group scheme over~$k$, we again may use the model orthogonal space $L$ with inner product $(\lambda, \nu)$ given by (\ref{lndef}) and the distinguished self-adjoint operator given by multiplication by $\beta$. The centralizer $D$ of $\beta$ has points
$$D(E) =  \{g \in (L\otimes E)^*: g^2 = 1, ~ N(g) =1\}.$$
in any $k$-algebra $E$.
Hence $D = \Res_{L/k}(\mu_2)_{N = 1}$ is the finite group scheme over $k$ described in Proposition \ref{stabilizer}. The choice of isometry $\theta: L \rightarrow W$ gives a vector $S = \theta\beta\theta^{-1}$ in the distinguished orbit and an isomorphism of stabilizers $G_S = \theta D \theta^{-1}$. Since $\theta$ is well-defined up to left multiplication by $G_S$, which is a commutative group scheme, the isomorphism with $D$ is uniquely defined. This completes the proof of Proposition~\ref{stabilizer}.


Here is a condition on the Gram matrix $B$ of the bilinear form $\langle v,Tw\rangle$ which implies that the orbit of $T$ is distinguished.

\begin{proposition} \label{dist}  An orbit of $\SO(W)(k)$ on the representation $V(k)$ having nonzero discriminant is distinguished if and only if it contains a vector $T$ whose Gram matrix $B = AM$ with respect to the standard basis 
has the form
\begin{equation}\label{distform}
\begin{pmatrix} 0 & 0 &\cdots&0& 0 & \ast & \ast \\
0 & 0 &\cdots&0& \ast & \ast & \ast \\
\vdots & \vdots&\iddots & \iddots & \vdots & \vdots&\vdots \\
0 & 0 & \iddots & \cdots & \vdots & \vdots  & \vdots \\
0 & \ast &\cdots &\cdots&\ast&\ast & \ast\\
\ast & \ast&\cdots&\cdots&\ast&\ast&\ast\\
\ast &\ast&\cdots&\cdots&\ast&\ast&\ast\\
\end{pmatrix}.
\end{equation}
\end{proposition}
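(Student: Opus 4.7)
My plan is to handle the two directions separately, with essentially all content in the forward direction. The backward direction is nearly immediate: if $T$ has Gram matrix of shape (\ref{distform}), then the upper-left $n \times n$ block of $B$ vanishes (every position $(i,j)$ with $i,j \le n$ satisfies $i+j \le 2n$), which in the standard basis says $\langle e_i, Te_j\rangle = 0$ for $i,j \le n$, i.e.\ $T(X_0) \subseteq X_0^{\perp}$ with $X_0 := \mathrm{span}(e_1,\ldots,e_n)$. Since $X_0$ is maximal isotropic, $(T,X_0)$ is a distinguished pair, so the orbit of $T$ is distinguished.

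For the forward direction, start from $T$ in the distinguished orbit together with a maximal isotropic $X$ satisfying $T(X) \subseteq X^{\perp}$. The strategy is to refine $X \subset X^{\perp}$ to a complete isotropic flag $F_\bullet$ that is $T$-adapted (meaning $T(F_k) \subseteq F_{k+1}$ and $F_k^{\perp} = F_{2n+1-k}$ for every $k$), and then transport $F_\bullet$ to the standard complete flag by Witt's theorem. I would set $F_n := X$ and $F_{n+1} := X^{\perp}$, and construct $F_{n-1},\ldots,F_1$ by descending induction: at stage $k$, the inclusion $T(F_k) \subseteq F_{k+1}$ induces a linear map $\bar T_k \colon F_k \to F_{k+1}/F_k$, and since the codomain is one-dimensional, $\ker\bar T_k$ has codimension at most $1$ in $F_k$, so I can pick any $(k-1)$-dimensional $F_{k-1} \subseteq \ker\bar T_k$, automatically yielding $T(F_{k-1}) \subseteq F_k$. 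The upper half of the flag is then defined by the duality $F_{n+j} := F_{n-j+1}^{\perp}$, and the $T$-adapted property for the upper half follows from self-adjointness: for $v \in F_{n+j}$ and $w \in F_{n+j+1}^{\perp} = F_{n-j}$ we compute $\langle Tv,w\rangle = \langle v,Tw\rangle = 0$ because $Tw \in F_{n-j+1} = F_{n+j}^{\perp}$.

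Once $F_\bullet$ is constructed, I apply Witt's theorem to find $g \in \SO(W)(k)$ mapping $F_\bullet$ to the flag $F_k^{\mathrm{std}}$ spanned by the first $k$ vectors of the standard ordered basis; the $\SO$- and $\O($-orbits on subspaces agree in odd dimension because $-I \in \O(W) \setminus \SO(W)$ fixes every subspace. Setting $T' := gTg^{-1}$, the condition $T'(F_k^{\mathrm{std}}) \subseteq F_{k+1}^{\mathrm{std}}$ says the matrix $M'$ of $T'$ in the standard basis is upper Hessenberg, $M'_{ij} = 0$ for $j \le i-2$. Passing to $B' = AM'$ via $B'_{ij} = M'_{2n+2-i,j}$, the vanishing becomes $B'_{ij} = 0$ precisely when $i+j \le 2n$, matching (\ref{distform}).

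The main obstacle will be verifying that the descending induction in the flag construction always goes through, which reduces to the single structural observation that $F_{k+1}/F_k$ is one-dimensional, so $\ker\bar T_k$ has codimension at most $1$ and always contains a subspace of dimension $k-1$. A secondary technicality is the self-adjointness calculation extending $T$-adaptedness from the lower to upper half of the flag --- this is where the hypothesis $T = T^*$ enters in an essential way, and it is also where the arrangement $F_k^{\perp} = F_{2n+1-k}$ becomes indispensable, as it is the compatibility that makes the matrix form symmetric under anti-diagonal reflection.
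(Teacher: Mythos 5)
Your proof is correct, and it takes a genuinely different route from the paper. The paper's argument passes through the explicit model $L=k[x]/(f(x))$ with the operator given by multiplication by $\beta$: one observes directly that the Gram matrix of $\beta$ in the power basis $\{1,\beta,\ldots,\beta^{2n}\}$ already has shape (\ref{distform}) (the inner product $(\lambda,\nu)$ extracts only the coefficient of $\beta^{2n}$, which vanishes for a product of degree less than $2n$), that this shape is preserved if the power basis is replaced by $\{1,p_1(\beta),\ldots,p_{2n}(\beta)\}$ for any monic $p_i$ of degree $i$, and that such monic polynomials can be chosen by a Gram--Schmidt process so that the inner products match those of the standard basis of $W$. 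Your argument works intrinsically in $W$ and never invokes the algebra model: you refine the maximal isotropic $X$ with $T(X)\subseteq X^\perp$ to a complete self-dual flag $F_\bullet$ (using the codimension-at-most-one kernel argument for the lower half, and self-adjointness to propagate $T(F_k)\subseteq F_{k+1}$ to the upper half via $F_{n+j}=F_{n-j+1}^\perp$), then transport $F_\bullet$ to the standard flag inside $\SO(W)$ using Witt/Bruhat transitivity and the fact that $-I$ fixes all subspaces, and finally unwind the upper-Hessenberg condition on $M'$ to the vanishing pattern on $B'=AM'$. The two routes are logically independent: the paper's is more constructive (the Gram--Schmidt actually produces the basis) and trades on the special structure of the Euler form on $L$; yours exposes the flag-theoretic content of "distinguished" and is arguably cleaner in that it only uses $T=T^*$ once, at exactly the step where the flag is extended past $n$. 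One small presentational suggestion: rather than quoting "Witt's theorem" outright, it would be tidier to say you are using the transitivity of $\O(W)(k)$ on complete isotropic flags of a split space (which follows by iterating Witt extension), and then pass to $\SO(W)(k)$ via $-I$ as you do; the step as stated is a standard consequence but not literally Witt's extension theorem.
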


\begin{proof}
If the Gram matrix $B$ of $T$ has the form (\ref{distform}), then the subspace $X$ spanned by the basis elements $e_1$, $e_2$,~$\ldots$\,, $e_n$ is isotropic and $T(X)$ is contained in $X^{\perp}$, which is spanned by the elements $e_1,e_2,\ldots,e_n,u$. Hence the orbit is distinguished. 

Conversely, if the operator $T$ lies in a distinguished orbit, we may identify~$W$ with the space $L = k[x]/ (f(x))$ having bilinear form $(\lambda,\nu)$ given by (\ref{lndef}), via an isometry $\theta: L \rightarrow W$. The self-adjoint operator $T = \theta\beta\theta^{-1}$ has Gram matrix of the form (\ref{distform}) with respect to the basis $1,\beta,\beta^2,\ldots,\beta^{2n}$ of $L$. It also has this form with respect to the basis $1,p_1(\beta),p_2(\beta),\ldots,p_{2n}(\beta)$ for any monic polynomials $p_i(x)$ of degree $i$.
But we can choose the monic polynomials $p_i(x)$ so that the inner products $(p_i(\beta),p_j(\beta))$ on $L$ match the inner products of our standard basis elements on $W$. There is then an isometry
$\theta' = g\theta: L \rightarrow V$ taking this basis to our standard basis, and the vector $\theta'\beta\theta'^{-1} = gTg^{-1}$ in the orbit of $T$ has Gram matrix $B$ of the desired form. 
\end{proof}

\subsection{The remaining orbits via Galois cohomology}

To describe the remaining orbits with a fixed separable characteristic polynomial, we use Galois cohomology. Let $k^s$ denote a separable closure of $k$. If $M$ is a commutative finite \'etale group scheme over $k$ we let $H^i(k,M) = H^i(\Gal(k^s/k),M(k^s))$ be the corresponding Galois cohomology groups. Similarly, if $J$ is a smooth algebraic group over $k$, we let $H^1(k,J) = H^1(\Gal(k^s/k),J(k^s))$ be the corresponding pointed set of first cohomology classes~\cite[Ch I, \S5]{S}. If we have a homomorphism $M \rightarrow J$ over $k$, we obtain an induced map of pointed sets $H^1(k,M) \rightarrow H^1(k,J)$. The kernel of this map is by definition the subset of classes in $H^1(k,M)$ that map to the trivial class in $H^1(k,J)$.

Let $S$ be a self-adjoint operator with characteristic polynomial $f(x)$ in the distinguished orbit of $G = \SO(W)$ over $k$, and let $G_S$ be the finite \'etale subgroup stabilizing $S$. Let $T$ be any self-adjoint operator with this characteristic polynomial over $k$. By Proposition~\ref{sepclosed}, the vector $T$ is in the same orbit as $S$ over the separable closure $k^s$, so $T = gSg^{-1}$ for some $g \in \SO(W)(k^s)$. For any $\sigma\in\Gal(k^s/k)$, the element $c_{\sigma} = g^{-1}g^{\sigma}$ lies in the subgroup $G_S(k^s)$, and the map $\sigma \rightarrow c_{\sigma}$ defines a $1$-cocycle on the Galois group with values in $G_S(k^s)$, whose cohomology class depends only on the $G(k)$-orbit of $T$. This class clearly lies in the kernel of the map 
\begin{equation}\label{mapeta}
\eta: H^1(k,G_S) \rightarrow H^1(k, G).
\end{equation}
Conversely, if we have a cocycle $c_{\sigma}$ whose cohomology class lies in the kernel of $\eta$, then it has the form $g^{-1}g^{\sigma}$ and the operator $T = gSg^{-1}$ is defined on $W$ over $k$. Since $T$ has the same characteristic polynomial as $S$, we have established the following.

\begin{proposition}\label{cohomology}
The map which takes an operator $T$ to the cohomology class of the cocycle $c_{\sigma}$ induces a bijection  between the set of orbits of $G(k)$ on self-adjoint operators $T$ with characteristic polynomial $f(x)$ and the set of elements in the kernel of the map
$\eta: H^1(k,G_S) \rightarrow H^1(k, G).$
The trivial class in $H^1(k,G_S)$ corresponds to the distinguished orbit.
\end{proposition}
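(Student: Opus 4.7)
The plan is to recognize this as an instance of the standard Galois-cohomological parametrization of orbits for a variety on which an algebraic group acts. Proposition~\ref{sepclosed} supplies the crucial transitivity: $G(k^s)$ acts transitively on the set of self-adjoint operators with characteristic polynomial $f(x)$, with stabilizer $G_S$ at the distinguished base point $S$. Once transitivity over $k^s$ is in hand, the identification of $G(k)$-orbits with $\ker(\eta)$ is formal, and the trivial class will automatically correspond to the $G(k)$-orbit of the chosen base point $S$.

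First I would verify that the assignment $T\mapsto[c_\sigma]$ is well-defined and lands in $\ker(\eta)$. For any $T\in V(k)$ with characteristic polynomial $f(x)$, write $T=gSg^{-1}$ with $g\in G(k^s)$. Since $T=T^\sigma$, we get $gSg^{-1}=g^\sigma S g^{-\sigma}$, whence $c_\sigma:=g^{-1}g^\sigma$ commutes with $S$; combined with $c_\sigma\in G(k^s)$ this shows $c_\sigma\in G_S(k^s)$. The identity $c_{\sigma\tau}=c_\sigma c_\tau^\sigma$ is direct from the definition. Replacing $g$ by $gh$ with $h\in G_S(k^s)$ changes $c_\sigma$ by the coboundary $\partial h$, so $[c_\sigma]\in H^1(k,G_S)$ is independent of the choice of $g$; and replacing $T$ by $g'Tg'^{-1}$ with $g'\in G(k)$ lets us take $g'g$ in place of $g$, which literally fixes $c_\sigma$ (because $g'^\sigma=g'$). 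Finally, $c_\sigma=\partial(g^{-1})$ inside $G(k^s)$, so $\eta([c_\sigma])$ is trivial.

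Next I would establish injectivity. Given $T=gSg^{-1}$ and $T'=g'Sg'^{-1}$ with cohomologous cocycles $c_\sigma$ and $c'_\sigma=h^{-1}c_\sigma h^\sigma$ for some $h\in G_S(k^s)$, replace $g$ by $gh$ (which leaves $T$ unchanged since $h$ centralizes $S$); this reduces to $c_\sigma=c'_\sigma$, which reads $g'g^{-1}=(g')^\sigma g^{-\sigma}=(g'g^{-1})^\sigma$ for every $\sigma$. Hence $g'g^{-1}\in G(k)$ and $T'=(g'g^{-1})T(g'g^{-1})^{-1}$. For surjectivity, a class in $\ker(\eta)$ has a representative $c_\sigma=g^{-1}g^\sigma$ with $g\in G(k^s)$; set $T:=gSg^{-1}$. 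Then $T^\sigma=g^\sigma S g^{-\sigma}=g\,c_\sigma S c_\sigma^{-1}g^{-1}=gSg^{-1}=T$ using $c_\sigma\in G_S$, so $T\in V(k)$; it is self-adjoint with characteristic polynomial $f(x)$ and maps to $[c_\sigma]$ by construction. For the last statement, the distinguished orbit contains $S$, and choosing $g=1$ yields the trivial cocycle; conversely, if $[c_\sigma]$ is trivial then $c_\sigma=h^{-1}h^\sigma$ for some $h\in G_S(k^s)$, so $gh^{-1}\in G(k)$ and $T=(gh^{-1})S(gh^{-1})^{-1}$ lies in the $G(k)$-orbit of $S$.

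No step presents a real technical obstacle: the heavy lifting (transitivity over $k^s$ and the explicit identification of $G_S$) has already been carried out in Propositions~\ref{sepclosed} and~\ref{stabilizer}, and what remains is the routine twisting bookkeeping outlined above. The only point to be a little careful about is the verification that $c_\sigma$ really lands in the stabilizer, which is why the Galois-invariance of $T$ is used at the very first step.
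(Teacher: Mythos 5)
Your proof is correct and follows essentially the same route as the paper: both reduce everything to the transitivity result of Proposition~\ref{sepclosed}, then carry out the standard twisting argument (cocycle $c_\sigma = g^{-1}g^\sigma$, kernel of $\eta$, etc.). You simply spell out the routine verifications—the cocycle identity, independence of choices, and injectivity—that the paper leaves implicit.
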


We will make the map $\eta$ more explicit shortly, via a computation of the pointed cohomology sets of $G_S$ and $G$. But we can use our discussion of the Fano variety to give an explicit principal homogeneous space $P_T$ for the finite group scheme $G_S$ over $k$, corresponding to the self-adjoint operator $T$. Choose $g\in G(k^s)/G_S(k^s)$ with $gSg^{-1} = T$. Since $G_S$ is abelian we obtain a unique isomorphism of the stablizers $G_S \cong G_T$ over $k^s$, which is given by conjugation by any $g$ in this coset. The unicity implies that this isomorphism is defined over $k$. It therefore suffices to construct a principal homogeneous space $P_T$ for the finite group scheme $G_T$. We let $P_T$ be the set of maximal isotropic subspaces $X \subset W$ defined over $k^s$ with the property that $T(X) \subset X^{\perp}$. This set is finite, of cardinality $2^{2n}$. The Galois group $\Gal(k^s/k)$ acts, so $P_T$ has the structure of a reduced scheme of dimension zero over $k$ and forms a principal homogeneous space for the group scheme $G_T$.  The isomorphism class of the principal homogeneous space $P_T$ over $k$ depends only on the orbit of $T$, and corresponds to the cohomology class of the cocycle $c_{\sigma}$. Note that the scheme $P_T$ has a $k$-rational point if and only if $T$ lies in the distinguished orbit.

We now make the above map $\eta$ of pointed sets more explicit, using our model orthogonal space $L$ with form $(\lambda,\nu)$ given by (\ref{lndef}).  First, since $2n+1$ is odd, we have a splitting
$$\Res_{L/k}(\mu_2) = \mu_2 \oplus D.$$
This allows us to compute the Galois cohomology groups of $D$ using Kummer theory. We find that
\begin{eqnarray*}
H^0(k, D) &=& L^*[2]_{N=1} \\[.075in]
H^1(k,D) &=& (L^*/L^{*2})_{N\equiv1}
\end{eqnarray*}
where we use the subscript $N\equiv1$ to mean the subgroup of elements in $L^*/L^{*2}$ whose norms are squares in $k^*$.

Let $\alpha$ be a class in $H^1(k,D)$, viewed as an element of the group $L^*$ whose norm to $k^*$ is a square.
The class $\eta(\alpha)$ lies in the pointed set $H^1(k,\SO(L))$, which classifies non-degenerate orthogonal spaces of the same dimension $2n+1$ and discriminant $\equiv 1$ as our model space $L$ over $k$ (see~\cite{KMRT}). Unwinding the definitions as in \cite{BG}, we find that the orthogonal space corresponding to the class $\eta(\alpha)$ is isometric to $L$ with the modified bilinear form $(\lambda,\nu)_{\alpha} =$ the coefficient of $\beta^{2n}$ in the product $\alpha\lambda\nu$. Equivalently, we have the formula:
$$(\lambda,\nu)_{\alpha} = \Trace_{L/k}(\alpha\lambda\nu/ f'(\beta)).$$
We note that the isomorphism class of this space depends only on the element $\alpha$ in the quotient group $(L^*/L^{*2})$, and that the condition that the norm of $\alpha$ is a square in $k^*$ translates into the fact that the discriminant of the form $(\lambda,\nu)_{\alpha}$ is $\equiv 1$. 

If we translate this over to the orthogonal space $W$ via the isometry $\theta$, we see that a class $\alpha$ in
the group $H^1(k,G_S) = H^1(k,D) = (L^*/L^{*2})_{N\equiv1}$ lies in the kernel of the map
$\eta: H^1(k,G_S) \rightarrow H^1(k,\SO(W))$ precisely when the form $(\lambda,\nu)_{\alpha}$ on
$L$ gives a split orthogonal space over $k$. In that case $\alpha$ gives an orbit of self-adjoint operators $T$ on $W$, and the principal homogeneous space $P_T$ splits over the extension of $L$ where $\alpha$ is a square. We will produce examples of non-trivial $\alpha$ where the bilinear form $(\lambda,\nu)_{\alpha}$ is split in the next section, using the Jacobians of hyperelliptic curves.

\section{Hyperelliptic curves}

We now use some results in algebraic geometry, on hyperelliptic curves and the Fano variety of the complete intersection of two quadrics in $\mathbb P(W \oplus k)$, to produce classes in the kernel of the map in cohomology $\eta: H^1(k,G_S) \rightarrow H^1(k,\SO(W))$, and hence orbits with a fixed separable characteristic polynomial $f(x)$.

Let $C$ be the smooth projective hyperelliptic curve of genus $n$ over $k$ with affine equation
$y^2 = f(x)$ and a $k$-rational Weierstrass point $O$ above $x = \infty$. The functions on $C$ which are regular outside of $O$ form an integral domain:
$$H^0(C - O, \mathscr O_{C - O}) = k[x,y]/ (y^2 = f(x)) = k[x, \sqrt{f(x)}].$$
The complete curve $C$ is covered by this affine open subset $U_1$, together with the affine open
subset $U_2$ associated to the equation $w^2 = v^{2n+2}f(1/v) = c_{2n+1}v^{2n+2} + \cdots + c_2v^3 + v$ and containing the point $O = (0,0)$. The gluing of $U_1$ and $U_2$ is by $(v,w) = (1/x, y/x^{n+1})$ and $(x,y) = (1/v,w/v^{n+1})$ wherever these maps are defined.

\subsection{The $2$-torsion subgroup}

Let $J = \Pic^0(C)$ denote the Jacobian of the hyperelliptic curve $C$ over $k$, which is an abelian variety of dimension $n$ over $k$, and let $J[2]$ denote the $2$-torsion subgroup of $J$. This is a finite \'etale group scheme of order $2^{2n}$. Recall that we have defined a finite group scheme $D =(\Res_{L/k}\mu_2)_{N=1}$ of this order, associated to the \'etale algebra $L = k[x]/ (f(x))$. We begin by constructing an isomorphism $D \cong J[2]$ of finite group schemes over $k$.

The other Weierstrass points $P_{\gamma}\neq O$ of $C(k^s)$ correspond bijectively to $k$-algebra homomorphisms $\gamma: L \rightarrow k^s$: we have $x(P_{\gamma}) = \gamma(\beta)$ and
$y(P_{\gamma}) = 0$. Associated to such a point we have the divisor $d_{\gamma} = (P_{\gamma}) - (O)$ of degree zero. The divisor class of $d_{\gamma}$ lies in the $2$-torsion subgroup
$J[2](k^s)$ of the Jacobian, as $2(d_{\gamma})$ is the divisor of the function  $(x - \gamma(\beta))$. The Riemann-Roch theorem shows that the classes $d_{\gamma}$ generate the finite group $J[2](k^s)$, and satisfy the single relation $\sum(d_{\gamma}) \equiv 0.$ Indeed, this sum is the divisor of the function $y$. This gives an isomorphism $\Res_{L/k}(\mu_2)/\mu_2 \cong J[2]$.  We recall that $D$ is defined as the kernel of the map $\Res_{L/k}(\mu_2)\rightarrow\mu_2$. 

Since the degree of $L$ over $k$ is odd, the composition $D \rightarrow           
\Res_{L/k}(\mu_2) \rightarrow \Res_{L/k}(\mu_2)/\mu_2$ is an isomorphism. We      
summarize what we have proved:
                                                                     
\begin{proposition}\label{isom}                                                          
There is an isomorphism of finite group schemes $G_S \rightarrow J[2]$ over 
$k$, where $G_S$ is the stabilizer of a point $S$ in the distinguished orbit 
with characteristic polynomial $f(x)$ and $J[2]$ is the $2$-torsion subgroup 
of the Jacobian of the hyperelliptic curve with equation $y^2 = f(x)$. Both 
group schemes are isomorphic to $D = (\Res_{L/k}(\mu_2))_{N=1};$ their points 
over any extension field $E$ of $k$ correspond bijectively to the monic 
factorizations $f(x) = g(x)h(x)$ of $f(x)$ over $E$. 
\end{proposition}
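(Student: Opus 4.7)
The plan is to show that both $G_S$ and $J[2]$ are canonically isomorphic to the finite group scheme $D = (\Res_{L/k}\mu_2)_{N=1}$, and then compose. The isomorphism $G_S \cong D$ has already been established in Proposition~\ref{stabilizer}, so the real work is producing an isomorphism $J[2] \cong D$ via the Weierstrass geometry of the curve $y^2 = f(x)$.

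First I would build a Galois-equivariant surjection $\Res_{L/k}\mu_2 \twoheadrightarrow J[2]$ from Weierstrass divisors. A $k$-algebra map $\gamma: L \to k^s$ is the same as a root of $f$ in $k^s$ and gives a non-infinite Weierstrass point $P_\gamma$ with $x(P_\gamma) = \gamma(\beta)$, $y(P_\gamma)=0$. The divisor $d_\gamma = (P_\gamma) - (O)$ lies in $J[2](k^s)$ because $2 d_\gamma$ is the divisor of $x-\gamma(\beta)$. Riemann-Roch, applied for instance via $\dim H^0(C, 2O) = n+1$, forces the classes $d_\gamma$ to generate $J[2](k^s)$ subject to the single relation $\sum_\gamma d_\gamma = 0$ (realized as the divisor of $y$). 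Rewriting the $d_\gamma$ as the basis of $\Res_{L/k}\mu_2$ coming from the idempotent decomposition of $L$ over $k^s$, this gives the identification $J[2] \cong \Res_{L/k}\mu_2/\mu_2$, where $\mu_2$ is embedded diagonally.

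Next I would use the parity hypothesis $[L:k] = 2n+1$ odd. The diagonal $\mu_2 \hookrightarrow \Res_{L/k}\mu_2$ composed with the norm $N$ is $\epsilon \mapsto \epsilon^{2n+1} = \epsilon$, so the short exact sequence $1 \to D \to \Res_{L/k}\mu_2 \xrightarrow{N} \mu_2 \to 1$ splits canonically as $\Res_{L/k}\mu_2 = \mu_2 \oplus D$. Hence the composition $D \hookrightarrow \Res_{L/k}\mu_2 \twoheadrightarrow \Res_{L/k}\mu_2/\mu_2$ is an isomorphism, and combined with Proposition~\ref{stabilizer} it yields the desired $G_S \cong D \cong J[2]$.

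Finally, for the description of $D(E)$: an element $\alpha \in (L \otimes_k E)^* = (E[x]/f)^*$ with $\alpha^2 = 1$ corresponds, by the Chinese Remainder Theorem, to a monic factorization $f = gh$ in $E[x]$, via $E[x]/f \cong E[x]/g \times E[x]/h$ with $\alpha = (1,-1)$ on this product. Its norm is $N(\alpha) = (-1)^{\deg h}$; since $\deg g + \deg h = 2n+1$ is odd, exactly one ordering of each factorization satisfies the norm-one condition, matching the count $2^{2n}$. The only subtle point, which I expect to be the main item to verify rather than a true obstacle, is that every identification in this chain is canonical and Galois-equivariant, so that the induced isomorphism $G_S \cong J[2]$ is intrinsic and can feed cleanly into the cohomological computations of the next section.
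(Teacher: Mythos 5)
Your proof takes essentially the same route as the paper: construct the classes $d_\gamma = (P_\gamma)-(O)$ from the finite Weierstrass points, use Riemann--Roch to show they generate $J[2](k^s)$ subject to the single relation $\sum d_\gamma = 0$ (giving $J[2] \cong \Res_{L/k}\mu_2/\mu_2$), use the oddness of $[L:k]=2n+1$ to split $\Res_{L/k}\mu_2 = \mu_2 \oplus D$, and compose with Proposition~\ref{stabilizer}. The CRT description of $D(E)$ in terms of unordered monic factorizations $f=gh$ (with the norm-one condition singling out one of the two orderings) correctly fills in a detail the paper states without spelling out. One small slip in your illustrative aside: $\dim H^0(C, 2O) = 2$, not $n+1$ (you likely meant $\dim H^0(C, 2nO) = n+1$); this does not affect the argument.
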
                                                      

The exact sequence $0 \rightarrow J[2] \rightarrow J \rightarrow J \rightarrow 0$ of commutative group schemes over $k^s$ gives an exact descent sequence in Galois cohomology:
$$0 \rightarrow J(k)/2J(k) \rightarrow H^1(k, J[2]) \rightarrow H^1(k,J)[2] \rightarrow 0.$$
The middle term $H^1(k, J[2])$ can be identified with the group $H^1(k,D) =  (L^*/L^{*2})_{N\equiv1}$ via the above isomorphism. We need an explicit description of the coboundary map  
$$\delta: J(k)/2J(k) \rightarrow  (L^*/L^{*2})_{N\equiv1}.$$
If $P = (a,b)$ is a point of $C$ over $k$ with $b \neq 0$, and $D =
(P) - (O)$ is the corresponding divisor class in $J(k)$, then (cf.\
\cite{Schaefer}, \cite[p.\ 7]{Stoll}) we have
$$\delta(D) \equiv (a - \beta).$$

\subsection{A subgroup in the kernel of $\eta$}

For any self-adjoint operator $S$ in the distinguished orbit, we have constructed an isomorphism from $D$ to the stabilizer $G_S$ in $\SO(W)$. Hence we may identify the cohomology groups $H^1(k,J[2]) = H^1(k,G_S)$. With this identification, we have the following important result.



\begin{proposition}\label{kernel}                                                           
The subgroup $J(k)/2J(k)$ of  $H^1(k,J[2])$ lies in the kernel of the map 
$\eta: H^1(k,D) \rightarrow H^1(k,\SO(W))$. The classes 
$\alpha$ which lie in $J(k)/2J(k)$ correspond bijectively to the orbits of
$\SO(W)(k)$ on $V(k)$ with characteristic polynomial $f(x)$, where the
homogeneous space $F_{\alpha}$ for the Jacobian $J$ $($to be defined below$)$ has 
a $k$-rational point.
\end{proposition}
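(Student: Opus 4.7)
Proposition~\ref{kernel} makes two assertions: (1) the inclusion $J(k)/2J(k) \subset \ker\eta$, and (2) that these classes correspond exactly to orbits whose Fano variety $F_\alpha$ has a $k$-rational point. I would prove them in that order.

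\emph{Proof of (1).} By Proposition~\ref{cohomology} and the explicit description of $\eta$ at the end of the previous subsection, a class $\alpha \in H^1(k,D) = (L^*/L^{*2})_{N \equiv 1}$ lies in $\ker\eta$ precisely when the twisted bilinear form $(\lambda,\nu)_\alpha = \Trace_{L/k}(\alpha\lambda\nu/f'(\beta))$ on $L$ is split, i.e., admits an $n$-dimensional isotropic subspace. Given $\alpha \in J(k)/2J(k)$, I would represent it via the coboundary map by a divisor $D = \sum_{i=1}^r (P_i) - rO$ with $r \leq n$ and $\{P_i\}$ Galois-stable (using Riemann-Roch), so $\alpha \equiv \prod_i(x(P_i) - \beta) \pmod{L^{*2}}$. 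The discriminant condition is automatic from $N_{L/k}(\alpha) = \prod_i f(x(P_i)) = \bigl(\prod_i y(P_i)\bigr)^2$. In the base case $r=1$, so $\alpha = a-\beta$ with $(a,b) \in C(k)$, the subspace $\operatorname{span}(1,\beta,\ldots,\beta^{n-1}) \subset L$ is immediately isotropic: $\alpha\lambda\nu$ has degree at most $2n-1$ for $\lambda,\nu$ in this span, so its coefficient of $\beta^{2n}$ vanishes. The case $r \geq 2$ is the crux (see below).

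\emph{Proof of (2).} Once (1) is established, Proposition~\ref{cohomology} realizes $J(k)/2J(k) \subset \ker\eta$ as a subset of the $\SO(W)(k)$-orbits on $V(k)$ with characteristic polynomial $f(x)$. To identify this subset with the orbits whose Fano variety has a rational point, I would use the Kummer exact sequence
\begin{equation*}
0 \to J(k)/2J(k) \to H^1(k,J[2]) \to H^1(k,J)[2] \to 0,
\end{equation*}
which identifies $J(k)/2J(k)$ with the kernel of the map to $H^1(k,J)$. The Fano variety $F_\alpha$ (constructed in the next section) is a principal homogeneous space for $J$ whose class in $H^1(k,J)$ is the image of $\alpha$; hence $F_\alpha(k) \neq \emptyset$ iff this image vanishes iff $\alpha \in J(k)/2J(k)$.

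\emph{Main obstacle.} Building the Lagrangian in the case $r \geq 2$ of Part (1). The naive degree argument that sufficed for $r=1$ fails, because $\alpha\lambda\nu$ may have degree exceeding $2n$, forcing reduction modulo $f(\beta)$ and destroying the automatic vanishing of the coefficient of $\beta^{2n}$. A direct check shows, for instance, that $\operatorname{span}(1,\beta,\ldots,\beta^{n-1})$ itself is not isotropic once $r \geq 2$. I would attack this by employing the Mumford representation $(u(x),v(x))$ of an effective divisor representing $[D + nO]$: the defining identity $v(\beta)^2 \equiv f(\beta) \pmod{u(\beta)}$ provides the algebraic ingredient needed to produce an explicit $n$-dimensional subspace of $L$ that is isotropic for $(\cdot,\cdot)_\alpha$. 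An alternative route, once $F_\alpha$ is defined, would be to use a rational point on $F_\alpha$ directly to produce the Lagrangian via the isometry $\theta : L \to W$, closing the loop with Part~(2).
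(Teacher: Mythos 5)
Your ``alternative route'' is in fact the paper's proof, and you should promote it to the primary argument and discard the direct Mumford-representation attack, which you correctly flag as incomplete at $r \geq 2$. The paper never constructs the Lagrangian subspace by hand from a Mumford representative (that sort of computation \emph{is} needed later, for the integral statement in Proposition~\ref{integralorbit}, and it is genuinely delicate); here the Lagrangian falls out for free from the geometry of $F_\alpha$, and (1) is deduced as a \emph{consequence} of (2), reversing the logical order of your decomposition.

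Concretely: one first defines $F_\alpha$ for \emph{arbitrary} $\alpha \in H^1(k,D) = (L^*/L^{*2})_{N\equiv 1}$ as the Fano variety of the pencil generated by
\[
Q(\lambda,a) = \Trace(\alpha\lambda^2/f'(\beta)), \qquad Q'(\lambda,a) = \Trace(\alpha\beta\lambda^2/f'(\beta)) + a^2
\]
on $\mathbb P(L \oplus k)$, without knowing in advance that $\alpha$ gives an orbit. One then shows (this is the real work, which you assert without proof) that $F_\alpha$ represents the image of $\alpha$ under $H^1(k,J[2]) \to H^1(k,J)[2]$; the paper does this by identifying the fixed-point locus of the involution $\tau(O)$ on $F_\alpha$ with the torsor $P_\alpha$ for $J[2]$. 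Now if $\alpha \in J(k)/2J(k)$, the Kummer sequence forces the image in $H^1(k,J)$ to vanish, so $F_\alpha$ has a $k$-point $Z$: an $n$-dimensional subspace of $L \oplus k$ isotropic for both $Q$ and $Q'$. Because $Q'$ contains the summand $a^2$, $Z$ cannot contain the line $0 \oplus k$, so its projection to $L$ is still $n$-dimensional and is isotropic for $(\,\cdot\,,\cdot\,)_\alpha$. Hence $(\,\cdot\,,\cdot\,)_\alpha$ is split and $\alpha \in \ker\eta$, giving (1); and the bijection of (2) then follows immediately from Proposition~\ref{cohomology} together with the Kummer sequence. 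Your $r=1$ sanity check is correct, but the general inclusion $J(k)/2J(k) \subset \ker\eta$ should be extracted from the Hasse--Minkowski-free geometric argument above rather than attempted combinatorially.
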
 

\begin{proof}                                                                     
This is proved in \cite[Prop.\ 6]{BG}, and we briefly recall the argument here.     
We first need to make the homomorphism from $H^1(k,D) = H^1(k,J[2])$ to           
$H^1(k,J)[2]$ which arises in the descent sequence more explicit. That is, we     
need to associate to each class $\alpha$ in the group $H^1(k,D) =                 
(L^*/L^{*2})_{N\equiv1}$                                                     
a principal homogeneous space $F_{\alpha}$ of order $2$ for the Jacobian $J$      
over $k$. The classes in the subgroup $J(k)/2J(k)$ will correspond to the         
homogeneous spaces with a $k$-rational point. We will use the existence of        
$k$-rational points on $F_{\alpha}$ to show that the bilinear form                
$(\;,\,)_{\alpha}$ on $L$ is split. Thus every $\alpha$ in $J(k)/2J(k)$           
corresponds to a class in the kernel of $\eta$, and hence to an orbit of          
$\SO(W)(k)$ on $V(k)$.                                                            

More generally, let $\alpha$ be any class in $H^1(k,D) =
(L^*/L^{*2})_{N\equiv1}$. The vector space $L \oplus k$ has dimension
$2n+2$ over $k$. Consider the two quadrics on $\mathbb P(L \oplus k)$:
$$Q(\lambda,a) = (\lambda,\lambda)_{\alpha} =  \Trace(\alpha\lambda^2/ f'(\beta)),$$
$$ Q'(\lambda,a) = (\lambda, \beta\lambda)_{\alpha} + a^2 =  \Trace(\alpha\beta\lambda^2/ f'(\beta)) + a^2.$$
The pencil $uQ - vQ'$ is non-degenerate and contains exactly $2n+2$ singular elements: the quadric $Q$ at $v=0$ and the $2n+1$ quadrics $\gamma Q - Q'$ at the points  $(\gamma, 1)$ where $f(\gamma) = 0$.  More precisely, the discriminant of the pencil is given by the binary form
$$\disc(uQ - vQ') = v^{2n+2}f(u/v)$$
of degree $2n+2$ over $k$. Hence the base locus $Q \cap Q'$ of the pencil is non-singular. The Fano variety $F = F_{\alpha}$ of this complete intersection, consisting of the $n$-dimensional subspaces $Z$ of the $2n+2$ dimensional space $L \oplus k$ which are isotropic for all of the quadrics in the pencil, is smooth of dimension~$n$ (see~\cite{D}). 

Since the discriminant of the  the pencil defines the hyperelliptic curve $C$ over $k$, a point $c = (x,y)$ on the curve determines both a quadric $Q_x = xQ - Q'$ in the pencil together with a ruling of $Q_x$. (A {\it ruling} of $Q_x$ is, by definition, a component of the variety of $n+1$ dimensional subspaces in $L \oplus k$ which are isotropic for $Q_x$.) Each point of $C$ gives an involution of the corresponding Fano variety $\tau(c): F \rightarrow F$,  which has $2^{2n}$ fixed points over a separable closure $k^s$, and is defined as follows. A point of $F$ over $k^s$ consists of a common isotropic subspace $Z$ of dimension $n$ for all quadrics in the pencil. The point $c$ then gives a fixed maximal isotropic subspace $Y$ of dimension $n+1$ for the quadric $Q_x$ which contains $Z$. If we restrict any non-singular quadric other than $Q_x$ in the pencil to the subspace $Y$, we get a reducible quadric which is the sum of two hyperplanes: $Z$ and another common isotropic subspace $\tau(c)(Z)$. This defines the involution, and hence a morphism $\phi: C \times F \rightarrow F$ with $\phi(c,Z) = \tau(c)(Z)$.

The morphism $C \times F \rightarrow F$ can be used to define a fixed-point free action of the Jacobian $J = \Pic^0(C)$ on $F$ over $k$, which gives $F$ the structure of a principal homogeneous space for $J$. For a general non-degenerate pencil of quadrics, this principal homogeneous space has order $4$ in the group $H^1(k,J)$, and its square is the principal homogeneous space $\Pic^1(C)$. In this case, we have a $k$-rational Weierstrass point $O$ on $C$, which corresponds to the degenerate quadric $Q$ in the pencil. Hence $\Pic^1(C)$ has a rational point and the homogenous space $F_{\alpha}$ has order $2$ in $H^1(k,J)$. 

We can prove that $F_\alpha$ represents the image of $\alpha$ in the descent sequence as follows. The involution $\tau = \tau(O)$ is induced by the linear involution
$(\lambda, a) \to (\lambda, -a)$.  Hence the fixed points $P_{\alpha}$ of $\tau$ are just the $n$-dimensional subspaces $X$ of $L$ over $k^s$ which are isotropic for both quadrics $Q$ and $Q'$. This is the Fano variety $P_T = P_{\alpha}$ of dimension zero, which is the principal homogeneous space for $G_T = J[2]$ associated to the cohomology class $\alpha$. It follows that $F_{\alpha}$ is the image of $\alpha$ in $H^1(k,J)[2]$. For more details on this construction, see the Harvard Ph.D.\ thesis of X. Wang \cite{W}.  Note that the variety $F_{\alpha}$ has a $k$-rational point whenever $\alpha$ lies in the subgroup $J(k)/2J(k)$, but $F_{\alpha}$ only has a $k$-rational point fixed by the involution $\tau$ when $\alpha$ is the trivial class in $H^1(k,J[2])$.

We may now prove Proposition~\ref{kernel}. Assume that the class $\alpha$ is in the subgroup $J(k)/2J(k)$. Then its image in $H^1(k,J)[2]$ is trivial, and the homogenous space $F_{\alpha}$ has a $k$-rational point. Hence there is a $k$-subspace $Z$ of $L \oplus k$ which is isotropic for both $Q$ and $Q'$. Since it is isotropic for $Q'$, the subspace~$Z$ does not contain the line $0 \oplus k$. Its projection to the subspace~$L$ has dimension $n$ and is isotropic for~$Q$. This implies that the orthogonal space $L$ with bilinear form $(\lambda,\nu)_{\alpha} = \Trace(\alpha\lambda\nu/ f'(\beta))$ is split, so the class $\alpha$ is in the kernel of the map $\eta$.
\end{proof}

\noindent
We call such special $G(k)$-orbits on $V(k)$ (and the elements in them) {\it soluble}.  Thus Proposition~\ref{kernel} states that the $\SO(W)(k)$-orbits on the set of  soluble elements in $V(k)$ with characteristic polynomial~$f(x)$ are in bijection with the elements of $J(k)/2J(k)$. 

\begin{remark}{\em 
We note that the finite group scheme $D = J[2]$ does not determine the hyperelliptic curve~$C$. Indeed, for any class $d \in k^*/k^{*2}$, one has the hyperelliptic curve $C_d$ with equation $d y^2 = f(x)$ and Jacobian $J_d$. The determination of $J[2]$ shows that these Jacobians share the same $2$-torsion subgroup: $J_d[2] \cong J[2]$. This Jacobian $J_d$ acts simply transitively on the Fano variety $F_{\alpha,d}$ of the complete intersection of the two quadrics
$$Q(\lambda,a) = \Trace(\alpha\lambda^2/ f'(\beta)),$$
$$ Q'(\lambda,a) = \Trace(\alpha\beta\lambda^2/ f'(\beta)) + da^2.$$
Indeed, the discriminant of the quadric $uQ - vQ'$ in the pencil is equal to $dv^{2n+2} f(u/v)$. A similar argument to the one given in the proof of Proposition~\ref{kernel} shows that the subgroup $J_d(k)/2J_d(k)$ of $H^1(k,D)$ is also contained in the kernel of the map $\eta: H^1(k,D) \rightarrow H^1(k, \SO(W)).$}
\end{remark}

\section{Orbits over arithmetic fields}

In this section, we describe the orbits with a fixed characteristic polynomial, when $k$ is a finite, local, or global field.

\subsection{Orbits over a finite field}\label{finite}

We first consider the case when $k$ is finite, of odd order $q$. In this case, every non-degenerate quadratic space of odd dimension is split, so $H^1(k,\SO(W)) = 1$, and the orbits with a fixed separable characteristic polynomial $f(x)$ are in bijection with elements of the group
$H^1(k, D) = (L^*/L^{*2})_{N\equiv1}$.
This elementary abelian $2$-group has order $2^m$, where $m+1$ is the number of irreducible factors of $f(x)$ in $k[x]$. This is also equal to the order of the stabilizer
$H^0(k,D) = D(k) = L^*[2]_{N=1}$
of any point in the orbit over $k$.
Hence the number of vectors $T$ in $V$ with any fixed separable polynomial is equal to the order of the finite group $\SO(W)(q)$, and is given by the formula
$$\#\SO(W)(q) = q^{n^2}(q^{2n} -1)(q^{2n-2} - 1)\cdots(q^2 - 1).$$

Since $k$ is perfect, a similar argument applies to all {\it regular} orbits---those orbits for which the characteristic polynomial of $T$ is equal to its minimal polynomial. Any regular vector $T$ with characteristic and minimal polynomial $f(x)$ has stabilizer $D$, the finite \'etale group scheme which is the kernel of the map $\Res_{L/k}\mu_2\rightarrow \mu_2$ with $L = k[x]/ (f(x))$. This finite group scheme has order $2^r$ over $k$, where $r+1$ is the number of distinct roots of $f(x)$ in the separable closure $k^s$. (Here $0 \leq r \leq 2n$, as we do not assume that the minimal polynomial $f(x)$ is separable.) Since the number of regular orbits with this characteristic polynomial is the order of the finite group $H^1(k,D)$, and this order is equal to the order of the stabilizer $H^0(k,D)$ of any point in the orbit, the number of regular vectors $T$ with any fixed characteristic polynomial $f(x)$ is equal to the order of the finite group $\SO(W)(q)$.  Since there are $q^{2n}$ choices for the coefficients of the characteristic polynomial $f(x)$, we can count the total number of regular vectors in $V(q)$. This number is
$$ q^{2n}\cdot \#\SO(W)(q) = q^d - q^{d-2} + \cdots$$
where $d = \dim V$. Since equality of the characteristic and minimal polynomials is an algebraic condition which is independent of field extension, this count shows that irregularity is a codimension $2$ condition.

We return to the case where $f(x)$ is separable. By Lang's theorem, we have $H^1(k,J) = 0$, where $J$ is the (connected) Jacobian of the smooth hyperelliptic curve $C$ with equation $y^2 = f(x)$ over $k$.
Hence the homomorphism $J(k)/2J(k) \rightarrow H^1(k,D)$ is an isomorphism, and every orbit with characteristic polynomial $f(x)$ comes from a $k$-rational point on the Jacobian.

\subsection{Orbits over a non-archimedean local field}

Next, we consider the case when $k$ is a non-archimedean local field, with ring of integers $A$ and finite residue field $A/\pi A$. In this case, the spin cover gives a connecting homomorphism in Galois cohomology, which is an isomorphism by Kneser's theorem~\cite{Kne}, cf.~\cite[Thms.\ 6.4,~6.10, pp.\ 356--397]{PRAG}: 
$$H^1(k,\SO(W)) \cong H^2(k, \mu_2) \cong \Z/2 \Z.$$
One can show that the resulting map $\eta: H^1(k,D) \rightarrow H^1(k, \SO(W)) \cong \Z/2\Z$ is an even quadratic form whose associated bilinear form is the cup product on $H^1(k,J[2])$ induced from the Weil pairing $J[2] \times J[2] \rightarrow \mu_2$ (see \cite {W}, \cite{ PR}). The subgroup  $J(k)/2J(k)$ is a maximal isotropic subspace on which $\eta = 0$. This allows us to count the number of orbits with a fixed separable characteristic polynomial $f(x)$. 

We first do this count when the characteristic of the residue field is odd.
Let $m+1$ be the number of irreducible factors of $f(x)$ in $k[x]$, and let $A_L$ be the integral closure of the ring $A$ in $L$. Then the stabilizer $H^0(k,D) = D(k)$ has order $2^m$. This is the order of $H^2(k,D)$ by Tate's local duality theorem, and his Euler characteristic formula then shows that the group $H^1(k, D)$ has order $2^{2m}$ (see~\cite[\S II.5]{S}). The number of orbits with characteristic polynomial $f(x)$ is equal to the number of zeroes of the even quadratic form $\eta$ on this vector space, which is $2^{m-1}(2^m + 1) = 2^{2m-1} + 2^{m-1}$. The subgroup $J(k)/2J(k)$ has order $2^m$; it is a maximal isotropic subspace for this quadratic form. We note that $2^m$ is also the order of the subgroup $(A_L^*/A_L^{*2})_{N\equiv1}$ of units in $(L^*/L^{*2})_{N \equiv 1}$. We will see that these two subgroups of $(L^*/L^{*2})_{N \equiv 1}$ coincide in many cases, such as when the polynomial $f(x)$ has coefficients in $A$ and the order $A[x] / (f(x))$ is equal to the integral closure $A_L$ of $A$ in $L$.

Next assume that the characteristic of the residue field is even, and for simplicity that $k$ is an unramified extension of $\Q_2$.
Let $m+1$ be the number of irreducible factors of $f(x)$ in $k[x]$, and let $A_L$ be the integral closure of the ring $A$ in $L$. Then the stabilizer $H^0(k,D) = D(k)$ has order $2^m$. This is the order of $H^2(k,D)$ by Tate's local duality theorem, and his Euler characteristic formula now shows that the group $H^1(k, D)$ has order $2^{2m + 2n}$. The number of orbits with characteristic polynomial $f(x)$ is again equal to the number of zeroes of the even quadratic form $\eta$ on this vector space. The subgroup $J(k)/2J(k)$ has order $2^{m+n}$; it is a maximal isotropic subspace for the quadratic form. Here the order of the subgroup $(A_L^*/A_L^{*2})_{N\equiv1}$ of units in $(L^*/L^{*2})_{N \equiv 1}$ is equal to $2^{m + 2n}$, which is strictly larger than the order of $J(k)/2J(k)$.

\subsection{Orbits over $\C$ and $\R$}

The local field $k = \C$ is algebraically closed, so there is a unique (distinguished) orbit for each separable characteristic polynomial by Proposition~\ref{sepclosed}.

The classification of orbits is more interesting for the local field $k = \R$. We now work this out in some detail, as we will need an explicit description of the real orbits in order to use arguments from the geometry of numbers later in this paper. Over the real numbers the pointed set $H^1(\R,\SO(W))$ has $n+1$ elements, corresponding to the quadratic spaces of signature $(p,q)$ with $p+q = 2n+1$ and $q \equiv n$ (mod 2). The split space $W$ of dimension $2n+1$ and discriminant $\equiv 1$ has signature $(n+1,n)$ over $\R$, so $\SO(W) = \SO(n+1,n)$.

Let $2m+1$ be the number of real roots of $f(x)$, with $ 0 \leq m \leq n$. Then the polynomial $f(x)$ has exactly $(2m+1) + (n-m) = m + n + 1$ irreducible factors over $\R$, and $L = \R^{2m+1} \times \C^{n-m}$. The stabilizer of any self-adjoint operator $T$ with this characteristic polynomial is an elementary abelian $2$-group of order $2^{n+m}$, isomorphic to $H^0(\R,D) = L^*[2]_{N=1}$. The group $H^1(\R,D) = ((\R^*/\R^{*2})^{2m+1} \times (\C^*/\C^{*2})^{n-m})_{N \equiv 1}$ is an elementary abelian $2$-group of order $2^{2m}$. The real locus of the hyperelliptic curve $y^2 = f(x)$ has $m+1$ components, and the group $J(\R)/2J(\R) = J(\R)/J(\R)^0$ has order $2^m$. This is a maximal isotropic subspace of $H^1(\R, J[2])$ for the cup product pairing and each class in $ J(\R)/2J(\R)$
gives a soluble real orbit for the split group.

There are $\binom{2m+1}{m}$ orbits of $\SO(n+1,n)$ on the self-adjoint operators $T$ with characteristic polynomial $f(x)$. One can identify these orbits as follows. Such an operator $T$ has $2m+1$ distinct real eigenspaces of dimension $1$ and $(n-m)$ stable subspaces of dimension $2$, which correspond to the pairs of complex conjugate eigenvalues. The orthogonal space $W(\R)$ decomposes as an orthogonal direct sum of these eigenspaces, and the 2-dimensional eigenspaces each have signature $(1,1)$. It follows that the orthogonal sum of the 1-dimensional eigenspaces has signature $(m+1,m)$, and the subset of $m$ eigenspaces which are negative definite determines the real orbit of $T$. 

Let 
$$\lambda_0 < \lambda_1 < \cdots < \lambda_{2m}$$
be the real roots of $f(x)$. The operators $S$ in the distinguished orbit have eigenvectors $S(w_i) = \lambda_i.w_i$ with $\langle w_i, w_i \rangle = (-1)^i$. In other words, the signs of the inner products of the eigenvectors of an operator in the distinguished orbit are
\begin{equation}\label{distsigns}
 + ~ - ~ + ~ - \;\cdots\; + \,.
 \end{equation}
Using the explicit description of the map $\delta : J(\R)/2J(\R) \rightarrow H^1(\R, J[2])$, we see that for a point $P = (a,b)$ on the curve, with $\lambda_{2k} < a < \lambda_{2k+1}$, the class of the divisor $D = (P) - (O)$ gives an orbit with eigenvectors $w_i$ satisfying $\langle w_i, w_i \rangle = (-1)^i$ for $i \leq 2k$ and $\langle w_i, w_i \rangle = (-1)^{i+1}$
for $i \geq 2k+1$. It follows that the signs of the inner products of eigenvectors of an operator in a soluble real orbit have the form
\begin{equation}\label{solublesigns}
+ ~~ \epsilon_1~ ~\epsilon_2~~ \cdots~~ \epsilon_m
\end{equation}
where each $\epsilon_i$ is equal to either  $(-~+)$ or $(+~-)$.

The space $\R^{2n} - \{\Delta = 0\}$ of characteristic polynomials with nonzero discriminant has
$n + 1$ components $I(m)$ indexed by integers $0 \leq m \leq n$, where $2m+1$ is the number of real roots. We get a similar decomposition of the space $V (\R) - \{\Delta = 0\}$ of self-adjoint operators $T$ with characteristic polynomials in $I(m)$. We label the various components $V^{(m,\tau)}$ in $V(\R)$ which map to the component $I(m)$ in $\R^{2n}- \{\Delta = 0\}$ by integers $1 \leq \tau \leq \binom{2m+1}{m}$, and let $V^{(m,1)}$ be the set of operators which lie in the distinguished orbits. 

The integer $\tau$ indexes a choice of signs $\langle w_i, w_i \rangle = \pm1$ for the inner products of the $2m+1$ real eigenvectors $w_i$ of $T$, with $m$ signs equal to $-1$. We use $\tau=1$ to designate the distinguished orbit, where we have seen that the signs for the inner products, ordered for increasing real eigenvalues, are given by~(\ref{distsigns}). We use the first $2^m$ indices $\tau=1,\ldots, 2^m$ to denote the soluble orbits, whose signs are described in~(\ref{solublesigns}). Note that the stabilizer of every vector $T$ in the component $V^{(m,\tau)}$ has order $2^{m+n}$, independent of~$\tau$.

\subsection{Orbits over a global field}

Finally, we consider the case when $k$ is a global field. In this case the group $H^1(k,D) = H^1(k,J[2])$ is infinite. We will see why there are also infinitely many classes in the kernel of $\eta$, so infinitely many orbits of self-adjoint operators with characteristic polynomial $f(x)$.  First, we show that every class in the 2-Selmer group lies in the kernel of $\eta$.
\begin{proposition}\label{global}
Every class $\alpha$ in the $2$-Selmer group $S_2(J)\subset H^1(k,J[2])$ lies in the kernel of $\eta$, so corresponds to an orbit of self-adjoint operators over $k$ with characteristic polynomial $f(x)$.
\end{proposition}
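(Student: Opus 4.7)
The plan is to deduce this from the local analog (Proposition~\ref{kernel}) together with the Hasse principle for quadratic forms. First I would unwind the definition of the $2$-Selmer group: a class $\alpha \in H^1(k, J[2])$ lies in $S_2(J)$ precisely when, for every place $v$ of $k$, its localization $\alpha_v \in H^1(k_v, J[2])$ lies in the image of the Kummer map $J(k_v)/2J(k_v) \hookrightarrow H^1(k_v, J[2])$. Identifying $H^1(k, J[2])$ with $H^1(k, D) = (L^*/L^{*2})_{N \equiv 1}$ via Proposition~\ref{isom}, we view $\alpha$ and each of its localizations as classes in this Kummer-theoretic description.

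The key step is then to apply Proposition~\ref{kernel} locally. For each place $v$, Proposition~\ref{kernel} (applied over the field $k_v$) tells us that the subgroup $J(k_v)/2J(k_v)$ lies in the kernel of the local map $\eta_v \colon H^1(k_v, D) \to H^1(k_v, \SO(W))$. Since $\alpha_v \in J(k_v)/2J(k_v)$ by the Selmer condition, it follows that $\eta_v(\alpha_v) = 0$ for every $v$. Because localization commutes with the homomorphism $\eta$ (it is induced by a homomorphism of group schemes defined over $k$), we conclude that the class $\eta(\alpha) \in H^1(k, \SO(W))$ becomes trivial in $H^1(k_v, \SO(W))$ for every place $v$ of $k$.

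Finally I would invoke the Hasse principle to upgrade this to global triviality. The pointed set $H^1(k, \SO(W))$ classifies non-degenerate quadratic spaces over $k$ of dimension $2n+1$ and discriminant $\equiv 1$ that become isometric to $W$ over $k^s$; the class $\eta(\alpha)$ corresponds to the twisted form $(L, (\,,\,)_\alpha)$ constructed in Section~4. By Hasse--Minkowski, two non-degenerate quadratic spaces over a global field are isometric if and only if they are isometric at every completion. Since $\eta(\alpha)_v = 0$ for all $v$ means $(L, (\,,\,)_\alpha)$ is split (isometric to $W$) over every $k_v$, the Hasse principle forces $(L, (\,,\,)_\alpha)$ to be split over $k$, i.e., $\eta(\alpha) = 0$. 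Combined with Proposition~\ref{cohomology}, this gives the desired orbit of self-adjoint operators with characteristic polynomial $f(x)$.

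The only substantive step is the appeal to Hasse--Minkowski; the rest is formal manipulation of the descent sequence and the definition of the Selmer group. One minor subtlety worth checking is that $H^1(k, \SO(W))$, defined via non-abelian Galois cohomology, genuinely satisfies the local-global principle (i.e., has trivial Tate--Shafarevich set)—this follows from the classification of quadratic forms of odd rank over global fields, where the only invariants are the local Hasse--Witt invariants and the discriminant, all controlled locally.
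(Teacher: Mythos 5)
Your proposal is correct and follows exactly the paper's argument: unwind the Selmer condition as local solubility, apply Proposition~\ref{kernel} over each completion $k_v$ to see that the quadratic space $(L,(\,,\,)_\alpha)$ is split at every place, and conclude by Hasse--Minkowski that it is split over $k$, so $\eta(\alpha)=0$. Nothing substantive differs.
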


\begin{proof}
Let $\alpha$ be a class in the $2$-Selmer group.  By definition, these are the classes in $H^1(k,J[2])$ whose restriction to $H^1(k_v,J[2])$ is in the image of $J(k_v)/2J(k_v)$ for every completion $k_v$. By Proposition~\ref{kernel}, the orthogonal space $U_v$ associated to the restriction of the  class $\alpha$ in $H^1(k_v,\SO(W))$ is split over $k_v$, for every valuation $v$ of $k$. By the theorem of Hasse and Minkowski, a non-degenerate orthogonal space $U$ of dimension $2n+1$ is split over $k$ if and only if $U \otimes k_v$ is split over every completion~$k_v$. Hence the orthogonal space $U$ associated to $\alpha$ is split over $k$, and $\alpha$ lies in the kernel of $\eta$. 
\end{proof}

We call the orbits corresponding to the 2-Selmer group {\it locally soluble}, as the Fano variety $F_T$ associated to $T$ has points over $k_v$ for all completions $v$. For each $f(x)$, these locally soluble orbits are finite in number, as the $2$-Selmer group is finite.

We have already remarked that, for any class $d \in k^*/k^{*2}$, the finite group scheme $J[2]$ is isomorphic to the $2$-torsion subgroup $J_d[2]$ of the Jacobian of the hyperelliptic curve $C_d$ with equation $d y^2 = f(x).$ Using similar methods, we can also obtain $k$-rational orbits with characteristic polynomial $f(x)$ from classes in the $2$-Selmer group of $J_d$ over $k$. Since the $2$-Selmer groups of these quadratic twists of $C$ become arbitrarily large finite elementary abelian $2$-groups (see~\cite{Bol} for the case $n=1$), the number of $k$-rational orbits with characteristic polynomial $f(x)$ is infinite. 

We have proven Theorem~\ref{orbit}, and the remarks following it.

\section{Nilpotent orbits and Vinberg's theory}

The results in this section will not be needed in the proof of the main theorems of this paper; we have included them here to place the orbits in this            
representation, as well as the hyperelliptic curves which appear in their study, into a larger context.                                                      

We say that an orbit $T$ of $\SO(W)$ on $V$ is {\it nilpotent} if the characteristic polynomial of $T$ is equal to $f(x) = x^{2n+1}$. We say that it is {\it regular nilpotent} if the minimal polynomial of $T$ is also equal to $x^{2n+1}$. One can show that the group $\SO(W)(k)$ acts simply-transitively on regular nilpotent elements in $V(k)$, i.e., there is a single orbit with trivial stabilizer. A representative operator $E$ on the standard basis is given by:
$$E: f_1\rightarrow f_2\rightarrow \cdots \rightarrow f_n \rightarrow u \rightarrow e_n \rightarrow \cdots\rightarrow e_2 \rightarrow e_1 \rightarrow 0.$$

We say that a nilpotent orbit $T$ is sub-regular if the minimal polynomial of $T$ is equal to $x^{2n}$. An example of a sub-regular nilpotent operator $E'$ is given on the standard basis by:
$$E': f_1\rightarrow f_2\rightarrow \cdots \rightarrow f_n \rightarrow e_n \rightarrow \cdots \rightarrow e_2 \rightarrow e_1 \rightarrow 0, ~ u \rightarrow 0.$$
The subregular nilpotent orbits of $\SO(W)(k)$ are indexed by the distinct classes $d \in k^*/k^{*2}$ and represented by the operators $d \cdot E'$.

When the characteristic of $k$ is either $0$ or is greater than $2n+1$, the representation $V$ of $\SO(W)$ appears in Vinberg's theory of torsion automorphisms $\theta$, where nilpotent orbits play a special role \cite{P}. We review this connection briefly here, as a model for future work on those automophisms $\theta$ which lift regular elliptic classes in the Weyl group. 

To do this, we change our notation for the rest of this section to conform with Vinberg, and let $G = \SL(W)$ with $W$ of dimension $2n+1$ over $k$. This reductive algebraic group has Lie algebra $\frak g = \{T \in \End(W): \Trace(T) = 0\}$. The outer (pinned) involution $\theta(g) = (g^*)^{-1}$ has fixed subgroup $G^{\theta} = G(0) = \SO(W)$ with Lie algebra $\frak g(0) = \{T \in \End(V): T + T^* = 0\}$. The fixed subgroup $\SO(W)$ acts on the non-trivial eigenspace for $\theta$ on the Lie algebra:
$$\frak g(1) = \{T \in \End(V): T^* = T, ~ \Trace(T) = 0 \} $$
and this is precisely the representation $V$ that we have been studying. The invariant polynomials for
$G(0)$ on $\frak g(1)$ in this case are the restriction of the invariant polynomials $c_2,c_3,\ldots,c_{2n+1}$ for $G$ on the adjoint representation $\frak g$. This gives a polynomial map to the geometric quotient:
$$p: \frak g(1) \rightarrow \frak g(1)/\!\!/ G(0) = \Spec k[c_2,\ldots,c_{2n+1}].$$

The regular nilpotent vector $E$ in $\frak g(1)$ can be completed to a principal $\frak{sl}_2$-triple $[E,F,H]$, with $F$ regular nilpotent in $\frak g(1)$ and $H$ regular semi-simple in $\frak g(0)$. Similarly, the subregular nilpotent vector $E'$ in $\frak g(1)$ can be completed to a subregular $\frak{sl}_2$-triple $[E',F',H']$, with $F'$ subregular nilpotent in $\frak g(1)$ and $H'$ semi-simple in $\frak g(0)$. The centralizer $z(F)$ of $F$ in the Lie algebra $\frak g$ is abelian of dimension $2n$ and is contained in $\frak g(1)$. If we restrict the invariant polynomials to the affine subspace $E + z(F)$ of $\frak g(1)$, the resulting map
$$p: E + z(F) \rightarrow \Spec k[c_2,\ldots, c_{2n+1}]$$
is a bijection, whose inverse is called the Kostant section \cite{P}. The isotropic subspace $X$ of $W$ spanned by $\{f_1,f_2,\ldots,f_n\}$ satisfies the condition: $T(X)$ is contained in $X^{\perp}$ for all operators $T$ in the Kostant section. It also contains no $T$-invariant subspace $Y$ except for $Y = 0$. Hence the Kostant section gives a representative for each distinguished orbit, once a representative $E$ of the regular nilpotent orbit has been chosen.

In his Harvard PhD thesis, J.\ Thorne \cite{Th} studies the centralizer $z(F')$ of a subregular nilpotent vector $F'$ in $\frak g$. This has dimension $2n+2$, and the eigenspace $z(F')(1)$ has dimension $2n+1$ in $\frak g(1)$. The restriction of the invariant polynomials to the affine subspace $E' + z(F')(1)$ of $\frak g(1)$ defines a family
$$p : E' + z(F')(1) \rightarrow \Spec k[c_2,\ldots,c_{2n+1}]$$
of affine curves over the geometric quotient, which can be identified with the affine hyperelliptic curves $y^2 = f(x)$. They are smooth over the open subset where $\Delta \neq 0$. If one starts instead with the subregular vector $d\cdot E'$ and forms the corresponding $\frak{sl}_2$-triple, one gets the family of affine curves $d y^2 = f(x)$, with the same $2$-torsion in their Picard groups. 

\section{Integral orbits}

Fix a separable polynomial $f(x) = x^{2n+1} + c_2x^{2n-1} + c_3x^{2n-2} + \cdots + c_{2n+1}$ over $\Q$, let $C$ be the hyperelliptic curve with equation $y^2 = f(x)$ and let $J$ be its Jacobian. We have seen that classes in the $2$-Selmer group of $J$  give locally soluble orbits for the split orthogonal group $\SO(W)(\Q)$ on~$V(\Q)$. However, since there are an infinite number of rational orbits and the Selmer group is finite, not all of the orbits over $\Q$ are locally soluble. In this section, we consider those orbits with an integral representative, in a sense to be defined below. We will show that this is a finite set that contains the locally soluble orbits.

\subsection{Orbits over $\Z_p$ for $p\neq 2$}

We begin by discussing the integral orbits over $\Z_p$, with $p \neq 2$. In this case the group $G = \SO(W)$ is smooth over $\Z_p$, and the lattice $W \otimes \Z_p$ has rank $2n+1$ and discriminant
$\disc(W) = (-1)^n\det(W) \equiv 1$ in $\Z_p/\Z_p^{*2}$.

\begin{lemma}
Assume that $p \neq 2$ and let $I$ be a $\Z_p$-module of rank $2n+1$ equipped with a symmetric bilinear form $I \times I \rightarrow \Z_p$ whose discriminant is the square of a unit. Then $I$ is isometric to $W$ over~$\Z_p$.
\end{lemma}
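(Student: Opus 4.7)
The plan is to reduce the lemma to the standard classification of unimodular symmetric $\Z_p$-lattices for odd $p$: two such lattices of the same rank are isometric if and only if their discriminants agree modulo $(\Z_p^*)^2$. Granting this, the conclusion is immediate. By hypothesis $\disc(I)$ is the square of a unit, hence trivial in $\Z_p^*/(\Z_p^*)^2$. For $W$, the anti-diagonal Gram matrix $A$ satisfies $\det(A) = (-1)^n$, so $\disc(W) = (-1)^n\det(A) = 1$, also trivial. Thus $I \cong W$ over $\Z_p$.

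To establish the classification step, I would first diagonalize $I$. Since $\disc(I) \in \Z_p^*$, the reduction $I \otimes \F_p$ is non-degenerate. Because $p$ is odd, polarization prevents the quadratic function $\bar v \mapsto \langle \bar v, \bar v\rangle$ from vanishing on all of $I/pI$ (otherwise $2\langle \bar v, \bar w\rangle$ would also vanish identically, contradicting non-degeneracy). Pick $\bar v$ with $\langle \bar v,\bar v\rangle \neq 0$ and lift to $v \in I$, so that $\langle v,v\rangle \in \Z_p^*$. Then $I = \Z_p v \oplus v^{\perp}$ with $v^{\perp}$ unimodular of rank one less, and iterating yields an orthogonal basis $I \cong \langle u_1, \ldots, u_{2n+1}\rangle$ with each $u_i \in \Z_p^*$.

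Finally, I would normalize this diagonal form using the identity $\langle a, b\rangle \cong \langle 1, ab\rangle$ over $\Z_p$ for $p$ odd. This identity follows from the fact that any non-degenerate binary form over $\F_p$ represents $1$ (by a standard pigeonhole between the values of $ax^2$ and $1-by^2$), and this representation lifts to $\Z_p$ via Hensel's lemma. Applied iteratively, it gives $I \cong \langle 1, 1, \ldots, 1, \prod u_i\rangle$; since $\prod u_i$ equals $\pm\disc(I)$ and is thus a unit square, rescaling the last basis vector by a square root in $\Z_p^*$ normalizes the form to $\langle 1, \ldots, 1\rangle$. The same reduction applied to $W$ yields the same normal form, completing the identification.

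No step presents a genuine obstacle; this is essentially the $p$-adic case of the classical diagonalization of quadratic forms. The only point to watch is the convention $\disc = (-1)^n\det$ on rank $2n+1$, which is what makes the hypothesis on $I$ and the computation $\disc(W) = 1$ compatible.
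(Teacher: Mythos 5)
Your argument takes a genuinely different route from the paper's. The paper invokes the classification of quadratic $\Q_p$-spaces: there are exactly two of dimension $2n+1$ and discriminant $\equiv 1$, the anisotropic (non-split) one contains no unimodular $\Z_p$-lattice, and in the split space all such lattices are conjugate (the stabilizer being a hyperspecial maximal compact). You instead diagonalize directly over $\Z_p$ and reduce to a normal form via $\langle a,b\rangle\cong\langle 1,ab\rangle$. Your approach is more elementary and explicit; the paper's buys the extra structural information (hyperspecial maximal compact stabilizer) for free.

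One arithmetic slip in the normalization step: after reducing to $\langle 1,\ldots,1,\prod u_i\rangle$, you assert that $\prod u_i$ ``is thus a unit square'' because it equals $\pm\disc(I)$. This is false in general. What's true is $\prod u_i\equiv\det(I)\equiv(-1)^n$ in $\Z_p^*/(\Z_p^*)^2$; when $n$ is odd and $p\equiv 3\pmod 4$, this class is $-1$, which is not a square, so the normal form is $\langle 1,\ldots,1,-1\rangle$, not $\langle 1,\ldots,1\rangle$. Your conclusion survives anyway, because $\det(W)=\det(A)=(-1)^n$ as well, so $W$ reduces to the \emph{same} normal form $\langle 1,\ldots,1,(-1)^n\rangle$; what you actually need (and what is true) is that $\det(I)$ and $\det(W)$ agree modulo unit squares, not that either is itself a square. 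With that correction, the proof is complete and correct.
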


This follows from the fact that there are, up to isomorphism, only two orthogonal spaces over $\Q_p$ of dimension $2n+1$ and discriminant $\equiv 1$. The non-split orthogonal space does not contain a non-degenerate lattice $I$ over $\Z_p$, and the corresponding lattices in the split orthogonal space are all conjugate. The stabilizer of such a lattice is a hyperspecial maximal compact subgroup of $\SO(W)(\Q_p)$.

Using this lemma, we can give a simple classification of the orbits of $\SO(W)(\Z_p)$ on the self-adjoint operators $T: W \otimes \Z_p \rightarrow W \otimes \Z_p$ with a fixed characteristic polynomial $f(x)$ over $\Z_p$, which is assumed to be separable over $\Q_p$.  Let $R = \Z_p[x] / (f(x))$, which is a $\Z_p$-order in the \'etale $\Q_p$-algebra $L = \Q_p[x]/ (f(x))$. We say that a $\Z_p$-lattice $I$ in $L$ that spans $L$ over $\Q_p$ is a {\it fractional ideal for $R$} if it is stable under multiplication by $R$. Such an ideal $I$ has a norm $N(I)$, which is an element of $\Q_p^*/\Z_p^*$: if $p^aI$ is contained in $R$ and the order of the finite $\Z_p$-module $R/p^aI$ is $p^b$, then $N(I) \equiv p^{b-(2n+1)a}$. 
If $\alpha$ is an element of $L^*$, then we have the principal fractional ideal $(\alpha)$ consisting of all $R$-multiples of~$\alpha$. This has norm $\equiv N(\alpha)$. 

We will define an equivalence relation on pairs $(I, \alpha)$, where $I$ is a fractional ideal for $R$, $\alpha$ is an element of $L^*$ whose norm is a square in $\Q_p^*$, the square $I^2$ of the ideal $I$ is contained in the principal ideal $(\alpha)$, and the square of its norm satisfies $N(I)^2 \equiv N(\alpha)$  (mod $\Z_p^{*2})$.  We say that the pair $(I',\alpha')$ is {\it equivalent} to $(I,\alpha)$ if there is an element $c$ in $L^*$ with $I' = cI$ and $\alpha' = c^2\alpha$. 

If $R$ is maximal in $L$, every fractional ideal $I$ is principal and
every equivalence class is represented by a pair $(R,\alpha)$, where
$\alpha$ is a unit having square norm to $\Z_p^*$. In this case, the
set of equivalence classes form a finite group $(R^*/R^{*2})_{N\equiv
  1}$. In general, the equivalence classes form a finite set which
contains the finite group $(R^*/R^{*2})_{N\equiv 1}$ corresponding to
those pairs where the ideal $I$ is principal. Since the ring $R$ is
generated by a single element $\beta$ over $\Z_p$, it is Gorenstein of
dimension one. Hence the ideal $I$ is principal if and only if it is
{\it proper}, i.e., $\End_R(I) = R$ (see~\cite {Bass}).

The set of equivalence classes always maps to the group $(L^*/L^{*2})_{N \equiv 1}$, by taking the class of $(I,\alpha)$ to the class of $\alpha$. This map need not be injective---for example, when the ring $R$ is not maximal, a unit~$\alpha$ that is not a square in $R^*$ may become a square in $L^*$.

\begin{proposition}\label{podd}
Assume that $p \neq 2$ and that $f(x) = x^{2n+1} + c_2x^{2n-1} + c_3x^{2n-2} + \cdots + c_{2n+1}$ is a polynomial with coefficients in $\Z_p$ and nonzero discriminant.
Then the integral orbits of $\SO(W)(\Z_p)$ on self-adjoint operators $T$ in $V(\Z_p)$ with characteristic polynomial $f(x)$ correspond to the equivalence classes of pairs $(I, \alpha)$ for the order $R = \Z_p[x]/(f(x))$. 
The stabilizer  in $\SO(W)(\Z_p)$ of $\,T$ is isomorphic to $S^*[2]_{N=1}$, where $S = \End_R(I)$.
The integral orbit corresponding to the pair $(I,\alpha)$ maps to the rational orbit of $\,\SO(W)(\Q_p)$ on $V(\Q_p)$ corresponding to the class of $\alpha$ in $(L^*/L^{*2})_{N \equiv 1}$.
\end{proposition}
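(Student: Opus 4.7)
The plan is to set up an explicit dictionary between $\SO(W)(\Z_p)$-orbits on integral self-adjoint operators in $V(\Z_p)$ with characteristic polynomial $f(x)$ and equivalence classes of pairs $(I,\alpha)$ by the standard ``module-with-bilinear-form'' method. Given an integral self-adjoint $T$, Cayley--Hamilton makes $W\otimes\Z_p$ a faithful $R$-module via $\beta\mapsto T$. Since $f$ is separable over $\Q_p$, the operator $T$ is regular on $W\otimes\Q_p$ (its minimal and characteristic polynomials both equal $f$), so $W\otimes\Q_p$ is free of rank one over $L$ and $W\otimes\Z_p$ is a torsion-free $R$-module of $R$-rank one. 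An $L$-linear isomorphism $\iota:W\otimes\Q_p\to L$ intertwining $T$ with multiplication by $\beta$ then identifies $\iota(W\otimes\Z_p)$ with a fractional $R$-ideal $I\subset L$.

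The bilinear form on $W\otimes\Z_p$ transports via $\iota$ to a symmetric, $R$-invariant (because $T$ is self-adjoint) $\Q_p$-bilinear form on $L$. Since the space of $R$-invariant $\Q_p$-bilinear forms on $L$ is one-dimensional over $L$ and spanned by the Euler trace form $(\lambda,\nu)\mapsto\Trace_{L/\Q_p}(\lambda\nu/f'(\beta))$, the transported form equals $(\lambda,\nu)_{\alpha}:=\Trace_{L/\Q_p}(\lambda\nu/(\alpha f'(\beta)))$ for a unique $\alpha\in L^*$. Changing $\iota$ by multiplication by $c\in L^*$ replaces $(I,\alpha)$ by $(cI,c^{2}\alpha)$, which is the equivalence relation of the proposition. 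The three conditions on $(I,\alpha)$ now fall out: integrality of the form on $I\times I$, combined with the self-duality of $R$ under $\Trace(\cdot/f'(\beta))$, is exactly $I^{2}\subset(\alpha)$; the discriminant of the form on $I$ is $N(\alpha)^{-1}N(I)^{2}$ times the unit-square discriminant of $R$, so demanding a unit-square discriminant (needed, by the lemma preceding the proposition, to ensure isometry with $W\otimes\Z_p$) yields $N(\alpha)\in\Q_p^{*2}$ (encoding $\det\equiv 1$) and $N(I)^{2}\equiv N(\alpha)\pmod{\Z_p^{*2}}$. Conversely, starting from a valid pair $(I,\alpha)$, the lemma supplies an isometry $\theta:(I,(\cdot,\cdot)_{\alpha})\cong W\otimes\Z_p$, and $T=\theta\circ\beta\circ\theta^{-1}$ is the associated integral self-adjoint operator with characteristic polynomial $f(x)$.

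The stabilizer, the map to the rational orbit, and the remaining subtleties all follow from this dictionary. An element of the stabilizer of $T$ corresponds to a $c\in L^*$ with $cI=I$, $c^{2}=1$, and $N(c)=1$; the first of these says $c\in S^*$ for $S=\End_R(I)$, giving stabilizer $S^*[2]_{N=1}$, and the passage between $\O(W)$- and $\SO(W)$-equivalence is automatic because $\dim W$ is odd so $-\mathrm{id}\in\O(W)$ acts trivially on $V$. Tensoring $(I,\alpha)$ with $\Q_p$ forgets $I$ and delivers the rational orbit parametrized by $\alpha\in(L^*/L^{*2})_{N\equiv 1}$ via Section~4.2. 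The main technical obstacle will be the non-principal case: there the Gorenstein property of $R=\Z_p[x]/(f(x))$ (a one-dimensional local complete intersection) is essential, since it guarantees that for every fractional ideal $I$ the endomorphism ring $S=\End_R(I)$ is an order with $R\subset S\subset L$, that the self-duality of $R$ used in deriving $I^{2}\subset(\alpha)$ extends correctly to the non-principal setting, and, as in Bass, that $I$ is principal if and only if $\End_R(I)=R$.
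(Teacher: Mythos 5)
Your dictionary between integral self-adjoint operators and equivalence classes of pairs $(I,\alpha)$ — transport via an $L$-linear isomorphism intertwining $T$ with multiplication by $\beta$, identification of the transported form with the $\alpha$-twisted Euler trace form, reduction to the preceding lemma on lattices of unit-square discriminant, and the stabilizer computation via $\End_R(I)^*[2]_{N=1}$ — is precisely the argument given in the paper. The only unnecessary caveat is your closing worry about the non-principal case: the construction is uniform in $I$, and the Gorenstein property of $R$ together with Bass's criterion are invoked in the surrounding discussion (to characterize when $I$ is principal, i.e.\ proper), not as ingredients of this proof.
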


\begin{proof}
Starting with a pair $(I,\alpha)$ we construct an integral orbit as follows. Recall that  $L = \Q_p + \Q_p\beta + \cdots + \Q_p\beta^{2n}$. Define the symmetric bilinear pairing $I \times I \rightarrow \Z_p$ by $(\lambda, \nu)_{\alpha^{-1}} := $ the coefficient of $\beta^{2n}$ in the product $\alpha^{-1}\lambda\nu$. Since the square $I^2$ of the ideal $I$ is contained in the principal ideal $(\alpha)$, this product lies in the subring $R = \Z_p + \Z_p\beta + \cdots + \Z_p\beta^{2n}$ and the coefficient of $\beta^{2n}$ is integral. Since $N(I)^2 \equiv N(\alpha)$, this pairing has discriminant $\equiv 1$. By the above lemma, the bilinear module $I$ is isometric to $W$ over $\Z_p$. Since $I$ is an ideal of $R$, multiplication by $\beta$ gives a self-adjoint operator on $I$ with characteristic polynomial $f(x)$. Translating this operator to $W$ by the isometry gives the desired integral orbit. Since a self-adjoint $T: W \rightarrow W$ gives $W$ the structure of a torsion-free $\Z_p[T] = R$-module of rank one, every integral orbit arises in this manner. 
The centralizer of $T$ in $\End(W)$ is equal to $S = \End_R(I)$, and the elements in this centralizer that also lie in $\SO(W)$ are the elements that have order $2$ (since they are both self-adjoint and orthogonal) and norm 1 (since this is the condition that the determinant $=1$).
Finally, since the bilinear form defined above is equivalent to the form $(\lambda, \nu)_{\alpha}$ over $\Q_p$, it maps to the rational orbit of $\alpha$. This completes the proof of Proposition~\ref{podd}. 
\end{proof}

%

As a corollary, we obtain the following.

\begin{corollary}\label{unitsubgroup}
Assume that $f(x)$ has coefficients in $\Z_p$ and that the ring $R = \Z_p[x] / (f(x))$ is maximal in $L$. Then the integral orbits correspond to classes $\alpha$ in the unit subgroup $(R^*/R^{*2})_{N \equiv 1}$ of $(L^*/L^{*2})_{N \equiv 1}$. Every rational orbit whose class lies in the unit subgroup has a unique integral representative.
\end{corollary}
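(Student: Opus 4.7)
The plan is to deduce this directly from Proposition~\ref{podd}, which classifies integral orbits by equivalence classes of pairs $(I,\alpha)$, and then use that maximality of $R$ collapses the pair to $\alpha$ alone.

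First I would use that $R$, being a $\Z_p$-order in the \'etale $\Q_p$-algebra $L$ that is maximal, must equal the product of the rings of integers of the factors of $L$; in particular $R = \prod_i O_i$ is a finite product of complete discrete valuation rings. Consequently every fractional $R$-ideal $I\subset L$ is principal, say $I = (c)$ for some $c\in L^*$. Using the equivalence $(I,\alpha)\sim (c^{-1}I, c^{-2}\alpha)$ from Proposition~\ref{podd}, every equivalence class of pairs is represented by one of the form $(R, \alpha)$.

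Next I would simplify the constraints on $\alpha$ in such a normalized pair. The condition $I^2\subseteq (\alpha)$ becomes $R\subseteq (\alpha)$, i.e., $\alpha^{-1}\in R$. Writing $\alpha=(\alpha_i)\in L^*=\prod L_i^*$, this says $v_i(\alpha_i)\leq 0$ for every $i$. The norm condition $N(I)^2\equiv N(\alpha)\pmod{\Z_p^{*2}}$ now reads $N(\alpha)\in \Z_p^{*2}$; in particular $v_p(N(\alpha))=\sum_i f_i v_i(\alpha_i)=0$ with $f_i$ the residue degree of $L_i$. Combined with $v_i(\alpha_i)\leq 0$, this forces $v_i(\alpha_i)=0$, so $\alpha\in R^*$. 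Two such normalized pairs $(R,\alpha)$ and $(R,\alpha')$ are equivalent precisely when there exists $c\in L^*$ with $cR=R$ and $\alpha'=c^2\alpha$; the first condition gives $c\in R^*$. Hence the set of equivalence classes is in natural bijection with $(R^*/R^{*2})_{N\equiv 1}$, proving the first claim.

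For the second claim, I would verify that the natural map $(R^*/R^{*2})_{N\equiv 1}\to (L^*/L^{*2})_{N\equiv 1}$ is injective; granting this, the image (the ``unit subgroup'') consists of exactly those rational orbit classes that have an integral lift, and that lift is unique. Injectivity is the same local calculation as above: if $\alpha,\alpha'\in R^*$ and $\alpha'=u^2\alpha$ for some $u\in L^*$, then $u^2=\alpha'/\alpha\in R^*$, so each component $u_i\in L_i^*$ satisfies $2v_i(u_i)=0$; since $L_i$ is a field of characteristic $0$ with a discrete valuation, this forces $v_i(u_i)=0$, and thus $u\in R^*$. Hence $\alpha$ and $\alpha'$ represent the same class in $R^*/R^{*2}$, and a single $\SO(W)(\Z_p)$-orbit lies over each admissible rational orbit.

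There is no real obstacle here beyond careful bookkeeping with the factorization $L=\prod L_i$, $R=\prod O_i$; all the substantive content is already in Proposition~\ref{podd} and in the observation that maximality of $R$ trivializes the fractional-ideal data.
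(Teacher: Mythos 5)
Your proof is correct and follows essentially the same route as the paper: the paper sketches exactly this argument (maximality $\Rightarrow$ every fractional ideal is principal $\Rightarrow$ equivalence classes are represented by pairs $(R,\alpha)$ with $\alpha$ a unit of square norm, and the map to $(L^*/L^{*2})_{N\equiv1}$ is injective because a unit that becomes a square in $L^*$ is already a square in the maximal order) in the discussion preceding Proposition~\ref{podd}, and then records the corollary. Your writeup usefully fills in the valuation-theoretic details justifying both that the normalized $\alpha$ must be a unit and that the map to $(L^*/L^{*2})_{N\equiv1}$ is injective.
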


An important special case of the corollary is when the discriminant
$\disc(f)$ of the characteristic polynomial is a unit in
$\Z_p^*$. Then the hyperelliptic curve $C$ with affine equation $y^2 =
f(x)$ over $\Z_p$ has good reduction modulo $p$. The Jacobian $J$ of
$C$ also has good reduction, and the finite group scheme $J[2] $ is
\'etale over $\Z_p$. It is obtained by restriction of scalars from the
finite \'etale extension $R$ of $\Z_p$, i.e., $J[2]$ is isomorphic to
the kernel of the map $\Res_{R/\Z_p}(\mu_2)\rightarrow \mu_2$.  Taking
the \'etale cohomology of the exact sequence of group schemes $0
\rightarrow J[2] \rightarrow J \rightarrow J \rightarrow 0$ over
$\Z_p$, we see that the image of $J(\Q_p)/2J(\Q_p)$ in $H^1(\Q_p,J[2])
= (L^*/L^{*2})_{N\equiv1}$ is precisely the unit subgroup
$H^1(\Z_p,J[2]) = (R^*/R^{*2})_{N\equiv1}$. 


More generally, we have the following result, which certainly holds
(cf. \cite[Prop.\ 4.6]{Stoll}) whenever the discriminant of $f(x)$ is a
unit in $\Z_p^*$ or is exactly divisible by $p$.

\begin{corollary}\label{unitmax}
  Assume that $f(x)$ has coefficients in $\Z_p$ and that the order
  $\Z_p[x]/(f(x)) = R$ is maximal in the \'etale algebra $\Q_p[x]/(f(x)) =
  L$. Then the integral orbits with characteristic polynomial $f(x)$
  are in bijection with the soluble orbits over $\Q_p$, i.e., those
  whose classes lie in the image of the subgroup $J(\Q_p)/2J(\Q_p)$.
Moreover, the stabilizer in $\SO(W)(\Z_p)$ of an element in such an orbit is isomorphic to $J[2](\Q_p)$.                          
\end{corollary}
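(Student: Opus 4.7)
The plan is to combine Corollary~\ref{unitsubgroup} with an identification of the image of the $2$-descent map, and to invoke Proposition~\ref{podd} for the stabilizer claim. By Corollary~\ref{unitsubgroup}, the integral orbits of $\SO(W)(\Z_p)$ on $V(\Z_p)$ with characteristic polynomial $f(x)$ correspond bijectively to elements of the unit subgroup $(R^*/R^{*2})_{N\equiv 1}\subseteq (L^*/L^{*2})_{N\equiv 1}$, and each rational orbit with class in this subgroup has a unique integral lift. Meanwhile, by Proposition~\ref{kernel}, the soluble $\Q_p$-orbits with characteristic polynomial $f(x)$ correspond bijectively to the image of $J(\Q_p)/2J(\Q_p)$ in $H^1(\Q_p,J[2]) = (L^*/L^{*2})_{N\equiv 1}$. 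Thus the first claim reduces to showing that these two subgroups of $(L^*/L^{*2})_{N\equiv 1}$ coincide whenever $R$ is the maximal order in $L$.

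For the inclusion of the image of $\delta\colon J(\Q_p)/2J(\Q_p) \to (L^*/L^{*2})_{N\equiv 1}$ inside $(R^*/R^{*2})_{N\equiv 1}$, I will use Schaefer's explicit formula $\delta((P)-(O))\equiv (a-\beta)$ for $P=(a,b)\in C(\Q_p)$ with $b\neq 0$. When $R$ is the maximal order, an N\'eron model argument (precisely Proposition~4.6 of \cite{Stoll}, which is exactly what is being invoked in the statement) shows that the class of $a-\beta$ modulo $L^{*2}$ can always be adjusted by a square in $L^*$ so as to be represented by a unit in $R^*$. The reverse inclusion, and hence equality, then follows by comparing cardinalities: since $p\neq 2$, the standard argument for $p$-adic Lie groups gives $|J(\Q_p)/2J(\Q_p)| = |J[2](\Q_p)|$, and since $L^*[2]\subset R^*$ (components are $\pm 1$) we have $|J[2](\Q_p)| = |L^*[2]_{N=1}| = |R^*[2]_{N=1}|$; a local computation, factoring $R = \prod_i O_{L_i}$ and using that each $O_{L_i}^*/O_{L_i}^{*2}$ has order $2$ for $p$ odd, then yields $|(R^*/R^{*2})_{N\equiv 1}| = |R^*[2]_{N=1}|$ as well. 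Hence the two subgroups have the same order, and one contains the other, so they are equal.

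For the stabilizer assertion, I will apply Proposition~\ref{podd}: the stabilizer of $T$ in $\SO(W)(\Z_p)$ equals $S^*[2]_{N=1}$, where $S = \End_R(I)$ for the pair $(I,\alpha)$ associated to $T$. Since $R$ is maximal, every fractional ideal $I$ is invertible, so $S = R$. Moreover $R^*[2] = L^*[2]$: any element of order dividing $2$ in $L = \prod L_i$ has components $\pm 1$, which are automatically units in the maximal order $R = \prod O_{L_i}$. Therefore the stabilizer is $R^*[2]_{N=1} = L^*[2]_{N=1} = D(\Q_p) = J[2](\Q_p)$, where the last identification is Proposition~\ref{isom}.

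The main obstacle is the first step of the equality $\operatorname{im}(\delta) = (R^*/R^{*2})_{N\equiv 1}$, namely showing that the image of the descent map lands in the unit subgroup when $R$ is merely maximal (not just unramified). This is the non-trivial input where ramification of $L/\Q_p$ must be handled carefully, and is the reason the statement appeals to \cite[Prop.~4.6]{Stoll}; the matching of orders is an elementary but slightly tedious local computation, while the stabilizer statement is immediate once the structure theorem in Proposition~\ref{podd} is in hand.
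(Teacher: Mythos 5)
Your high-level reduction matches the paper's: by Corollary~\ref{unitsubgroup} the integral orbits correspond to the unit subgroup $(R^*/R^{*2})_{N\equiv 1}$, so it suffices to show the image of $\delta\colon J(\Q_p)/2J(\Q_p)\to (L^*/L^{*2})_{N\equiv 1}$ lands inside the unit subgroup and then compare orders; your cardinality bookkeeping and the stabilizer computation via Proposition~\ref{podd} (using maximality to get $S=R$ and $R^*[2]_{N=1}=L^*[2]_{N=1}\cong J[2](\Q_p)$) are both exactly the paper's argument. The problem is the step you flag as ``the main obstacle'': you outsource it to Stoll's Proposition~4.6. But the paper's remark immediately before this corollary cites Stoll only for the special cases where $\disc(f)$ is a unit or exactly divisible by $p$, and the word ``more generally'' before the corollary signals that the statement goes beyond what Stoll's proposition gives. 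Thus your citation does not cover the full strength of ``$R$ is a maximal order,'' and the inclusion is left unproved.

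The paper's actual argument is a direct, self-contained valuation computation, and it is worth knowing since it is where maximality of $R$ is genuinely used. One writes $L=\prod L_i$, $R=\prod R_i=\prod \Z_p[\beta_i]$, and shows $v_{L_i}(x(P)-\beta_i)$ is even for every $\Q_p$-point $P$ and every $i$, so that $\delta((P)-(O))\equiv\prod(x(P)-\beta_i)$ lies in the unit subgroup modulo squares. If $v(x(P))<0$ the equation $y(P)^2=f(x(P))$ with $\deg f$ odd forces $v(x(P))$ even, and then $v(x(P)-\beta_i)=v(x(P))$ since $\beta_i$ is integral. If $v(x(P))\geq 0$ and $L_i$ is not totally ramified, maximality of $\Z_p[\beta_i]$ forces $\bar\beta_i\notin\F_p$, so $v(x(P)-\beta_i)=0$. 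If $L_i$ is totally ramified and $v(x(P)-\beta_i)>0$, then maximality of $R$ (comparison of the discriminant of $f$ with the product of the discriminants of the $R_j$) forces the residues $\bar\beta_j$ for $j\neq i$ to be distinct from and non-conjugate to $\bar\beta_i$, hence $v(x(P)-\beta_j)=0$ for $j\neq i$; since $L_i$ is totally ramified, $v_{L_i}(x(P)-\beta_i)$ equals the $\Q_p$-valuation of $N(x(P)-\beta)=y(P)^2$, which is even. You should either reproduce this argument or verify that the hypotheses of Stoll's Proposition 4.6 actually include the general maximal-order case (the paper's phrasing suggests they do not).
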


\begin{proof}
We know from Corollary~\ref{unitsubgroup} that the integral orbits
correspond to classes in the unit subgroup $(R^*/R^{*2})_{N \equiv 1}$
of $(L^*/L^{*2})_{N \equiv 1}$. Hence it suffices to show that the image
of $J(\Q_p)/2J(\Q_p)$ lies in this subgroup (as the two subgroups have
the same order). Write $L = \prod L_i$ as a product of fields, and 
let~$\beta_i$ denote the component of $\beta$ in $L_i$.  Then $R = \prod
R_i$, where $R_i = \Z_p[\beta_i]$ is the maximal order in $L_i$. We
will show that for any point $P = (x(P),y(P))$ on the curve $C$ over
$\Q_p$, the valuation of $x(P) - \beta_i$ in~$L_i$ is even for all
$i$, which will imply the first assertion of the corollary.

If the valuation of $x(P)$ is negative, it must be even. Indeed,
$y(P)^2 = x(P)^{2n+1} + c_2x(P)^{2n-1}+\cdots+c_{2n+1} = f(x(P))$, and
$f(x)$ has integral coefficients $c_k$. Since each $\beta_i$ is 
integral, the valuation of $x(P) - \beta_i$ is also negative and even,
equal to the valuation of $x(P)$. Next assume that the valuation of
$x(P)$ is $\geq 0$. If $L_i$ is not totally ramified, $\beta_i$ must
be a unit whose image in the residue field of $R_i$ is not in the
prime field $\F_p$. Hence the valuation of $x(P) - \beta_i$ is equal
to zero. If $L_i$ is totally ramified and the valuation of $x(P) -
\beta_i$ is strictly positive, then the valuation of $x(P) - \beta_j$
in $L_j$ must be zero for all $j \neq i$. Indeed, the images of $\beta_i$
and $\beta_j$ in their respective residue fields, when embedded in a
common algebraic closure of $\F_p$, cannot be equal or even
conjugate over $\F_p$.  Otherwise, the discriminant of the polynomial
$f(x)$ would be larger than the product of the discriminants of the
maximal orders $R_i$, which would contradict the maximality of the
ring $R$.
As $L_i$ is totally ramified, the valuation of $x(P) - \beta_i$
in~$L_i$ 
is equal to the valuation of its norm in $\Q_p$. Since the
valuation of $x(P) - \beta_j$ in $L_j$ is zero for all $j\neq i$,
the valuation of $x(P) - \beta_i$ in $L_i$ is equal to the valuation
of $N(x(P) - \beta) = y(P)^2$ in $\Q_p$, which is clearly even. 
Finally, to obtain the second assertion of the corollary we simply observe that, because~$R$ is maximal in $L$, we have $S^\ast[2]_{N=1}\cong R^\ast[2]_{N=1} \cong L^\ast[2]_{N=1} \cong J[2](\Q_p)$. 
This completes the proof.
\end{proof}

%

In general, given a $p$-adic rational orbit with integral
characteristic polynomial $f(x)$, we can ask if it is represented by
an integral self-adjoint operator $T: W \otimes \Z_p \rightarrow W
\otimes \Z_p$. That is, given an element $\alpha$ in $L^*$ with square
norm to $k^*$, which represents a class in the quotient group
$(L^*/L^{*2})_{N\equiv1}$ lying in the kernel of the map $\eta$, we
can ask if there is an ideal $I$ of $R$ with $I^2 \subset \alpha R$
and $(NI)^2 \equiv N(\alpha)$.  One situation in which we can prove
that an integral orbit exists is when the corresponding $p$-adic
rational orbit is soluble, i.e., for those elements $\alpha$ whose
classes are contained in the subgroup $J(\Q_p)/2J(\Q_p)$ of
$H^1(\Q_p,J[2]) = (L^*/L^{*2})_{N\equiv1}$.

We thank Michael Stoll for his help with the proof of the following key proposition:

\begin{proposition}\label{integralorbit}
Assume that $f(x)$ has coefficients in $\Z_p$ and nonzero discriminant. Let $R = \Z_p[x] / (f(x))$ and $L = \Q_p[x]/(f(x))$. If $\alpha\in L^*$ represents a class in the subgroup $J(\Q_p)/2J(\Q_p)$ of $(L^*/L^{*2})_{N\equiv1}$, then there is an ideal $I$ of $R$ with $I^2 \subset \alpha R$ and $(NI)^2 \equiv N(\alpha)$. Hence there is an integral orbit representing the rational orbit of $\alpha$.
\end{proposition}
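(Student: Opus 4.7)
The approach I would take is to combine Schaefer's explicit 2-descent formula with Mumford's representation of divisors on $C$. Every class in $J(\Q_p)/2J(\Q_p)$ lifts to the class of a $\Q_p$-rational degree-zero divisor $D$, which, after moving by a principal divisor, we may write as $D \sim E - n[O]$ where the effective divisor $E$ is presented in Mumford form by a pair $(u,v)$: $u\in\Q_p[x]$ is monic of degree $n$, $v\in\Q_p[x]$ has degree $<n$, and $u\mid v^2-f$ in $\Q_p[x]$. Extending the formula $\delta([P]-[O])\equiv a-\beta$ (for $P=(a,b)$ with $b\neq 0$) by linearity and Galois descent, one obtains $\delta([D])\equiv u(\beta)\pmod{L^{*2}}$, so we may take $\alpha=u(\beta)$.

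The first step is to adjust the Mumford representative so that $u,v$ have $\Z_p$-integral coefficients. Since $\alpha$ is only defined modulo $L^{*2}$, and even more, only the class in $J(\Q_p)/2J(\Q_p)$ matters, we may replace $D$ by $D+2D_0$ for a suitable auxiliary $D_0\in J(\Q_p)$ before passing to reduced Mumford form, and so arrange $u,v\in\Z_p[x]$ with $u$ monic. Because $u$ is monic in $\Z_p[x]$, the long-division quotient $h(x):=(v(x)^2-f(x))/u(x)$ automatically lies in $\Z_p[x]$.

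The second step is to define $I:=(u(\beta),\,v(\beta))\,R\subseteq R$ and to verify both requirements. Using $f(\beta)=0$ in $R$, we have $v(\beta)^2=u(\beta)\,h(\beta)$, whence
\[
I^2 \;=\; \bigl(u(\beta)^2,\ u(\beta)v(\beta),\ v(\beta)^2\bigr) \;=\; u(\beta)\cdot\bigl(u(\beta),\,v(\beta),\,h(\beta)\bigr) \;\subseteq\; u(\beta)R \;=\; \alpha R.
\]
For the norm identity, factor $u(x)=\prod_i(x-a_i)$ over $\overline{\Q_p}$ and use $N_{L/\Q_p}(u(\beta))=\prod_i f(a_i)=\bigl(\prod_i v(a_i)\bigr)^2$; then compute $N(I)$ directly by presenting $R/I\cong\Z_p[x]/(f,u,v)$ and exhibiting a $\Z_p$-basis. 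This should yield $N(I)\equiv\pm\prod_i v(a_i)\pmod{\Z_p^*}$, and hence $N(I)^2\equiv N(\alpha)\pmod{\Z_p^{*2}}$, as required. Proposition~\ref{podd} then converts the pair $(I,\alpha)$ into the desired integral orbit.

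The main obstacle I anticipate is the integrality reduction together with the handling of ``reduction-degenerate'' Mumford representatives — specifically the case when $u$ and $f$ acquire a common root modulo $p$, which occurs when two points of $E$ collide modulo $p$ or when a point of $E$ reduces to the Weierstrass point $O$. In those situations the naive ideal $I=(u(\beta),v(\beta))$ may fail the norm identity on the nose and must be corrected by a suitable power of $p$, or $(u,v)$ must be replaced by an equivalent Mumford pair with better $p$-adic behavior. Establishing that such corrections can always be made uniformly is the delicate technical point, and this is presumably what Stoll's expertise (acknowledged just before the proposition) contributes.
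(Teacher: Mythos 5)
Your strategy---Mumford representation with $\alpha$ given by $(-1)^m u(\beta)$ (you drop the sign $(-1)^m$, which matters since $-1$ need not be a square in $L$), the ideal $I=(u(\beta),v(\beta))R$, the containment $I^2\subseteq\alpha R$ from $v^2-f=hu$, and the norm computation over a splitting field---is exactly the paper's. The genuine gap is in the step you yourself flag: you assert that one can replace $D$ by $D+2D_0$ so as to arrange $u,v\in\Z_p[x]$, but you offer no construction of $D_0$ and no argument that such a $D_0$ exists; and you then defer the degenerate-reduction issue entirely. As it stands the proposal reduces the proposition to an unproven claim of exactly the same difficulty.

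The paper fills this with a concrete descent on the number $m$ of affine points in $D$. Writing $D=\sum_{i=1}^m (a_i,b_i)-m(O)$ with $m\le n$, $a_i$ integral and distinct, and $b_i\ne 0$, the polynomial $P(x)=\prod(x-a_i)$ is automatically in $\Z_p[x]$, while the interpolating $R(x)$ (with $R(a_i)=b_i$) need not be. If $R$ is integral, your construction runs. If not, one examines the remaining roots of the monic degree-$(2n+1)$ polynomial $f(x)-R(x)^2$: a Newton-polygon argument shows at most $m-2$ of them, say $r_1,\ldots,r_{m-2}$, have non-negative valuation, and each point $(r,R(r))$ with $\mathrm{ord}(r)<0$ lies in the $3$-adic (here $p$-adic) disc around $O$ on which the kernel of reduction is $2$-divisible (this is where $p\ne 2$ enters). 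The principal divisor of $y-R(x)$ then gives $D\equiv D^*:=\sum_{i=1}^{m-2}(r_i,R(r_i))-(m-2)(O)\pmod{2J(\Q_p)}$, and one recurses on $D^*$, terminating at $m\le 1$ where both $P$ and $R$ are trivially integral. Supplying this descent, rather than an unspecified $D_0$, is precisely what is needed to make your sketch a proof.
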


\begin{proof}
We prove this by an explicit construction of the ideal $I$. 
We can represent any class in $J(\Q_p)/2J(\Q_p)$ by a Galois-invariant divisor of degree zero 
$$D  = (P_1) + (P_2) + \cdots + (P_m) - m(O)$$
with $m \leq n$, where the $P_i = (a_i,b_i)$ are defined over some finite Galois extension $K$ of $\Q_p$.  We may assume that the coordinates of all the $P_i$ lie in the integers of $K$ with $b_i \neq 0$, and that the $a_i$ are all distinct. Define the monic polynomial
$$P(x) = (x - a_1)(x - a_2)\cdots(x - a_m)$$
of degree $m$ over $\Q_p$. By our explicit description of the coboundary map from $J(\Q_p)/2J(\Q_p)$ to $H^1(\Q_p, J[2])$, we have $(-1)^mP(\beta) \equiv \alpha$ in $(L^*/L^{*2})_{N\equiv1}$. Next define $R(x)$ as the unique polynomial of degree $\leq m-1$ over $\Q_p$ with the property that
$$R(a_i) = b_i, ~1 \leq i \leq m.$$
Then $R(x)^2  - f(x) = h(x)P(x)$ in $\Q_p[x]$. This gives the Mumford representation of the divisor $D$, which is cut out on the curve by the two equations $P(x) = 0$ and $y = R(x)$.

The polynomial $P(x)$ clearly has coefficients in the ring $\Z_p$. When $R(x)$ also has coefficients in $\Z_p$, we define the ideal $I_D$ of $R = \Z_p[\beta]$ as the $R$-submodule of $L$  generated by the two elements $P(\beta)$ and $R(\beta)$. For example, when $m = 1$, the divisor $D$ has the form $(P) - (O)$ with $P = (a,b)$, and the ideal $I_D$ is generated by the two elements $(\beta - a)$ and $b$. 

We claim that $I_D^2$ is contained in the ideal generated by $\alpha = (-1)^mP(\beta)$ and that $N(I_D) \equiv b_1b_2\cdots b_m.$ Hence $N(I_D)^2$ generates the same ideal as $N(P(\beta)) = (-1)^mb_1^2b_2^2\cdots b_m^2$ and we have constructed an integral orbit mapping to the rational orbit of $\alpha$.

The fact that the ideal $I_D^2 = (P(\beta)^2, P(\beta)R(\beta), R(\beta)^2)$ is contained in the ideal generated by $P(\beta)$ follows from $f(\beta) = 0$, so $R(\beta)^2 = h(\beta)P(\beta)$. On the other hand, 
$$R/I_D = \Z_p[\beta]/I_D = \Z_p[x]/(f(x), P(x), R(x)) = \Z_p[x]/(P(x),R(x))$$
and so $N(I_D)\equiv \#\Z_p[x]/(P(x),R(x))$.  The latter cardinality may be computed as the determinant (viewed as a power of $p$) of the $\Z_p$-linear transformation $\times R(x):\Z_p[x]/(P(x))\to\Z_p[x]/(P(x))$.  This is equivalent to computing the determinant of the $K$-linear transformation $\times R(x):K[x]/(P(x))\to K[x]/(P(x))$.  But $P(x)$ splits over $K$, and we have
\begin{equation}\label{crt}
K[x]/(P(x)) \cong K[x]/(x-a_1)\times \cdots \times K[x]/(x-a_m)\cong K^m
\end{equation}
since the $a_i$ are distinct.  The image of $R(x)$ in $K^m$ above is given by $(R(a_1),\ldots,R(a_m))=(b_1,\ldots,b_m)$, and thus the determinant of $\times R(x)$ as a transformation of $K[x]/(P(x))\cong K^m$ is simply $b_1\cdots b_m$, as claimed.
This completes the construction of an integral orbit when the
polynomial $R(x)$ has coefficients in $\Z_p$.

If $R(x)$ is not integral, we consider the $2n+1$ roots of the monic polynomial $F(x) - R(x)^2$.
These contain the roots $a_1,a_2,\ldots,a_m$ of $P(x)$, and a consideration of the Newton polygon shows that  at most $m-2$ of the remaining roots $r_1,r_2,\dots,r_{m-2}$ have non-negative valuation. Let $s_i = R(r_i)$. Then the function $y - R(x)$ on the curve has principal divisor
$$\{(a_1,b_1) + ..+ (a_m,b_m) - m(O)\} + \{(r_1, s_1) +\cdots+ (r_{m-2},s_{m-2}) - (m-2)(O)\} + E$$
where the $x$-coordinates of the points in the divisor $E$ are the remaining roots. Since these all have negative valuation, the class of $E$ lies in $2J(\Q_p)$. Hence the class of the divisor $D$ is equivalent to the class of the divisor
$$D^* = (r_1,s_1) + \cdots+ (r_{m-2},s_{m-2}) - (m-2)(O)$$
in the quotient group $J(\Q_p)/2J(\Q_p)$. The proof now concludes by a descent on the number $m$ of points in $D$. Once $m \leq 1$ the polynomials  $P(x)$ and $R(x)$ are both integral. 
\end{proof}

We end with some brief remarks on integral orbits $(I,\alpha)$ in the case when $\End_R(I) = R'$ is strictly larger than $R$, so $I$ is not a free $R$-module.
When the order $R'$ is Gorenstein, $I$ is a free $R'$-module, so can be identified with $R'$.  Moreover, the dualizing module $\Hom(R',\Z_p)$ is free, and can be identified with $(1/\gamma) R'$ under the trace pairing, with $\gamma$ in $L^*$ chosen so that $N(\gamma) \equiv N(f'(\beta))$ in $\Q_p^*/\Q_p^{*2}$. The element $\gamma$ is well-defined up to multiplication by $R'^{*2}$, and defines an integral orbit via taking the equivalence class of the pair $(I,\alpha) = (R',\gamma/f'(\beta))$. So the integral orbits with characteristic polynomial $f(x)$ and endomorphisms by a Gorenstein order $R'$ containing $R$ in $L$ form a principal homogeneous space for the group $(R'^*/R'^{*2})_{N\equiv1}$. They map to the rational orbits with classes $\gamma/f'(\beta)$ in $(L^*/L^{*2})_{N\equiv1}$, where $(1/\gamma)$ generates
the dualizing module of $R'$ in $L$, and these rational classes need not lie in the unit subgroup of
$(L^*/L^{*2})_{N\equiv1}$.

\subsection{General integral orbits}

The lattice $W \otimes \Z_2$ has rank $2n+1$ and discriminant
$\disc(W) = (-1)^n\det(W) \equiv 1$ in $\Z_2/\Z_2^{*2}$. In this case we need a bit more to identify those lattices isometric to $W$ over $\Z_2$, as both orthogonal spaces over $\Q_2$ of dimension
$2n+1$ and discriminant $\equiv 1$ contain non-degenerate lattices with these invariants.

\begin{lemma}
Let $I$ be a $\Z_2$-module of rank $2n+1$ equipped with a symmetric bilinear form $I \times I \rightarrow \Z_2$ whose discriminant is the square of a unit. Then $I$ is isometric to $W$ over $\Z_2$ if and only if it contains an isotropic sublattice of rank $n$.
\end{lemma}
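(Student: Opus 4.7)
The ``only if'' direction is immediate: the standard basis of $W$ contains the rank-$n$ isotropic sublattice spanned by $e_1, \ldots, e_n$. For the converse, I argue by induction on $n$. The base case $n = 0$ is trivial: a rank-$1$ lattice with unit square discriminant is $\langle c \rangle$ with $c \in \Z_2^{*2}$, isometric to $\langle 1 \rangle = W$ after rescaling. For the inductive step, assume $I$ has rank $2n+1$ with unit square discriminant and contains a rank-$n$ isotropic sublattice $X$. Replacing $X$ by its saturation in $I$, which remains isotropic since $\Z_2$ is a domain, we may assume $X$ is primitive.

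The key preliminary step is to show that the evaluation map $I \to X^{\ast} = \Hom_{\Z_2}(X, \Z_2)$, $v \mapsto \langle v, \cdot \rangle|_X$, is surjective. Writing $I = X \oplus Y_0$ (possible since $X$ is a direct summand) and choosing bases, the Gram matrix has block form $G = \bigl(\begin{smallmatrix} 0 & M \\ M^T & N \end{smallmatrix}\bigr)$ with $M$ of size $n \times (n+1)$; putting $M$ in Smith form with elementary divisors $d_1 \mid \cdots \mid d_n$ and computing block determinants gives $\det G = (-1)^n (d_1 \cdots d_n)^2 c$ for some $c \in \Z_2$, so unimodularity of the form on $I$ forces each $d_i$ to be a unit, yielding the claimed surjectivity. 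Picking a primitive $e_1 \in X$ and extending to a basis of $X$, we can thus find $f_1 \in I$ with $\langle e_i, f_1 \rangle = \delta_{i1}$. The rank-$2$ sublattice $H_1 = \Z_2 e_1 \oplus \Z_2 f_1$ has Gram $\bigl(\begin{smallmatrix} 0 & 1 \\ 1 & a \end{smallmatrix}\bigr)$ with $a = \langle f_1, f_1 \rangle$, is unimodular, and hence splits off as an orthogonal direct summand $I = H_1 \perp I'$. The complement $I'$ has rank $2n-1$, unit square discriminant, and contains $\mathrm{span}(e_2, \ldots, e_n)$ as a primitive rank-$(n-1)$ isotropic sublattice; by induction $I' \cong H^{n-1} \perp \langle 1 \rangle$.

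The main obstacle is that when $a$ is odd, $H_1$ is not isomorphic over $\Z_2$ to the hyperbolic plane $H = \bigl(\begin{smallmatrix} 0 & 1 \\ 1 & 0 \end{smallmatrix}\bigr)$, so one cannot conclude directly. I resolve this by using the $\langle 1 \rangle$ factor of $I'$ to absorb the $2$-adic parity. Explicitly, in the rank-$3$ block $H_1 \perp \langle 1 \rangle$ with basis $(e, f, u)$ and $\langle u, u \rangle = 1$, set $e' = e$, $f' = f + u - \tfrac{a+1}{2} e$, and $u' = u - e$; since $a$ is odd, $(a+1)/2 \in \Z_2$, the change-of-basis matrix has determinant $1$, and a direct check of the six independent inner products shows that $(e', f', u')$ has Gram $\bigl(\begin{smallmatrix} 0 & 1 & 0 \\ 1 & 0 & 0 \\ 0 & 0 & 1 \end{smallmatrix}\bigr)$. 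Hence $H_1 \perp \langle 1 \rangle \cong H \perp \langle 1 \rangle$, and rearranging the orthogonal sum gives
\[
I \cong H_1 \perp H^{n-1} \perp \langle 1 \rangle \cong (H_1 \perp \langle 1 \rangle) \perp H^{n-1} \cong H^n \perp \langle 1 \rangle = W,
\]
completing the induction.
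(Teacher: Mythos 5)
Your proposal is correct, and it takes a genuinely different route from the paper. The paper's proof is a two-sentence appeal to classification theory: the existence of a rank-$n$ isotropic sublattice is equivalent to $I \otimes \Q_2$ being split, and in the split orthogonal space of odd dimension over $\Q_2$ all non-degenerate (necessarily odd) unimodular $\Z_2$-lattices are conjugate. Your argument is instead a self-contained, constructive induction: saturate $X$, show via Smith normal form on the off-diagonal block that the evaluation map $I \to X^*$ is surjective, split off a unimodular plane $H_1 = \Z_2 e_1 \oplus \Z_2 f_1$, apply the inductive hypothesis to $H_1^\perp$, and then handle the $2$-adic obstruction that $H_1 \cong \bigl(\begin{smallmatrix} 0 & 1 \\ 1 & a \end{smallmatrix}\bigr)$ need not be the even hyperbolic plane $H$ when $a$ is odd by the explicit isometry $H_1 \perp \langle 1 \rangle \cong H \perp \langle 1 \rangle$. (I checked the change-of-basis $e' = e$, $f' = f + u - \tfrac{a+1}{2}e$, $u' = u - e$: all six inner products come out as claimed.) What the paper's route buys is brevity, at the cost of invoking the genus classification of unimodular lattices over $\Z_2$ as a black box; what your route buys is transparency about exactly where the odd-versus-even subtlety at $p=2$ enters and why the $\langle 1 \rangle$ summand is the thing that absorbs it.
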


Indeed, the existence of such an isotropic sublattice is equivalent to the assumption that $I \otimes \Q_2$ is a split orthogonal space, and in this space all non-degenerate odd lattices are conjugate. The stabilizer of such a lattice is the normalizer of a parahoric subgroup of $\SO(W)(\Q_2)$.  

If the bilinear lattice $I$ contains a maximal isotropic sublattice over $\Z_2$, then we say that it is {\it split}~\cite{MH}. The same argument as the proof of Proposition~\ref{podd} yields the following.

\begin{proposition}\label{peven}
Assume that $f(x) = x^{2n+1} + c_2x^{2n-1} + c_3x^{2n-2} + \cdots + c_{2n+1}$ is a polynomial with coefficients in $\Z_2$ and nonzero discriminant in $\Q_2$.
Then the integral orbits of $G(\Z_2)$ on self-adjoint operators $T$ with characteristic polynomial $f(x)$ correspond to the equivalence classes of pairs $(I, \alpha)$ for the order $R = \Z_2[x]/(f(x))$, with the property that the bilinear form $(\; ,\, )_{\alpha^{-1}}$ on $I$ is split. The integral orbit corresponding to the pair $(I,\alpha)$ maps to the rational orbit of $\SO(W)(\Q_2)$ corresponding to the class of $\alpha$ in $(L^*/L^{*2})_{N \equiv 1}$.
\end{proposition}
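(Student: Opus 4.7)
The plan is to follow exactly the construction in the proof of Proposition~\ref{podd}, inserting at the single place where the argument breaks down over $\Z_2$ the hypothesis that the bilinear form $(\,,\,)_{\alpha^{-1}}$ on $I$ be split. Concretely, given a pair $(I,\alpha)$ as in the statement, I would define the symmetric $\Z_2$-valued pairing $(\lambda,\nu)_{\alpha^{-1}}$ on $I$ as the coefficient of $\beta^{2n}$ in $\alpha^{-1}\lambda\nu$; the inclusion $I^2\subset(\alpha)$ makes this product lie in $R=\Z_2[\beta]$, so the coefficient is integral, and the condition $N(I)^2\equiv N(\alpha)$ forces the discriminant of this pairing to be the square of a unit in $\Z_2$. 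If we further assume that $(\,,\,)_{\alpha^{-1}}$ is split on $I$, then the preceding lemma produces an isometry $\theta\colon I\xrightarrow{\sim} W\otimes\Z_2$, and the operator $T:=\theta\circ(\times\beta)\circ\theta^{-1}$ is a self-adjoint operator on $W\otimes\Z_2$ with characteristic polynomial $f(x)$, giving the desired integral orbit.

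For the reverse direction, starting from an integral self-adjoint $T\colon W\otimes\Z_2\to W\otimes\Z_2$ with characteristic polynomial $f(x)$, the $\Z_2[T]$-module structure on $W\otimes\Z_2$ makes it a torsion-free $R$-module of rank one, hence (after choosing a $\Q_2$-linear embedding into $L$) a fractional ideal $I\subset L$. Pulling back the bilinear form on $W\otimes\Z_2$ along this embedding yields a pairing of the form $(\,,\,)_{\alpha^{-1}}$ for some $\alpha\in L^\ast$ with $N(\alpha)\in\Q_2^{\ast 2}$ and $I^2\subset(\alpha)$, $N(I)^2\equiv N(\alpha)$. Crucially, since $W\otimes\Z_2$ is by hypothesis split, the pulled-back form on $I$ is also split, so the hypothesis in the statement is automatically satisfied by pairs arising from integral orbits. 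The ambiguity in the choice of embedding $I\hookrightarrow L$ is precisely multiplication by $c\in L^\ast$, which rescales $(I,\alpha)$ to $(cI,c^2\alpha)$; this gives the equivalence relation on pairs.

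Two bookkeeping items then complete the argument. First, two pairs $(I,\alpha)$ and $(I',\alpha')$ give isometric $W$-lattices with $G(\Z_2)$-equivalent self-adjoint operators if and only if they are equivalent in the sense defined in Section~7.1, by the same argument as in the odd-$p$ case (any isomorphism of bilinear $R$-modules $I\cong I'$ compatible with the $\beta$-action is multiplication by an element $c\in L^\ast$). Second, the assertion that the pair $(I,\alpha)$ maps to the rational orbit of $\alpha$ is immediate, since over $\Q_2$ the form $(\,,\,)_{\alpha^{-1}}$ on $I\otimes\Q_2=L$ coincides with the form used in Section~4 to build the rational orbit attached to $\alpha\in (L^\ast/L^{\ast 2})_{N\equiv 1}$.

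The main new subtlety, compared with Proposition~\ref{podd}, is precisely the role of the split condition. Over $\Z_p$ with $p\neq 2$ any non-degenerate lattice of rank $2n{+}1$ and square-unit discriminant is automatically isometric to $W\otimes\Z_p$, so no extra condition on $(\,,\,)_{\alpha^{-1}}$ is needed; at $p=2$ there are two isometry classes (split and non-split) of such lattices, and only the split ones embed into $W\otimes\Z_2$, so one must both impose splitness in the forward construction and verify that it holds automatically in the reverse construction. This is the only step that requires any genuine new input, and it is handled cleanly by invoking the lemma immediately preceding the proposition.
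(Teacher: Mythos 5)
Your proposal is correct and follows essentially the same route as the paper: the paper itself states only that "the same argument as the proof of Proposition~\ref{podd} yields" the result, and your write-up is precisely that argument with the split condition inserted at the one step (identifying the bilinear module $I$ with $W\otimes\Z_2$) where the odd-$p$ lemma no longer applies. The observation that splitness is automatic in the reverse direction, because the fixed form on $W\otimes\Z_2$ is split by construction, is the right way to see that the imposed condition is exactly the needed one.
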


The condition that the pair $(I,\alpha)$ gives a split orthogonal space is not empty. Consider the case when $f(x)$ has integral coefficients and the discriminant of $f(x)$ is a unit in $\Z_2^*$. Then the ring $R = A_L$ is the maximal order in $L$, and every pair $(I,\alpha)$ is equivalent to $(R,u)$ with $u$ in the unit subgroup $(A_L^*/A_L^{*2})_{N \equiv 1}$ of $(L^*/L^{*2})_{N\equiv 1}$ . This subgroup of has order $2^{m+2n}$, where $f(x)$ has $m+1$ irreducible factors over $\Q_2$. Not all of these orthogonal spaces can be split over $\Q_2$, as a maximal isotropic subgroup for the quadratic form $\eta$ on $(L^*/L^{*2})_{N\equiv 1}$  has order $2^{m+n}$. On the other hand, the image of classes in $J(\Q_2)/2J(\Q_2)$ give a subgroup of the units of order $2^{m+n}$, which do correspond to orbits.

We suspect that this remains true in general. More precisely, we conjecture that whenever $f(x)$ has coefficients in $\Z_2$ and $\alpha$ is a class in $J(\Q_2)/2J(\Q_2)$, the corresponding orbit over $\Q_2$ is represented by (at least one) integral orbit over $\Z_2$. One can try to imitate the construction of an ideal that we gave in the case where $p$ is odd. The Mumford representative works just as well, but the reduction to a divisor of smaller degree used the fact that when $P = (a,b)$ on $C$ has
${\rm ord}(a) < 0$, the class $(P) - (O)$ is divisible by $2$ in the Jacobian. For $p = 2$ this is true if we assume that the integral coefficients satisfy: $2^{4k}$ divides $c_k$, for $k = 2,3, \ldots, {2n+1}$.
So for integral polynomials of this form, each class in $J(\Q_2)/2J(\Q_2)$ is represented by at least one integral orbit.

Similarly, we have the following result on integral orbits over $\Z$.

\begin{proposition}
Assume that $f(x) = x^{2n+1} + c_2x^{2n-1} + c_3x^{2n-2} + \cdots + c_{2n+1}$ is a polynomial with coefficients in $\Z$ and nonzero discriminant in $\Q$.
Then the integral orbits of $G(\Z)$ on self-adjoint operators $T$ with characteristic polynomial $f(x)$ correspond to the equivalence classes of pairs $(I, \alpha)$ for the order $R = \Z[x]/(f(x))$, with the property that the bilinear form $(\; ,\, )_{\alpha^{-1}}$ on $I$ is split. The integral orbit corresponding to the pair $(I,\alpha)$ maps to the rational orbit of $\SO(W)(\Q)$ corresponding to the class of $\alpha$ in $(L^*/L^{*2})_{N \equiv 1}$.
\end{proposition}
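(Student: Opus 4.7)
The plan is to carry out, word for word, the same kind of argument that established Propositions~\ref{podd} and \ref{peven}, now working over $\Z$ rather than over $\Z_p$ or $\Z_2$. Let $R = \Z[x]/(f(x))$ and $L = \Q[x]/(f(x))$. I will construct two maps between $G(\Z)$-orbits of integral self-adjoint operators $T \in V(\Z)$ with characteristic polynomial $f(x)$ and equivalence classes of pairs $(I,\alpha)$, and show they are mutually inverse.

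First I would go from pairs to orbits. Given $(I,\alpha)$ with $I^2 \subset \alpha R$ and $N(I)^2 \equiv N(\alpha) \pmod{\Z^{*2}}$, the formula $(\lambda,\nu)_{\alpha^{-1}} = $ coefficient of $\beta^{2n}$ in $\alpha^{-1}\lambda\nu$ defines a symmetric $\Z$-bilinear form on $I$, integral because $\alpha^{-1}\lambda\nu \in R$, and of discriminant $\equiv 1$ by the norm condition. Multiplication by $\beta$ on $I$ is a $\Z$-linear, self-adjoint operator with characteristic polynomial $f(x)$. Assuming $(I,(\,,\,)_{\alpha^{-1}})$ is split, I would invoke the global uniqueness of the split integral bilinear lattice of rank $2n+1$, discriminant $(-1)^n$, and signature $(n+1,n)$ to produce an isometry $\theta : I \to W$, transporting multiplication by $\beta$ to an integral self-adjoint $T := \theta\beta\theta^{-1}$ on $W$. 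This $T$ has characteristic polynomial $f(x)$ and maps, at the level of $\Q$-orbits, to the class of $\alpha$ in $(L^*/L^{*2})_{N\equiv 1}$ by the computation already carried out in Section~4.

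Next I would go from orbits to pairs. Given integral self-adjoint $T : W \to W$ with characteristic polynomial $f(x)$, the action of $\Z[T] \cong R$ makes $W$ a torsion-free $R$-module whose rationalization is free of rank $1$ over $L$ by cyclic decomposition (since $f$ is separable over $\Q$). Hence $W$ may be identified with a fractional ideal $I \subset L$, with multiplication by $\beta$ corresponding to $T$. The symmetric form on $W$ pulled back to $I$ is $R$-bilinear (by self-adjointness $\langle T\lambda,\nu\rangle = \langle\lambda,T\nu\rangle$, iterated), and the fact that every non-degenerate $R$-bilinear form on $L$ has the shape $(\lambda,\nu)_{\alpha^{-1}}$ for some $\alpha \in L^*$ produces the class $\alpha$. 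Integrality of the form forces $I^2 \subset \alpha R$; the discriminant $\equiv 1$ forces $N(I)^2 \equiv N(\alpha)$; and since $W$ itself is split, so is $(I,(\,,\,)_{\alpha^{-1}})$. Ambiguity in the $R$-module identification $W \cong I$ is exactly multiplication by some $c \in L^*$, which produces $(cI,c^2\alpha)$, so the pair is well-defined up to the stated equivalence; the $\SO(W)(\Z)$-orbit of $T$ depends only on this equivalence class.

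The main obstacle is the global uniqueness step: that any $\Z$-lattice of rank $2n+1$ whose symmetric bilinear form has discriminant a unit square, signature $(n+1,n)$, and is split (contains a maximal isotropic sublattice of rank $n$) is integrally isometric to $W$. Locally this is exactly the content of the two lemmas preceding Propositions~\ref{podd} and \ref{peven}, which identify the genus of the resulting lattice with that of $W$ at every finite place. The signature condition pins down the archimedean place. What remains is a local-to-global input: an indefinite non-degenerate integral lattice of rank $\geq 3$ and square discriminant is determined in its genus by its localizations, so $I \cong W$ over $\Z$. Once this point is granted, the rest of the argument is essentially bookkeeping parallel to the proofs of Propositions~\ref{podd} and~\ref{peven}.
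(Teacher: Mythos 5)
Your proposal is correct and follows the same route the paper itself takes: the paper merely states that the argument is identical to that of Propositions~\ref{podd} and~\ref{peven} and then appeals to the classification of integral quadratic forms in Serre's \emph{A Course in Arithmetic}, Chapter~V, to replace the $\Z_p$-lattice uniqueness lemmas with a global one (it is there that the ``split iff correct signature'' remark comes from). One small precision: the local-to-global fact you need is not quite ``any indefinite lattice of rank $\geq 3$ with square discriminant is determined by its genus'' in the form you state it, but rather the sharper and more elementary fact (which is what Serre proves and what the present situation requires) that an \emph{indefinite unimodular} integral lattice is determined up to isometry by its rank, signature, and type; here $\det \equiv (-1)^n$ forces unimodularity and odd rank forces type~I, so the invariant that remains is exactly the signature, in agreement with the paper's closing remark that splitness can be checked at $\R$.
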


Here one can check if the bilinear form is split simply by a calculation of the signature of the orthogonal space over $\R$ (see~\cite[Chapter V]{S3}).

In general, given a rational orbit with integral characteristic polynomial $f(x)$, we can ask if it is represented by an integral self-adjoint operator $T: W \rightarrow W$. That is, given an element $\alpha$ in $L^*$ that represents a class in the group $(L^*/L^{*2})_{N \equiv 1}$ lying in the kernel of $\eta$, we can ask if there is an ideal $I$ of $R = \Z[x]/(f(x))$ with  $I^2 \subset \alpha R$
and $(NI)^2 = N(\alpha)$. In this case, there is no need to check that the bilinear form defined by $\alpha$ on $I$ is split, as it is split over $\Q$. The number of integral orbits mapping to a fixed rational orbit is finite, as there are only finitely many ideals with a given norm. 

Fix a representative of the class $\alpha$ in the group $L^*$, and let $m(\alpha)$ be the number of ideals $I$ of $R$ with the property that $I^2 \subset \alpha R$
and $(NI)^2 = N(\alpha)$. Let $m_p(\alpha)$ be the number of ideals $I_p$ of $R_p:=R\otimes\Z_p$ with this property. For all  primes $p$ that do not divide $\Delta$ and where the element $\alpha$ is a unit, we must have $I_p = R_p$. Hence $m_p(\alpha_p) = 1$ for almost all $p$, where $\alpha_p$ denotes the image of $\alpha$ in $R_p$. Since an ideal of $R$ is determined by its localizations, and since the intersection of $L$ with the product $\prod_p I_p$ is an ideal $I$ of $R$ with the desired properties, we have the product formula
$$m(\alpha) = \prod_p m_p(\alpha).$$
The pair $(I,\alpha)$ gives the same orbit as the pair $(cI,\alpha)$ whenever $c$ is an element of $L^*$ with $c^2 = 1$ and $N(c) = 1$. On the other hand, the ideals $I$ and $cI$ are equal only when $c$ is a unit in the endomorphism ring $R(I)$ of the ideal $I$. So the number of distinct ideals giving the same integral orbit is the order of the finite quotient group $L^*[2]_{N=1}/R(I)^*[2]_{N=1}$. Note that the stabilizer of an operator in this integral orbit is the finite group $R(I)^*[2]_{N=1}$ and the stabilizer of an operator in the rational orbit $\alpha$ is the finite group $L^*[2]_{N=1}$. Hence we also have the formula
$$m(\alpha) = \sum_{\sigma\in O(\alpha)} \#\Stab_{G(\Q)}(\alpha)/\#\Stab_{G(\Z)}(\sigma)$$
where the sum is taken over the set $O(\alpha)$ of integral  orbits $\sigma$ mapping to the rational orbit 
corresponding to $\alpha$.

The same argument comparing ideals and integral orbits works locally, so combining the above formulas gives the following

\begin{proposition}\label{product}
Assume that the number of integral orbits representing a fixed rational orbit $\alpha$ is nonzero. Then
$$\#\Stab_{G(\Q)}(\alpha) \sum_{\sigma\in O(\alpha)} 1/\#\Stab_{G(\Z)}(\sigma) = 
\prod_p \#\Stab_{G(\Q_p)}(\alpha_p) \sum_{\sigma_p\in O_p(\alpha_p)} 1/\#\Stab_{G(\Z_p)}(\sigma_p)$$
where almost all factors in the product are equal to $1$.
\end{proposition}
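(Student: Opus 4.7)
The plan is to read off both sides of the claimed identity as reformulations of the ideal-counting functions $m(\alpha)$ and $m_p(\alpha_p)$ already defined in the paragraph preceding the proposition, and then to invoke the local-global product formula $m(\alpha)=\prod_p m_p(\alpha_p)$ together with the fact $m_p(\alpha_p)=1$ for almost all $p$.

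First, I would reinterpret the left-hand side. By the correspondence of Propositions~\ref{podd} and~\ref{peven} (and its global counterpart), the integral orbits $\sigma$ mapping to the rational orbit of $\alpha$ correspond to equivalence classes of pairs $(I,\alpha)$ with $I^2\subset\alpha R$ and $(NI)^2=N(\alpha)$. Two pairs $(I,\alpha)$ and $(cI,\alpha)$ give the same orbit exactly when $c\in L^\ast$ satisfies $c^2=1$ and $N(c)=1$, and the distinct ideals $cI$ that arise are in bijection with the coset space $L^\ast[2]_{N=1}/R(I)^\ast[2]_{N=1}$, where $R(I)=\End_R(I)$. Since $\Stab_{G(\Q)}(\alpha)\cong L^\ast[2]_{N=1}$ and $\Stab_{G(\Z)}(\sigma)\cong R(I)^\ast[2]_{N=1}$, summing over integral orbits gives
\[
m(\alpha)\;=\;\sum_{\sigma\in O(\alpha)}\frac{\#\Stab_{G(\Q)}(\alpha)}{\#\Stab_{G(\Z)}(\sigma)}\;=\;\#\Stab_{G(\Q)}(\alpha)\sum_{\sigma\in O(\alpha)}\frac{1}{\#\Stab_{G(\Z)}(\sigma)}.
\]

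Next, the identical argument applied over $\Z_p$ (using Propositions~\ref{podd} and~\ref{peven} directly) yields
\[
m_p(\alpha_p)\;=\;\#\Stab_{G(\Q_p)}(\alpha_p)\sum_{\sigma_p\in O_p(\alpha_p)}\frac{1}{\#\Stab_{G(\Z_p)}(\sigma_p)}.
\]
At this stage the proposition reduces to the product formula $m(\alpha)=\prod_p m_p(\alpha_p)$. This follows because an ideal $I\subset L$ is uniquely determined by its localizations $I_p=I\otimes_\Z \Z_p$, and conversely any coherent choice of local ideals $(I_p)$ with $I_p=R_p$ for almost all $p$ assembles to a global ideal $I=L\cap\prod_p I_p$ with the required properties $I^2\subset\alpha R$ and $(NI)^2=N(\alpha)$ (both conditions may be checked prime-by-prime). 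The almost-everywhere triviality needed for the product to make sense is the observation that at any prime $p$ with $p\nmid\Delta$ and $\alpha_p\in R_p^\ast$, Corollary~\ref{unitsubgroup} forces $I_p=R_p$ and hence $m_p(\alpha_p)=1$.

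The only slightly delicate point is to verify that each local count $m_p(\alpha_p)$ really does factor through the same stabilizer ratio as the global count; this is where the choice of a single representative $\alpha\in L^\ast$ (rather than a class in $(L^\ast/L^{\ast2})_{N\equiv 1}$) matters, since changing $\alpha$ by a square changes the ideal $I$ by a principal factor but does not change $m$ or $m_p$. Once this matching is in place, combining the two displayed identities with the product formula produces exactly the asserted equality, and the proof is complete.
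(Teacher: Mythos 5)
Your proposal reproduces the paper's argument: both you and the authors define $m(\alpha)$ and $m_p(\alpha_p)$ as ideal counts, rewrite each as a weighted sum of stabilizer ratios using the correspondence between pairs $(I,\alpha)$ and integral orbits, and then invoke the localization principle $I\mapsto(I_p)_p$ together with Corollary~\ref{unitsubgroup} to obtain the product formula and the almost-everywhere triviality. The approach and key ingredients are essentially identical to the paper's proof.
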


Finally, we have the following result on locally soluble orbits.

\begin{proposition}\label{intorbit}
Let $f(x)$ have coefficients in $\Z$, and assume that $2^{4k}$ divides $c_k$ for all $k$. Then every class in the $2$-Selmer group of $J$ is represented by at least one integral orbit.
\end{proposition}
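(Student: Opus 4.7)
The plan is to construct a global integral ideal of $R = \Z[x]/(f(x))$ by building local ideals at every prime and patching, then to feed the resulting pair $(I,\alpha)$ into the correspondence stated just before the proposition. Start with $\alpha \in S_2(J) \subset H^1(\Q,J[2]) = (L^*/L^{*2})_{N\equiv 1}$. By definition of the $2$-Selmer group, for every prime $p$ the restriction $\alpha_p$ lies in the image of $J(\Q_p)/2J(\Q_p)$, and similarly at the real place. For each odd prime $p$, Proposition \ref{integralorbit} supplies an ideal $I_p \subset R_p := R\otimes\Z_p$ with $I_p^2 \subset \alpha R_p$ and $N(I_p)^2 \equiv N(\alpha)$ modulo $\Z_p^{*2}$. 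For all $p$ of good reduction where the local class is a unit, Corollary \ref{unitmax} allows us to take $I_p = R_p$, so $I_p = R_p$ for all but finitely many $p$.

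The crucial remaining input is at $p = 2$. The proof of Proposition \ref{integralorbit} depends only on one $p$-adic fact --- that for any $P = (a,b) \in C(\Q_p)$ with $\mathrm{ord}_p(a) < 0$, the class of $(P)-(O)$ is divisible by $2$ in $J(\Q_p)$ --- used in the Newton-polygon descent on the degree $m$ of the Mumford divisor. As noted in the paragraph preceding the proposition, at $p = 2$ this divisibility holds precisely under the hypothesis $2^{4k} \mid c_k$ that we have assumed. Granting this, the Mumford representation of the Selmer class and the descent on $m$ produce an ideal $I_2 \subset R_2$ with $I_2^2 \subset \alpha R_2$ and $N(I_2)^2 \equiv N(\alpha)$ modulo $\Z_2^{*2}$, exactly parallel to the odd-prime case.

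Now assemble $I := L \cap \prod_p I_p$. Since an ideal of $R$ is determined by its localizations and almost all $I_p = R_p$, this is a fractional $R$-ideal with $I\otimes\Z_p = I_p$ for every $p$. The conditions $I^2 \subset \alpha R$ and $(NI)^2 \equiv N(\alpha)$ modulo $\Q^{*2}$ are local and therefore hold; since $\alpha$ lies in the $N\equiv 1$ subgroup, $N(\alpha)$ is already a square in $\Q^*$, so after multiplying $I$ by a scalar in $L^*$ (replacing $\alpha$ by $\alpha c^2$) we achieve the exact equality $(NI)^2 = N(\alpha)$. The bilinear form $(\lambda,\nu)_{\alpha^{-1}}$ on the lattice $I$ has discriminant $\equiv 1$, is split over $\R$ by the locally soluble condition at $\infty$ together with Proposition \ref{kernel}, and is split over every $\Q_p$ because each $I_p$ came from an integral orbit (via Propositions \ref{podd} and \ref{peven}). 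By the local-to-global classification of nondegenerate symmetric bilinear lattices of odd rank with these invariants, $(I,(\lambda,\nu)_{\alpha^{-1}})$ is isometric to $W$ over $\Z$, and the pair $(I,\alpha)$ therefore gives the required integral orbit mapping to the rational orbit of $\alpha$.

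The main obstacle is the $p = 2$ local construction: the odd-prime Newton-polygon descent is clean but breaks without the $2$-divisibility of the ``polar'' divisor classes, and it is precisely to guarantee that divisibility at $p = 2$ that the coefficient hypothesis $2^{4k} \mid c_k$ has been imposed. Once this $2$-adic input is secured, the remaining steps --- gluing the $I_p$ to a global $I$, adjusting by a scalar to obtain exact norm equality, and verifying that the global bilinear lattice is isometric to $W$ --- are standard local-global considerations, and they all reduce to facts already established earlier in the excerpt.
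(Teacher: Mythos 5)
Your argument is correct and follows the same route as the paper: the paper's proof of this proposition is a one-line remark that one builds an ideal $I_p$ at each prime and glues, with Proposition~\ref{integralorbit} supplying $I_p$ at odd $p$, Corollary~\ref{unitmax} giving $I_p = R_p$ at almost all $p$, and the hypothesis $2^{4k}\mid c_k$ securing the Newton-polygon descent at $p=2$ exactly as you say. The only cosmetic slip is the scalar adjustment at the end --- multiplying $I$ by $c\in L^*$ while replacing $\alpha$ by $c^2\alpha$ leaves the ratio $(NI)^2/N(\alpha)$ unchanged, but in fact that ratio is a positive rational square lying in $\Z_p^{*2}$ for every $p$, hence is already $1$, so the exact equality holds with no adjustment needed.
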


This follows from the construction of an ideal $I_p$ at each prime. The divisibility hypothesis was needed to construct $I_2$ and an integral orbit over $\Z_2$. It is not necessary when the order $R$ is maximal at $2$, and we suspect that it is not necessary in general.

\section{Construction of fundamental domains}


In the previous sections, we have studied the rational and integral orbits of the representation of $\SO(W)$ on $V$, and shown how the locally soluble orbits are naturally related to the 2-Selmer groups of the Jacobians of hyperelliptic curves.  In order to prove Theorem~1, we now turn to the question of counting these orbits.

In this section, we construct 
convenient fundamental domains for the action of $G(\Z)$ on $V(\R)$.  
In Section~10, we will then count the number of {``irreducible''} points in $V(\Z)$ in these fundamental domains having bounded height, which will then allow us to prove Theorem~1 in Sections~11 and 12 via the appropriate sieve methods.  

We say that an element in $V(\Z)$ is {\it reducible} if it either lies in a distinguished orbit over $\Q$ or has discriminant zero; we say that it is {\it irreducible} otherwise.  The {\it height} of an element $B\in V(\R)$ and of its associated characteristic polynomial $f(x)$ is defined as follows: if $B\in V(\R)$ has associated invariants $c_2,\ldots,c_{2n+1}$ given by
$$f(x)=\det(Ax-B)=x^{2n+1}+c_2x^{2n-1}+\cdots+c_{2n+1},$$
then the height $H(B)$ of $B$ and the height $H(f)$ of $f$ is defined by $$H(B):= H(f):=\max\{|c_k|^{{2n(2n+1)}/{k}}\}_{k=2}^{2n+1}.$$  

To describe our fundamental domains explicitly, we put natural coordinates on $V$.  Recall that $V$ may
be identified with the $n(2n+3)$-dimensional space of symmetric $(2n+1)\times(2n+1)$ matrices having anti-trace zero.  We use $b_{ij}$ to denote the $(i,j)$ entry of $B\in V$; thus the $b_{ij}$ ($i\leq j$; \,$(i,j)\neq (n+1,n+1)$) form a natural set of coordinates on $V$. 
 

\subsection{Fundamental sets for the action of $G'(\R)$ on $V(\R)$}

Recall that we have naturally partitioned the set of elements in $V(\R)$ with $\Delta\neq 0$ into $\sum_{m=0}^n{{2m+1}\choose m}$ components, which we denote by $V^{(m,\tau)}$ for $m=0,\ldots,n$ and $\tau=1,\ldots,{{2m+1}\choose m}$.  For a given value of~$m$, the component $V^{(m,\tau)}$ in $V(\R)$ maps to the component $I(m)$ of characteristic polynomials in $\R^{2n}$ having nonzero discriminant and $2m+1$ real roots. 
Let us define $G'(\R):=\Lambda\times G(\R)$, where $\Lambda=\{\lambda>0\}$.  Then we may view $V(\R)$ also as a representation of $G'(\R)$, where $\Lambda$ acts by scaling.  Furthermore, $G'(\R)$ acts also on each $V^{(m,\tau)}$.  Note that the action of $\lambda\in\Lambda$ on $V(\R)$ takes $c_k$ to $\lambda^{k}c_k$, so that $H(\lambda B) = \lambda^{2n(2n+1)} H(B)$.

We first construct fundamental sets $L^{(m,\tau)}$ for the action of $G'(\R)$ on $V^{(m,\tau)}$, which        will also then end up being fundamental sets for the action of $G(\R)$ on $\{B\in V^{(m,\tau)}:H(B)=1\}$.                                                                                                                   We begin with the case when $m = n$. Fix a self-adjoint operator $T: W \to W$ in the distinguished orbit        having a given characteristic polynomial $f(x)$ with $2n+1$ real roots. The polynomial $f(x)$ of $T$ is       separable, of trace~$0$ and split over $\R$.                                                                  Let $r_1 < r_2 < \cdots < r_{2n+1}$ be the roots of $f(x)$ and let $L = \R[T]$ be the algebra centralizing    $T$ in $\End(W)$. All of the operators in $L$ are self-adjoint and have the same eigenvectors $v_i$ in        $W$.                                                                                                          We thus obtain an isomorphism $L \to \R^{2n+1}$ which maps the polynomial $p(T)$ to the eigenvalues           $(x_1,\ldots,x_{2n+1})=(p(r_1),\ldots,p(r_{2n+1}))$.                                                          With these coordinates, consider the cone $C$ defined by the inequalities                                     $x_1 < x_2 < \cdots < x_{2n+1}$ and                                                                           $\sum x_i = 0$ in the subspace of elements of trace~0 in $L$. The operators in this cone represent the        distinguished orbits with split characteristic polynomial, as the eigenvectors for the eigenvalues $x_1 <     x_2 < \cdots < x_{2n+1}$ are the same as the eigenvectors for $T$.                                            If we scale the elements in $C$ to have height 1 we are in a bounded region. This is the set $L^{(n,1)}$      of $G(\R)$-representatives of height 1 that we take                                                           for $V^{(n,1)}.$                                                                                              Now take another $V^{(n,\tau)}$ where the characteristic polynomial splits: $\tau$ is given by a              collection of sign changes $\alpha$ in                                                                        $L^* = \R^{\ast2n+1}$. If $T'$ in the region has characteristic polynomial $f(x)$, then we can find an        element $T$ in the distinguished orbit and an element $g\in \GL(V)(\R)$ with $gTg^{-1} = T'$ and $g^*g =      \alpha \in L^*$. The elements $g(v_i)$ are now eigenvectors for $T'$ with the appropriately changed signs,    and we obtain a fundamental set $L^{(n,\tau)} = gL^{(n,1)}g^{-1}$. Since we are conjugating by a fixed $g$    (which is a continuous map), this set of $G'(\R)$-representatives for $V^{(n,\tau)}$ also lies in a compact region.                                                                                                                          

Next assume that $m < n$, and choose $T$ in the distinguished orbit with characteristic polynomial~$f(x)$.    In this case, the algebra $L = \R[T]$ is the product of $2m+1$ copies of $\R$ and $n-m$ copies of~$\C$. We    take the                                                                                                      elements of trace zero in $L$ which lie in the product of the cone defined by the inequalities $x_1 < x_2     < \cdots < x_{2m+1}$ in the real part with a fundamental domain for the action of the symmetric group         $S_{n-m}$ on the product of $n-m$ upper half planes minus all the diagonals in the complex part. Again, we    
scale to obtain a set $L^{(m,1)}$ of orbit representatives of height 1 for the action of $G'(\R)$ on $V^{(m,1)}$.                                 
As before, there exists $g\in \GL(V)(\R)$ such that $L^{(m,\tau)}:=gL^{(m,1)}g^{-1}$ is a fundamental set for the action of $G'(\R)$ on $V^{(m,\tau)}$, and $L^{(m,\tau)}$ also lies in a compact set.

We have thus chosen our fundamental sets
$L^{(m,\tau)}$  so that all elements in these sets have height~$1$ and all entries of elements in these sets 
are {bounded}, i.e., the $L^{(m,\tau)}$
all lie in a compact subset of $V(\R)$.  Note that for any fixed
$h\in G'(\R)$ (or for any $h$ lying in a fixed compact subset
$G_0\subset G'(\R)$), the set $hL^{(m,\tau)}$ is also a fundamental
set for the action of $G'(\R)$ on $V^{(m,\tau)}$ that is bounded (independent of $h\in G_0$).


\subsection{
Fundamental sets for the action of $G(\Z)$ on $G'(\R)$}

In \cite{Borel}, Borel (building on earlier work of Borel--Harish-Chandra~\cite{BoHa}) constructs a natural fundamental domain in $G(\R)$ for the left action of $G(\Z)$ on $G(\R)$, which immediately also gives us a fundamental domain $\FF$ in $G'(\R)$ for the left action of $G(\Z)$ on $G'(\R)$.  This set $\FF$ may be expressed in the form $$\FF=
\{ut\theta\lambda:u\in N'(t),\,t\in T',\,\theta\in K,\,\lambda\in\Lambda\},$$ where $N'(t)$ is an absolutely bounded measurable set, which depends on $t\in T'$, of unipotent lower triangular real $(2n+1)\times(2n+1)$ matrices; the set $T'$ is the subset of the torus of diagonal matrices with positive entries given by
\begin{equation}\label{nak}
T' = \left\{\left(\begin{array}{ccccccccc}
 t_1^{-1} & & & & & & \\
 & \ddots & & & & &  \\
  &  &  t_n^{-1}& & & &  \\
  & & & 1\, & & &  \\
 & & &  & \,t_n\, & &  \\
 & & & & & \ddots  &  \\
 & & & & & &\,t_1\, 
\end{array}\right): \;t_1/t_2>c, \;\,t_2/t_3>c, \; \ldots\,,\; t_{n-1}/t_n>c,\;\,t_n>c\right\};
\end{equation}
and $K$ is a maximal compact real subgroup of $G(\R)$.
In the above, $c$ denotes an absolute positive constant.  (Note that $t_i/t_{i+1}$ for $i=1,\ldots,n-1$ and $t_n$ form a set of simple roots for our choice of maximal torus $T$ in $G$.)

It will be convenient in the sequel to parametrize $T'$ in a slightly different way.  Namely, for $k=1,\ldots,n$, we make the substitution $t_k=s_ks_{k+1}\cdots s_n$.  Then we may speak of elements $s=(s_1,\ldots,s_n)\in T'$ as well, and the conditions on $s$ for $s=(s_1,\ldots,s_n)$ to be in $T'$ is simply that $s_k>c$ for all $k=1,\ldots,n$.


\subsection{Fundamental sets for the action of $G(\Z)$ on $V(\R)$}

For any $h\in G'(\R)$, by the previous subsection, 
we see that $\FF hL^{(m,\tau)}$ is the union of $2^{m+n}$
fundamental domains for the action of $G(\Z)$ on $V^{(m,\tau)}$;
here, we regard $\FF hL^{(m,\tau)}$ as a multiset, where the
multiplicity of a point $x$ in $\FF hL^{(m,\tau)}$ is given by the
cardinality of the set $\{g\in\FF\,\,:\,\,x\in ghL^{(m,\tau)}\}$.
Thus, as explained in~\cite[\S2.1]{BS}, a $G(\Z)$-equivalence class $x$ in $V^{(m,\tau)}$ is
represented in this multiset $\sigma(x)$ times, where
$\sigma(x)=\#\Stab_{G(\R)}(x)/\#\Stab_{G(\Z)}(x)$. In particular, $\sigma(x)$ is always a number between 1
and~$2^{m+n}$.

For any $G(\Z)$-invariant set $S\subset V^{(m,\tau)}\cap V(\Z)$, let $N(S;X)$
  denote the number of $G(\Z)$-equivalence classes of irreducible
  elements $B\in S$ satisfying $H(B)<X$.
  Then we conclude that, for any $h\in G'(\R)$, the product $2^{m+n}\cdot
N(S;X)$ is exactly equal to the number of irreducible integer
points in $\FF hL^{(m,\tau)}$ having height less than $X$,
with the slight caveat that the (relatively rare---see
Proposition~\ref{gzbigstab}) points with $G(\Z)$-stabilizers of
cardinality $r$ ($r>1$) are counted with weight $1/r$.

As mentioned earlier, the main obstacle to counting integer points of
bounded height in a single domain $\FF hL^{(m,\tau)}$ is that the
relevant region is not bounded, but rather has cusps going off to
infinity.  We simplify the counting in this cuspidal region by
``thickening'' the cusp; more precisely, we compute the number of
integer points of bounded height in the region $\FF hL^{(m,\tau)}$
by averaging over lots of such fundamental regions, i.e., by averaging
over the domains $\FF hL^{(m,\tau)}$ where $h$ ranges over a certain
compact subset $G_0\in G'(\R)$.  The method, which is an extension of the method of~\cite{dodpf}, is described next.



\section{Counting irreducible integral points of bounded height}

In this section, we derive asymptotics for the number of
$G(\Z)$-equivalence classes of irreducible elements of $V(\Z)$ having bounded invariants.  
We also describe how these
asymptotics change when we restrict to counting only those elements in $V(\Z)$ that satisfy a specified finite set of congruence
conditions.  

Let $L^{(m,\tau)}$ ($m=1,\ldots,n$, $\tau=1,\ldots,{{2m+1}\choose m}$) denote fundamental sets for the action of  $G'(\R)$ on $V^{(m,\tau)}$, as constructed in $\S9.1$, 
and let 
\[c_{m,\tau} = \frac{\Vol(\FF L^{(m,\tau)}\cap \{v\in V(\R):H(v)<1\})}{2^{m+n}}.\]
Then in this section we prove the following theorem:

\begin{theorem}\label{thmcount}
 Fix $m,\tau$.  For any $G(\Z)$-invariant set $S\subset V(\Z)^{(m,\tau)}:=V^{(m,\tau)}\cap V(\Z)$, let $N(S;X)$
  denote the number of $G(\Z)$-equivalence classes of irreducible
  elements $B\in S$ satisfying $H(B)<X$. Then
$$N(V(\Z)^{(m,\tau)};X)=c_{m,\tau}X^{(2n+3)/(4n+2)}+o(X^{(2n+3)/(4n+2)}).$$
\end{theorem}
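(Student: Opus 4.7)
The plan is to follow the orbit-counting framework developed in~\cite{dodpf} and~\cite{BS}. Section~9.3 already yields the multiset formula
\begin{equation*}
2^{m+n} N(V(\Z)^{(m,\tau)}; X) = \#\{B \in \mathcal F h L^{(m,\tau)} \cap V(\Z)^{\irr} : H(B) < X\}
\end{equation*}
for any fixed $h \in G'(\R)$ (up to small corrections from points with nontrivial $G(\Z)$-stabilizer, which are negligible by Proposition~\ref{gzbigstab}). The difficulty is that $\mathcal F$ is not bounded, so I would smooth things out by averaging the right-hand side over $h$ in a bounded open $G_0 \subset G'(\R)$ and dividing by $\Vol(G_0)$. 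Using the Iwasawa-type decomposition $\mathcal F = N'(t) T' K \Lambda$, this re-expresses $2^{m+n} \Vol(G_0) N(V(\Z)^{(m,\tau)};X)$ as an integral over $(s, \theta, \lambda, u)$ of the number of irreducible integer points of height $<X$ in the bounded region $u s \theta \lambda \cdot G_0 L^{(m,\tau)}$.

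I would then split the $(s,\lambda)$-domain of integration into a \emph{main body}, where the torus coordinates $s_1,\ldots,s_n$ are bounded (say by a large constant, or by a small power of $X$), and a \emph{cuspidal region} where some $s_i$ is larger. On the main body the integrand is the point count in a bounded region of volume $\ll \lambda^{\dim V}$, and an application of a Davenport-style lemma replaces the integer point count by the volume, incurring only a lower-order error. Collecting the main-body contributions and using that $H(\lambda B) = \lambda^{2n(2n+1)} H(B)$, the total evaluates to
\begin{equation*}
\frac{1}{2^{m+n}} \Vol\bigl(\mathcal F L^{(m,\tau)} \cap \{H<X\}\bigr) = c_{m,\tau}\, X^{(2n+3)/(4n+2)},
\end{equation*}
where the exponent is $\dim V / (2n(2n+1)) = (2n^2+3n)/(4n^2+2n)$.

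The main obstacle is controlling the cuspidal region, where I expect almost all integer points to be \emph{reducible} in the sense of Section~9. The torus $T'$ acts on each coordinate $b_{ij}$ with a definite weight, and in the cusp the large $s_i$ force the coordinates lying in the upper-left block to be small; hence for $s_i$ large enough relative to $\lambda$, any $B \in V(\Z)$ inside the region must have all $b_{ij}$ in this block equal to $0$. The structural input, which is precisely Proposition~\ref{dist}, is that the vanishing of exactly this block of coordinates forces $B$ to lie in a distinguished $\SO(W)(\Q)$-orbit, hence to be reducible and excluded from the count. A careful bookkeeping of the remaining, non-vanishing sub-strata of the cusp (on which some $b_{ij}$ of negative weight is forced to be $\pm 1, \pm 2, \ldots$) shows, by summing geometric series in $s$, that the total contribution of irreducible points from the cusp is $o(X^{(2n+3)/(4n+2)})$. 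Combining the main-body and cuspidal estimates yields the theorem.
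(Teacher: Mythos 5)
Your overall strategy -- average over fundamental domains, split into a cuspidal region and a main body, apply Davenport's lemma in the main body, and use Proposition~\ref{dist} to show that the ``deep cusp'' consists of reducible points -- is exactly the paper's approach. However, there are two gaps that are not just details.

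First, you have accounted for only one half of the needed approximation. You argue that irreducible points in the cusp are negligible, which addresses the paper's Proposition~\ref{hard}, but you never show that \emph{reducible} points in the \emph{main body} are negligible. Davenport's lemma applied to the main body yields the total weighted count of \emph{all} integer points there, not just the irreducible ones, and the theorem claims an asymptotic for $N$ which counts only irreducible orbits. One must therefore prove that the number of reducible integer points of bounded height with $b_{11}\neq 0$ (equivalently, in the main body) is $o(X^{(2n+3)/(4n+2)})$. The paper does this as Proposition~\ref{hard2}, and the proof is not a soft lattice-point estimate: it requires a sieve argument --- one bounds the density of the mod-$p$ reduction of reducible points using the orbit count over $\F_p$ from~\S6.1, and lets the modulus tend to infinity. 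Without this step the main-body contribution is not identified with $c_{m,\tau}X^{(2n+3)/(4n+2)}$. (A related sieve handles the large-stabilizer orbits, Proposition~\ref{gzbigstab}, which you wave at but do not account for in the volume.)

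Second, the ``careful bookkeeping of the remaining, non-vanishing sub-strata of the cusp'' is where the genuine combinatorial difficulty of the argument lives, and your sketch does not engage with it. The cusp does not simply force the entire upper-left block $\{b_{ij}: i+j<2n+1\}$ to vanish; depending on which $s_k$ are large, different subsets $U_0$ of coordinates vanish, and one must estimate an integral of the form $\int \prod_{b\notin U_0}w(b)\prod_k s_k^{k^2-2kn}\,d^\times s\,d^\times\lambda$ for \emph{every} such $U_0$ containing $b_{11}$. The exponents of the $s_k$ are not automatically nonpositive, so ``summing geometric series'' does not close directly; the paper multiplies the integrand by an auxiliary product $\pi$ of minimal-weight variables (which is legitimate because those variables are $\gg 1$ on the relevant region) and then must verify a nontrivial inequality, namely $\sum_{k=1}^n\max\{0,\,e_k(U_0)+k^2-2kn\}<|U_0|$ for all proper nonempty $U_0\subseteq U^-$, proved by induction on $n$. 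This inequality is precisely what makes the total cuspidal contribution $o(X^{(2n+3)/(4n+2)})$, and it is not something one gets ``for free'' by observing that deep cusps are reducible. You also conflate two distinct reducibility mechanisms: the vanishing of a $k\times(2n+1-k)$ block forces $\Disc=0$ (Lemma~\ref{lem1}), while only the vanishing of the full anti-triangle invokes Proposition~\ref{dist} (Lemma~\ref{lem2}); both are used, at different strata.
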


\subsection{Averaging over fundamental domains}\label{avgsec}

Let $G_0$ be a compact left $K$-invariant set in $G'(\R)$ that is the
closure of a nonempty open set and in which every element has
determinant greater than or equal to $1$. 
Then for any $m,\tau$, we may write
\begin{equation}\label{latter}
N(V(\Z)^{(m,\tau)};X)=\frac{\int_{h\in G_0}\#\{x\in \FF hL\cap V(\Z)^{\irr}:
  H(x)<X\}dh\;}{2^{m+n}\int_{h\in G_0}dh}
\end{equation}
for any Haar measure $dh$ on $G'(\R)$, where $V(\Z)^\irr$ denotes the set of irreducible elements in
$V(\Z)$ and $L$ is equal to $L^{(m,\tau)}$.  The denominator of the
right hand side of (\ref{latter}) is an absolute constant $C_{G_0}^{(m,\tau)}$ greater than
zero.

More generally, for any $G(\Z)$-invariant subset $S \subset
V(\Z)^{(m,\tau)}$, let $N(S;X)$ denote again the number of irreducible
$G(\Z)$-orbits in $S$ having height less than $X$. Let $S^{\irr}$
denote the subset of irreducible points of $S$. Then $N(S;X)$ can be
similarly expressed as

\begin{equation}\label{eq9}
N(S;X)=\frac{\int_{h\in G_0}\#\{x\in \FF hL\cap S^{\irr}: H(x)<X\}dh\;}{C_{G_0}^{(m,\tau)}}.
\end{equation}
We use (\ref{eq9}) to define $N(S;X)$ even for
sets $S\subset V(\Z)$ that are not necessarily $G(\Z)$-invariant.

Now, given $x\in V_\R^{(m,\tau)}$, let $x_L$ denote the {unique} point in $L$
that is $G'(\R)$-equivalent to $x$. We have

\begin{equation}
N(S;X)=\frac{1}{C_{G_0}^{(m,\tau)}}\sum_{\substack{{x\in
      S^{\irr}}\\[.02in]{H(x)<X}}}\int_{h\in G_0} \#\{g \in \FF :
x=ghx_L\} dh.
\end{equation}
For a given $x\in S^{\irr}$, there exist a finite number of elements
$g_1,\ldots,g_k\in G'(\R)$ satisfying $g_jx_L=x$.  We then have
$$\int_{h\in G_0} \#\{g \in \FF :x=ghx_L\} dh=\displaystyle\sum_j\int_{h\in G_0} \#\{g \in \FF :gh=g_j\} dh=\displaystyle\sum_j\int_{h\in G_0\cap\FF^{-1}g_j}dh.$$
As $dh$ is an invariant measure on $G'(\R)$, we have
$$\begin{array}{rcl}
\displaystyle\sum_j\int_{h\in G_0\cap\FF^{-1}g_j}dh&=&\displaystyle\sum_j\int_{h\in G_0g_j^{-1}\cap\FF^{-1}}dh\\[.25in] &=&\displaystyle\sum_j\int_{g\in\FF}\#\{h \in G_0 :gh=g_j\} dg\\[.25in] &=&\displaystyle\int_{g\in \FF} \#\{h \in G_0 :x=ghx_L\}
 dg.\end{array}$$
Therefore,
\begin{eqnarray}\label{eqavg}
N(S;X)&=&\frac{1}{C_{G_0}^{(m,\tau)}}\,\sum_{\substack{x\in S^{\irr}\\[.02in]
    H(x)<X}} \int_{g\in\FF} \#\{h \in G_0 : x=ghx_L\}dg\\
&\!\!=\!\!&  \frac1{C_{G_0}^{(m,\tau)}}\int_{g\in\FF}
\#\{x\in S^\irr\cap gG_0L:H(x)<X\}dg\\[.075in] 
&\!\!=\!\!&  \frac1{C_{G_0}^{(m,\tau)}}\int_{g\in N'(s)T'\Lambda K}
\#\{x\in S^\irr\cap u s\lambda \theta G_0L:H(x)<X\}
dg.
\end{eqnarray}
Let us now fix our Haar measure $dg$ on $G'(\R)$ by setting
\begin{equation}
dg \,=\,  \prod_{k=1}^n t_k^{2k-2n-1} \cdot du\, d^\times t\,d^\times \lambda\, d\theta = 
\prod_{m=1}^n s_k^{k^2-2kn}\cdot du\, d^\times s\,d^\times \lambda\, d\theta\,,
\end{equation}
where $du$ is an invariant measure on the group $N$ of unipotent lower triangular matrices in $G(\R)$, and where we normalize the invariant measure $d\theta$ on $K$ so that $\int_{K} d\theta=1.$

Let us write $E(u,s,\lambda,X) = u s \lambda G_0L\cap\{x\in V^{(m,\tau)}:H(x)<X\}$.
As $KG_0=G_0$ and $\int_K d\theta =1 $, we have
\begin{equation}\label{avg}
N(S;X) = \frac1{C_{G_0}^{(m,\tau)}}\int_{g\in N'(s)T'\Lambda}                              
\#\{x\in S^\irr\cap E(u,s,\lambda,X)\}\prod_{k=1}^n s_k^{k^2-2kn}
\cdot du\, d^\times s\,d^\times \lambda\,.
\end{equation}

We note that the same counting method may be used even if we are interested in counting both
reducible and irreducible orbits in $V(\Z)$. For any set $S\subset V^{(m,\tau)}$, let $N^\ast(S;X)$ be defined by (\ref{avg}), but
where the superscript ``irr'' is removed:
\begin{equation}\label{avg2}
N^\ast(S;X) = \frac1{C_{G_0}^{(m,\tau)}}\int_{g\in N'(s)T'\Lambda}                              
\#\{x\in S\cap E(u,s,\lambda,X)\}\prod_{k=1}^n s_k^{k^2-2kn}
\cdot du\, d^\times s\,d^\times \lambda\,.
\end{equation}
 Thus for a $G(\Z)$-invariant set $S\subset V^{(m,\tau)}$, $N^\ast(S;X)$ counts 
the total (weighted) number of $G(\Z)$-orbits in $S$ having height less than $X$ (not just the irreducible ones). 

The expression (\ref{avg}) for $N(S;X)$, and its analogue (\ref{avg2}) for $N^\ast(S,X)$, will be useful to us in the sections
that follow.

\subsection{An estimate from the geometry of numbers}

To estimate the number of lattice points in $E(u,s,\lambda,X)$, we
have the following proposition due to
Davenport~\cite{Davenport1}. 

\begin{proposition}\label{davlem}
  Let $\mathcal R$ be a bounded, semi-algebraic multiset in $\R^n$
  having maximum multiplicity $m$, and that is defined by at most $k$
  polynomial inequalities each having degree at most $\ell$.  Let $\RR'$
  denote the image of $\RR$ under any $($upper or lower$)$ triangular,
  unipotent transformation of $\R^n$.  Then the number of integer
  lattice points $($counted with multiplicity$)$ contained in the
  region $\mathcal R'$ is
\[\Vol(\mathcal R)+ O(\max\{\Vol(\bar{\mathcal R}),1\}),\]
where $\Vol(\bar{\mathcal R})$ denotes the greatest $d$-dimensional 
volume of any projection of $\mathcal R$ onto a coordinate subspace
obtained by equating $n-d$ coordinates to zero, where 
$d$ takes all values from
$1$ to $n-1$.  The implied constant in the second summand depends
only on $n$, $m$, $k$, and $\ell$.
\end{proposition}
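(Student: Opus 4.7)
The plan is to prove the proposition by induction on the dimension $n$, following Davenport's classical slicing strategy. For the base case $n=1$, any bounded semi-algebraic subset of $\R$ cut out by at most $k$ polynomial inequalities of degree $\leq \ell$ is a disjoint union of at most $O_{k,\ell}(1)$ intervals (since the endpoints are roots of polynomials of bounded degree), and the integer-point count in each interval differs from its length by at most one. The only one-dimensional unipotent transformation is the identity, so after accounting for multiplicity we obtain the desired $O_{m,k,\ell}(1)$ error.

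For the inductive step, after reversing the order of coordinates if necessary, it suffices to treat the lower-triangular case. Let $\pi \colon \R^n \to \R^{n-1}$ be the projection onto the first $n-1$ coordinates. Because $T$ is lower-triangular unipotent, the first $n-1$ entries of $T(x)$ depend only on $(x_1,\ldots,x_{n-1})$, so $\pi\circ T = T_{n-1} \circ \pi$ where $T_{n-1}$ is the principal $(n-1)\times(n-1)$ minor of $T$ (again lower-triangular unipotent); thus $\pi(\mathcal R')=T_{n-1}(\pi(\mathcal R))$. For each lattice point $y\in\Z^{n-1}\cap\pi(\mathcal R')$, the fiber $\mathcal R'\cap\pi^{-1}(y)$ is a bounded semi-algebraic subset of $\R$ cut out by at most $k$ polynomials of degree $\leq\ell$ in $x_n$, hence is a union of at most $O_{k,\ell}(1)$ intervals whose weighted integer-point count equals its total length $I(y)$ plus $O_{m,k,\ell}(1)$. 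Summing over $y$ and invoking the inductive hypothesis on $\pi(\mathcal R)$ with the transformation $T_{n-1}$ gives
\[
N(\mathcal R') \;=\; \sum_{y} I(y) \;+\; O\bigl(\max\{\Vol(\bar{\mathcal R}),1\}\bigr),
\]
since every projection of $\pi(\mathcal R)$ onto a coordinate subspace is also a projection of $\mathcal R$.

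The remaining and most delicate step is to show that $\sum_y I(y) = \Vol(\mathcal R) + O(\max\{\Vol(\bar{\mathcal R}),1\})$, i.e.\ to control the Riemann-sum error for the function $y \mapsto I(y)$. My plan is to apply Fubini in the reverse direction: for each real $t$, the horizontal slice $\{y\in\R^{n-1}\colon(y,t)\in\mathcal R'\}$ is a bounded semi-algebraic set of bounded complexity in $\R^{n-1}$, to which the inductive hypothesis (with trivial unipotent) applies, yielding its lattice-point count as its $(n-1)$-dimensional volume plus a projection-type error. Integrating in $t$, the main term recovers $\int\Vol_{n-1}(\text{slice})\,dt = \Vol(\mathcal R)$ by Fubini, and the error term becomes a sum of $(d+1)$-dimensional volumes of projections of $\mathcal R'$ (for various $d<n$). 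The main technical obstacle will be showing that these projection-volumes of $\mathcal R'$ are all controlled by projection-volumes of $\mathcal R$: this is immediate for projections onto initial-segment coordinate subspaces since the lower-triangular structure of $T$ makes such projections unipotent images of the corresponding projections of $\mathcal R$ (hence of equal volume), while the remaining coordinate projections can be handled by iterating the slicing argument and absorbing the resulting cumulative error into $\max\{\Vol(\bar{\mathcal R}),1\}$.
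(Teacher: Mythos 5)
The paper gives no proof of this proposition: it is attributed to Davenport's 1951 paper, with the remark that Davenport's argument ``adapts without significant change'' to multisets and to unipotent images. So I evaluate your proposal on its own terms rather than against a proof in the paper.

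Your base case and the first half of the inductive step are fine: projecting onto the first $n-1$ coordinates (which commutes with lower-triangular unipotent $T$), counting the integer fiber over each integer $y\in\pi(\mathcal R')$, and bounding the number of such $y$ by the inductive hypothesis applied to $\pi(\mathcal R')=T_{n-1}(\pi(\mathcal R))$, legitimately give $N(\mathcal R') = \sum_y I(y) + O(\max\{\Vol(\bar{\mathcal R}),1\})$.

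The gap is in the ``reverse Fubini'' step, which is in fact where all the difficulty lives. Writing $\sum_y I(y) = \int_\R N_{n-1}(\mathrm{slice}_t(\mathcal R'))\,dt$ and applying the inductive hypothesis slice-by-slice yields an \emph{absolute-value} error bound
\[
\int_\R O\bigl(\max\{\Vol(\overline{\mathrm{slice}_t(\mathcal R')}),\,1\}\bigr)\,dt,
\]
taken over the set of $t$ for which the slice is nonempty. Already the ``$+1$'' part of the integrand, integrated over this $t$-range, is of the order of the length of the $x_n$-projection of $\mathcal R'$ --- a projection volume of $\mathcal R'$, \emph{not} of $\mathcal R$. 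Because a lower-triangular unipotent $T$ acts nontrivially on $x_n$, this can be arbitrarily large while $\Vol(\bar{\mathcal R})$ stays bounded. Concretely, take $\mathcal R=[0,1]^3$ and $T(x_1,x_2,x_3)=(x_1,x_2,x_3+ax_1+ax_2)$ with $a$ a large integer: the $x_3$-projection of $\mathcal R'$ has length $2a+1$, so your bound is $\gg a$, while the true count is $8=\Vol(\mathcal R)+O(1)$ and $\max\{\Vol(\bar{\mathcal R}),1\}=1$. The signed errors $N_{n-1}(\mathrm{slice}_t)-\Vol_{n-1}(\mathrm{slice}_t)$ oscillate in $t$ and cancel almost completely; bounding their absolute values destroys this cancellation, and the suggested patch of ``absorbing the cumulative error'' cannot recover it.

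The standard (Davenport-style) resolution is to slice at \emph{integer} values of a coordinate that $T$ leaves fixed (the first coordinate for lower-triangular $T$, or, after reordering, the last for upper-triangular). Then $\mathrm{slice}_j(\mathcal R')$ is, up to an irrelevant translate, a unipotent image of $\mathrm{slice}_j(\mathcal R)$ itself, so the inductive errors are expressed in terms of projection volumes of the original $\mathcal R$; and the comparison of $\sum_j \Vol_{n-1}(\mathrm{slice}_j(\mathcal R))$ with $\Vol(\mathcal R)$ is done by Fubini in the \emph{transverse} $(n-1)$ variables, where each one-dimensional fiber of $\mathcal R$ is a union of $O_{k,\ell}(1)$ intervals. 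That is the step that genuinely captures the cancellation and keeps every error term controlled by a projection volume of $\mathcal R$; your argument needs to be rebuilt along these lines.
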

Although Davenport states the above lemma only for compact
semi-algebraic sets $\mathcal R\subset\R^n$, his proof adapts without
significant change to the more general case of a bounded semi-algebraic
multiset $\mathcal R\subset\R^n$, with the same estimate applying also to
any image $\mathcal R'$ of $\mathcal R$ under a unipotent triangular
transformation.

\subsection{Estimates on reducibility}\label{redsec}

In this subsection, we describe the relative frequencies with which
reducible and irreducible elements sit inside various parts of the
fundamental domain $\FF hL$, as $h$ varies over the compact region $G_0$.

We begin by describing some sufficient conditions that guarantee that
a point in $V(\Z)$ is reducible.  Recall that a point $B\in V(\Z)$ is called {\it reducible} if it lies in a distinguished rational orbit or it has discriminant zero.

\begin{lemma}\label{lem1}
  Let $B\in V(\Z)$ be an element such that, for some $k\in \{1,\ldots,n\}$, the top left $k \times (2n+1-k)$
block of entries of $B$ are all equal to $0$.  Then $\Disc(\Det(Ax-B))=0$, and thus $B$ is reducible. 
\end{lemma}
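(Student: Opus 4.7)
The plan is to recast the hypothesis as a $T$-invariance statement on a flag. Set $X := \langle e_1,\ldots,e_k\rangle\subset W$. Since $\langle e_i,f_j\rangle = \delta_{ij}$, the orthogonal complement is $X^\perp = \langle e_1,\ldots,e_n,u,f_n,\ldots,f_{k+1}\rangle$, which in the standard basis $w_1=e_1,\ldots,w_{2n+1}=f_1$ coincides with $\langle w_1,\ldots,w_{2n+1-k}\rangle$. Because $b_{ij} = \langle w_i,T w_j\rangle$, the vanishing of the top-left $k\times(2n+1-k)$ block of $B$ is exactly the statement that $\langle x,Tu\rangle = 0$ for all $x\in X$ and all $u\in X^\perp$, which is equivalent to $T(X^\perp)\subset X^\perp$. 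The self-adjointness of $T$ then also gives $T(X)\subset X$: for $x\in X$ and $u\in X^\perp$, $\langle Tx,u\rangle = \langle x,Tu\rangle = 0$, so $Tx\in (X^\perp)^\perp = X$.

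Consequently $T$ preserves the flag $0\subset X\subset X^\perp\subset W$, whose successive quotients have dimensions $k$, $2n+1-2k$, and $k$; accordingly the characteristic polynomial factors as $f(x) = f_1(x)\,f_2(x)\,f_3(x)$, where $f_1, f_2, f_3$ are the characteristic polynomials of the induced actions on $X$, $X^\perp/X$, and $W/X^\perp$, respectively. The crucial observation is $f_1 = f_3$: the pairing induces a canonical $k$-linear isomorphism $W/X^\perp \xrightarrow{\sim} X^*$ given by $\bar w\mapsto \langle w,\cdot\rangle|_X$, and the identity $\langle Tw,x\rangle = \langle w,Tx\rangle$ (for $x\in X$) shows that the induced $T$-action on $W/X^\perp$ corresponds under this isomorphism to the transpose of $T|_X$, which has the same characteristic polynomial as $T|_X$ itself.

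Hence $f(x) = f_1(x)^2\,f_2(x)$, and since $\deg f_1 = k\geq 1$, $f_1$ has at least one root in $\overline{\Q}$, which is automatically a repeated root of $f$. Thus $\disc(f) = 0$, and since $\Det(Ax-B) = (-1)^n f(x)$ differs from $f$ only by a sign, $\Disc(\Det(Ax-B)) = 0$ as well. By definition, $B$ is therefore reducible. The only mild subtlety is verifying the $T$-equivariance of the duality isomorphism $W/X^\perp\cong X^*$; once that is in hand, the factorization and conclusion are immediate from unwinding the definitions.
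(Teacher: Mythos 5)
Your proof is correct, and it is the coordinate-free rendering of the paper's block-matrix argument. The paper simply notes that $Ax-B$ is block anti-triangular with outer $k\times k$ blocks that are transposes of each other, so any $x_0$ killing one outer block kills both and is a double root of $\det(Ax-B)$. You translate the vanishing hypothesis into the statement that $T$ preserves the isotropic flag $0\subset X\subset X^\perp\subset W$, deduce the factorization $f=f_1f_2f_3$ of the characteristic polynomial from the flag, and then use the $T$-equivariant duality $W/X^\perp\cong X^*$ (together with the invariance of characteristic polynomials under transposition) to see $f_3=f_1$, hence $f=f_1^2f_2$. The underlying observation is identical---the flag invariance is precisely what the block anti-triangularity of $Ax-B$ expresses, and the duality $W/X^\perp\cong X^*$ is precisely why the outer blocks are transposes of each other. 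What your version buys is conceptual clarity and independence from the specific coordinate ordering; what the paper's version buys is brevity (no need to set up the dual pairing, one just looks at the matrix). Both are complete and correct. One small point of style: you could shorten the last step by noting directly, as the paper does, that any root $x_0$ of $f_1$ is automatically a root of $f_3=f_1$ too, so it is a double root of $f$ without writing out the full factorization.
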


\begin{proof}
If the top left $k \times (2n+1-k)$ block of $B$ is zero, then $B$ (and indeed $Ax_0-B$ for any $x_0\in\C$) may be viewed as a block anti-triangular matrix, with square blocks of length $k$, $2n+1-2k$, and $k$ respectively on the anti-diagonal.  Furthermore, the two blocks of length $k$ will be transposes of each other.  There then must be a scalar multiple $Ax_0$ of $A$ such that $Ax_0+B$ has the property that one and thus both of these square blocks of length $k$ are singular.  It follows that $x_0$ is a double root of $\Det(Ax-B)$, yielding $\Disc(\Det(Ax-B))=0$, as desired. 
\end{proof}

We also have the following consequence of Proposition~\ref{dist}:

\begin{lemma}\label{lem2}
  Let $B=(b_{ij})\in V(\Z)$ be an element such that, for all pairs $(i,j)$ satisfying $i+j< 2n+1$, we have $b_{ij}=0$.  Then $B$ is reducible. 
\end{lemma}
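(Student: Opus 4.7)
The plan is to deduce Lemma~\ref{lem2} directly from Proposition~\ref{dist}. The hypothesis that $b_{ij}=0$ whenever $i+j<2n+1$ is exactly the zero-pattern displayed in~(\ref{distform}): every entry of $B$ strictly above the secondary anti-diagonal $\{(i,j):i+j=2n+1\}$ vanishes, while the remaining entries (at and below that anti-diagonal) are unconstrained.

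First I would dispose of the degenerate case. If the discriminant of $B$ is zero then $B$ is reducible by the definition given at the start of Section~9, so I may assume $\Delta(B)\neq 0$. In this non-degenerate case, $B$ itself, viewed as an element of $V(\Q)$, is already a Gram matrix of the form~(\ref{distform}). Proposition~\ref{dist}, applied with $k=\Q$, then asserts that the $\SO(W)(\Q)$-orbit of $B$ is distinguished; by the definition of ``reducible'' this means $B$ is reducible.

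The conceptual content --- that the vanishing of all entries strictly above the anti-diagonal $i+j=2n+1$ forces the subspace spanned by $e_1,\ldots,e_n$ to be an $n$-dimensional isotropic subspace $X$ satisfying $T(X)\subset X^{\perp}$, which is precisely the condition witnessing a distinguished orbit --- is already built into Proposition~\ref{dist}. Consequently there is no substantive obstacle here: Lemma~\ref{lem2} is simply an integral restatement of the ``if'' direction of that proposition, and the only thing one must do is verify that the combinatorial description ``$b_{ij}=0$ for $i+j<2n+1$'' matches the displayed pattern in~(\ref{distform}), which is immediate by inspection of the two triangular regions of zeros.
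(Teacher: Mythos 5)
Your proof is correct and is exactly the paper's intended argument: the paper states Lemma~\ref{lem2} as a ``consequence of Proposition~\ref{dist}'' without further elaboration, and your verification that the zero pattern $b_{ij}=0$ for $i+j<2n+1$ is precisely the pattern~(\ref{distform}), together with the explicit handling of the $\Delta(B)=0$ case via the definition of reducibility, fills in exactly the details the authors left implicit.
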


We are now ready to give an estimate on the number of 
irreducible elements $B=(b_{ij})\in \FF h L \cap V(\Z)$, on average, satisfying
$b_{11}=0$:

\begin{proposition}\label{hard}
Let $h$ take a random value in $G_0$ uniformly with respect to the Haar measure
$dh$.  Then the expected number of 
irreducible elements $B\in\FF h L^{(m,\tau)}\cap V(\Z)$ such that 
$H(B)< X$  and $b_{11}=0$
is $O_\varepsilon(X^{\tfrac{2n+3}{4n+2} - \tfrac{1}{2n(2n+1)}+\varepsilon})$.
\end{proposition}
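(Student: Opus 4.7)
The plan is to apply the averaging formula~(\ref{avg}) with the set $S = V(\Z)^{(m,\tau)}\cap\{b_{11}=0\}$, bound the inner integer-point count using Davenport's lemma (Proposition~\ref{davlem}), and then integrate over the Siegel domain $N'(s)\,T'\,\Lambda$.

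First I would track how $b_{11}$ transforms along the decomposition $B = us\lambda h\, x_L$, with $u\in N'(s)$, $s\in T'$, $\lambda\in\Lambda$, $h\in G_0$, and $x_L\in L^{(m,\tau)}$ lying in a bounded set. The coordinate $b_{11}$ is a highest-weight vector for the torus action on $V$, so the lower-unipotent element $u\in N'$ fixes $b_{11}$, while the compact factor contributes only a bounded rotation of coordinates. Under $t\in T'$, writing $t_1 = s_1s_2\cdots s_n$, the coordinate $b_{11}$ is scaled by $t_1^{\,2}$, and $\lambda\in\Lambda$ acts on $V(\R)$ by scalar multiplication so $b_{11}\mapsto\lambda b_{11}$. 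Therefore for $B\in E(u,s,\lambda,X)$ one has $|b_{11}|=O(\lambda(s_1\cdots s_n)^2)$.

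Next, the slice $E(u,s,\lambda,X)\cap\{b_{11}=0\}$ is a codimension-one subset whose Lebesgue volume equals the volume of $E$ divided by the length of the $b_{11}$-interval. Since $G(\R)$ acts on $V(\R)$ preserving Lebesgue measure while $\lambda$ scales $V(\R)$ by $\lambda$, one has $\mathrm{Vol}(E)=O(\lambda^{\dim V})$, so the slice volume is $O(\lambda^{\dim V-1}(s_1\cdots s_n)^{-2})$. Davenport's lemma bounds the number of integer points in this slice by the slice volume plus boundary contributions from its projections. Integrating the slice-volume contribution against the measure $\prod_k s_k^{k^2-2kn}\,du\,d^\times s\,d^\times \lambda$ over $\lambda<X^{1/(2n(2n+1))}$ (which encodes $H(B)<X$) produces a $\lambda$-integral of order $X^{(\dim V-1)/(2n(2n+1))}=X^{(2n+3)/(4n+2)-1/(2n(2n+1))}$, since $\dim V=n(2n+3)$; the $s$-integral, which now carries the extra weight $(s_1\cdots s_n)^{-2}$ relative to the main counting, still converges absolutely because for each $1\le k\le n$ the resulting exponent of $s_k$ becomes $k^2-2kn-3<-1$. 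The predicted savings of $X^{-1/(2n(2n+1))}$ then appears naturally from this single codimension-one slicing.

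The main obstacle is controlling the Davenport boundary terms in the cuspidal region where one or more of the $s_k$ is large. In that region, several coordinates $b_{ij}$ of $B$ have scaled range $<1$ and hence must vanish as integers; after projecting out these coordinates, some of the resulting lower-dimensional projection volumes can exceed the slice volume, so the naive boundary estimate is not immediately summable. The essential additional input is provided by Lemmas~\ref{lem1} and~\ref{lem2}: whenever the coordinates forced to vanish (together with the imposed $b_{11}=0$) include an entire top-left $k\times (2n+1-k)$ block of $B$, or include the full anti-triangular set $\{b_{ij}:i+j<2n+1\}$, the point $B$ is automatically reducible and so contributes nothing to the count. This cuts off the cuspidal integration at precisely the locus where irreducibility can still hold, and, following the bookkeeping developed in~\cite{dodpf} and~\cite{BS}, the remaining projection contributions integrate to the same order as the slice-volume term, up to a harmless $X^{\varepsilon}$ factor. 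Combining these estimates yields the stated bound.
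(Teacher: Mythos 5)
There is a genuine gap here, and a concrete computational error that points at it. You claim that under $t\in T'$ the coordinate $b_{11}$ is scaled by $t_1^{2}$, and hence that $|b_{11}|=O(\lambda(s_1\cdots s_n)^2)$ on $E(u,s,\lambda,X)$. This has the wrong sign: from the weight formula~(\ref{wtformula}) one has $w(b_{11})=\lambda\,s_1^{-2}\cdots s_n^{-2}=\lambda t_1^{-2}$, so $b_{11}$ has the \emph{smallest} weight (not the highest), and $|b_{11}|=O(\lambda(s_1\cdots s_n)^{-2})$. Consequently the $(\dim V -1)$-dimensional slice volume is $O\bigl(\lambda^{\dim V -1}(s_1\cdots s_n)^{+2}\bigr)$, not $O(\lambda^{\dim V-1}(s_1\cdots s_n)^{-2})$. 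With the correct sign your claimed convergence $k^2-2kn-3<-1$ becomes $k^2-2kn+1<-1$, which fails for $n=1$ (giving no savings at all from the slice term), and even for $n\geq 2$ the slice-volume main term is only part of the story, because when $w(b_{11})<1$ the condition $b_{11}=0$ is automatic on integer points and the ``slice'' captures the full cusp, so the slice-volume estimate is not a valid upper bound there. (Incidentally, for $n=1$ the proposition is vacuous: $U^-=\{b_{11}\}$, so Lemma~\ref{lem2} makes every $B$ with $b_{11}=0$ reducible --- but your argument does not exploit this.)

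The deeper issue is the final paragraph, where you defer the entire cuspidal analysis to ``the bookkeeping developed in~\cite{dodpf} and~\cite{BS}.'' That is precisely the hard part, and it is not off-the-shelf. The paper's proof partitions by the exact set $U_0\ni b_{11}$ of vanishing coordinates, uses Lemmas~\ref{lem1} and~\ref{lem2} to discard the $U_0$ that force reducibility, and then --- for the remaining $U_0$ --- multiplies the integrand by a carefully chosen monomial $\pi=\prod_k\beta_k^{\max\{0,\,e_k(U_0)+k^2-2kn\}}$ in the ``free'' variables $\beta_k$ (harmless since $|\beta_k|\geq1$) precisely to kill the divergent $s_k$-exponents. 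Establishing that the resulting exponent of $X$ drops by $1/(2n(2n+1))$ then reduces to the strict inequality $\#U_0>\#\pi(U_0)$, which is the content of a new combinatorial lemma ($\sum_k\max\{0,e_k(U_0)+k^2-2kn\}\leq|U_0|$, with equality only for $U_0=\varnothing$ or $U^-$) proved by induction on $n$. This lemma is specific to the weight structure of the present representation of $\SO(W)$ and does not follow from the binary-quartic or quintic-ring cases cited; without it, or a substitute, the cuspidal error terms are not controlled and the claimed power saving is not established.
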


\begin{proof}
We follow the method of proof of \cite[Lemma~11]{dodpf}.  Let $U$ denote the set of all $n(2n+3)$ variables
$b_{ij}$ corresponding to the coordinates on $V(\Z)$.  Each variable $b\in U$ has a {\it weight}, defined as follows. The
action of $s=(s_1,\ldots,s_n)\cdot\lambda$ on $B\in
V(\R)$ causes each variable $b$ to multiply by a certain weight which we
denote by $w(b)$.  These weights $w(b)$ are evidently rational
functions in $\lambda,s_1,\ldots,s_n$.  If we use $w_1,\ldots,w_{2n}$ to denote the quantities $s_1,s_2,\ldots,s_n,s_n,\ldots, s_2,s_1$, respectively, then we have the explicit formula 
\begin{equation}\label{wtformula}
w(b_{ij})\,=\,\lambda \cdot s_1^{-2}\cdots s_n^{-2} \cdot(w_1\cdots w_{i-1})\cdot(w_1\cdots w_{j-1}).
\end{equation}
In particular, if $i+j=2n+2$, then we see that $w(b_{ij})=\lambda$.  If $i+j=2n+1$, with $i\leq n$, then $w(b_{ij})=\lambda s_i^{-1}$.
Finally, note that $\prod_{b\in U}w(b)=\lambda^{n(2n+3)}$.

Let $U_0\subset U$ be any subset of variables in $U$ containing $b_{11}$.  We now give an upper estimate on the total number of irreducible $B\in \FF h L^{(m,\tau)}\cap V(\Z)$ such that all variables in $U_0$ vanish, but all variables in $U\setminus U_0$ do not.  To this end, let 
$V(U_0)$ denote the set of all such $B\in V(\Z)$.  Furthermore, let $U_1$ denote the 
the set of variables having the minimal weights $w(b)$ among the variables $b\in
U\setminus U_0$ (by ``minimal weight'' in $U\setminus U_0$, we mean
there is no other variable $u\in U\setminus U_0$ with weight having
equal or smaller exponents for all parameters $\lambda, s_1,\ldots,s_n$).  As explained in~\cite{dodpf}, given $U_1$, it suffices to assume that $U_0$ in fact consists of all variables in $U$ having weight smaller than that of some variable in $U_1$.

In that case, if $U_1$ contains any variable $b_{ij}$ with $i+j>2n+1$, then every element $B\in V(U_0)$ will be reducible by Lemma~\ref{lem1}.  
If $U_1$ consists exactly of those variables $b_{ij}$ for which $i+j=2n+1$ (i.e., $U_0$ contains all variables $b_{ij}$ with $i+j<2n+1$), then every element $B\in V(U_0)$ will be reducible by Lemma~\ref{lem2}.
Thus, we may assume that all variables $b_{ij}\in U_1$ satisfy
$i+j\leq 2n+1$, and at least one variable $b_{ij}\in U_1$ satisfies $i+j<2n+1$. 

In particular, there exist variables $\beta_1,\ldots,\beta_n\in U_1$ such that, for all $k$, the weight $w(\beta_k)$ has equal or smaller exponents for all parameters $\lambda, s_1,\ldots,s_n$ when  compared to the weight $w(b_{k,2n+1-k})=\lambda s_k^{-1}$.

For each subset $U_0\subset U$ as above, we wish to show that
$N(V(U_0);X)$, as defined by (\ref{avg}), is $O_\varepsilon(X^{\tfrac{2n+3}{4n+2} - \tfrac{1}{2n(2n+1)}+\varepsilon})$.  
Since the set $N'(s)$ is absolutely bounded, the equality (\ref{avg2}) implies that
\begin{equation}\label{estv0s}
N^*(V(U_0);X)\ll
\int_{\lambda=c'}^{X^{1/(2n(2n+1))}}\!\!\!\int_{s_1,\ldots,s_n=c}^\infty
\sigma(V(U_0))
\prod_{k=1}^n s_k^{k^2-2kn}\cdot
d^\times\! s \,d^\times\!\lambda,
\end{equation}
where $\sigma(V(U_0))$ denotes the maximum possible number of integer points
in the region $E(u,s,\lambda,X)$ that also satisfy the conditions 
\begin{equation}\label{cond}
\mbox{$b=0$ for $b\in U_0$ and $|b|\geq 1$ for $b\in U_1$}.
\end{equation}

By our construction of $L$, all entries of elements $B\in G_0L$ are uniformly bounded.
Let $C$ be a constant that bounds the absolute value of all variables
$b\in U_1$ over all elements $B\in G_0L$.
Then, for an element $B\in E(u,s,\lambda,X)$, we have
\begin{equation}\label{condt}
|b|\leq C{w(b)}
\end{equation}
for all variables $b\in U_1$; therefore, the number of integer points in $E(u,s,\lambda,X)$
satisfying $b=0$ for all $b\in U_0$ and $|b|\geq 1$ for all $b\in U_1$ will be nonzero only if we have
\begin{equation}\label{condt1}
C{w(b)}\geq 1
\end{equation}
for all weights $w(b)$ such that $b\in U_1$.  
Thus if the condition (\ref{condt1}) holds for all weights $w(b)$
corresponding to $b\in U_1$, then---by the definition of $U_1$---we
will also have $Cw(b)\gg 1$ for all weights $w(b)$ such that $b\in
U\setminus U_0$.  In particular, note that we have 
\begin{equation}\label{betaest}
Cw(\beta_k)\gg 1
\end{equation}
for all $k=1,\ldots,n$.


Therefore, if the region $\BB=\{B\in
E(u,s,\lambda,X):b=0\;\forall b\in U_0;\; |b|\geq 1\; \forall
b\in U_1\}$ contains an integer point, then (\ref{condt1}) and
Proposition~\ref{davlem} together imply that the number of integer points
in $\BB$ is $O(\Vol(\BB))$ (where we are computing the volume in the $\R$-subspace of $V(\R)$
spanned by the coordinates $b\in U\setminus U_0$); this is because the volumes of all the projections of
$u^{-1}\BB$ will in that case also be $O(\Vol(\BB))$.  
Now clearly
\[\Vol(\BB)=O\Bigl(\prod_{b\in U\setminus U_0} w(b)\Bigr),\]
so we obtain
\begin{equation}\label{estv1s}
N(V(U_0);X)\ll
\int_{\lambda=c'}^{X^{1/(2n(2n+1))}} \!\!\!\int_{s_1,\ldots,s_n=c}^\infty
\prod_{b\in U\setminus U_0} w(b)
\, \prod_{k=1}^n s_k^{k^2-2kn}
\cdot d^\times\! s \,d^\times\!\lambda.
\end{equation}

We wish to show that the latter integral is bounded by $O_\varepsilon(X^{\tfrac{2n+3}{4n+2} - \tfrac{1}{2n(2n+1)}+\varepsilon})$ for every choice of $U_0$.  If, for example, the exponent of
$s_k$ in (\ref{estv1s}) is nonpositive for all $k$ in $\{1,\ldots,n\}$,
then it is clear that the resulting integral will be at most
$O_\varepsilon(X^{(n(2n+3)-|U_0|)/(2n(2n+1))+\varepsilon})$ in value, since each $s_k$ is bounded above by a power of $X$ by (\ref{betaest}).  

For cases where this nonpositive exponent condition does not hold, we observe that, due to (\ref{betaest}), the integrand in
(\ref{estv1s}) may be multiplied by any product $\pi$ of the variables $\beta_k$ ($k=1,\ldots,n$) without harm, and the
estimate (\ref{estv1s}) will remain true.  
Extend the notation $w$ multiplicatively, i.e., $w(ab)=w(a)w(b)$, and for any subset $U'\subset U$, write
$w(\prod_{b\in U'}b)=\lambda^{|U'|}\cdot \prod s_k^{-e_k(U')}$.  Then we set 
$$\pi = \pi(U_0):=\prod_{k=1}^n \beta_k^{\,\max\{0\,,\,e_k(U_0) +k^2-2kn\}}.$$
We may then apply the
inequalities (\ref{condt}) to each of the variables in $\pi$, yielding
\begin{equation}\label{estv2s}
N(V(U_0);X)\ll
\int_{\lambda=c'}^{X^{1/(2n(2n+1))}} \!\!\!\int_{s_1,\ldots,s_n=c}^\infty
\prod_{b\in U\setminus U_0} w(b)\;w(\pi)
\, \prod_{k=1}^n s_k^{k^2-2kn}
\cdot d^\times\! s \,d^\times\!\lambda.
\end{equation}
By the definition of $\beta_k$, we have $w(\beta_k)\ll \lambda s_k^{-1}$, implying
\begin{equation}\label{estv3}
N(V(U_0);X)\ll
\int_{\lambda=c'}^{X^{1/(2n(2n+1))}} \!\!\!\int_{s_1,\ldots,s_n=c}^\infty
\prod_{b\in U\setminus U_0} w(b)\;\lambda^{|\pi|}
\, \prod_{k=1}^n s_k^{k^2-2kn-\max\{0\,,\,e_k(U_0)+k^2-2kn\}}
\cdot d^\times\! s \,d^\times\!\lambda.
\end{equation}

In the above, we have chosen the factor $\pi$ so that the exponent of each $s_m$ in
the integrand of (\ref{estv3}) is nonpositive.  Thus we obtain from (\ref{estv3}) that
$N(V(U_0);X)=O_\varepsilon(X^{(n(2n+3)-\#U_0+\#\pi)/(2n(2n+1))+\varepsilon})$, where $\#\pi$ denotes the
total number of variables of $U$ appearing in $\pi$ (counted with
multiplicity). It thus suffices now to check that we always have $\#U_0-\#\pi(U_0)>0$.

Let $U^-$ denote the subset of variables $b_{ij}$ in $U$ such that $i+j<2n+1$.  
It then suffices to 
check the condition $\#U_0-\#\pi(U_0)>0$ for all proper nonempty subsets $U_0$ of $U^-$, and this is the content of the following combinatorial lemma.  

\begin{lemma}
Let $U_0\subseteq U^-$.  Then 
\begin{equation}\label{keyeq}
\sum_{k=1}^n \max\{0\,,\,e_k(U_0)+k^2-2kn\}\leq |U_0|
\end{equation}
with equality if and only if $U_0=\varnothing$ or $U_0=U^-$.
\end{lemma}

\begin{proof}
We prove the lemma by induction on $n$.  The lemma is directly verified for $n=1$.  If $n>1$, then we note that the weights for $s_{k}$ ($k>1$) agree with the weights for $s_{k-1}$ when $n$ is replaced by $n-1$.  Let $U_0(1)$ (resp.\ $U_0(\geq 2)$)  denote the set of all variables $b_{ij}\in U_0$ such that $i=1$ (resp.\ $i\geq 2$), and define $U^-(1)$ and $U^-(\geq 2)$ analogously.  Then, by the induction hypothesis, we have
\begin{equation}\label{inducthyp}
\sum_{k=2}^{n} \max\{0\,,\,e_{k}(U_0(\geq 2))+(k-1)^2-2(k-1)(n-1)\}\leq |U_0(\geq 2)|
\end{equation}
with equality if and only if $U_0(\geq 2)=\varnothing$ or $U_0(\geq 2)=U^-(\geq 2)$.

Let $|U_0(1)|=r$, so that $r\in\{0,\ldots,2n-1\}$.  Then to prove (\ref{keyeq}), we may assume that $U_0(1)=\{b_{11},b_{12},\ldots, b_{1r}\}$, for any other choice of $U_0(1)\subset U^-(1)$ of size $r$ could only decrease the sum on the left hand side of (\ref{keyeq}).

We first consider the case where $r<n$; in that case, we have
$$
\begin{array}{rcl}
 & &\max\{0\,,\,e_1(U_0(1))+1-2n\}+\displaystyle\sum_{k=2}^n \max\{0\,,\,e_k(U_0)+k^2-2kn\} 
 \\[.1in] &=& 0+\displaystyle\sum_{k=2}^n \max\{0\,,\,e_k(U_0(\geq2))+(k-1)^2-2(k-1)(n-1)+e_k(U_0(1))+(1-2n)\} \\[.1in]
 &\leq& |U_0(\geq2)| + \displaystyle\sum_{k=2}^n \max\{0\,,\,e_k(U_0(1))+(1-2n)\} \\[.275in]
&=& |U_0(\geq2)|\leq |U_0|,
\end{array}
$$
with equality if and only if $r$ is zero and all terms on the left hand sides of (\ref{keyeq}) and (\ref{inducthyp}) are zero, i.e., $U_0=\varnothing$.

Now consider the case where $r\geq n$.  In that case, we have similarly
$$
\begin{array}{rcl}
 & &\displaystyle\sum_{k=1}^n \max\{0\,,\,e_k(U_0)+k^2-2kn\} 
 \\[.1in] 
 &\leq& |U_0(\geq2)| + \displaystyle\sum_{k=1}^n \max\{0\,,\,e_k(U_0(1))+(1-2n)\} \\[.225in]
&=& |U_0(\geq2)|+2(r-n-1)-1\leq |U_0|,
\end{array}
$$
with equality if and only if $r$ is $2n-1$ and we have equality in (\ref{inducthyp}) with all terms nonzero, i.e., $U_0=U^-$.
\end{proof}

\noindent
This completes the proof of Proposition~\ref{hard}.
\end{proof}

We now give an estimate on the number of 
reducible elements $B=(b_{ij})\in \FF h L \cap V(\Z)$, on average, satisfying
$b_{11}\neq 0$:

\begin{proposition}\label{hard2}
Let $h$ take a random value in $G_0$ uniformly with respect to the measure
$dh$.  Then the expected number of 
reducible elements $B\in\FF h L^{(m,\tau)}\cap V(\Z)$ such that 
$H(B)< X$  and $b_{11}\neq 0$
is $o(X^{(2n+3)/(4n+2)})$.
\end{proposition}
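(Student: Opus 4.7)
The plan is to decompose the set of reducible elements in $V(\Z)$ as the union
$$
\{B\in V(\Z):\Delta(B)=0\}\;\cup\;\{B\in V(\Z):B\text{ lies in a distinguished }\SO(W)(\Q)\text{-orbit}\},
$$
and to bound the expected contribution of each piece separately, using the averaging integral~(\ref{avg2}) of $\S$\ref{avgsec} together with Proposition~\ref{davlem}.

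For elements of discriminant zero, $\Delta$ is a nonzero polynomial on $V$ of weighted degree $2n(2n+1)$, so the locus $\{\Delta=0\}$ has codimension one. Restricting the averaging integral to this locus and running the weight-counting argument from the proof of Proposition~\ref{hard}, but with one coordinate direction removed, yields an expected count of $O_\varepsilon(X^{(2n+3)/(4n+2)-1/(2n(2n+1))+\varepsilon})$, which is $o(X^{(2n+3)/(4n+2)})$.

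For the distinguished-$\Q$-orbit piece, a distinguished $\Q$-orbit restricts to a distinguished $\R$-orbit; by the labeling of $\S$7.3 these real orbits comprise exactly the component $V^{(m,1)}$, so this piece is empty whenever $\tau\ne 1$. For $\tau=1$, Proposition~\ref{dist} implies that each such $B$ is $\SO(W)(\Q)$-equivalent to a point whose Gram matrix has the anti-triangular shape (\ref{distform}); equivalently, there exists a primitive integral maximal isotropic sublattice $L_X\subset W(\Z)$ of rank $n$ with $T(L_X)\subset L_X^\perp$. The set $V_{L_X}:=\{T\in V:T(L_X)\subset L_X^\perp\}$ is a linear subvariety of $V$ of codimension $n^2$ (the number of zeros above the anti-diagonal in (\ref{distform})). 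Since the $G(\Z)$-equivalence classes of primitive rank-$n$ isotropic sublattices of $W(\Z)$ form a finite set, summing the Davenport lattice-point estimate over $V_{L_X}\cap\FF hL^{(m,1)}\cap\{H<X\}$ for a set of representatives, and averaging over $h\in G_0$, yields an expected count of $O(X^{(n^2+3n)/(2n(2n+1))})$, which is $o(X^{(2n+3)/(4n+2)})$ since $n+3<2n+3$.

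The main obstacle is the distinguished-orbit bound for $\tau=1$: to make the averaging integrate cleanly one must re-run the weight-counting argument from the proof of Proposition~\ref{hard} on the lower-dimensional subvariety $V_{L_X}$ for each representative $L_X$, verify that the resulting $s_k$-exponents remain nonpositive after an appropriate analogue of the $\pi(U_0)$-factor, and control the finite sum over $G(\Z)$-orbit classes of isotropic sublattices. The restrictive anti-triangular structure (\ref{distform}) provides precisely the zero entries required to force the resulting volume bound to be strictly smaller than $X^{(2n+3)/(4n+2)}$; the hypothesis $b_{11}\ne 0$ excludes the points already handled by Proposition~\ref{hard} (where the reducible contribution is instead the main term), so no further condition beyond $\Delta\ne 0$ and the existence of $L_X$ is needed here.
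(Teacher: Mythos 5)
Your approach is genuinely different from the paper's, but it has several gaps that prevent it from being a complete proof.

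The paper's proof of Proposition~\ref{hard2} is a mod-$p$ sieve. Noting that a $B\in V(\Z)$ reducible over $\Q$ is reducible modulo $p$ for every prime $p$, it applies Theorem~\ref{cong2} (proved independently of Propositions~\ref{hard2} and~\ref{gzbigstab}) to the intersection $S^\red(Y)=\cap_{p<Y}S^\red_p$ of the mod-$p$ reducibility conditions, and shows that $\mu_p(S^\red_p)\leq 1-\frac{n}{2n+1}+O(1/p)$ by the orbit-counting over $\F_p$ from $\S$\ref{finite}. Letting $Y\to\infty$ drives the product of densities to zero. This bypasses any geometric analysis of what reducible elements look like in the fundamental domain.

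Your decomposition into $\{\Delta=0\}$ and distinguished-$\Q$-orbit pieces has three concrete gaps. First, $\{\Delta=0\}$ is a nonlinear hypersurface, not a coordinate subspace, so ``running the weight-counting argument with one coordinate direction removed'' is not what Proposition~\ref{davlem} or the proof of Proposition~\ref{hard} actually provides; one would need a separate lattice-point estimate on the discriminant locus restricted to the fundamental domain. Second, for the distinguished-orbit piece your set $V_{L_X}$ depends on $B$: a $B$ in a distinguished $\Q$-orbit with $b_{11}\neq 0$ does \emph{not} itself lie in a fixed $V_{L_0}$ with respect to the standard basis; it is only $G(\Q)$-equivalent (not $G(\Z)$-equivalent) to a point with the anti-triangular Gram matrix of Proposition~\ref{dist}. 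Fixing a set of $G(\Z)$-orbit representatives $L_0$ of primitive rank-$n$ isotropic sublattices does not reduce the count to a lattice-point count in a single $V_{L_0}$ inside $\FF hL$: the stabilizer $\Stab_{G(\Z)}(L_0)$ is a parabolic, the intersection $V_{L_0}\cap\FF hL$ is not a bounded region to which Proposition~\ref{davlem} applies directly, and the cusp behavior on $V_{L_0}$ would need its own analysis. Third, the finiteness of $G(\Z)$-classes of primitive rank-$n$ isotropic sublattices of the odd unimodular lattice $W(\Z)$ is asserted without proof; the paper never needs this fact. You acknowledge in your final paragraph that the weight-counting would have to be re-run on $V_{L_X}$ and that the sum over sublattice classes needs to be controlled, but you do not carry either out, so the proposal is a sketch of a program rather than a proof. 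The paper's sieve avoids all of these issues and is the route you should study.
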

%
We defer the proof of this lemma to the end of the section.

 We also have the following proposition which bounds the number of
$G(\Z)$-equivalence classes of irreducible elements in 
$V^{(m,\tau)}$
with height less than $X$ that have
large stabilizers inside $G(\Q)$; we again
defer the proof to the end of the section.

\begin{proposition}\label{gzbigstab}
  The number of $G(\Z)$-equivalence classes of irreducible
  elements in
  $V(\Z)$
having height less than $X$
whose stabilizer in $G(\Q)$ has size greater
  than $1$ is $o(X^{(2n+3)/(4n+2)})$.
\end{proposition}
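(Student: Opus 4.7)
The plan is to reduce the statement to a count of reducible characteristic polynomials of bounded height, and then feed this estimate into the averaging framework of Section~10.

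For $T \in V(\Z)^{\irr}$ (so $\disc(f_T) \neq 0$), I would first observe that $|\Stab_{G(\Q)}(T)| > 1$ if and only if the characteristic polynomial $f_T(x)$ factors nontrivially over $\Q$: indeed, by Proposition~\ref{stabilizer} the geometric stabilizer is the finite \'etale group scheme $D$; by Proposition~\ref{isom} its $\Q$-points correspond bijectively to monic factorizations $f_T = g(x)h(x)$ in $\Q[x]$; and Galois descent as in Section~4.2 identifies these with $\Stab_{G(\Q)}(T)$. It therefore suffices to bound the number of $G(\Z)$-equivalence classes of irreducible $T \in V(\Z)$ with $H(T) < X$ whose characteristic polynomial is reducible over $\Q$.

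Next, I would estimate the number of monic trace-zero polynomials $f(x) = x^{2n+1} + c_2 x^{2n-1} + \cdots + c_{2n+1} \in \Z[x]$ with $H(f) < X$ that factor nontrivially over $\Q$ (equivalently, over $\Z$ by Gauss's lemma). For each factorization $f = gh$ with $g, h \in \Z[x]$ monic of degrees $d$ and $2n+1-d$, where $1 \le d \le n$, Cauchy's bound combined with $|c_k| \le X^{k/(2n(2n+1))}$ forces every complex root of $f$ to have absolute value $O(X^{1/(2n(2n+1))})$, so by Vieta the $i$-th coefficient of $g$ is $O(X^{i/(2n(2n+1))})$ and similarly for $h$. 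Counting such pairs $(g,h)$ subject to the trace-zero constraint yields $O(X^{[d(d+1) + (2n+1-d)(2n+2-d) - 2]/(4n(2n+1))})$ polynomials, maximized at $d = 1$ with value $O(X^{1/2})$. Hence there are $O(X^{1/2})$ reducible $f$ of height less than $X$ in total.

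Finally, I would incorporate this count into the averaging machinery of Section~10, in the spirit of Proposition~\ref{hard}. Writing the desired orbit count via Equation~(\ref{avg}) for $S = \{T \in V(\Z) : f_T \text{ reducible over } \Q\}$, and using the $(I,\alpha)$-parametrization of Propositions~\ref{podd}--\ref{peven} and~\ref{product} together with Davenport's lemma (Proposition~\ref{davlem}) to bound the integrand, yields $N(S; X) = O(X^{1/2 + \epsilon})$. Since $(2n+3)/(4n+2) = 1/2 + 1/(2n+1) > 1/2$ for every $n \ge 1$, this is $o(X^{(2n+3)/(4n+2)})$, as required. The main obstacle is controlling the number of $G(\Z)$-orbits per $f$: a pointwise bound via the $2$-torsion of the class group of $\Z[x]/(f(x))$ only gives $O(H(f)^{1/2 + \epsilon})$, which is too weak. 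The resolution is to absorb the orbit count into the averaging integral, exploiting the thinness of the reducibility locus in invariant space rather than bounding orbits per $f$ separately.
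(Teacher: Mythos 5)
Your first step is correct and matches the paper's: by Proposition~\ref{isom} (and the uniqueness of the isomorphism $G_S\cong G_T$ over $\Q$), the stabilizer $\Stab_{G(\Q)}(T)$ is nontrivial exactly when $f_T$ admits a nontrivial monic factorization over $\Q$. The paper's proof of Proposition~\ref{gzbigstab} starts from precisely this observation.

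However, the rest of your argument has a genuine gap, and you have essentially identified it yourself without closing it. The set $S=\{T\in V(\Z):f_T\text{ reducible over }\Q\}$ is not a semi-algebraic region and is not cut out by a finite set of congruence conditions; it is an arithmetic set whose membership cannot be detected by the integrand of the averaging integral~(\ref{avg}). Davenport's lemma (Proposition~\ref{davlem}) and the volume computations of Sections~9--10 count \emph{all} lattice points in a bounded region --- they have no mechanism to see ``$f_T$ factors over $\Q$.'' Your estimate that the number of reducible trace-zero monic $f$ of height $<X$ is $O(X^{1/2})$ is correct, but it bounds the image in invariant space, not the number of $G(\Z)$-orbits lying above it, and as you note, the pointwise orbit count per $f$ is too large to multiply through. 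The phrase ``absorb the orbit count into the averaging integral, exploiting the thinness of the reducibility locus'' is not an argument; there is no version of Proposition~\ref{davlem} that trades a thin image in invariant space for a lattice-point bound upstairs.

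The paper's actual proof replaces the global condition ``reducible over $\Q$'' by the weaker but \emph{local} conditions ``reducible over $\F_p$'' for all primes $p$, and then sieves. Concretely, reducibility over $\Q$ implies reducibility modulo every prime, so $S\subset\bigcap_{p<Y}S_p^{\bigstab}$ where $S_p^{\bigstab}$ is the set of $B$ with $\det(Ax-B)$ reducible in $\F_p[x]$; each $S_p^{\bigstab}$ is a genuine congruence condition, so Theorem~\ref{cong2} applies and gives
$$\limsup_{X\to\infty}\frac{N^*(S^{\bigstab}\cap V^{(m,\tau)}(\varnothing);X)}{X^{(2n+3)/(4n+2)}}\;\ll\;\prod_{p<Y}\mu_p(S_p^{\bigstab})\;\ll\;\prod_{p<Y}\Bigl(1-\tfrac{2n}{2n+1}+O(1/p)\Bigr),$$
which tends to $0$ as $Y\to\infty$ since the local densities are uniformly bounded away from $1$. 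The remaining case $b_{11}=0$ is handled by Proposition~\ref{hard}. This mod-$p$ sieve is the key device you are missing: it converts a global rationality condition, invisible to lattice-point counting, into an infinite product of local densities, each of which the machinery of Section~10 can compute.
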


\subsection{The main term}

Fix again $m,\tau$ and let $L=L^{(m,\tau)}$.  The work of the previous subsection shows that, in order to obtain Theorem~\ref{thmcount}, it suffices to count those integral elements $B\in \FF h L$ of bounded height for which $b_{11}\neq 0$, as $h$ ranges over $G_0$.

Let $\RR_X(hL)$ denote the region $\FF h L\cap \{B\in
V(\R):H(B)<X\}$.  Then we have the following result counting the
number of integral points in $\RR_X(hL)$, on average, satisfying
$b_{11}\neq 0$:

\begin{proposition}\label{nonzerob11}
  Let $h$ take a random value in $G_0$ uniformly with
  respect to the Haar measure $dh$.  Then the expected
  number of elements $B\in\FF hL\cap V(\Z)$ such that
  $|H(B)|< X$ and $b_{11}\neq0$ is $\Vol(\RR_X(L))
  + O(X^{\tfrac{2n+3}{4n+2}-\tfrac{1}{2n(2n+1)}})$.
\end{proposition}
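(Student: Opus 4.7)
The plan is to apply the averaging formula~\eqref{avg2} to $S=V(\Z)^{(m,\tau)}\cap\{b_{11}\neq 0\}$ and then bound the resulting integral pointwise via Proposition~\ref{davlem}, following the template of \cite[\S11]{dodpf} and \cite[\S2.3]{BS}. This rewrites the expected count as
\[
\frac{1}{C_{G_0}^{(m,\tau)}}\int_{N'(s)T'\Lambda}\#\bigl\{x\in V(\Z)\cap E(u,s,\lambda,X): b_{11}(x)\neq 0\bigr\}\prod_{k=1}^n s_k^{k^2-2kn}\,du\,d^\times s\,d^\times\lambda,
\]
where $E(u,s,\lambda,X)=us\lambda G_0 L\cap\{H<X\}$ and $L=L^{(m,\tau)}$. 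For an integer point, $b_{11}\neq 0$ is equivalent to $|b_{11}|\geq 1$; since $G_0 L$ is absolutely bounded, this forces $w(b_{11})\gg 1$, equivalently $\lambda\gg\prod_k s_k^2$.

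At each $(u,s,\lambda)$, the relevant region $\mathcal R:=s\lambda G_0 L\cap\{H<X,\,|b_{11}|\geq 1\}$ is bounded and semi-algebraic, and $E(u,s,\lambda,X)\cap\{|b_{11}|\geq 1\}$ is its image under the unipotent transformation $u\in N'(s)$. Proposition~\ref{davlem} therefore gives an integer-point count of $\Vol(\mathcal R)+O(\max\Vol(\bar{\mathcal R}))$. The main-term volume, integrated against the Jacobian and divided by $C_{G_0}^{(m,\tau)}$, collapses after undoing the Iwasawa decomposition to $\Vol(\RR_X(L))$ exactly, since removing the measure-zero slice $\{b_{11}=0\}$ is harmless; this is the standard volume-unfolding of \cite[\S2.2]{BS}, and I would not dwell on it.

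The substance lies in the error estimate. The key structural observation is that $b_{11}$ carries the minimum weight among the coordinates on $V$: by~\eqref{wtformula}, $w(b_{ij})/w(b_{11})=(w_1\cdots w_{i-1})(w_1\cdots w_{j-1})$, which is $\geq 1$ on the deep cusp where every $s_k>1$ (the bounded portion of $T'$ contributes only $O(1)$). Imposing $w(b_{11})\gg 1$ then pushes every $w(b)$ above $1$, so the maximum coordinate-projection volume of $\mathcal R$ is attained at codimension one by deleting $b_{11}$ and equals $\Vol(\mathcal R)/w(b_{11})\asymp\lambda^{n(2n+3)-1}\prod_k s_k^2$. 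Multiplying by the Jacobian and performing the $\lambda$-integral from $\max(c',\prod s_k^2/C)$ up to $X^{1/(2n(2n+1))}$ produces the factor $X^{(n(2n+3)-1)/(2n(2n+1))}=X^{(2n+3)/(4n+2)-1/(2n(2n+1))}$, which is exactly the stated bound.

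The main obstacle will be the residual $s$-integration: one must verify that, for every choice of $(s_1,\ldots,s_n)$ in the cusp, the dominant Davenport projection really is the codimension-one one deleting $b_{11}$ (rather than a lower-dimensional projection, which the constraint $\prod s_k^2\ll\lambda$ must rule out), and that the resulting $s$-integral of $\prod s_k^{k^2-2kn+2}$ against the range $\prod s_k^2\lesssim\lambda$ converges. This is a direct combinatorial exercise analogous to the lemma appearing inside Proposition~\ref{hard}, and once it is carried out the two ingredients combine to give the stated main term plus error.
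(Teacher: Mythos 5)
Your high-level strategy matches the paper's: apply the averaging formula, observe that $b_{11}\neq 0$ for an integer point forces $|b_{11}|\geq 1$ and hence $w(b_{11})=\lambda/\prod_k s_k^2\gg 1$, apply Davenport's lemma to the restricted region, and identify the dominant projection as the codimension-one one deleting $b_{11}$, whose volume is $\asymp\lambda^{n(2n+3)-1}\prod_k s_k^2$; your integration of this against the Jacobian and $d^\times\lambda$ also gives the claimed exponent. So the skeleton is right.

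However, there is one genuine gap, and it is precisely at the step you said you ``would not dwell on.'' Davenport's lemma, applied to $\mathcal R=E(u,s,\lambda,X)\cap\{|b_{11}|\geq 1\}$, produces main term $\Vol(\mathcal R)$, \emph{not} $\Vol(E(u,s,\lambda,X))$. The difference $E\setminus\mathcal R=\{B\in E:\ |b_{11}|<1\}$ is a slab of thickness $2$ in the $b_{11}$-direction; it is emphatically not measure-zero, and your justification (``removing the measure-zero slice $\{b_{11}=0\}$ is harmless'') confuses the slab $\{|b_{11}|<1\}$ with the hyperplane $\{b_{11}=0\}$. The integer-point count is insensitive to this distinction, but the \emph{volume} is not, and it is the volume that must unfold to $\Vol(\RR_X(L))$. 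The paper handles this explicitly: one writes $\Vol(\mathcal R)=\Vol(E)-\Vol(E\setminus\mathcal R)$, slices $E\setminus\mathcal R$ by $b_{11}\in[-1,1]$, bounds each slice by $O\bigl(\prod_{b\neq b_{11}}w(b)\bigr)$, and concludes $\Vol(E\setminus\mathcal R)\ll\prod_{b\neq b_{11}}w(b)\asymp\lambda^{n(2n+3)-1}\prod_k s_k^2$, which is the same order as the Davenport error and is absorbed into it. Without this estimate, the main term in your argument does not actually equal $\Vol(\RR_X(L))$.

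One smaller remark: the final $s$-integral here does \emph{not} require a combinatorial lemma of the type inside Proposition~\ref{hard}. Since only the single variable $b_{11}$ has been deleted, the exponent of $s_k$ in the error integrand is $k^2-2kn+2$, and for $n\geq 2$ these are all negative, so the $s$-integral converges outright over the cusp and contributes only a constant; the whole exponent in $X$ then comes from the $\lambda$-integral, as you observed. (For $n=1$ the exponent of $s_1$ is positive, one must cap using $s_1^2\ll\lambda$, and this degrades the bound; that case is consistent with the binary-quartic analysis of \cite{BS}, and either way the error remains strictly below $X^{(2n+3)/(4n+2)}$.) Invoking the combinatorial lemma would be needed only if several low-weight coordinates were simultaneously deleted, which is the situation of Proposition~\ref{hard}, not here.
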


\begin{proof}
Following the notation in the proof of Proposition~\ref{hard}, let $V^{(m,\tau)}(\!\varnothing\!)$ denote the subset of $V(\Z)^{(m,\tau)}$ such that $b_{11}\neq 0$.  
We wish to show that
\begin{equation}\label{toprove2}
N^*(V^{(m,\tau)}(\!\varnothing\!);X)=\Vol(\RR_X(L)) + O(X^{\tfrac{2n+3}{4n+2}-\tfrac{1}{2n(2n+1)}}).
\end{equation}
We have
\begin{equation}\label{translate2}
  N^*(V^{(m,\tau)}(\!\varnothing\!);X)=\frac{1}{C^{(m,\tau)}_{G_0}}\int_{\lambda=c'}^{X^{1/(2n(2n+1))}} \!\!\!
\int_{s_1,\ldots,s_n=c}^\infty
\int_{u\in    N'(s)} 
  \sigma(V(\!\varnothing\!))
  \prod_{k=1}^n s_k^{k^2-2kn}
  \cdot du\,d^\times\! s \,d^\times\!\lambda,
\end{equation}
where $\sigma(V(\!\varnothing\!))$ denotes the number of integer points
in the region $E(u,s,\lambda,X)$ satisfying $|b_{11}|\geq 1$.
Evidently, the number of integer points in $E(u,s,\lambda,X)$
with $|b_{11}|\geq 1$ can be nonzero only if we have
\begin{equation}\label{condt2}
C{w(b_{11})}=C\cdot\frac{\lambda}{s_1^2s_2^2\cdots s_n^2}\geq 1.
\end{equation}
Hence, if the region $\BB=\{B\in
E(u,s,\lambda,X):|b_{11}|\geq 1\}$
 contains an integer point, then (\ref{condt2}) and
Proposition~\ref{davlem} imply that the number of integer points in $\BB$
is $\Vol(\BB)+O(\Vol(\BB)/w(b_{11}))$, because all
smaller-dimensional projections of $u^{-1}\BB$ are clearly bounded by
a constant times the projection of $\BB$ onto the hyperplane $b_{11}=0$ (since
$b_{11}$ has minimal weight).

Therefore, since $\BB=E(u,s,\lambda,X)-\bigl(E(u,s,\lambda,X)-\BB\bigr)$, 
we may write
\begin{eqnarray}\label{bigint}\nonumber
\!N^\ast(V^{(m,\tau)}(\!\varnothing\!);X) &\!\!\!\!\!\!\!=\!\!\!\!\!\!& \!\frac1{C^{(m,\tau)}_{G_0}}
\!\!\int_{\lambda=c'}^{X^{1/(2n(2n+1))}}\!\!\!\!\!
\int_{s_1,\ldots,s_n=c}^{\infty} \int_{u\in N'(s)}\!\!
\Bigl(\Vol\bigl(E(u,s,\lambda,X)\bigr)\!-\!\Vol\bigl(E(u,s,\lambda,X)\!-\!\BB\bigr) 
 \\[.01in] & & \,\,\,\,\,\,\,\,\,\,
\,\,\,\,\,\,\,+O(\max\{\lambda^{n(2n+3)-1}s_1^2s_2^2\cdots s_n^2,1\})
\Bigr)   
\, \prod_{k=1}^n s_k^{k^2-2kn}\cdot du\,
d^\times s\, d^\times \lambda.
\end{eqnarray}
The integral of the first term in (\ref{bigint}) is $\int_{h\in G_0}
\Vol(\RR_X(hL))dh$.
Since $\Vol(\RR_X(hL))$ does not
depend on the choice of $h\in G_0$,
the latter integral is simply $C^{(m,\tau)}_{G_0}\cdot \Vol(\RR_X(L))$.

To estimate the integral of the second term in (\ref{bigint}), let $\BB'=
E(u,s,t,X)-\BB$, and for each $|b_{11}|\leq 1$, let $\BB'(b_{11})$ be
the subset of all elements $B\in\BB'$ with the given value of
$b_{11}$.  Then the $(n(2n+3)-1)$-dimensional volume of $\BB'(b_{11})$ is at most 
$O\Bigl(\prod_{b\in U\setminus\{b_{11}\}}w(b)\Bigr)$, and so we
have the estimate 
\[\Vol(\BB') \ll \int_{-1}^1 \prod_{b\in
  U\setminus\{b_{11}\}}w(b)
\,\,db_{11} = O\Bigl(\prod_{b\in U\setminus\{b_{11}\}}w(b)\Bigr).\]
The second term of the integrand in (\ref{bigint}) can thus be
absorbed into the third term.
 
Finally, one easily computes the
integral of the third term in (\ref{bigint}) to be
$O(X^{\tfrac{2n+3}{4n+2}-\tfrac{1}{2n(2n+1)}})$.  We thus obtain
\begin{equation}\label{obtain}
N^\ast(V(\Z)^{(m,\tau)};X) = 
\Vol(\RR_X(L)) 
+ O(X^{\tfrac{2n+3}{4n+2}-\tfrac1{2n(2n+1)}}),
\end{equation}
as desired.
\end{proof}

Propositions~\ref{hard}, \ref{hard2}, \ref{gzbigstab}, and \ref{nonzerob11} now yield Theorem~\ref{thmcount}.

\subsection{Computation of the volume}\label{secvol}

Fix again $m,\tau$.  In this subsection, we describe how to compute
the volume of $\mathcal R_X(L^{(m,\tau)})$.  

To this end, let $R^{(m,\tau)}:=\Lambda L^{(m,\tau)}$.
For each 
$(c_2,\ldots,c_{2n+1})\in I(m)$, the set $R^{(m,\tau)}$ contains exactly one point $p^{(m,\tau)}(c_2,\ldots,c_{2n+1})$
having invariants $c_2,\ldots,c_{2n+1}$. Let $R^{(m,\tau)}(X)$ denote the set of
all those points in $R^{(m,\tau)}$ having height less than $X$. Then
$\Vol(\mathcal R_X(L^{(m,\tau)}))=\Vol(\FF^{\ss}\cdot R^{(m,\tau)}(X))$,
where $\FF^{\ss}$ denotes the fundamental domain $N'T'K$ for the action
of $G(\Z)$ on $G(\R)$
(here $N'$, $T'$, and~$K$ are as in~(\ref{nak})).

The set $R^{(m,\tau)}$ is in canonical one-to-one correspondence with the
set $I(m)\subset \R^{2n}$.
We thus obtain a
natural measure on each of these sets $R^{(m,\tau)}$, given by
$dr=dc_2\cdots dc_{2n+1}$.  Let $\omega$ be a differential which generates the rank
$1$ module of top-degree differentials of $G$ over $\Z$.
We begin with the following key proposition, which describes how one can change measure
from $dv$ on $V$ to $\omega(g)\,dr$ on $G\times R$:
\begin{proposition}\label{genjac}
  Let $K$ be $\C$, $\R$, or $\Z_p$ for any prime $p$. Let $dv$ be the
  standard additive measure on $V(K)$. Let $R$ be an open subset of
  $K^{2n}$ and let $s:R\to V(K)$ be a continuous function such that the
  invariants of $s(c_2,\ldots,c_{2n+1})$ are given by $c_2,\ldots,c_{2n+1}$.  Then 
  there exists a rational nonzero constant~$\mathcal J$ $($independent of $K,$ $R,$ and $s)$ such that, \,for any
  measurable function $\phi$ on $V(K)$, we have
  \begin{equation}\label{eqjacpgl2}
\int_{v \in G(K)\cdot
  s(R)}\phi(v)dv\,=\,|\mathcal J|\int_{R}\int_{G(K)}
\phi(g\cdot s(c_2,\ldots,c_n))\,\omega(g) \,dr
  \end{equation}
where we regard $G(K)\cdot s(R)$ as a
multiset.
\end{proposition}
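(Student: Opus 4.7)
The plan is to identify $\mathcal J$ as (the absolute value of) the Jacobian determinant of the map $\Psi\colon G(K)\times R\to V(K)$, $(g,r)\mapsto g\cdot s(r)$, and then show by a weighted-homogeneity argument that this Jacobian is a universal nonzero rational constant. Once this is done, \eqref{eqjacpgl2} follows from the standard change-of-variables formula, with the multiset interpretation on the left-hand side precisely accounting for the generic fibers of $\Psi$ (which, by Proposition~\ref{sepclosed}, are torsors for the finite stabilizer $D$).

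First I would reduce to evaluating the Jacobian at points of the form $(1,r)$. Because $G$ is semisimple, it acts on $V$ through $\SL(V)$, so conjugation preserves the additive measure $dv$; and $\omega$ is a bi-invariant top form on $G$. Hence $|\det d\Psi_{(g_0,r)}|=|\det d\Psi_{(1,r)}|=:J(r)$. Explicitly, $d\Psi_{(1,r)}\colon \mathfrak g(K)\oplus T_rR\to V(K)$ is the linear map $(X,v)\mapsto[X,s(r)]+ds_r(v)$, and its source and target have matching dimension $n(2n+1)+2n=n(2n+3)=\dim V$. Since the statement asserts independence of $s$, I may fix a canonical choice: the Kostant section $E+z(F)$ of Section~7, which is defined over $\Q$ and is $\Lambda$-equivariant for the principal grading.

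For this choice $J(r)$ becomes a $\Q$-polynomial in $c_2,\ldots,c_{2n+1}$, and I would argue it is constant by homogeneity. The scaling action of $\Lambda$ on $V$ multiplies $dv$ by $\lambda^{\dim V}=\lambda^{n(2n+3)}$, while $dr=dc_2\cdots dc_{2n+1}$ scales by $\lambda^{2+3+\cdots+(2n+1)}=\lambda^{(2n+1)(n+1)-1}=\lambda^{n(2n+3)}$, and $\omega$ is $\Lambda$-invariant (since $\Lambda$ does not act on $G$). The scalings match, so $J(r)$ has weighted degree zero; but every nonconstant polynomial in the $c_k$ has strictly positive weight, forcing $J(r)\equiv\mathcal J\in\Q$. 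Nonvanishing of $\mathcal J$ is then the statement that on the dense open locus $\disc(f)\neq 0$ the orbit $G\cdot s(r)$ has dimension $\dim V-2n$ (its stabilizer is the finite group $D$), so $[\mathfrak g,s(r)]$ together with the transverse directions $ds_r(T_rR)$ spans all of $V$, making $\Psi$ a submersion there. For a general continuous section $s$, the formula then follows from the universal constancy of $\mathcal J$ by local comparison to the Kostant section. The main obstacle will be the weighted-homogeneity step: one must verify that the Kostant section is genuinely $\Lambda$-equivariant (a consequence of the principal $\mathfrak{sl}_2$-triple grading) so that $J$ is truly a polynomial of total weighted degree zero, and not a ratio of homogeneous polynomials whose degrees happen to cancel.
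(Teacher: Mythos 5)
Your approach---compute the Jacobian of $(g,r)\mapsto g\cdot s(r)$, show by left-invariance of $\omega$ and unimodularity of $G$ together with a weighted-scaling argument that it is a nonzero rational constant, and then apply the change-of-variables formula with the generic fiber (a torsor for $D$) accounting for the multiset---is exactly the argument of \cite[Prop.~3.11]{BS}, to which the paper defers for this proposition; the identity $\dim V=n(2n+3)=\sum_{k=2}^{2n+1}k$ that makes your degree-matching work is precisely the point the paper singles out. One small remark on your flagged ``main obstacle'': the Kostant section $E+z(F)$ is not literally preserved by the scaling action of $\Lambda$ (since $\lambda E\neq E$), but is equivariant for the twisted $\Gm$-action $\lambda\cdot\mathrm{Ad}\bigl(\exp(-\tfrac12(\log\lambda)H)\bigr)$ coming from the principal $\frak{sl}_2$-grading; as the $\mathrm{Ad}$-factor has determinant one on both $\frak g$ and $V$, the scalings of $dv$ and $\omega\,dr$ still cancel and your weighted-degree-zero conclusion goes through unchanged.
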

The proof of Proposition~\ref{genjac} is identical to that of \cite[Prop.~3.11]{BS} (where we use the important fact that the dimension $n(2n+3)$ of $V$ is equal to the sum of the degrees of the invariants $c_2,\ldots,c_{2n+1}$ for the action of $G$ on $V$).

Proposition \ref{genjac} may now be used
to give a convenient expression for the volume of the multiset~$\mathcal R_X(L^{(m,\tau)})$:
\begin{eqnarray}\nonumber
\!\!\!\!\int_{\mathcal R_X(L^{(m,\tau)})}\!\!\!\!\!dv=\int_{\FF^\ss\cdot R^{(m,\tau)}(X)}\!\!\!\!\!dv&=&
|\mathcal J|\cdot\int_{R^{(m,\tau)}(X)}\int_{\FF^\ss}\omega(g)\,dr \\ \nonumber &=&|\mathcal J|\cdot \Vol(G
(\Z)\backslash G
(\R))\cdot \int_{R^{(m,\tau)}(X)}dr \\ \label{volexp}&=& |\mathcal J|\cdot \Vol(G
(\Z)\backslash G
(\R))\cdot \int_{{\scriptstyle{(c_2,\ldots,c_{2n+1})\in I(m)}}\atop{\scriptstyle{H(c_2,\ldots,c_{2n+1})<
X}}}dc_2\cdots dc_{2n+1}.\;\;\;\;\;
\end{eqnarray}

\subsection{Congruence conditions}\label{secbqcong}

In this subsection, we prove the following version of Theorem \ref{thmcount} where we
count elements of $V(\Z)$ satisfying some finite set of congruence conditions:

\begin{theorem}\label{cong2}
Suppose $S$ is a subset of $V(\Z)$ defined by 
congruence conditions modulo finitely many prime powers. Then 
\begin{equation}\label{ramanujan}
N(S\cap V^{(m,\tau)};X)
  \;=\; N(V(\Z)^{(m,\tau)};X)\cdot
  \prod_{p} \mu_p(S)+o(X^{(2n+3)/(4n+2)}),
\end{equation}
where $\mu_p(S)$ denotes the $p$-adic density of $S$ in $V(\Z)$.

\end{theorem}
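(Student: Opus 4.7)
The plan is to reduce to the case where $S$ is a single coset of a sublattice of $V(\Z)$, and then to re-run the proof of Theorem~\ref{thmcount} with this coset in place of $V(\Z)$. Since $S$ is defined by congruence conditions modulo finitely many prime powers, there exists a positive integer $M$ such that $S$ can be written as a finite disjoint union
\[
S \;=\; \bigsqcup_{i=1}^{k} \bigl(B_i + M\cdot V(\Z)\bigr),
\]
where the $B_i \in V(\Z)$ are representatives of the relevant residue classes modulo $M$. By definition of the $p$-adic densities, we have $k/M^{n(2n+3)} = \prod_p \mu_p(S)$. Thus it suffices to prove that for each individual translate $S_i = B_i + M\cdot V(\Z)$,
\[
N(S_i \cap V^{(m,\tau)};X) \;=\; \frac{1}{M^{n(2n+3)}}\, N(V(\Z)^{(m,\tau)};X) \;+\; o\bigl(X^{(2n+3)/(4n+2)}\bigr),
\]
and then to sum over $i = 1, \ldots, k$.

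To prove this congruence-class statement, I would repeat the averaging procedure of Section~10.1 verbatim, with $V(\Z)$ replaced everywhere by the affine lattice $B_i + M\cdot V(\Z)$. The key inputs carry over as follows. First, Davenport's lemma (Proposition~\ref{davlem}) applies equally well to counting points of any translated sublattice $B_i + M\cdot\Z^{n(2n+3)}$: for a bounded semialgebraic region $\mathcal R$, the number of such points in $\mathcal R$ equals $\Vol(\mathcal R)/M^{n(2n+3)} + O(\max\{\Vol(\bar{\mathcal R})/M^{n(2n+3)-1}, 1\})$, where the error term is dominated by the same projection bounds used in Section~10. Second, the cuspidal estimates of Propositions~\ref{hard}, \ref{hard2}, and \ref{gzbigstab}---which give the $o(X^{(2n+3)/(4n+2)})$ bound on the number of reducible elements with $b_{11}\neq 0$ and on the number of elements with $b_{11}=0$---depend only on the scaling weights $w(b)$ under the torus action and on volumes of coordinate projections, not on which translate of the lattice we are summing over. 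Hence these bounds apply uniformly (with constants depending on $M$, but $M$ is fixed).

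Combining these ingredients, the proof of Proposition~\ref{nonzerob11} goes through verbatim for $S_i$, giving
\[
N^\ast(S_i \cap V^{(m,\tau)};X) \;=\; \frac{1}{M^{n(2n+3)}}\Vol(\mathcal R_X(L^{(m,\tau)})) \;+\; O\bigl(X^{\tfrac{2n+3}{4n+2}-\tfrac{1}{2n(2n+1)}}\bigr),
\]
and the cusp estimates show that the contribution from reducible elements, and from elements with $b_{11}=0$, is absorbed into the $o(X^{(2n+3)/(4n+2)})$ error. Summing over the $k$ cosets $S_i$ and using $k/M^{n(2n+3)} = \prod_p \mu_p(S)$ together with the volume computation from Section~10.4 yields the claimed formula.

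The main obstacle is verifying that the cusp bounds remain of order $o(X^{(2n+3)/(4n+2)})$ uniformly over the translates $B_i$; the implied constants can depend on $M$, but since $M$ is a fixed integer (determined by $S$) this is harmless. Concretely, the delicate part of Proposition~\ref{hard} is the combinatorial inequality $\#U_0 - \#\pi(U_0) > 0$ for $\varnothing \subsetneq U_0 \subsetneq U^-$, which is a statement purely about torus weights on $V$ and therefore is unchanged when one passes from $V(\Z)$ to a sublattice coset. Likewise, the reducibility criteria in Lemmas~\ref{lem1} and~\ref{lem2} are algebraic conditions on $B$ that do not interact with the choice of coset. Once these uniformity statements are confirmed, Theorem~\ref{cong2} follows by linearity.
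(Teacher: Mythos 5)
Your proposal is correct and follows essentially the same route as the paper: decompose $S$ into finitely many cosets of $M\cdot V(\Z)$, rerun the averaging argument of Section~10 for each coset with volumes scaled appropriately, and sum, using $k/M^{n(2n+3)} = \prod_p \mu_p(S)$. The paper's proof is a compressed version of exactly this, invoking formula~(\ref{avg}) "and the discussion following" where you spell out which ingredients (Davenport, the cusp estimates, Proposition~\ref{nonzerob11}) need to be checked to carry over.
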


To obtain Theorem~\ref{cong2}, suppose $S$ is defined by congruence conditions modulo some integer $m$. Then $S$ may be viewed as the union of (say) $k$ translates ${\mathcal L}_1,\ldots,{\mathcal L}_k$ of the lattice $m\cdot V(\Z)
$. For each such lattice translate ${\mathcal L}_j$, we may use formula (\ref{avg}) and the discussion following that formula to compute $N(S;X)$, but where each $d$-dimensional volume is scaled by a factor of $1/m^d$ to reflect the fact that our new lattice has been scaled by a factor of $m$. For a fixed value of $m$, we thus obtain
\begin{equation}\label{lat}
N({\mathcal L}_j\cap V^{(m,\tau)};X) = m^{-n(2n+3)} \Vol(\mathcal R_X(L)) +o(X^{(2n+3)/(4n+2)}).
\end{equation}
Summing (\ref{lat}) over $j$, and noting that $km^{-n(2n+3)} = \prod_p \mu_p(S)$, yields Theorem~\ref{cong2}.

We will also have occasion to use the following weighted version of
Theorem \ref{cong2}; the proof is identical.

\begin{theorem}\label{cong3}
  Let $p_1,\ldots,p_k$ be distinct prime numbers. For $j=1,\ldots,k$,
  let $\phi_{p_j}:V(\Z)\to\R$ be a $G(\Z)$-invariant function on
  $V(\Z)$ such that $\phi_{p_j}(x)$ depends only on the congruence
  class of $x$ modulo some power $p_j^{a_j}$ of $p_j$.  Let
  $N_\phi(V(\Z)^{(m,\tau)};X)$ denote the number of irreducible
  $G(\Z)$-orbits in $V(\Z)^{(m,\tau)}$ having height bounded by $X$,
  where each orbit $G(\Z)\cdot B$ is counted with weight
  $\phi(B):=\prod_{j=1}^k\phi_{p_j}(B)$. Then 
\begin{equation}
N_\phi(V(\Z)^{(m,\tau)};X)
  = N(V(\Z)^{(m,\tau)};X)
  \prod_{j=1}^k \int_{B\in V({\Z_{p_j}})}\tilde{\phi}_{p_j}(B)\,dB+o(X^{(2n+3)/(4n+2)}),
\end{equation}
where $\tilde{\phi}_{p_j}$ is the natural extension of ${\phi}_{p_j}$
to $V(\Z_{p_j})$ by continuity, $dB$ denotes the additive
measure on $V(\Z_{p_j})$ normalized so that $\int_{B\in
  V(\Z_{p_j})}dB=1$, and where the implied constant in the error term
depends only on the local weight functions ${\phi}_{p_j}$.
\end{theorem}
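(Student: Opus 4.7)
The plan is to reduce Theorem~\ref{cong3} to the unweighted congruence count of Theorem~\ref{cong2} by exploiting the fact that each local weight $\phi_{p_j}$ takes only finitely many values. Since $\phi_{p_j}$ depends only on the residue of $B$ modulo $p_j^{a_j}$, we may decompose
\[
\phi_{p_j}(B) \;=\; \sum_{y_j \in V(\Z/p_j^{a_j}\Z)} \phi_{p_j}(y_j)\cdot \mathbf{1}\bigl(B\equiv y_j \!\!\!\pmod{p_j^{a_j}}\bigr),
\]
and hence the total weight $\phi(B)=\prod_{j=1}^k \phi_{p_j}(B)$ decomposes as a finite $\R$-linear combination of characteristic functions $\mathbf{1}_{S_{\mathbf{y}}}$, where $\mathbf{y}=(y_1,\ldots,y_k)$ and $S_{\mathbf{y}}\subset V(\Z)$ is defined by the congruence conditions $B\equiv y_j \pmod{p_j^{a_j}}$ for $j=1,\ldots,k$.

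By linearity of $N_\phi(V(\Z)^{(m,\tau)};X)$ in the weight $\phi$, first I would write
\[
N_\phi(V(\Z)^{(m,\tau)};X)\;=\;\sum_{\mathbf{y}} \Bigl(\prod_{j=1}^k \phi_{p_j}(y_j)\Bigr)\cdot N\bigl(S_{\mathbf{y}}\cap V(\Z)^{(m,\tau)};X\bigr),
\]
where the outer sum is over finitely many tuples $\mathbf{y}$. Then I would apply Theorem~\ref{cong2} to each term. Since $S_{\mathbf{y}}$ is defined by conditions only at the primes $p_1,\ldots,p_k$, the local density $\mu_p(S_{\mathbf{y}})$ equals $1$ for all $p\notin\{p_1,\ldots,p_k\}$, so the product over all primes collapses to $\prod_{j=1}^k \mu_{p_j}(S_{\mathbf{y}})$, with the latter factoring as $\mu_{p_j}(S_{\mathbf{y}}) = \mu_{p_j}\bigl(\{B\in V(\Z_{p_j}): B\equiv y_j\}\bigr)$.

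Reassembling, the main term becomes
\[
N(V(\Z)^{(m,\tau)};X)\cdot\sum_{\mathbf{y}}\prod_{j=1}^k \phi_{p_j}(y_j)\,\mu_{p_j}\bigl(B\equiv y_j\bigr)\;=\;N(V(\Z)^{(m,\tau)};X)\cdot\prod_{j=1}^k\int_{V(\Z_{p_j})}\tilde{\phi}_{p_j}(B)\,dB,
\]
the last equality simply recognizing the integral of a locally constant step function against the normalized Haar measure. Summing the error terms, since there are only finitely many tuples $\mathbf{y}$ (depending only on the $\phi_{p_j}$), each of the finitely many $o(X^{(2n+3)/(4n+2)})$ contributions can be absorbed into a single $o(X^{(2n+3)/(4n+2)})$ bound depending on the weights, as claimed.

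There is no genuine obstacle here beyond bookkeeping; the only mild subtlety is ensuring the error terms aggregate correctly, which is immediate because the decomposition of $\phi$ into step functions is \emph{finite} (a consequence of the fixed modulus $p_j^{a_j}$). Thus the proof is a direct application of the unweighted statement combined with the linearity of both the counting function $N_\phi$ and the Haar integral.
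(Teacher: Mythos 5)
Your proof is correct and matches the paper's intent: the paper simply states that the proof of Theorem~\ref{cong3} is ``identical'' to that of Theorem~\ref{cong2}, which is precisely the decomposition of the congruence conditions into finitely many lattice translates (equivalently, residue classes modulo $\prod_j p_j^{a_j}$) and term-by-term application of the lattice-point count. Your reduction via linearity to Theorem~\ref{cong2} is the same argument, stated a bit more modularly.
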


\subsection{Proof of Propositions~\ref{hard2} and \ref{gzbigstab}}

We may use the results of the previous subsection to prove Propositions~\ref{hard2} and \ref{gzbigstab}.  Indeed, to prove Proposition~\ref{hard2}, we note that if an element $B\in V(\Z)$ 
is reducible over $\Q$ then it also must be reducible modulo $p$ 
(i.e., its image in $V(\F_p)$ lies in a distinguished orbit or has discriminant zero)
for every $p$.

Let $S^\red$ denote the set of elements in $V(\Z)$ that are reducible over $\Q$, and let $S^\red_p$ denote the set of all elements in $V(\Z)$ that are reducible modulo $p$.  Then $S^\red\subset \cap_p S^\red_p$.  Let $S^\red(Y)=\cap_{p<Y}S^\red_p$, and let us use as before $V^{(m,\tau)}(\!\varnothing\!)$ to denote the set $B\in V(\Z)^{(m,\tau)}$ such that $b_{11}\neq 0$.  Then the proof of Theorem~\ref{cong2} (without assuming Propositions~\ref{hard2} and \ref{gzbigstab}!) gives that
\begin{equation}\label{SYcount}
N^\ast(S^\red(Y)\cap V^{(m,\tau)}(\!\varnothing\!);X)
  \;\leq\; 2^{2n}\cdot N^\ast(V^{(m,\tau)}(\!\varnothing\!);X)\cdot
  \prod_{p<Y} \mu_p(S^\red_p)+o(X^{(2n+3)/(4n+2)}).
\end{equation}
To estimate $\mu_p(S^\red_p)$, we recall from Subsection~\ref{finite} that the number of elements $B\in V(\F_p)$ having any given associated separable polynomial $f(x)=\det(Ax-B)$ is $\#\SO(W)(\F_p)$.  The number of these elements $B$ that lie in the distinguished orbit is $\#\SO(W)(\F_p)/2^m$, where $m+1$ is the number of irreducible factors of $f$ in $\F_p[x]$.  For $p>2n+1$, it is an elementary and well-known calculation that the number of separable polynomials $$f(x)=x^{2n+1}+c_{2}x^{2n-1}+\cdots+c_{2n+1}$$ over $\F_p$ that are reducible over $\F_p$ is $p^{2n}\cdot 2n/(2n+1)+O(p^{2n-1})$, where the implied $O$-constant is independent of $p$.  Since such a reducible polynomial $f(x)$ has at least $2$ factors in $\F_p[x]$, we see that at least $\#\SO(W)(\F_p)/2$ of the elements $B$ having associated polynomial $f(x)$ do not lie in a distinguished orbit.  We conclude that
$$
\mu_p(S^\red_p) \leq 1 - \frac{2n}{2n+1}\cdot\frac{1}{2} + O(1/p).
$$
Combining with (\ref{SYcount}), we see that
$$\lim_{X\to\infty}\frac{N^\ast(S^\red\cap V^{(m,\tau)}(\!\varnothing\!);X)}{X^{(2n+3)/(4n+2)}}
 \;\ll\;  \prod_{p<Y} \mu_p(S^\red_p) \;\ll \; \prod_{p<Y}\Bigl(1-\frac{n}{2n+1}+O(1/p)\Bigr).
 $$
 When $Y$ tends to infinity, the product on the right tends to 0, proving Proposition~\ref{hard2}.
  
We may proceed similarly with Proposition~\ref{gzbigstab}.  If an element $B\in V(\Z)$ with nonzero discriminant has a non-trivial stabilizer, then the associated polynomial $f(x)=\det(Ax-B)$ must be reducible in $\Q[x]$, and thus must be reducible in $\F_p[x]$ for all $p$.  

Let $S^\bigstab\subset V(\Z)$ denote the elements $B\in V(\Z)$ such that $f(x)=\det(Ax-B)$ is reducible in $\Q[x]$, and let $S_p^\bigstab\subset V(\Z)$ denote the elements $B\in V(\Z)$ such that $f(x)$ is reducible in $\F_p[x]$.  Let $S^\bigstab(Y)=\cap_{p<Y} S^\bigstab_p$.  Then we have by the same argument that 
$$\lim_{X\to\infty}\frac{N^\ast(S^\bigstab\cap V^{(m,\tau)}(\!\varnothing\!);X)}{X^{(2n+3)/(4n+2)}}
 \;\ll\; \prod_{p<Y} \mu_p(S^\bigstab_p) \;\ll\; \prod_{p<Y}\Bigl(1-\frac{2n}{2n+1}+O(1/p)\Bigr).
 $$
Letting $Y$ tend to infinity, and noting Proposition~\ref{hard}, now proves also Proposition~\ref{gzbigstab}.

\section{Sieving to Selmer elements}

For each prime $p$, let
$\Sigma_p$ be a closed subset of elements $(c_2,\ldots,c_{2n+1})$ in $\Z_p^{2n}\setminus\{\Delta=0\}$ whose boundary has measure zero.  To such a collection
$(\Sigma_p)_p$ of local specifications, we may associate the set $F_\Sigma$ of hyperelliptic curves, where 
$C(c_2,\ldots,c_{2n+1})\in F_\Sigma$ if and only if
$(c_2,\ldots,c_{2n+1})\in\Sigma_p$ for~all~$p$. We say then that $F_\Sigma$ is a family of
hyperelliptic curves $C(c_2,\ldots,c_{2n+1})$ over $\Q$ that is {\it defined by congruence conditions}.

We may also impose ``congruence conditions at infinity'' on elements $(c_2,\ldots,c_{2n+1})$ in $\Z^{2n}$.
Let additionally $\Sigma_\infty$ denote a union of components $I(m)$ (as defined in $\S6.3$), where $m$ ranges over the elements of some subset of $\{0,\ldots,n\}$.  
Then we may again associate to the collection $(\Sigma_\nu)_\nu$ of local specifications the set $F_\Sigma$ of hyperelliptic curves over $\Q$, where $C(c_2,\ldots,c_{2n+1})\in F_\Sigma$ if and only if
$(c_2,\ldots,c_{2n+1})\in\Sigma_\nu$ for~all~$\nu$ (including $\nu=\infty$).  We will still say in that case that 
$F_\Sigma$ is defined by congruence conditions.

If $F$ is a set of
hyperelliptic curves (\ref{Ccdef}) over $\Q$ defined by local congruence conditions, then we
use $\Inv(F)$ to denote the set $\{(c_2(C),\ldots,c_{2n+1}(C)):C\in F\}\subset \Z^{2n}$.   
We denote the $p$-adic closure of $\Inv(F)$ in $\Z_p^{2n}\setminus\{\Delta=0\}$ by $\Inv_p(F)$. 
We say that such a set $F$ of hyperelliptic curves over $\Q$ is {\it large at
  $p$} if $\Inv_p(F)$ contains all elements $(c_2,\ldots,c_{2n+1})\in\Z_p^{2n}$ 
  such that $p^2\nmid \Delta(c_2,\ldots,c_{2n+1})$.  
  Finally, we say that such a set $F$ of
hyperelliptic curves is {\it large} if it is large at all but finitely
many primes $p$.  

In this section
we prove the following
strengthening of Theorem~\ref{main}:
\begin{theorem}\label{congmain}
  When all hyperelliptic curves over $\Q$ of genus $n$ with a 
  rational Weierstrass point in any large family are ordered by height, the average
size of the  $2$-Selmer group is $3$. 
\end{theorem}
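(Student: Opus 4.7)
The plan is to combine Theorem~\ref{orbit} with the counting results of Section~10 and a squarefree sieve. By Theorem~\ref{orbit}, for each separable $f(x)$ of the form (\ref{fdef}), the $2$-Selmer group $S_2(\Jac(C_f))$ is in bijection with the locally soluble $\SO(W)(\Q)$-orbits on $V(\Q)$ with characteristic polynomial $f(x)$, and the identity class corresponds to the (reducible) distinguished orbit guaranteed by Proposition~\ref{simpletransitive}. Thus for any curve $C$ in a large family $F_\Sigma$ of height $<X$, we have $\#S_2(\Jac(C)) = 1 + M(f_C)$, where $M(f_C)$ denotes the number of irreducible locally soluble rational orbits with characteristic polynomial $f_C$. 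Summing over $C\in F_\Sigma$ with $H(C)<X$ and dividing by $\#\{C\in F_\Sigma:H(C)<X\}$, the constant $1$ is the ``cusp'' contribution; the remaining task is to prove that the average of $M(f_C)$ over $F_\Sigma$ equals $2$.

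To evaluate this average, I would pass from rational to integral orbits via Propositions~\ref{integralorbit} and~\ref{product}. Define a weight function on $V(\Z)$ by setting $w(B) = \#\Stab_{G(\Q)}(B)/\#\Stab_{G(\Z)}(B)$ times the indicator that the $\Q$-orbit of $B$ is locally soluble and lies over $\Inv_\nu(F_\Sigma)$ for every place~$\nu$. By Proposition~\ref{product}, the function $w$ factors as a product $w=\prod_p w_p$ of local weights, where each $w_p$ depends only on the class of $B$ modulo a finite power of $p$, and where $\int_{V(\Z_p)}w_p\,dB$ computes the local mass of soluble orbits. By Proposition~\ref{integralorbit}, every locally soluble rational orbit is represented by at least one integral orbit, so summing $w(B)$ over irreducible $G(\Z)$-orbits in $V(\Z)^{(m,\tau)}$ (with $(m,\tau)$ ranging over those signatures compatible with $\Sigma_\infty$) recovers exactly $\sum_{C\in F_\Sigma,\,H(C)<X}M(f_C)$.

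Next I would apply the weighted counting Theorem~\ref{cong3} with $\phi=\prod_{p<Y}w_p$, combining it with the volume formula (\ref{volexp}) and the change of measure supplied by Proposition~\ref{genjac}. The archimedean contribution, summing the volumes $c_{m,\tau}$ over the $2^m$ soluble components within $I(m)$, amounts to weighting each real invariant $(c_2,\ldots,c_{2n+1})$ by $\#(J(\R)/2J(\R))/\#J[2](\R)$. At each finite prime $p$, the density $\int_{V(\Z_p)}\tilde{w}_p\,dB$ is similarly $\#(J(\Q_p)/2J(\Q_p))/\#J[2](\Q_p)$, computed by Tate's local duality and Kneser's theorem as in Section~6.2. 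The resulting product of local ratios, combined with $\Vol(G(\Z)\backslash G(\R))$ from (\ref{volexp}) and the Jacobian $|\mathcal J|$, assembles into the Tamagawa number of the split adjoint group $\SO_{2n+1}$, which equals $2$. Thus the weighted count is $2\cdot\#\{C\in F_\Sigma:H(C)<X\}(1+o(1))$, giving the desired average $M=2$.

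The main obstacle will be the uniformity estimate required to pass from the truncated product $\prod_{p<Y}w_p$ to the full product. Specifically, I must bound the number of irreducible integral orbits $B\in V(\Z)$ of height $<X$ for which $p^2\mid\Delta(B)$ for some $p>Y$, uniformly so that the count is $o(X^{(2n+3)/(4n+2)})$ as $Y\to\infty$. For this I would invoke the geometric squarefree sieve of~\cite{geosieve} applied to the discriminant polynomial on $V$; since $V$ is a Vinberg representation of $\SO(W)$ (as explained in Section~7) and the discriminant has an explicit factorization reflecting the hyperelliptic curve's bad reduction, the sieve of~\cite{geosieve} applies with polynomial savings in $Y$. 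Assembling the orbit correspondence, the integral-orbit parametrization of Proposition~\ref{integralorbit}, the weighted counting result Theorem~\ref{cong3}, the Tamagawa volume computation, and the geometric sieve yields the average $2+1=3$ uniformly across any large family $F_\Sigma$, establishing Theorem~\ref{congmain}.
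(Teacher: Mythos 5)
Your overall strategy mirrors the paper's: parametrize non-identity $2$-Selmer elements by irreducible locally soluble rational orbits (Theorem~\ref{orbit}), pass to integral representatives (Propositions~\ref{integralorbit}, \ref{intorbit}), count weighted $G(\Z)$-orbits using Theorem~\ref{cong3} together with the volume formula (\ref{volexp}) and Proposition~\ref{genjac}, apply the geometric squarefree sieve of~\cite{geosieve} to remove the truncation $p<Y$, and assemble the local ratios $\#(J(\Q_\nu)/2J(\Q_\nu))/\#J[2](\Q_\nu)$ into the Tamagawa number $\tau(\SO_{2n+1})=2$. That is precisely the chain the paper carries out in Sections~11 and~12.

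However, there is a genuine gap in the weight function. You set $w(B)=\#\Stab_{G(\Q)}(B)/\#\Stab_{G(\Z)}(B)$, claim this factors over primes by Proposition~\ref{product}, and claim that summing $w(B)$ over irreducible $G(\Z)$-orbits recovers $\sum_C M(f_C)$. Neither claim is correct. Proposition~\ref{product} factors the quantity
$m(B)=\#\Stab_{G(\Q)}(B)\sum_{B'\in O(B)}\bigl(\#\Stab_{G(\Z)}(B')\bigr)^{-1}$
as a product $\prod_p m_p(B)$; it says nothing about the single term $\#\Stab_{G(\Q)}(B)/\#\Stab_{G(\Z)}(B)$. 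Moreover, summing your $w(B')$ over the $G(\Z)$-orbits $B'$ in a fixed rational orbit gives $m(B)$, not~$1$, so your weighted count returns $\sum_{\rm rational\ orbits} m(B)$ rather than $\sum_C M(f_C)$; even for trivial stabilizers (where $m(B)=n(B)$, the number of $G(\Z)$-orbits) this overcounts each rational orbit $n(B)$ times. The correct weight on each $G(\Z)$-orbit is $1/m(B)$, which \emph{does} factor as $\prod_p 1/m_p(B)$ by Proposition~\ref{product}, and which matches the paper's $1/n(B)$ up to a negligible set controlled by Proposition~\ref{gzbigstab}. Similarly, the local density $\int_{U_p(F)}(1/m_p)\,dB$ (not $\int w_p\,dB$) is what Proposition~\ref{denel} evaluates as $|2^{4n(2n+3)}\mathcal J|_p\Vol(G(\Z_p))M_p(V,F)$. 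Once the weight is corrected to $1/m(B)$, the rest of your outline goes through and reproduces the paper's proof. You should also note that the $p=2$ integral-orbit existence (Proposition~\ref{intorbit}) requires the divisibility $2^{4k}\mid c_k$, which is why the paper rescales invariants by $2^{4k}$ in the definition (\ref{udef}) of $U(F)$; your write-up glosses over this.
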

Theorem~\ref{congmain} is proven via an appropriate sieve applied to the counts of $G(\Z)$-orbits on $V(\Z)$ having bounded height which we obtained in Section~10. 

Note that the set of all hyperelliptic curves in (\ref{Ccdef}) with indivisible coefficients is large.  So too is the set of such hyperelliptic curves $C:y^2=f(x)$ having semistable reduction.  Any family of hyperelliptic curves in (\ref{Ccdef}) defined by finitely many congruence conditions on the coefficients is also a large family.  Thus Theorem~\ref{congmain} applies to quite general families of hyperelliptic curves.

\subsection{A weighted set $U(F)$ in $V(\Z)$ corresponding to a large family $F$}

It follows from Theorem~\ref{kernel} and Proposition \ref{intorbit} that
non-identity elements of the 2-Selmer group of the Jacobian of the
hyperelliptic curve $C=C(c_2,\ldots,c_{2n+1})$ over $\Q$ defined by (1) are in bijective
correspondence with $G(\Q)$-equivalence classes of irreducible locally
soluble elements $B\in V(\Z)$ having invariants $2^8c_2$, $2^{12}c_3$, $\ldots$,
$2^{4(2n+1)}c_{2n+1}$;  in this bijection, we have $H(B)=2^{8n(2n+1)}H(C)$.  In Section~10, we computed the asymptotic number of
$G(\Z)$-equivalence classes of irreducible elements $B\in V(\Z)$
having bounded height.
In order to use this to compute the number of irreducible locally soluble $G(\Q)$-equivalence classes of elements $B\in V(\Z)$ having invariants in 
\begin{equation}\label{newinvts}
\{(2^8c_2,\ldots,2^{4(2n+1)}c_{2n+1}):(c_2,\ldots,c_{2n+1})\in \Inv(F)\}
\end{equation}
and bounded height (where $F$ is any large family), we need to count each 
$G(\Z)$-orbit, $G(\Z)\cdot B$,
with a weight of $1/n(B)$, where $n(B)$ is equal to the number of
$G(\Z)$-orbits inside the $G(\Q)$-equivalence class of $B$
in $V(\Z)$.  

To count the number of irreducible locally soluble $G(\Z)$-orbits having invariants in the set~(\ref{newinvts}) and  bounded height, where each orbit $G(\Z)\cdot B$ is weighted by $1/n(B)$, it suffices to count the number of such $G(\Z)$-orbits of
 bounded height such that each orbit
$G(\Z)\cdot B$ is weighted instead by $1/m(B)$, where
$$m(B):=\sum_{B'\in O(B)} \frac{\#\Aut_\Q(B')}{\#\Aut_\Z(B')}\; = 
\sum_{B'\in O(B)} \frac{\#\Aut_\Q(B)}{\#\Aut_\Z(B')}\;
;$$ here $O(B)$ denotes a set of orbit representatives for the action of $G(\Z)$ on the $G(\Q)$-equivalence class of $B$ in $V(\Z)$, and $\Aut_\Q(B)$ (resp.\
$\Aut_\Z(B)$) denotes the stabilizer of $B$ in $G(\Q)$ (resp.\
$G(\Z)$). The reason it suffices to weight by $1/m(B)$ instead of $1/n(B)$ is that we have shown in the
proof of Proposition~\ref{gzbigstab} that all but a negligible number $o(X^{(2n+3)/(4n+2)})$ of
irreducible $G(\Z)$-orbits having height less than $X$ have trivial
stabilizer in $G(\Q)$ (and thus also in $G(\Z)$), while the number of hyperelliptic curves in $F$ of bounded height is $\gg X^{(2n+3)/(4n+2)}$.

We use $U(F)$ to denote the set of all locally soluble elements in $V(\Z)$ having invariants in the set~(\ref{newinvts}), i.e.,
\begin{equation}\label{udef}
U(F) := \{\mbox{loc.\ sol.\ elts.\ in $V(\Z)$ having inv'ts.\ $(2^8c_2,\ldots,2^{4(2n+1)}c_{2n+1})$}\}
\,|\,C(c_2,\ldots,c_{2n+1})\in F\}.
\end{equation}
We assign to each element $B\in U(F)$ the weight $1/m(B)$.  Then we conclude that the weighted number of irreducible $G(\Z)$-orbits of height less than $2^{8n(2n+1)}X$ in $U(F)$ is asymptotically equal to the number of non-identity 2-Selmer elements of all hyperelliptic curves of height less than $X$ in $F$.  

The global weights $m(B)$ assigned to elements $B\in U(F)$ are useful for the following reason.  
For a prime $p$ and any element $B\in V(\Z_p)$, 
define the local weight $m_p(B)$ by
$$m_p(B):=\sum_{B'\in O_p(B)} \frac{\#\Aut_{\Q_p}(B)}{\#\Aut_{\Z_p}(B')},$$ where $O_p(B)$ denotes 
a set of orbit representatives for the action of $G(\Z_p)$ on the $G(\Q_p)$-equivalence class of $B$ in $V(\Z_p)$, and 
$\Aut_{\Q_p}(B)$
(resp.\ $\Aut_{\Z_p}(B)$) denotes the stabilizer of $B$ in $G(\Q_p)$ (resp.\
$G(\Z_p)$). 
Then by Proposition~\ref{product}, we have the following identity:
 \begin{equation}\label{prodformula}
 m(B)=\prod_pm_p(B).
 \end{equation}
Thus the global weights of elements in $U(F)$ are products of local weights, so we may express the global weighted density of elements $U(F)$ in $V(\Z)$ as products of local weighted densities of the closures of the set $U(F)$ in $V(\Z_p)$.  We consider these local weighted densities next. 

\subsection{Local densities of the weighted sets $U(F)$}

Suppose that $F$ is a large family of elliptic curves, and for each place $\nu$ of $\Q$, let $F_\nu$ denote the resulting family of curves defined by congruence conditions over $\Z_\nu$ (where we follow the usual convention that
$\Z_\nu=\Q_\nu=\R$ when $\nu=\infty$).
Let $U(F)$ denote the set in $V(\Z)$ attached to $F$,
as defined by (\ref{udef}), 
and for any place $\nu$ of $\Q$ define $U_\nu(F)$ analogously by
\begin{equation}\label{updef}
U_\nu(F) := \{\mbox{sol.\ elts.\ in $V(\Z_\nu)$ having inv'ts.\ $(2^8c_2,\ldots,2^{4(2n+1)}c_{2n+1})$}\}
\,|\,C(c_2,\ldots,c_{2n+1})\in F_\nu\}.
\end{equation}

In the case $\nu=p$ is a finite prime,  we assign to each element $B\in
U_p(F)$ the weight $1/m_p(B)$.  In this subsection, we determine the weighted
$p$-adic density of $U_p(F)$ in $V(\Z_p)$
in terms of a {\it local $(p$-adic$)$ mass} $M_p(V,F)$ involving the
elements of $\Jac(C)/2\Jac(C)$ for curves $C$ in $F_p$.
To do so, we require the following proposition, which is a
reformulation of the change-of-measure assertion of
Proposition~\ref{genjac}, with $\Z_p$ in place of $\R$:
\begin{proposition}\label{jacG}
 Let $\mathcal J$ be the constant 
 of Proposition $\ref{genjac}$, let $p$ be a prime, and let 
$\phi$ be a continuous function
  on $V({\Z_p})$. Then
  \begin{equation}\label{eqjacg}
    \int_{V_{\Z_p}}\!\phi(B)dB=|\mathcal J|_p\!
\int_{\substack{(c_2,\ldots,c_{2n+1})\in \Z_p^{2n}\\\!\!\Delta(c_2,\ldots,c_{2n+1})\neq 0}}\Bigl(\!\!\sum_{B\in\!\!\textstyle{\frac{V_{\Z_p}(c_2,\ldots,c_{2n+1})}{G(\Z_p)}}}\!\frac{1}{\#\Aut_{\Z_p}(B)}\!\int_{g\in G(\Z_p)}\!\!\phi(g\cdot B)\;\!\omega(g)\!\Bigr)dc_2\cdots dc_{2n+1},
  \end{equation}
  where $\frac{V_{\Z_p}(c_2,\ldots,c_{2n+1})}{G(\Z_p)}$ denotes a set of
  representatives for the action of $G(\Z_p)$ on elements in
  $V_{\Z_p}$ having invariants $c_2,\ldots,c_{2n+1}$.
\end{proposition}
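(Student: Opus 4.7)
\noindent
The plan is to deduce Proposition~\ref{jacG} directly from Proposition~\ref{genjac} applied with $K=\Q_p$, together with a careful decomposition of each $G(\Q_p)$-orbit appearing on the right into its constituent $G(\Z_p)$-orbits. I first note that since $V(\Z_p)$ is clopen in $V(\Q_p)$ for the $p$-adic topology, a continuous function $\phi$ on $V(\Z_p)$ extends by zero to a continuous function on $V(\Q_p)$, and the locus $\{\Delta=0\}$, being of measure zero, may safely be discarded throughout.

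Next I would cover $V(\Q_p)\setminus\{\Delta=0\}$ by countably many open pieces of the form $G(\Q_p)\cdot s(R)$, where $R\subset\Q_p^{2n}$ is a small open ball and $s:R\to V(\Q_p)$ is a continuous section of the invariant map meeting the $G(\Q_p)$-orbit over each $c\in R$ transversally in a single point; existence of such sections follows from the $p$-adic implicit function theorem, using that the invariant map is a submersion away from the discriminant and that stabilizers are finite. A standard partition-of-unity argument then reduces the claim to the case where $\phi$ is supported inside a single piece $G(\Q_p)\cdot s(R)\cap V(\Z_p)$. Applying Proposition~\ref{genjac} to the test function $\phi/\#\Aut_{\Q_p}(\cdot)$, which is locally constant (hence continuous) off the discriminant, gives
$$\int_{V(\Q_p)}\phi(v)\,dv\;=\;|\mathcal J|_p\int_{R}\frac{1}{\#\Aut_{\Q_p}(s(c))}\int_{G(\Q_p)}\phi(g\cdot s(c))\,\omega(g)\,dc,$$
the division by $\#\Aut_{\Q_p}(s(c))$ cancelling the multiplicity inherent in the multiset convention of Proposition~\ref{genjac}.

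The key step will then be a coset decomposition of the inner $G(\Q_p)$-integral according to which $G(\Z_p)$-orbit $O_{c,j}\subset V(\Z_p)$ contains the point $g\cdot s(c)$. Picking representatives $B_{c,j}\in O_{c,j}$ with $B_{c,j}=g_{c,j}\cdot s(c)$, I would verify
$$\{\,g\in G(\Q_p):g\cdot s(c)\in V(\Z_p)\,\}\;=\;\bigsqcup_{j}G(\Z_p)\,g_{c,j}\,\Aut_{\Q_p}(s(c)),$$
and then show that each piece $G(\Z_p)\,g_{c,j}\,\Aut_{\Q_p}(s(c))$ consists of exactly $\#\Aut_{\Q_p}(B_{c,j})/\#\Aut_{\Z_p}(B_{c,j})$ distinct left $G(\Z_p)$-cosets, since $g_{c,j}\alpha_1$ and $g_{c,j}\alpha_2$ lie in the same left $G(\Z_p)$-coset precisely when $g_{c,j}(\alpha_1\alpha_2^{-1})g_{c,j}^{-1}\in G(\Z_p)\cap\Aut_{\Q_p}(B_{c,j})=\Aut_{\Z_p}(B_{c,j})$. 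Each such coset contributes $\int_{G(\Z_p)}\phi(h\cdot B_{c,j})\,\omega(h)$ by left-invariance of $\omega$, and combining with the identity $\#\Aut_{\Q_p}(s(c))=\#\Aut_{\Q_p}(B_{c,j})$ (stabilizers in a single $G(\Q_p)$-orbit are conjugate) makes the $\#\Aut_{\Q_p}$ factors cancel and leaves precisely the weight $1/\#\Aut_{\Z_p}(B_{c,j})$ claimed in the statement.

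I expect the only genuinely delicate part of the plan to be this coset bookkeeping; the other ingredients (clopen extension of $\phi$, $p$-adic local triviality of $\pi$, the partition of unity, and the invocation of Proposition~\ref{genjac}) are routine. Summing the resulting local identity over the cover yields the full formula~(\ref{eqjacg}).
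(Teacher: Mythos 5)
Your argument is essentially sound, but it takes a noticeably more roundabout route than the one the paper has in mind.  The paper defers to \cite[Prop.~3.12]{BS}, and the natural translation of that argument to the present setting is to apply Proposition~\ref{genjac} with $K=\Z_p$ \emph{directly}: cover $V(\Z_p)\setminus\{\Delta=0\}$ by finitely many clopen pieces of the form $G(\Z_p)\cdot s(R)$ with $R\subset\Z_p^{2n}$ and $s:R\to V(\Z_p)$ a local analytic section (using that $V(\Z_p)$ is compact and totally disconnected, so any open cover refines to a clopen partition).  Because the multiset $G(\Z_p)\cdot s(R)$ assigns a point $v$ the multiplicity $\#\Aut_{\Z_p}(v)$, applying~(\ref{eqjacpgl2}) to the test function $\phi(\cdot)/\#\Aut_{\Z_p}(\cdot)$ produces the weight $1/\#\Aut_{\Z_p}(B)$ immediately, and summing over the clopen pieces (each corresponding to a $G(\Z_p)$-orbit per value of the invariants) gives~(\ref{eqjacg}) with no further bookkeeping.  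You instead invoke Proposition~\ref{genjac} with $K=\Q_p$, which is outside the stated range $K\in\{\C,\R,\Z_p\}$ (the extension to $\Q_p$ is believable, and convergence is saved by $\phi$ having compact support, but you should flag this), and then you must re-derive the passage from $G(\Q_p)$-integration back to $G(\Z_p)$-integration via the coset decomposition $\{g\in G(\Q_p):g\cdot s(c)\in V(\Z_p)\}=\bigsqcup_j G(\Z_p)\,g_{c,j}\,\Aut_{\Q_p}(s(c))$ and the index count $[\Aut_{\Q_p}(B_{c,j}):\Aut_{\Z_p}(B_{c,j})]$.  I have checked this coset bookkeeping and it is correct, so your proof does go through; but the $K=\Z_p$ route avoids it entirely because the $G(\Z_p)$-orbit decomposition and the $\#\Aut_{\Z_p}$ weight fall out of Proposition~\ref{genjac} at once.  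One small slip worth fixing: when you compute the contribution of a single coset $G(\Z_p)g'$, the substitution $g=hg'$ requires \emph{right}-invariance of $\omega$, not left-invariance; this is harmless because $G=\SO(W)$ is reductive, hence unimodular, but the justification as written is off.
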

The proof of Proposition~\ref{jacG} is identical to that of \cite[Prop.~3.12]{BS}. 
We observe that if $\phi$ is supported only on elements that have nonzero
discriminant and are soluble, and if $\phi(B)$ is additionally
weighted by $1/m_p(B)$, then Equation (\ref{eqjacg}) takes on a particularly nice form:
\begin{corollary}\label{corjac}
  Let $p$ be a prime and let $\phi$ be a continuous $G(\Q_p)$-invariant
  function on $V_{\Z_p}$ such that every element $B\in
  V_{\Z_p}$ in the support of $\phi$ has nonzero discriminant, is soluble, and satisfies $2^{4j}\mid c_j(B)$ for all $j\in\{2,\ldots,2n+1\}$.
 Then
  \begin{equation}\label{eqjacpgl3}
    \int_{V_{\Z_p}}\!\!\frac{\phi(B)}{m_p(B)}dB=|\mathcal J|_p\Vol(G(\Z_p))\!\!
\int_{{C=C(c_2,\ldots,c_{2n+1})}}\!\frac{1}{\# \Jac(C)[2](\Q_p)}\Bigl(\sum_{\sigma\in\!\! \textstyle\frac{\Jac(C)(\Q_p)}{2\Jac(C)(\Q_p)}}\!\!\!\phi(B_\sigma)\Bigr)dc_2\cdots dc_{2n+1},
  \end{equation}
  where $B_\sigma$ is any element in $V_{\Z_p}$ that corresponds to
  $\sigma$ under the correspondence of Proposition~$\ref{kernel}$. $($The
  existence of such an $f_\sigma\in V_{\Z_p}$ is guaranteed by
  Proposition~$\ref{intorbit}.)$
\end{corollary}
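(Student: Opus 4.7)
The plan is to deduce Corollary~\ref{corjac} as a direct specialization of Proposition~\ref{jacG} applied to the function $\phi/m_p$. First I would observe that the local weight function $m_p$ is $G(\Q_p)$-invariant: by construction $m_p(B)$ depends only on the $G(\Q_p)$-equivalence class of $B$, since it is computed as a sum over $O_p(B)$, a set of $G(\Z_p)$-orbit representatives inside that class. Thus $\phi/m_p$ is a continuous $G(\Q_p)$-invariant function on $V_{\Z_p}$ to which we may apply Proposition~\ref{jacG}. The inner integral $\int_{G(\Z_p)}\phi(g\cdot B)/m_p(g\cdot B)\,\omega(g)$ then collapses to $\phi(B)\Vol(G(\Z_p))/m_p(B)$, pulling a factor of $\Vol(G(\Z_p))$ out front.

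Next, for each $(c_2,\ldots,c_{2n+1})$ in the support of $\phi$ (so in particular with $\Delta\neq 0$ and with the divisibility $2^{4j}\mid c_j$), I would partition the set $V_{\Z_p}(c_2,\ldots,c_{2n+1})/G(\Z_p)$ of integral orbit representatives according to the $G(\Q_p)$-orbit they lie in. Since every $B$ in the support of $\phi$ is soluble, Proposition~\ref{kernel} identifies these $G(\Q_p)$-orbits bijectively with the classes $\sigma\in J(\Q_p)/2J(\Q_p)$ of the Jacobian $J=\Jac(C)$ of $C:y^2=f(x)$. Moreover, the divisibility hypothesis $2^{4j}\mid c_j(B)$ lets me invoke Proposition~\ref{intorbit} to guarantee that each such $\sigma$ is actually represented by at least one integral orbit $B_\sigma\in V_{\Z_p}$. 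The $G(\Z_p)$-orbits inside the $G(\Q_p)$-orbit of $B_\sigma$ are precisely indexed by $O_p(B_\sigma)$.

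Restricting to a single $\sigma$, and using the $G(\Q_p)$-invariance of $\phi$ and $m_p$ to replace $\phi(B')/m_p(B')$ by the constant $\phi(B_\sigma)/m_p(B_\sigma)$ for all $B'\in O_p(B_\sigma)$, the internal sum becomes
\[
\sum_{B'\in O_p(B_\sigma)}\frac{\phi(B')}{m_p(B')\,\#\Aut_{\Z_p}(B')}
=\frac{\phi(B_\sigma)}{m_p(B_\sigma)}\sum_{B'\in O_p(B_\sigma)}\frac{1}{\#\Aut_{\Z_p}(B')}.
\]
The remaining sum is, by the very definition of $m_p(B_\sigma)$, equal to $m_p(B_\sigma)/\#\Aut_{\Q_p}(B_\sigma)$, producing the clean quotient $\phi(B_\sigma)/\#\Aut_{\Q_p}(B_\sigma)$ with all reference to $m_p$ having cancelled.

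Finally, I would identify $\#\Aut_{\Q_p}(B_\sigma)$ with $\#J[2](\Q_p)$. By Proposition~\ref{stabilizer} the stabilizer of any vector in the relevant $G(\Q_p)$-orbit is isomorphic to $D(\Q_p)=L^*[2]_{N=1}$, and Proposition~\ref{isom} identifies this group scheme with $J[2]$; so $\#\Aut_{\Q_p}(B_\sigma)=\#J[2](\Q_p)$, independent of $\sigma$. Summing over $\sigma\in J(\Q_p)/2J(\Q_p)$ and integrating over the invariants $(c_2,\ldots,c_{2n+1})$ then yields the right-hand side of (\ref{eqjacpgl3}). The main technical point to verify with care is that every soluble $G(\Q_p)$-class in the support of $\phi$ genuinely contributes an integral representative (so the sum on the right is indexed by all of $J(\Q_p)/2J(\Q_p)$, not a subset); this is exactly where the divisibility hypothesis on the $c_j$ is used, via Proposition~\ref{intorbit}.
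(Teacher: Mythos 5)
Your proposal is correct and follows essentially the same route as the paper's proof: apply Proposition~\ref{jacG} to $\phi/m_p$, use $G(\Z_p)$-invariance to extract $\Vol(G(\Z_p))$, regroup the integral orbit representatives by $G(\Q_p)$-class so that the definition of $m_p$ cancels and leaves $\phi(B)/\#\Aut_{\Q_p}(B)$, then invoke Propositions~\ref{kernel}, \ref{intorbit}, and \ref{isom}. Your presentation is slightly more explicit than the paper's (notably in checking the $G(\Q_p)$-invariance of $m_p$ and in naming Proposition~\ref{stabilizer} alongside~\ref{isom}), but the substance is identical.
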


\begin{proof}
  Proposition \ref{jacG} implies that the left side of (\ref{eqjacpgl3}) is equal to
  \begin{equation}\label{eqjaccorf}
    \begin{array}{rcl}
& &
|\mathcal J|_p\displaystyle
\int_{\substack{(c_2,\ldots,c_{2n+1})\in \Z_p^{2n}\\\!\!\Delta(c_2,\ldots,c_{2n+1})\neq 0}}\Bigl(\sum_{B\in\textstyle{\frac{V_{\Z_p}(c_2,\ldots,c_{2n+1})}{G(\Z_p)}}}\frac{1}{\#\Aut_{\Z_p}(B)}\int_{g\in G(\Z_p)}\frac{\phi(g\cdot B)}{m_p(B)}\;\!\omega(g)\Bigr)dc_2\cdots dc_{2n+1}
\\[0.4in]
&=&|\mathcal J|_p\Vol(G(\Z_p))
\displaystyle
\int_{\substack{(c_2,\ldots,c_{2n+1})\in \Z_p^{2n}\\\!\!\Delta(c_2,\ldots,c_{2n+1})\neq 0}}\Bigl(\sum_{B\in\textstyle{\frac{V_{\Z_p}(c_2,\ldots,c_{2n+1})}{G(\Z_p)}}}\displaystyle\frac{\phi(B)}{m_p(B)\#\Aut_{\Z_p}(B)}\Bigr)dc_2\cdots dc_{2n+1}
    \end{array}
  \end{equation}
 since both $\phi$ and $m_p$ are
  $G(\Z_p)$-invariant. 
  For $B\in
  V_{\Z_p}$, let $B=B_1,B_2,\ldots,B_k$ be the set of all elements in
  $\frac{V_{\Z_p}(c_2,\ldots,c_{2n+1})}{G(\Z_p)}$ that are
  $G(\Q_p)$-equivalent to $B$. Then, since $\phi$ and $m_p$ are $G(\Q_p)$-invariant, we have
\begin{eqnarray*}
\sum_{i=1}^k\frac{\phi(B_i)}{m_p(B_i)\#\Aut_{\Z_p}(B_i)}&\!\!=\!\!&\frac{1}{m_p(B)}\sum_{i=1}^k\frac{\phi(B)}{\#\Aut_{\Z_p}(B_i)}=
\left(\sum_{i=1}^k\frac{\#\Aut_{\Q_p}(B)}{\#\Aut_{\Z_p}(B_i)}\right)^{-1}\!\sum_{i=1}^k\frac{\phi(B)}{\#\Aut_{\Z_p}(B_i)}\\ &\!\!=\!\!&\frac{\phi(B)}{\#\Aut_{\Q_p}(B)}.
\end{eqnarray*}
Therefore, 
\begin{equation}\label{eqjaccors}
\int_{V_{\Z_p}}\frac{\phi(B)}{m_p(B)}dB=
|\mathcal J|_p\Vol(G(\Z_p))
\displaystyle
\int_{\substack{(c_2,\ldots,c_{2n+1})\in \Z_p^{2n}\\\!\!\Delta(c_2,\ldots,c_{2n+1})\neq 0}}\Bigl(\sum_{B\in\textstyle{\frac{V_{\Z_p}(c_2,\ldots,c_{2n+1})}{G(\Q_p)}}}
\displaystyle\frac{\phi(B)}{\#\Aut_{\Q_p}(B)}\Bigr)dc_2\cdots dc_{2n+1},  
\end{equation}
where $\frac{V_{\Z_p}(c_2,\ldots,c_{2n+1})}{G(\Q_p)}$ analogously denotes a set consisting of one
representative from each $G(\Q_p)$-equivalence class in $V_{\Z_p}$ having invariants
$c_2,\ldots,c_{2n+1}$. Proposition~\ref{kernel} 
states that soluble elements in
$\frac{V_{\Z_p}(c_2,\ldots,c_{2n+1})}{G(\Q_p)}$ are in bijective correspondence
with elements in $\Jac(C)(\Q_p)/2\Jac(C)(\Q_p)$, where $C=C(c_2,\ldots,c_{2n+1})$; meanwhile, Proposition~\ref{isom} 
states that $\Aut_{\Q_p}(B)$ is isomorphic to
$\Jac(C)[2](\Q_p)$. Corollary \ref{corjac} thus follows
from (\ref{eqjaccors}).
\end{proof}

We may now prove the following proposition which determines the necessary
local $p$-adic masses.


\begin{proposition}\label{denel}
 Let $\mathcal J$ be the constant 
 of Proposition $\ref{genjac}$, and let $F$ be any large family of hyperelliptic curves. Then
$$\int_{U_p(F)}\frac{1}{m_p(v)}dv\,=\,|2^{4n(2n+3)}\mathcal J|_p\cdot \Vol(G(\Z_p))\cdot
M_p(V,F),$$
where
$$M_p(V,F)\,:=\,\displaystyle{
\int_{C=C(c_2,\ldots,c_{2n+1})\in F_p}
\sum_{\sigma\in \textstyle\frac{\Jac(C)(\Q_p)}{2\Jac(C)(\Q_p)}}
\frac1{\#\Jac(C)[2](\Q
_p)}dc_2\cdots dc_{2n+1}}.$$
\end{proposition}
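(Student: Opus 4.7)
The plan is to apply Corollary~\ref{corjac} to the indicator function $\phi := \mathbf{1}_{U_p(F)}$ on $V(\Z_p)$ and then change variables between the characteristic polynomial coefficients (in which Corollary~\ref{corjac} is phrased) and the curve coefficients (which parametrize $F_p$ and $M_p(V,F)$).

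First I would verify that $\phi$ satisfies the hypotheses of Corollary~\ref{corjac}. By the definition (\ref{updef}) of $U_p(F)$, every element $B$ in the support of $\phi$ is soluble over $\Q_p$, has nonzero discriminant, and satisfies $2^{4j}\mid c_j(B)$ for $j=2,\ldots,2n+1$. Since solubility and the invariants are $G(\Q_p)$-invariant, $U_p(F)$ is a union of $G(\Q_p)$-orbits, so $\phi$ is $G(\Q_p)$-invariant. The continuity hypothesis poses no serious difficulty: the assumption that each $\partial\Sigma_p$ has measure zero permits the standard approximation of $\mathbf{1}_{U_p(F)}$ by continuous functions.

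Next I would unwind the right-hand side of (\ref{eqjacpgl3}). Fix $(\tilde c_2,\ldots,\tilde c_{2n+1})\in\Z_p^{2n}$ with nonzero discriminant and $2^{4j}\mid\tilde c_j$ for every $j$; write $\tilde c_j = 2^{4j} c_j$ and set $C = C(c_2,\ldots,c_{2n+1})$. Propositions~\ref{kernel} and~\ref{intorbit} together guarantee that as $\sigma$ ranges over $\Jac(C)(\Q_p)/2\Jac(C)(\Q_p)$ the representatives $B_\sigma \in V(\Z_p)$ are all soluble with invariants precisely $\tilde c_2,\ldots,\tilde c_{2n+1}$; hence $B_\sigma \in U_p(F)$ if and only if $C \in F_p$, independently of $\sigma$. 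Thus $\sum_\sigma \phi(B_\sigma) = \#\bigl(\Jac(C)(\Q_p)/2\Jac(C)(\Q_p)\bigr)\cdot \mathbf{1}_{C\in F_p}$, and Corollary~\ref{corjac} expresses $\int_{U_p(F)} dB/m_p(B)$ as $|\mathcal J|_p\,\Vol(G(\Z_p))$ times an integral in the variables $\tilde c_j$.

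Finally I would change variables from $\tilde c_j$ to $c_j$. Since $d\tilde c_j = |2^{4j}|_p\,dc_j$ and $\sum_{j=2}^{2n+1} 4j = 4n(2n+3)$, the $p$-adic Jacobian of this substitution is $|2^{4n(2n+3)}|_p$, and the resulting integral becomes precisely $M_p(V,F)$. The compound factor $|2^{4n(2n+3)}\mathcal J|_p$ in the proposition then emerges naturally as $|\mathcal J|_p\cdot|2^{4n(2n+3)}|_p$. The only real obstacle is bookkeeping---keeping the two scales of invariants straight and confirming that the $G(\Q_p)$-invariance of $\phi$ is compatible with every normalization---after which the proposition is a direct unwinding of Corollary~\ref{corjac}.
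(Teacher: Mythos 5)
Your proposal is correct and follows the same route as the paper's own proof: both apply Corollary~\ref{corjac} to the indicator of $U_p(F)$, use the isomorphism $\Jac(C(c_2,\ldots,c_{2n+1}))(\Q_p)\cong\Jac(C(2^8c_2,\ldots,2^{4(2n+1)}c_{2n+1}))(\Q_p)$ coming from the scaling $(x,y)\mapsto(u^2x,u^{2n+1}y)$ with $u=2^2$, and obtain the factor $|2^{4n(2n+3)}|_p$ via the volume of the rescaled invariant set. Your explicit verification of the hypotheses of Corollary~\ref{corjac} and the change-of-variables bookkeeping is just a somewhat fuller spelling-out of what the paper compresses into ``follows directly.''
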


\begin{proof}
  The set $U(F)$ consists of the soluble elements in $V(\Z)$
  having invariants $2^8c_2,$ $2^{12}c_3,$ $\ldots,$
  $2^{4(2n+1)}{c_{2n+1}}$ 
where $(c_2,\ldots,c_{2n+1})\in
  \Inv_p(F)$. Proposition~\ref{denel} thus follows directly from
  Corollary~\ref{corjac} since $\Jac(C)(c_2,\ldots,c_{2n+1})(\Q_p)$ 
is isomorphic to
  $\Jac(C)(2^8c_2,2^{12}c_3,\ldots,2^{4(2n+1)}{c_{2n+1}})(\Q_p)$ 
and the volume of
$\{(2^8c_2,2^{12}c_3,\ldots,2^{4(2n+1)}{c_{2n+1}})|(c_2,\ldots,c_{2n+1})
\in
  \Inv_p(F)\}$
is equal to $|2^{4n(2n+3)}|_p\cdot\Vol(\Inv_p(F))$.
\end{proof}

\noindent
We may also define a local mass at the infinite place:
\begin{equation}
M_\infty(V,F;X):=\displaystyle\int_{\substack{C=C(c_2,\ldots,c_{2n+1})\in F_\infty\\H(C)<X}}
\sum_{\sigma\in \textstyle\frac{\Jac(C)(\R)}{2\Jac(C)(\R)}}
\frac1{\#\Jac(C)[2](\R)}dc_2\cdots dc_{2n+1}.
\end{equation}

In the analogous manner, if $F$ is a large family of hyperelliptic curves, 
then we may define $M_p(F)$ to be the measure of $\Inv_p(F)$
with respect to the measure $dc_2\cdots dc_{2n+1}$ on $\Z_p^{2n}$, where the
measure $dc_m$ on $\Z_p$ is normalized so that the total measure is
$1$.  That is, we have
\begin{equation}\label{ecden}
\;\;\;\;\;\;M_p(F):=\int_{C=C(c_2,\ldots,c_{2n+1})\in F_p}dc_2\cdots dc_{2n+1}.
\end{equation}
Similarly, we define
\begin{equation}
M_\infty(F;X):=\displaystyle\int_{\substack{C=C(c_2,\ldots,c_{2n+1})\in F_\infty\\H(C)<X}}
\!dc_2\cdots dc_{2n+1}.
\end{equation}

In Section 12, we will be interested in comparing the masses $M_p(V,F)$ and $M_p(F)$, and \linebreak
$M_\infty(V,F;X)$ and $M_\infty(F;X)$.

\subsection{A key uniformity estimate, and a squarefree sieve}\label{unisec}

For each prime $p$, let $W_p$ denote the set of elements $B$ in $V(\Z)$ such that $p^2\mid\Delta(B)$.  Then the following proposition is proven in \cite{geosieve}:

\begin{proposition}\label{uniformity}
There exists $\delta>0$ such that, \,for any $M>0$, \,we have 
$$\sum_{p>M} N(W_p; X) = O(X^{(2n+3)/(4n+2)}/M^\delta),$$
where the implied constant is independent of $X$ and $M$. 
\end{proposition}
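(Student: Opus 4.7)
The plan is to prove this as a geometric (quantitative) squarefree sieve estimate in the sense of \cite{geosieve}. First I would stratify $W_p$ by the behavior of $B$ modulo $p$: write $W_p = W_p^{\rm sm} \sqcup W_p^{\rm sing}$, where $W_p^{\rm sm}$ consists of $B \in V(\Z)$ with $p^2 \mid \Delta(B)$ whose reduction $\bar B \in V(\F_p)$ is a smooth point of the discriminant hypersurface $\{\bar\Delta = 0\}$, and $W_p^{\rm sing}$ consists of the remaining $B \in W_p$, i.e.\ those whose reduction $\bar B$ lies in the singular locus of $\{\bar\Delta = 0\} \subset V(\F_p)$. The discriminant is an irreducible (weighted homogeneous) polynomial on $V$, so its singular locus has codimension at least $2$.

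For $W_p^{\rm sm}$, Hensel's lemma applied at a smooth point $\bar B$ of $\{\bar\Delta = 0\}$ shows that the condition $p^2 \mid \Delta(B)$ specifies $B$ to lie in one residue class modulo $p^2$ per such smooth point. Thus $W_p^{\rm sm}$ is a union of mod-$p^2$ residue classes of total $p$-adic density $O(1/p^2)$ in $V(\Z_p)$. The congruence version of the counting theorem, Theorem~\ref{cong2}, then gives $N(W_p^{\rm sm}; X) = O(X^{(2n+3)/(4n+2)}/p^2)$, with an error term that remains negligible provided $p$ is at most a small enough power of $X$. Summing over $p$ in the range $M < p \leq X^\alpha$ (for a sufficiently small $\alpha > 0$) yields a contribution of size $O(X^{(2n+3)/(4n+2)}/M)$.

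For $W_p^{\rm sing}$, the reduction $\bar B$ lies in a subvariety $Y \subset V_{\F_p}$ of codimension $\geq 2$, and this preimage condition has $p$-adic density $O(1/p^2)$. The point, however, is to obtain a bound uniform in $p$ that still sums to $O(X^{(2n+3)/(4n+2)}/M^\delta)$ even when $p$ is much larger than $X^\alpha$ (where Theorem~\ref{cong2} no longer applies usefully). This is exactly the content of the geometric sieve of \cite{geosieve}: for any $\Z$-subvariety $Y \subset V$ of codimension at least $2$, one has $\sum_{p > M} \#\{B \in V(\Z) : H(B) < X,\ \bar B \in Y(\F_p)\} = O(X^{(2n+3)/(4n+2)}/M^\delta)$ for some $\delta > 0$. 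Its proof proceeds by covering the fundamental sets $\FF h L^{(m,\tau)}$ constructed in Section~9 by boxes on which the codimension-$2$ condition can be projected onto a transversal pair of coordinates, then applying Davenport's estimate (Proposition~\ref{davlem}) to control both the main-body and cuspidal contributions uniformly in $p$.

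The main obstacle will be the $W_p^{\rm sing}$ estimate for large $p$: congruence counting of the type used for $W_p^{\rm sm}$ is useless once $p$ exceeds $X^{1/(4n+2)}$ or so, because the error term from Proposition~\ref{davlem} overwhelms the main term. The geometric-sieve argument circumvents this by exploiting that, for a given $B$ of height less than $X$, at most one very large prime $p$ can satisfy $\bar B \in Y(\F_p)$ where $Y$ has codimension $\geq 2$; projecting onto a pair of coordinates transverse to $Y$ and summing absolutely over the projected image (which is $\ll X^{(2n+3)/(4n+2)-\eta}$ for some $\eta > 0$) then closes the sieve and furnishes the exponent~$\delta$.
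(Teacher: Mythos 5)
The paper gives no proof of Proposition~\ref{uniformity} --- it simply cites \cite{geosieve} --- so there is no in-text argument to compare against. Your decomposition of $W_p$ into the smooth stratum (where $\bar B$ is a nonsingular $\F_p$-point of $\{\bar\Delta=0\}$) and the singular codimension-$\ge 2$ stratum is the right starting point and does match the architecture of the geometric sieve. But the sketch, as written, has two genuine gaps.

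The more serious one is the smooth stratum for $p$ large. Your congruence count via Theorem~\ref{cong2} yields $N(W_p^{\rm sm};X) \ll X^{(2n+3)/(4n+2)}/p^2$ only in a range $M < p \le X^\alpha$ in which the mod-$p^2$ lattice-count error stays subordinate to the main term, and you acknowledge this. However $p^2 \mid \Delta(B)$ with $0 \neq |\Delta(B)| \ll X$ remains possible all the way up to $p \asymp X^{1/2}$, and in the range $X^\alpha < p \ll X^{1/2}$ the naive observation that each fixed $B$ contributes at most $O(\log X)$ squared prime factors gives only $O(X^{(2n+3)/(4n+2)}\log X)$, with no saving in $M$ at all. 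That range requires its own argument (in the geometric-sieve literature it is handled by fibering the fundamental region into lines and applying a one-variable squarefree-values bound of Hooley/Estermann type to the restriction of $\Delta$ to each line, or an equivalent resultant manipulation), and this piece is entirely absent from your sketch. The second gap is the claim that ``at most one very large prime $p$ can satisfy $\bar B\in Y(\F_p)$'': what is true is that the condition forces $p$ to divide a fixed nonzero resultant of size $O(X^C)$, so the number of such primes $>M$ is $O(\log X/\log M)$, not $O(1)$; and even that pointwise bound produces no $M^{-\delta}$ saving. The saving in the Ekedahl argument comes from interchanging the sum over $p$ with the sum over residue classes in a direction transversal to $Y$, using that a codimension-$\ge 2$ condition projects to $O(1)$ residues per $p$ --- a mechanism your one-sentence appeal to it does not actually reproduce. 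The plan is right in spirit, but the sketch does not close the estimate.
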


Proposition~\ref{uniformity} allows us to prove a more general congruence version of 
Theorem~\ref{cong2}, namely, one which allows
appropriate infinite sets of congruence conditions to be imposed and
which also allows weighted counts of lattice points (where weights are
also assigned by congruence conditions). 
Specifically, let us say that a
function $\phi:V(\Z)\to[0,1]\subset\R$ is {\it defined by congruence
  conditions} if, for all primes $p$, there exist functions
$\phi_p:V({\Z_p})\to[0,1]$ satisfying the following conditions:
\begin{itemize}
\item[(1)] For all $B\in V(\Z)$, the product $\prod_p\phi_p(B)$ converges to $\phi(B)$;
\item[(2)] For each prime $p$, the function $\phi_p$ is 
locally constant outside some closed set $S_p \subset V({\Z_p})$ of measure zero.
\end{itemize}
Such a function $\phi$ is called {\it acceptable} if, for sufficiently
large primes $p$, we have $\phi_{p}(B)=1$ whenever $p^2\nmid
\Delta(B)$.  For example, the
characteristic function of the set of elements $B\in V(\Z)$
having squarefree discriminant is an acceptable function.

We then have the following version of Theorem~\ref{cong3}, in which we
allow weights to be defined by certain infinite sets of congruence
conditions:
\begin{theorem}\label{thsquarefreebq}
  Let $\phi:V(\Z)\to[0,1]$ be an acceptable function that is defined by
  congruence conditions via the local functions $\phi_{p}:V({\Z_p})\to[0,1]$. Then, with
  notation as in Theorem~$\ref{cong3}$, we have:
\begin{equation}
N_\phi(V(\Z)^{(m,\tau)};X)
  = N(V(\Z)^{(m,\tau)};X)
  \prod_{p} \int_{B\in V({\Z_{p}})}\phi_{p}(B)\,dB+o({X^{(2n+3)/(4n+2)}}).
\end{equation}
\end{theorem}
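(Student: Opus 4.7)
The plan is to approximate $\phi$ by truncations involving only finitely many primes, apply Theorem~\ref{cong3} to those, and control the tail using the uniformity estimate of Proposition~\ref{uniformity}. For each integer $M>0$, define the truncated weight
\[
\phi^{(M)}(B) \,:=\, \prod_{p<M}\phi_p(B).
\]
Since each $\phi_p$ is locally constant outside a closed set $S_p\subset V(\Z_p)$ of measure zero, I would first sandwich each $\phi_p$ between locally constant functions $\phi_p^- \leq \phi_p \leq \phi_p^+$, each depending only on the residue of $B$ modulo some fixed power $p^{a_p}$, such that $\int_{V(\Z_p)}(\phi_p^+-\phi_p^-)\,dB<\varepsilon/2^p$. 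Setting $\phi^{(M),\pm}:=\prod_{p<M}\phi_p^\pm$, these truncated weights now satisfy the hypotheses of Theorem~\ref{cong3}, so
\[
N_{\phi^{(M),\pm}}(V(\Z)^{(m,\tau)};X) \,=\, N(V(\Z)^{(m,\tau)};X)\prod_{p<M}\int_{B\in V(\Z_p)}\phi_p^\pm(B)\,dB+o(X^{(2n+3)/(4n+2)}),
\]
and taking the sandwich estimates $\varepsilon\to 0$ gives the same asymptotic with $\phi_p$ in place of $\phi_p^\pm$.

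The main obstacle is comparing $N_\phi(\cdot;X)$ with $N_{\phi^{(M)}}(\cdot;X)$ uniformly in $X$ as $M\to\infty$. Here the acceptability hypothesis is crucial: there exists $p_0$ such that, for all $p\geq p_0$ and all $B\in V(\Z)$ with $p^2\nmid\Delta(B)$, one has $\phi_p(B)=1$. Consequently, for $M\geq p_0$,
\[
|\phi(B)-\phi^{(M)}(B)|\;\leq\;\mathbf{1}\bigl[\exists\,p\geq M\text{ with }p^2\mid\Delta(B)\bigr],
\]
so
\[
\bigl|N_\phi(V(\Z)^{(m,\tau)};X)-N_{\phi^{(M)}}(V(\Z)^{(m,\tau)};X)\bigr| \,\leq\, \sum_{p\geq M}N(W_p;X).
\]
Proposition~\ref{uniformity} bounds the right-hand side by $O(X^{(2n+3)/(4n+2)}/M^\delta)$ with implied constant independent of $X$ and $M$.

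Next I would verify convergence of the Euler product $\prod_p\int_{V(\Z_p)}\phi_p(B)\,dB$. Acceptability forces $\int_{V(\Z_p)}\phi_p(B)\,dB = 1+O(\mu_p(W_p))$ for $p\geq p_0$, and a standard estimate (or a direct calculation on $V(\F_p)$) shows $\mu_p(W_p)=O(1/p^2)$, so the product converges absolutely. In particular, $\prod_{p<M}\int\phi_p\,dB\to\prod_p\int\phi_p\,dB$ as $M\to\infty$, with error $O(1/M)$.

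Combining the three ingredients, for every $\varepsilon>0$ one can choose $M=M(\varepsilon)$ large so that
\[
\Bigl|N_\phi(V(\Z)^{(m,\tau)};X)-N(V(\Z)^{(m,\tau)};X)\prod_p\int_{V(\Z_p)}\phi_p(B)\,dB\Bigr| \,\leq\, \varepsilon\cdot X^{(2n+3)/(4n+2)}+o_M(X^{(2n+3)/(4n+2)}).
\]
Letting first $X\to\infty$ and then $\varepsilon\to 0$ yields the theorem. The only genuine difficulty is Proposition~\ref{uniformity} itself, whose proof (carried out in \cite{geosieve}) requires the geometric squarefree sieve; here I would simply invoke it as a black box.
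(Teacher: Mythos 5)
Your proposal is correct and follows essentially the same strategy as the paper, which itself just points to the analogous result in \cite[Thm.~2.21]{BS}: apply Theorem~\ref{cong3} to truncated weights involving finitely many primes, then use Proposition~\ref{uniformity} to control the tail $\sum_{p\geq M}N(W_p;X)$ uniformly in $X$. The sandwich approximation $\phi_p^-\leq\phi_p\leq\phi_p^+$ by genuinely locally constant functions, and the observation that acceptability reduces the truncation error to the indicator of $p^2\mid\Delta(B)$ for some $p\geq M$, are exactly the two technical steps one must supply.
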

The proof of Theorem~\ref{thsquarefreebq} is identical to that of \cite[Thm.~2.21]{BS}; the idea is to use Theorem~\ref{cong3} to impose more and more congruence conditions, while using Proposition~\ref{uniformity} to uniformly 
bound the 
error term.

\subsection{Weighted count of elements in $U(F)$ having bounded height}

For a large family $F$, we may now describe the asymptotic number of
$G(\Z)$-orbits in $U(F)$ having bounded height, where as before each
element $B\in U(F)$ is counted with weight $1/m(B)$:

\begin{theorem}\label{ufcount}
Let $F$ be any large family of hyperelliptic curves.  Then $N_{1/m}(U(F);2^{8n(2n+1)}X)$, the weighted number of $G(\Z)$-orbits in $U(F)$ having height less than $2^{8n(2n+1)}X$, is given by
\begin{equation}\label{eval}
\begin{array}{rl}
\!\!N_{1/m}(U(F);2^{8n(2n+1)}X) &\!\!\!=
\displaystyle{\sum_{m=0}^n\sum_{\tau=1}^{2^m} N(U_\infty(F)\cap
  V(\Z)^{(m,\tau)}
; 2^{8n(2n+1)}X)\cdot
\prod_p\int_{U_p(F)}\frac{1}{m_p(v)}dv}
\\[.325in]
&\hspace{2in}+\,o(X^{(2n+3)/(4n+2)}).
\end{array}
\end{equation}
\end{theorem}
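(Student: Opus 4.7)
The plan is to apply the weighted sieve of Theorem~\ref{thsquarefreebq} to a weight function encoding both the local weight $1/m_p$ and membership in the local sets $U_p(F)$. By the product formula of Proposition~\ref{product}, for $B\in U(F)$ the global weight factors as $1/m(B)=\prod_p 1/m_p(B)$. I will therefore define, for each prime $p$,
\[\phi_p\colon V(\Z_p)\to[0,1],\qquad \phi_p(B)\;:=\;\frac{1}{m_p(B)}\cdot\mathbf{1}[B\in U_p(F)],\]
and set $\phi(B):=\prod_p\phi_p(B)$ on $V(\Z)$. For any $B$ with nonzero discriminant, $\phi(B)$ is nonzero exactly when $B\in\bigcap_p U_p(F)=U(F)$, in which case $\phi(B)=1/m(B)$.

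The first step is to check that each $\phi_p$ is $G(\Z_p)$-invariant and locally constant outside the measure-zero locus $\{\Delta=0\}$, both of which are immediate from the definitions of $m_p$ and $U_p(F)$. The crucial step --- and the main obstacle --- is to verify that $\phi$ is \emph{acceptable}: for all sufficiently large primes $p$, one must have $\phi_p(B)=1$ whenever $p^2\nmid\Delta(B)$. This is where Corollary~\ref{unitmax} does all the work. When $p$ is odd and $p^2\nmid\Delta(B)$, the order $R=\Z_p[x]/(f(x))$ is maximal in $L=\Q_p[x]/(f(x))$; Corollary~\ref{unitmax} then yields that every element of $V(\Z_p)$ with such invariants is soluble, that it is the unique integral representative in its $G(\Q_p)$-orbit (so $|O_p(B)|=1$), and that $\#\Aut_{\Z_p}(B)=\#J[2](\Q_p)=\#\Aut_{\Q_p}(B)$, so $m_p(B)=1$. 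Moreover, for $p$ large enough that $F$ is large at $p$, the condition $p^2\nmid\Delta(B)$ forces the invariant tuple into $\Inv_p(F)$, whence $\mathbf{1}[B\in U_p(F)]=1$ as well.

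With acceptability established, Theorem~\ref{thsquarefreebq} (which itself rests on the uniformity estimate of Proposition~\ref{uniformity}) applies to $\phi$ on each component $V(\Z)^{(m,\tau)}$, giving
\[N_\phi\bigl(V(\Z)^{(m,\tau)};\,2^{8n(2n+1)}X\bigr)\;=\;N\bigl(V(\Z)^{(m,\tau)};\,2^{8n(2n+1)}X\bigr)\prod_p\int_{V(\Z_p)}\phi_p(B)\,dB+o\bigl(X^{(2n+3)/(4n+2)}\bigr).\]
By construction of $\phi$, the left side equals $N_{1/m}\bigl(U(F)\cap V(\Z)^{(m,\tau)};\,2^{8n(2n+1)}X\bigr)$, and each finite-place factor on the right equals $\int_{U_p(F)}(1/m_p(v))\,dv$. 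For the archimedean factor: when $\tau\le 2^m$ (the soluble real orbits described in~(\ref{solublesigns})) and $m$ is an allowed component of $F_\infty$, one has $U_\infty(F)\cap V(\Z)^{(m,\tau)}=V(\Z)^{(m,\tau)}$; if $m$ is not allowed then both sides vanish; and if $\tau>2^m$ no soluble real orbit exists, so again both sides are zero. Hence the archimedean count may be written uniformly as $N(U_\infty(F)\cap V(\Z)^{(m,\tau)};\,2^{8n(2n+1)}X)$. Summing over $m=0,\ldots,n$ and $\tau=1,\ldots,2^m$ then yields precisely the asymptotic formula of Theorem~\ref{ufcount}, with the finitely many error terms aggregating to a single $o(X^{(2n+3)/(4n+2)})$.
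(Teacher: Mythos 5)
Your proof is correct and is in substance the same as the paper's (very terse) argument, which invokes exactly the same three ingredients: Corollary~\ref{unitmax}, the product formula (\ref{prodformula}), and the squarefree sieve of Theorem~\ref{thsquarefreebq}. You have simply made explicit the weight function $\phi_p = (1/m_p)\cdot\mathbf{1}_{U_p(F)}$ and verified acceptability, which is precisely what the paper leaves implicit.

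One minor point worth stating more carefully: the characteristic polynomial of an element of $U_p(F)$ is the \emph{rescaled} one with invariants $(2^8 c_2,\ldots,2^{4(2n+1)}c_{2n+1})$, and it is for this polynomial that one needs maximality of $\Z_p[x]/(f)$. For odd $p$ the $2$-power factors are units, so the maximality and largeness conditions transfer between the rescaled and unrescaled invariants exactly as you assert; but this silent identification deserves a sentence.
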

Theorem~\ref{ufcount} follows from Corollary~\ref{unitmax}, Equation (\ref{prodformula}), and Theorem~\ref{thsquarefreebq}.

It remains to evaluate expression (\ref{eval}) in terms of the total number of hyperelliptic curves in $F$ having height less than $X$.

\section{Proof of Theorem \ref{main}}

\subsection{The number of hyperelliptic curves of bounded height}

\begin{theorem}\label{ccount}
Let $F$ be a large family of hyperelliptic curves.  Then the number of curves $C$ in $F$ with $H(C)<X$ is
given by
\[
M_\infty(F;X)
\cdot \prod_p M_p(F)
+ o(X^{(2n+3)/(4n+2)}) .\]
\end{theorem}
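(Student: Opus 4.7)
The plan is to reduce to a standard lattice-point count in the space $\Z^{2n}$ of coefficient tuples $(c_2, \ldots, c_{2n+1})$, cut out by a height condition, using Davenport's lemma (Proposition~\ref{davlem}), and then to pass from finitely many congruence conditions to the infinite collection defining $F$ via a squarefree-type sieve modeled on the methods of \cite{geosieve}.

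First, I would verify the archimedean count. A direct application of Proposition~\ref{davlem} to the region $\{(c_2,\ldots,c_{2n+1}) \in I(m) : H < X\}$ shows that the number of integer points in this region equals its volume plus $O(X^{(2n+3)/(4n+2) - 1/(2n(2n+1))})$; summing over the finitely many components $I(m)$ appearing in $F_\infty$ gives $M_\infty(F;X)$ up to the same error. Next, for any integer $N$, if one imposes congruence conditions modulo $N$ coming from $\Sigma_p$ for $p \mid N$, one may write the resulting set as a disjoint union of translates of the lattice $N \Z^{2n}$; applying Proposition~\ref{davlem} to each translate separately and summing yields the count
\[
 M_\infty(F;X) \cdot \prod_{p \mid N} M_p(F) + O_N(X^{(2n+3)/(4n+2) - 1/(2n(2n+1))}).
\]
This handles any truncation of the family $F$ to the conditions at primes dividing $N$.

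To extend to the full large family $F$, let $F(Y)$ denote the family obtained from $F$ by only imposing the congruence conditions at primes $p < Y$ and at the finitely many ``bad'' primes of $F$; at every other prime, $F(Y)$ is defined by the condition $p^2 \nmid \Delta$. The previous step, applied with $N$ equal to the product of all primes $p < Y$ together with the bad primes, shows that the number of curves in $F(Y)$ of height less than $X$ is
\[
 M_\infty(F;X) \cdot \prod_{p < Y} M_p(F) \cdot \prod_{p \geq Y} M_p\bigl(\{p^2 \nmid \Delta\}\bigr) + o_Y(X^{(2n+3)/(4n+2)}).
\]
The discrepancy between $F(Y)$ and $F$ is concentrated on tuples with $p^2 \mid \Delta(c_2,\ldots,c_{2n+1})$ for some $p \geq Y$. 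To control this I would invoke the polynomial analogue of Proposition~\ref{uniformity}: there exists $\delta > 0$ such that for every $M > 0$,
\[
 \sum_{p > M} \#\bigl\{(c_2,\ldots,c_{2n+1}) \in \Z^{2n} : H < X,\; p^2 \mid \Delta(c_2,\ldots,c_{2n+1}) \bigr\} \;=\; O\!\left(X^{(2n+3)/(4n+2)}/M^{\delta}\right),
\]
with implied constant independent of $X$. Since $\Delta$ is a (weighted) homogeneous polynomial of degree $2n(2n+1)$ in the $c_k$, the geometric squarefree-sieve machinery of \cite{geosieve} applies verbatim (with the discriminant of a binary quartic replaced by the discriminant of a hyperelliptic-type polynomial), giving the estimate. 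Letting $Y \to \infty$ after $X \to \infty$, and using the convergence of $\prod_p M_p(F)$ which follows from $M_p(F) = 1 + O(1/p^2)$ at large good primes, yields the theorem.

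The main obstacle is the uniformity estimate in the last step: obtaining a bound on the number of integral coefficient tuples of height less than $X$ with $p^2 \mid \Delta$ that is summable in $p$ and uniform in $X$. The proof should follow the template of \cite{geosieve}: split $p^2 \mid \Delta$ into two cases according to whether the double root of $f(x) \pmod{p}$ is $\F_p$-rational, handle the rational-double-root case by an explicit change of variables reducing to a codimension-two lattice condition, and handle the irrational case by a direct application of Davenport's lemma combined with the Lang--Weil bound on the relevant subvariety of $\A^{2n}_{\F_p}$. All other ingredients reduce to applications of Proposition~\ref{davlem} that are already in the toolkit used in earlier sections.
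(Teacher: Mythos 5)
Your proposal is essentially the paper's approach, spelled out in detail: the paper's proof of Theorem~\ref{ccount} is a single sentence remarking that the result follows ``in much the same manner as Theorem~\ref{ufcount} (but is easier),'' using the squarefree-sieve uniformity estimate from~\cite{geosieve}, and your outline — a direct Davenport count on the coefficient lattice $\Z^{2n}$ cut out by the height condition, imposition of finitely many congruence conditions, and a tail estimate to pass to the full large family — is exactly what is intended, including your correct observation that the needed uniformity input is the coefficient-space (``polynomial'') variant of Proposition~\ref{uniformity} rather than the $V(\Z)$-orbit version, both of which come out of~\cite{geosieve}. One small bookkeeping point worth tightening: as written your truncated family $F(Y)$ still carries infinitely many congruence conditions (namely $p^2\nmid\Delta$ for $p\ge Y$), so the finite-modulus count from your first step does not apply to it directly; the cleaner argument is to count the family $F'(Y)$ defined only by the conditions at $p<Y$ and the finitely many bad primes of~$F$, observe $F\subseteq F'(Y)$, and bound $\bigl|\{C\in F'(Y)\setminus F : H(C)<X\}\bigr|$ by the number of coefficient tuples with $p^2\mid\Delta$ for some good prime $p\ge Y$ (using that $F$ is large at such $p$), which is precisely what the uniformity estimate controls.
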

This theorem is proven in much the same manner as
Theorem~\ref{ufcount} (but is easier), again using the uniformity
estimate of Proposition~\ref{uniformity}. 


\subsection{Evaluation of the average size of the 2-Selmer group}\label{finalsec}

We now have the following theorem, from which Theorem \ref{congmain} (and thus Theorem \ref{main}) will be seen to follow.
\begin{theorem}\label{main2}
Let $F$ be a large family of hyperelliptic curves. Then we have
\begin{equation}\label{eqthsec5}
\displaystyle\lim_{X\to\infty}\frac{\displaystyle\sum_{\substack{C\in F\\H(C)<X
}}(\#S_2(J)-1)}{\displaystyle\sum_{\substack{C\in F\\H(C)<X}}1}=
\frac{|\mathcal J|\,\Vol(G(\Z)\backslash G(\R))
M_\infty(V,F;X)}{M_\infty(F;X)} 
\cdot\, \displaystyle\prod_p\frac{|\mathcal J|_p \Vol(G(\Z_p))
M_p(V,F)}{M_p(F)}.
\end{equation}

\end{theorem}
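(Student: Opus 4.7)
The strategy is to identify both the numerator and the denominator of the left-hand side of~(\ref{eqthsec5}) with counts established in Sections~10--12, and then to assemble the archimedean and non-archimedean pieces. The denominator is immediate from Theorem~\ref{ccount}: one has $\sum_{C\in F,\,H(C)<X}1 = M_\infty(F;X)\prod_p M_p(F) + o(X^{(2n+3)/(4n+2)})$. For the numerator, the bijections of Theorem~\ref{orbit}, Proposition~\ref{kernel}, and Proposition~\ref{intorbit}, together with the discussion of Section~11.1 around~(\ref{udef}) (and Proposition~\ref{gzbigstab} to discard the negligibly-many orbits with non-trivial stabilizer), identify
\[\sum_{C\in F,\,H(C)<X}\bigl(\#S_2(\Jac(C))-1\bigr) \;=\; N_{1/m}\bigl(U(F);\,2^{8n(2n+1)}X\bigr)+o\bigl(X^{(2n+3)/(4n+2)}\bigr).\]

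The central task is then to evaluate $N_{1/m}(U(F);2^{8n(2n+1)}X)$ asymptotically. Theorem~\ref{ufcount} decomposes this $1/m$-weighted count as a sum over $(m,\tau)$, with $\tau\in\{1,\ldots,2^m\}$, of archimedean factors times products of $p$-adic integrals. Each $p$-adic integral equals $|2^{4n(2n+3)}\mathcal J|_p\cdot\Vol(G(\Z_p))\cdot M_p(V,F)$ by Proposition~\ref{denel}. For the archimedean factor, Theorem~\ref{thmcount}, refined to congruence conditions by Theorem~\ref{cong2}, combined with the volume formula~(\ref{volexp}), yields after the change of variables $c_k=2^{4k}c_k'$ (which scales the volume by $\prod_{k=2}^{2n+1}2^{4k}=2^{4n(2n+3)}$ and converts the height bound $2^{8n(2n+1)}X$ into $X$):
\[N\bigl(U_\infty(F)\cap V(\Z)^{(m,\tau)};\,2^{8n(2n+1)}X\bigr)\;\sim\;\frac{2^{4n(2n+3)}\,|\mathcal J|\,\Vol(G(\Z)\backslash G(\R))}{2^{m+n}}\int_{\substack{C\in F_\infty\cap I(m)\\H(C)<X}}\!\!dc_2\cdots dc_{2n+1}.\]
Here the factor $2^{-(m+n)}$ records $\#J[2](\R)$ from Section~6.3; summing over the $\#J(\R)/2J(\R)=2^m$ soluble classes $\tau$ and then over~$m$ collapses the archimedean sum into $2^{4n(2n+3)}\,|\mathcal J|\,\Vol(G(\Z)\backslash G(\R))\cdot M_\infty(V,F;X)$.

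Combining, the numerator is asymptotically
\[2^{4n(2n+3)}\,|\mathcal J|\,\Vol(G(\Z)\backslash G(\R))\,M_\infty(V,F;X)\cdot\prod_p\bigl|2^{4n(2n+3)}\mathcal J\bigr|_p\,\Vol(G(\Z_p))\,M_p(V,F),\]
and the artificial powers of~$2$ cancel via the product formula $\prod_p|2^{4n(2n+3)}|_p=2^{-4n(2n+3)}$; dividing by the denominator yields~(\ref{eqthsec5}). The chief obstacle is bookkeeping: one must verify that (i)~summing over $\tau\in\{1,\ldots,2^m\}$ exactly reproduces the weight $\#(J(\R)/2J(\R))/\#J[2](\R)$ appearing in the integrand of $M_\infty(V,F;X)$, and (ii)~the three independent sources of powers of~$2$---the invariant dilation $c_k\mapsto 2^{4k}c_k$, the induced height dilation, and the $p$-adic factor from Proposition~\ref{denel}---balance through the product formula to produce the clean expression of~(\ref{eqthsec5}). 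All the analytic ingredients (lattice-point counting in a compact fundamental region, the squarefree sieve and uniformity estimate of Proposition~\ref{uniformity}, and the local and archimedean mass computations) are already in place from the preceding sections.
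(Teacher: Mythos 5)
Your proposal is correct and follows essentially the same route as the paper, which combines Proposition~\ref{denel}, Theorem~\ref{ufcount}, Theorem~\ref{thmcount}, the volume expression~(\ref{volexp}) (with the identification $\Vol(\RR_X(L^{(m,\tau)}))=2^{m+n}c_{m,\tau}X^{(2n+3)/(4n+2)}$ and $2^{m+n}=\#J[2](\R)$), and Theorem~\ref{ccount}. You make explicit the power-of-$2$ bookkeeping---the change of variables $c_k\mapsto 2^{4k}c_k$, the induced height rescaling, and the adelic product-formula cancellation---which the paper's one-sentence proof leaves implicit.
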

\begin{proof}
This follows by combining Proposition~\ref{denel}, Theorem~\ref{ufcount}, Theorem~\ref{thmcount}, expression (\ref{volexp}) for the volume $\Vol(\RR_X(L^{(m,\tau)}))=\Vol(\FF L^{(m,\tau)}\cap \{v\in V(\R):H(v)<X\})=2^{m+n}c_{m,\tau}X^{(2n+3)/(4n+2)}$ (noting that $2^{m+n}=\#J[2](\R)$, where $J=\Jac(C)(c_2,\ldots, c_{2n+1})$ for any $(c_2,\ldots,c_{2n+1})\in I(m)$), and Theorem~\ref{ccount}.
\end{proof}

\noindent
In order to evaluate the right hand side of the expression in Theorem~\ref{main2}, we
use the following fact (see \cite[Lemma~3.1]{BK}):
\begin{lemma}\label{lembk}
Let $J$ be an abelian variety over $\Q_p$ of dimension $n$. Then
$$\#(J(\Q_p)/2J(\Q_p))=
\left\{\begin{array}{cl}
\#J[2](\Q_p) & {\mbox{\em if }} p\neq 2;\\[.1in]
2^n\cdot\#J[2](\Q_p)& {\mbox{\em if }} p= 2.
\end{array}\right.$$
\end{lemma}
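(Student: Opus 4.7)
My plan is to reduce the lemma to a direct calculation on a single open subgroup $U\subset J(\Q_p)$ that is isomorphic, as a topological group, to $\Z_p^n$. The starting point is the tautological four-term exact sequence
$$0 \to J[2](\Q_p) \to J(\Q_p) \xrightarrow{[2]} J(\Q_p) \to J(\Q_p)/2J(\Q_p) \to 0$$
arising from the multiplication-by-$2$ endomorphism; the lemma asks for the ratio of the orders of the two outer terms. Since $J(\Q_p)$ is infinite, one cannot immediately invoke the ``kernel equals cokernel'' principle that applies to endomorphisms of finite abelian groups.

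The key structural input is that $J(\Q_p)$ is a compact $p$-adic Lie group of dimension $n$, so it contains an open subgroup $U$ of finite index which is isomorphic, as a topological group, to $\Z_p^n$. Such a $U$ is produced by the formal group law of $J$ and the $p$-adic formal logarithm: for a sufficiently deep step in the canonical filtration on $J(\Q_p)$ the logarithm converges and identifies that step with $\Z_p^n$. Granting this, the quotient $A := J(\Q_p)/U$ is a \emph{finite} abelian group. I would then apply the snake lemma to
$$0 \to U \to J(\Q_p) \to A \to 0$$
with multiplication by $2$ on each term (which respects the subgroup $U$ since $2U\subseteq U$). The resulting six-term exact sequence
$$0 \to U[2] \to J[2](\Q_p) \to A[2] \to U/2U \to J(\Q_p)/2J(\Q_p) \to A/2A \to 0$$
links the four groups of interest to the two finite groups $A[2]$ and $A/2A$. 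Because $A$ is finite abelian one has $\#A[2]=\#(A/2A)$, so these outer contributions cancel in the alternating product of orders, leaving
$$\frac{\#\bigl(J(\Q_p)/2J(\Q_p)\bigr)}{\#J[2](\Q_p)} \;=\; \frac{\#(U/2U)}{\#U[2]}.$$

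The proof is then finished by a direct computation of the right-hand side for $U\cong\Z_p^n$. This group is torsion-free, so $\#U[2]=1$, while $U/2U \cong (\Z_p/2\Z_p)^n$ is trivial when $p$ is odd and of order $2^n$ when $p=2$. This yields exactly the two cases in the statement. The only mildly nontrivial ingredient in the plan is the structure result producing the open subgroup $U\cong\Z_p^n$; the $p=2$ case may require taking $U$ somewhat deeper in the canonical filtration on $J(\Q_2)$ before the formal logarithm converges, but such a $U$ exists in all cases and the rest of the argument is pure diagram chasing.
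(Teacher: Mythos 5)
Your proof is correct and follows essentially the same route as the paper: pass to a finite-index open subgroup $U\cong\Z_p^n$ coming from the formal group, apply the snake lemma to multiplication by $2$ on $0\to U\to J(\Q_p)\to J(\Q_p)/U\to 0$, cancel the finite-quotient contributions, and compute $\#(U/2U)/\#U[2]$ directly. The paper's proof is a compressed version of exactly this argument (with $M$ in place of your $U$), so there is nothing to add.
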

\begin{proof}
  It follows from the theory of formal groups 
  that there exists a subgroup
  $M\subset J(\Q_p)$ of finite index that is isomorphic to $\Z_p^n$ \cite{M}. Let
  $H$ denote the finite group $J(\Q_p)/M$ .
 Then by applying the snake lemma to the following diagram
$$\xymatrix{
0\ar[d]\ar[r]&M\ar[d]^{[2]}\ar[r]&J(\Q_p)\ar[d]^{[2]}\ar[r]&H\ar[d]^{[2]}\ar[r]&0
\ar[d]\\
0\ar[r]&M\ar[r]&J(\Q_p)\ar[r]&H\ar[r]&0}$$
we obtain the exact sequence
$$0\to M[2]\to J(\Q_p)[2]\to H[2]\to M/2M\to J(\Q_p)/2J(\Q_p)\to H/2H\to 0.$$
Since $H$ is a finite group and $M$ is isomorphic to $\Z_p^n$, Lemma \ref{lembk}  follows.
\end{proof}

The expression on the right hand side in Theorem~\ref{main2} thus
reduces simply to the Tamagawa number
$\tau(G)=\Vol(G(\Z)\backslash G(\R))\cdot \prod_p
\Vol(G(\Z_p)$ of $G$, which is 2.  This completes the proof of
Theorem~\ref{congmain}, and thus of Theorem~\ref{main}.

\vspace{.05in}
In fact, the proof of Theorem~\ref{main} shows more.  For example, we may study the distribution
of the nonidentity Selmer elements inside $J(\Q_\nu)/2J(\Q_\nu)$ for any (finite or infinite) place $\nu$ of $\Q$. 
In this regard, we have: 

\begin{theorem}\label{equi}Fix $\nu$. 
Let $F$ be a large family of hyperelliptic curves $C$ of genus $n$ such that 
\begin{itemize}
\item[{\rm (a)}]
the cardinality of $\Jac(C)(\Q_\nu)/2\Jac(C)(\Q_\nu)$ is a constant $k$ for all $C$ in $F;$ and
\item[{\rm (b)}] the set $U_\nu(F)\subset V(\Z_\nu)$, defined by
  $(\ref{updef})$, can be partitioned into $k$ open sets $\Omega_i$
  such that$:$
\begin{itemize}
\item[{\rm (i)}] 
for all $i$, if two elements in $\Omega_i$ have the same invariants
$c_2,\ldots,c_{2n+1}$, then they are $G(\Q_\nu)$-equivalent$;$ and
\item[{\rm (ii)}]
 for all $i\neq j$, we have $(G(\Q_\nu)\cdot \Omega_i)\cap (G(\Q_\nu)\cdot
  \Omega_j)=\varnothing$.
\end{itemize}
\end{itemize}
$($In particular, the groups $\Jac(C)(\Q_\nu)/2\Jac(C)(\Q_\nu)$
are naturally identified for all $C$ in $F$.$)$ Then when the
hyperelliptic curves $C$ in $F$ are ordered by height, the images of the
non-identity $2$-Selmer elements under the canonical map
\begin{equation}
S_2(\Jac(C))\to \Jac(C)(\Q_\nu)/2\Jac(C)(\Q_\nu)
\end{equation} are equidistributed.
\end{theorem}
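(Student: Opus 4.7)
The plan is to refine the weighted count underlying Theorem~\ref{main2} by isolating, for each $i \in \{1,\ldots,k\}$, the non-identity 2-Selmer elements whose $\nu$-local image lies in the class corresponding to $\Omega_i$, and then to show that each such restricted count contributes exactly $1/k$ of the total. For each $i$, I would let $U^{(i)}(F) \subset U(F)$ consist of those $B \in V(\Z)$ whose $\nu$-local image in $V(\Z_\nu)$ is $G(\Q_\nu)$-equivalent to some element of $\Omega_i$. Conditions (b)(i) and (b)(ii) then ensure that the sets $U^{(i)}(F)$ partition $U(F)$, and under the bijection between irreducible $G(\Z)$-orbits in $U(F)$ and non-identity 2-Selmer elements furnished by Theorem~\ref{orbit} together with Proposition~\ref{intorbit}, a $G(\Z)$-orbit inside $U^{(i)}(F)$ corresponds precisely to a 2-Selmer element whose image in $\Jac(C)(\Q_\nu)/2\Jac(C)(\Q_\nu)$ is the class $\sigma_i(C)$ determined by $\Omega_i$.

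Next I would run the weighted counting machinery of Sections~10--12 verbatim with $U(F)$ replaced by $U^{(i)}(F)$. The product formula $m(B)=\prod_p m_p(B)$ of Proposition~\ref{product}, the orbit count of Theorem~\ref{ufcount}, the sieve of Theorem~\ref{thsquarefreebq}, and the curve count of Theorem~\ref{ccount} all extend without change, because the extra constraint cutting out $U^{(i)}(F)$ is imposed at the single place $\nu$ and has boundary of measure zero in $V(\Z_\nu)$ (or in $V(\R)$ when $\nu=\infty$). In the resulting analog of Theorem~\ref{main2}, every local factor away from $\nu$ is unchanged, and the local factor at $\nu$ becomes the restricted weighted integral $\int_{G(\Q_\nu)\cdot\Omega_i\cap V(\Z_\nu)} m_\nu(v)^{-1}\,dv$.

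The heart of the argument is that this restricted local factor is independent of $i$. Condition (b)(i) combined with (b)(ii) and the fact that the $\Omega_i$ exhaust $U_\nu(F)$ forces each $\Omega_i$ to meet each fiber of the invariant map in exactly one $G(\Q_\nu)$-equivalence class, corresponding to a single element $\sigma_i(C) \in \Jac(C)(\Q_\nu)/2\Jac(C)(\Q_\nu)$ for every $C \in F_\nu$. Rerunning the proof of Proposition~\ref{denel} (or its archimedean analog) with the sum over $\Jac(C)(\Q_\nu)/2\Jac(C)(\Q_\nu)$ collapsed to the single term $\sigma_i(C)$ then yields
\begin{equation*}
\int_{G(\Q_\nu)\cdot\Omega_i\cap V(\Z_\nu)}\frac{dv}{m_\nu(v)} \;=\; |2^{4n(2n+3)}\mathcal{J}|_\nu\cdot\Vol(G(\Z_\nu))\cdot\int_{C\in F_\nu}\frac{dc_2\cdots dc_{2n+1}}{\#\Jac(C)[2](\Q_\nu)},
\end{equation*}
which is manifestly independent of $i$ and is exactly $1/k$ of the corresponding unrestricted factor in Proposition~\ref{denel}. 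Combined with Theorem~\ref{ccount}, this forces the average number of non-identity 2-Selmer elements with $\nu$-local image $\sigma_i$ to be the same for each $i$, yielding the claimed equidistribution.

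The main technical subtlety I anticipate is that, when $\nu=\infty$, the partition $\{\Omega_i\}$ must interact compatibly with the decomposition of $V(\R)$ into the components $V^{(m,\tau)}$ used for the fundamental domain construction of Section~9; however, intersecting each $\Omega_i$ with each $V^{(m,\tau)}$ preserves conditions (b)(i) and (b)(ii), so the count disaggregates cleanly along both partitions and the conclusion is unaffected.
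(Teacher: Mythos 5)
Your proposal is correct and follows essentially the same route as the paper: in the local mass $M_\nu(V,F)$ you replace the sum over all $\sigma \in \Jac(C)(\Q_\nu)/2\Jac(C)(\Q_\nu)$ by a restricted sum over the single class cut out by $\Omega_i$, note that hypotheses (b)(i)--(ii) make this restriction a congruence/open condition at $\nu$ so that the sieve of Theorem~\ref{thsquarefreebq} still applies, and conclude that each restricted count contributes exactly $1/k$ of the unrestricted total. This is precisely the paper's argument.
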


We first prove that for any given value $c\in
\Z_\nu^{2n}\setminus\{\Delta=0\}$ for the invariants
$(c_2,\ldots,c_{2n+1})$, there always exists a sufficiently small
$\nu$-adic neighborhood $W$ of $c$ in $\Z_\nu^{2n}$ such that the
corresponding (large) family $F=F(W)$, consisting of all hyperelliptic
curves having invariants in $W$, satisfies both 
(a) and~(b).
Indeed, when $\nu=\infty$, if $c\in I(m)$ then we simply let $W=I(m)$, let $k=2^m=
\#(\Jac(C(c))(\R)/2\Jac(C(c))(\R))$,  and let $\Omega_1,\ldots,\Omega_k$ be equal to $V^{(m,1)}, \ldots,V^{(m,k)}$, respectively.

If $\nu$ is a finite prime $p$, let $k$ be the cardinality of
$\Jac(C(c))(\Q_p)/2\Jac(C(c))(\Q_p)$.  Then the set $Y$ of soluble
elements in the inverse image of $(2^8c_2,\ldots,2^{4(2n+1)}c_{2n+1})$
under the map $\pi:V(\Z_p) \to \Z_p^{2n}$, given by taking invariants,
is the disjoint union of $k$ nonempty compact sets $Y_1,\ldots,Y_k$,
namely, the $k$\, $G(\Q_p)$-equivalence classes in $V(\Z_p)$
comprising $Y$.

Let 
$Z_1,\ldots,Z_k\subset V(\Z_p)\setminus\{\Delta=0\}$ be disjoint
neighborhoods of $Y_1,\ldots,Y_k$, respectively, in $V(\Z_p)$ such
that each $Z_i$ consists of soluble elements and is the union of
$G(\Q_p)$-equivalence classes in $V(\Z_p)$.
Such~$Z_i$ can be constructed by noting that if $\varepsilon$ is
sufficiently small, then the $\varepsilon$-neighborhoods
$B_{\varepsilon}(Y_i)$ of the $Y_i$'s are disjoint and consist only of
elements that have nonzero discriminant and are soluble.
The set $\{ g \in G(\Q_p) \,|\, g B_\varepsilon(Y_i)\cap
V(\Z_p)\neq \varnothing \}$ 
is then compact.  Indeed, for a single stable element $v\in V(\Z_p)$,
the set
\begin{equation}\label{star}
\{ g \in G(\Q_p)\,|\, gv \in V(\Z_p) \}
\end{equation}
is compact, as it is the union of finitely many right cosets
$G(\Z_p)\cdot g_1,\ldots,G(\Z_p)\cdot g_m$ of $G(\Z_p)$; this is
because there are only finitely many $G(\Z_p)$-orbits in the
$G(\Q_p)$-orbit of $v$, by Propositions~\ref{podd} and~\ref{peven}.
Next, note that the set given by (\ref{star}) is constant in a
neighborhood of $v$.  This is because the number of $G(\Z_p)$-orbits in
$G(\Q_p)\cdot v'$ is a constant for elements $v'$ in a sufficiently
small neighborhood of $v$, e.g., we may take a neighborhood of $v$ small enough
so that the associated ring in Proposition~\ref{podd} or~\ref{peven}
is a constant.  The compactness of
\begin{equation}\label{star2}
\{ g \in G(\Q_p) \,|\, gS \cap V(\Z_p)=\varnothing \}
\end{equation}
now follows for any compact set $S$ of stable elements (by covering $S$
with neighborhoods where (\ref{star}) is constant, and then taking a
finite subcover).  Hence $(G(\Q_p)\cdot B_{\varepsilon}(Y_i))\cap
V(\Z_p)$ is both open and compact, and so is a bounded distance away
from $Y_j$ for all $j\neq i$. By shrinking $\varepsilon$ if necessary,
we can then ensure that $(G(\Q_p)\cdot B_{\varepsilon}(Y_i))\cap
B_{\varepsilon}(Y_j)=\varnothing$ for all $i\neq j$, and therefore
$(G(\Q_p)\cdot B_{\varepsilon}(Y_i))\cap (G(\Q_p)\cdot
B_{\varepsilon}(Y_j))=\varnothing$ for all $i\neq j$.  We then set
$Z_i=(G(\Q_p)\cdot B_{\varepsilon}(Y_i))\cap V(\Z_p)$.

Let $W'=\{(b_2,\ldots,b_{2n+1})\in\Z_p^{2n}\,|\,(2^8b_2,\ldots,2^{4(2n+1)}b_{2n+1})\in\cap_i\,\pi(Z_i)\}$.
Since~$\pi$ is an open mapping
on $V(\Z_p)\setminus\{\Delta=0\}$, we see that $W'$ is an open set in $\Z_p^{2n}$
containing $c$.  Let $W\subset W'$ be an open neighborhood of $c$
small enough so that for all hyperelliptic curves $C$ having
invariants in $W$, we have $\#(\Jac(C)(\Q_p)/2\Jac(C)(\Q_p))=k$.
Such a neighborhood $W$ exists because 
the size of $\Jac(C)(\Q_p)/2\Jac(C)(\Q_p)$, where $C$ is given by $y^2=f(x)$,     
depends only on the shape of the factorization of $f(x)$ over $\Q_p$, i.e., the number of irreducible factors and their degrees, so this size is locally 
constant as a function of the coefficients of $f$.
Then $F(W)$ satisfies both (a) and (b), with
$\Omega_i=Z_i\cap\pi^{-1}(\{2^8b_2,\ldots,2^{4(2n+1)}b_{2n+1})\,|\,(b_2,\ldots,b_{2n+1})\in W\})$.


To prove Theorem~\ref{equi}, on the right side of the equation in Theorem~\ref{main2},
we replace the sum over all $\sigma$ in $\Jac(C)(\Q_\nu)/2\Jac(C)(\Q_\nu)$ in the definition of $M_\nu(V,F)$ 
by the corresponding sum over $\sigma$ lying in any subset $\Sigma\subset \Jac(C)(\Q_\nu)/2\Jac(C)(\Q_\nu)$.  (By property (b), we are still counting elements in a weighted subset in $V(\Z)$ defined by congruence conditions, so Theorem~\ref{thsquarefreebq} again applies.)  This gives us the average number of nonidentity Selmer elements that map to $\Sigma$, and we see that the result is proportional to the size of $\Sigma$, proving Theorem~\ref{equi}.  The same argument also allows one to show equidistribution of non-identity 2-Selmer elements in
$\prod_{\nu\in S}\Jac(C)(\Q_\nu)/2\Jac(C)(\Q_\nu)$ for any finite set~$S$ of places of $\Q$, provided that our large family $F$ of hyperelliptic curves lies in the intersection of sufficiently small $\nu$-adic discs ($\nu\in S$) so that both (a) and (b) are satisfied for all $\nu\in S$.

\vspace{.1in}
There is an alternative interpretation of many of these calculations (and cancellations!) in terms of an adelic geometry-of-numbers argument. 
This interpretation was introduced by Poonen in~\cite{Poo} and we briefly describe it here. Let $\mathbb A$ be the ring of adeles of $\Q$ and let $dv$ be the unique Haar measure on $V(\mathbb A)$ which is the counting measure on the discrete subgroup $V(\Q)$ and gives the compact quotient $V(\mathbb A)/V(\Q)$ volume one. Let $S$ be the vector group over $\Q$ of dimension $2n$ where the invariant polynomials take values, and let $ds$ be the unique Haar measure on $S(\mathbb A)$ which is counting measure on the discrete subgroup $S(\Q)$ and gives the compact quotient $S(\mathbb A)/S(\Q)$ volume one.

Define the compact subset $S(\mathbb A)_{<X}$ as the product of the elements in $S(\Z_p)$ that define a $p$-indivisible polynomial having nonzero discriminant with the elements in $S(\mathbb R)$ that define a polynomial having nonzero discriminant and bounded height $H < X$. Define the subset $V(\mathbb A)_{<X}$ as the subset of elements in $V(\mathbb A)$ locally soluble everywhere and having characteristic polynomial in $S(\mathbb A)_{<X}$. The group $G(\Q)$ acts on $V(\mathbb A)_{<X}$ and the orbit space has finite volume. The adelic volume formula is then
$$\mathrm{vol}(V(\mathbb A)_{<X}/G(\Q), dv) = 2\cdot \mathrm{vol}(S(\mathbb A)_{<X}, ds).$$
The factor $2$ in this formula is the Tamagawa number of $G$: the volume of $G(\mathbb A)/G(\Q)$ with respect to Tamagawa measure $\omega(g)$. The proof of this volume formula involves the compatibility of the three measures $ds$, $dv$, and $\omega(g)$ (as in Proposition~\ref{genjac}) as well as a product formula for the Herbrand quotients $\#(J(\Q_v)/2J(\Q_v)) / \#J[2](\Q_v)$ (as in Proposition~\ref{lembk}). Geometry-of-numbers arguments (as in Sections~9 and 10) and a sieve (as in Section 11) can then be used to estimate (as in Section~12) the average number ($= 2$) of irreducible locally soluble rational orbits, and so the average number of non-trivial classes in the $2$-Selmer group.


\subsection*{Acknowledgments}
We are very grateful to John Cremona, Joe Harris, Wei Ho, Bjorn Poonen, Arul Shankar, Michael Stoll, Jack Thorne, and Jerry Wang for their help.

\def\noopsort#1{}
\providecommand{\bysame}{\leavevmode\hbox to3em{\hrulefill}\thinspace}

\end{document}